\documentclass[a4paper,reqno,11pt]{amsart}
\usepackage{mathtools}
\usepackage{dsfont}
\usepackage{amsmath, amsfonts, amssymb, amsthm, amscd}
\usepackage{graphicx}
\usepackage{psfrag}
\usepackage{perpage}
\usepackage{url}
\usepackage{color}
\usepackage{mathrsfs}
\usepackage{tikz}
\usepackage{mathabx}
\usetikzlibrary{arrows,decorations.pathmorphing,backgrounds,positioning,fit,petri} % for TIKZ
\usepackage{tikz,tikz-cd}\usepackage{caption,subcaption}

\usepackage[normalem]{ulem}

\usepackage{dsfont} % nice indicator function symbol (blackboard 1)
\usepackage{esint}

\usepackage[utf8]{inputenc}
\usepackage[T1]{fontenc}
\usepackage{microtype}

\usepackage[a4paper,scale={0.72,0.74},marginratio={1:1},footskip=7mm,headsep=10mm]{geometry}

\usepackage{hyperref}

\usepackage{amsmath}

\newcommand{\avgint}{%
	\mathop{%
		\vcenter{\hbox{%
				$\int$\kern-0.7em\raisebox{0.6ex}{\rule{0.8em}{0.1pt}}%
		}}%
	}%
}
\makeatletter
\def\@secnumfont{\bfseries\scshape}

\def\section{\@startsection{section}{1}%
  \z@{.7\linespacing\@plus\linespacing}{.5\linespacing}%
  {\normalfont\large\bfseries\scshape\centering}}

\def\subsection{\@startsection{subsection}{2}%
  \z@{.5\linespacing\@plus.7\linespacing}{-.5em}%
  {\normalfont\bfseries\scshape}}

\def\subsubsection{\@startsection{subsubsection}{3}%
  \z@{.5\linespacing\@plus.7\linespacing}{-.5em}%
  {\normalfont\scshape}}

\def\specialsection{\@startsection{section}{1}%
  \z@{\linespacing\@plus\linespacing}{.5\linespacing}%
  {\normalfont\centering\large\bfseries\scshape}}
\makeatother

\makeatletter

\renewenvironment{proof}[1][\proofname]{\par
\pushQED{\qed}%
\normalfont \topsep4\p@\@plus4\p@\relax
\trivlist
\item[\hskip\labelsep
\bfseries
#1\@addpunct{.}]\ignorespaces
}{%
\popQED\endtrivlist\@endpefalse
}
\makeatother

\makeatletter
\newcommand \Dotfill {\leavevmode \leaders \hb@xt@ 6pt{\hss .\hss }\hfill \kern \z@}
\makeatother

\makeatletter
\def\@tocline#1#2#3#4#5#6#7{\relax
  \ifnum #1>\c@tocdepth % then omit
  \else
    \par \addpenalty\@secpenalty\addvspace{#2}%
    \begingroup \hyphenpenalty\@M
    \@ifempty{#4}{%
      \@tempdima\csname r@tocindent\number#1\endcsname\relax
    }{%
      \@tempdima#4\relax
    }%
    \parindent\z@ \leftskip#3\relax \advance\leftskip\@tempdima\relax
    \rightskip\@pnumwidth plus4em \parfillskip-\@pnumwidth
    #5\leavevmode\hskip-\@tempdima
      \ifcase #1
       \or\or \hskip 1.65em \or \hskip 3.3em \else \hskip 4.95em \fi%
      #6\nobreak\relax
    \Dotfill
    \hbox to\@pnumwidth{\@tocpagenum{#7}}\par
    \nobreak
    \endgroup
  \fi}
\makeatother

\makeatletter
\def\l@section{\@tocline{1}{0pt}{1pc}{}{\scshape}}
\renewcommand{\tocsection}[3]{%
\indentlabel{\@ifnotempty{#2}{\ignorespaces#1 #2.\hskip 0.7em}}#3}
\def\l@subsection{\@tocline{2}{0pt}{1pc}{5pc}{}}

\def\l@subsubsection{\@tocline{3}{0pt}{1pc}{7pc}{}}

\makeatother

\numberwithin{equation}{section}

\newtheoremstyle{mytheorem}{.7\linespacing\@plus.3\linespacing}{.7\linespacing\@plus.3\linespacing}%
     {\itshape}%         Body font
     {}%         Indent amount (empty = no indent, \parindent = para indent)
     {\bfseries}% Thm head font (e.g. \bfseries, \scshape, \sffamily)
     {. }%        Punctuation after thm head
     {0.3ex}%     Space after thm head (\newline = linebreak)
     {\thmname{{\bfseries #1}}\thmnumber{ {\bfseries #2}}\thmnote{ (#3)}}  % Thm head spec

\theoremstyle{mytheorem}

\newtheorem{theorem}{Theorem}[section]
\newtheorem{lemma}[theorem]{Lemma}
\newtheorem{proposition}[theorem]{Proposition}

\newtheorem{remark}[theorem]{Remark}

\renewcommand{\tilde}{\widetilde}          % wider `tilde'
\DeclareMathSymbol{\leqslant}{\mathalpha}{AMSa}{"36} % nicer `smaller or equal'
\DeclareMathSymbol{\geqslant}{\mathalpha}{AMSa}{"3E} % nicer `larger or equal'
\DeclareMathSymbol{\eset}{\mathalpha}{AMSb}{"3F}     % nicer `emptyset'
\newcommand{\be}{\begin{equation}}
\newcommand{\ee}{\end{equation}}

\renewcommand{\epsilon}{\varepsilon}
\renewcommand{\theta}{\vartheta}
\renewcommand{\rho}{\varrho}

\newenvironment{myenumerate}{%
\renewcommand{\theenumi}{\arabic{enumi}}%
\renewcommand{\labelenumi}{{\rm(\theenumi)}}%
\begin{list}{\labelenumi}
	{%
	\setlength{\itemsep}{0.4em}%
	\setlength{\topsep}{0.5em}%
	\setlength\leftmargin{2.45em}%
	\setlength\labelwidth{2.05em}%
	\setlength{\labelsep}{0.4em}%
	\usecounter{enumi}%
	}%
	}%
{\end{list}
}

{\end{list}
}

{\end{list}
}

\renewenvironment{enumerate}{
\begin{myenumerate}}%
{\end{myenumerate}}

\newenvironment{myitemize}{%
\begin{list}{$\bullet$}%
 	{%
	\setlength{\itemsep}{0.4em}%
	\setlength{\topsep}{0.5em}%
	\setlength\leftmargin{2.65em}%
	\setlength\labelwidth{2.65em}%
	\setlength{\labelsep}{0.4em}%
	}%
	}%
{\end{list}}

{\end{myitemize}}

\MakePerPage[2]{footnote} % restarts footnote counter at every new page

\date{\today}

\newcommand{\td}{\text{d}}
\title[Extrema of cooling branching Brownian motion and related Gaussian fields]{Extrema of cooling branching Brownian motion and related Gaussian fields}

\author{Zuodi Xie}
\address{Department of Mathematics, University of Warwick, Coventry CV4 7AL, United Kingdom}
\email{zuodi-xie.xie@warwick.ac.uk}

\begin{document}

\begin{abstract}
We introduce a family of Gaussian fields with an inhomogeneous cooling variance profile, indexed by $\alpha\in(0,1/2]$, whose covariance is log--log-correlated at $\alpha=1/2$ and approaches the log-correlated regime as $\alpha\downarrow0$. We consider both one dimensional such objects (which we call {\it cooling Branching Brownian Motions}) as well as the related Gaussian fields. We identify the centering of the maximum at the terminal time $T$ and prove tightness of the recentered maximum. While the exponent in the first-order growth varies linearly with $\alpha$, giving a leading order of $T^{1-\alpha}$, the second-order correction exhibits a phase transition at $\alpha=1/3$. We also show that any subsequential limit cannot be a randomly shifted Gumbel law, in contrast with the logarithmically correlated case.
\end{abstract}

\keywords{ Gaussian field; branching Brownian motion; extreme values; $\log\log$-covariance field; phase transition; Airy equation.}
\subjclass[2020]{Primary 60G70; Secondary 60G15, 60G60, 60J80.}

\maketitle

\tableofcontents

\section{Introduction}

\subsection{Background}
The study of extrema of log-correlated random fields has advanced rapidly in the past decade; see, for example, the notes \cite{Arguin2017_ExtremaLogCorrelated,Biskup2017_extremaNotes,Zeitouni2016_NoteBRWGaussianFields}. This direction is important for several reasons. First, real-valued branching Brownian motion (BBM) and its connection with the Fisher--KPP equation provide a fundamental setting in which extremal behavior plays a central role; see \cite{Bramson1983_FKPP}. Second, the high points of log-correlated Gaussian fields (for example, Gaussian free field) are closely tied to Gaussian multiplicative chaos (GMC), which is a fundamental and universal model in random geometry; see the reviews \cite{RV2014_GMCreview} and \cite{BerestyckiPowell2025GFFLQG}. For example, thick points and their Hausdorff dimension are intimately related to subcritical GMC; see \cite{HMP2010_thickpoints_GMC}. In addition, the law of the recentered maximum and the limiting extremal process are themselves closely connected to GMC-type objects. For example, in \cite{Madaule2015_max_log_Gaussian}, a critical GMC-type object, namely the derivative martingale, governs the random shift of the maximum, while in \cite{Biskup_Louidor2016_intermediate_level_sets}, the intermediate high points for discrete Gaussian free fields converge, after rescaling, to a GMC-like measure. In \cite{DRZ_2017_max_log_generalGaussian}, the similar randomly shifted Gumbel limit was proven for the recentered maximum of a family of discrete Gaussian fields with general covariance structure converging in some certain sense.

A closely related line of the present work comes from hierarchical spin-glass models with scale-dependent variance structure. Derrida's generalized random energy model introduced a Gaussian energy landscape whose correlations are organized by a hierarchy of scales \cite{Derrida1985GREM,DerridaGardner1986GREM}. The rigorous analysis of these models, including both finitely many hierarchical levels and the continuous hierarchy analogue, was developed by Bovier and Kurkova \cite{BovierKurkova2004GREM1,BovierKurkova2004GREM2}. These models can be regarded as natural predecessors of time-inhomogeneous branching models: the variance accumulated along a genealogical path depends on the scale, and the resulting extremal behaviour is governed by the geometry of the hierarchical covariance structure.

In the past two decades, substantial progress has also been made in the study of extrema of log-correlated models with time-inhomogeneous variance profiles. For BBM, most existing results concern the variance profile in the form $\sigma(t)=\widetilde{\sigma}(t/T)$ with $\widetilde{\sigma}$ bounded on $[0,1]$. Such models remain log-correlated and preserve many qualitative features of homogeneous BBM. In the homogeneous case, Bramson \cite{Bramson1978_max_BBM} proved convergence in law of the recentered maximum, and the extremal process was later identified in \cite{Arguin2013extremal,ABBS2013_BBMfromTop}. For macroscopic time-inhomogeneous BBM, analogous results have been obtained for two-speed profiles and for more general classes of variance functions; see, for example, \cite{BAH2014_extremal_process_twospeed_BBM,Bovier_and_Hartung2015_weakcor_BBM}. In the decreasing-variance case, a macroscopic slowdown with the characteristic Airy-type $T^{1/3}$ correction was discovered by Fang and Zeitouni \cite{FangZeitouni2012_slowdown_BBM} and further refined by Maillard and Zeitouni \cite{MZ2016_slowdownBBM}. In particular, \cite{MZ2016_slowdownBBM} developed Airy PDE estimates for probabilities associated with slowed-down Brownian motion, leading to the precise centering of the maximum and to a randomly shifted Gumbel limit.

\begingroup
\emergencystretch=2em
Parallel developments have been established for branching random walks (BRW) with time-inhomogeneous variances; see, for instance, \cite{FangZeitouni2012_twoscale_BRW,luo2025_extremal_twospeed_BRW,Ouitmet2018_BRWpwconstant,Mallein2015_slowdownBRW}. Natural higher-dimensional analogues of these BBM and BRW models arise in the form of log-correlated Gaussian fields, such as the scale-inhomogeneous Gaussian free field; see, for example, \cite{AO2016_extremes_scalevarying_GFF,Ouitmet2017_scale_GFF_Gibbs_Measure,Fels_and_Hartung_2020_scaleGFF_extremalprocess}. More broadly, time-inhomogeneous Gaussian models such as those in \cite{FangZeitouni2012_slowdown_BBM,MZ2016_slowdownBBM} can also provide useful intuition for more complicated non-Gaussian systems, see \cite{Bauerschmidt_Hofstetter2020_maxof_sineGordon,Barashkov2023_phi2max,Hofstetter2025_liouvillemaximum, CNZ2025_max_subcriticalDPRE}.
\par
\endgroup

The maxima of discrete log-correlated Gaussian fields were studied, for example, in \cite{BZ2012_tightness_maxdGFF,BDZ2016_weakconv_of_maxdGFF, DRZ_2017_max_log_generalGaussian}. Especially,
\cite{DRZ_2017_max_log_generalGaussian} developed a general framework for maxima of discrete log-correlated Gaussian fields under covariance assumptions. Their approach does not require the field itself to possess an exact branching structure or an independent decomposition over scales like BBM/BRW, although comparison with hierarchical Gaussian fields remains an important tool in the proof. Similar idea of Gaussian comparison was adapted in the setting of continuous Gaussian free field in \cite{Acosta2014_tightness}.

The present paper, in contrast to the previous literature, concerns a different regime. Rather than Gaussian models with variance profile changing but still in $O(1)$ scale, we study a \emph{cooling} setting, in which the variance profile decreases drastically as time proceeds and basically vanishes at the terminal time. This leads to a covariance scale substantially smaller than that in the usual log-correlated setting, and the resulting extremal behavior falls outside the standard macroscopic universality classes. Specifically, we introduce a class of Gaussian fields whose variance structure interpolates between log and $\log\log$ correlation. For these weak-covariance models, the leading orders of their maxima differ from those obtained for i.i.d. models, and the second order exhibits a phase transition in the $(\log)^{1/3}$-covariance case. What's more, we can also show that the weak limit (if exists) of the recentered maximum of the cooling BBM cannot be a randomly shifted Gumbel, in contrast with the classical logarithmically correlated case.

\subsection{Motivations from the two-dimensional Critical Stochastic Heat Flow.}
The present work is primarily motivated by the desire to study high peak values of the two-dimensional critical Stochastic Heat Flow (SHF), which was constructed in \cite{CSZ2023_2dcSHF}. See \cite{CSZ2024_reviewSHF, CSZ2025_SHFicm} for reviews. The Critical SHF is a positive random measure on $\mathbb{R}^2$ (hence not Gaussian), that exhibits log-correlations. Even though it is shown not to be an exponential of a Gaussian field (i.e. not a
GMC) \cite{CSZ2025_singularityregularitySHF}, it appears to share some GMC features. For example its moments on small balls blow up with the same magnitude as those of a GMC associated with a Gaussian field with $\log\log$ covariance structure \cite{LZ2024_micromomentSHF}. Furthermore, a central limit theorem has been found for the mollified SHF at one single point \cite{GuTsai_2026_loglogCLT}. Therefore, we hope that extremal behavior of the weakly correlated Gaussian models studied here, especially that of the $\log\log$-Gaussian fields, can provide useful insights into the extremal landscape of the SHF. Conversely, the results obtained here may help clarify the distinctions between the SHF and $\log\log$-GMCs.

We mention that a similar comparison has led to important progress in the study of extrema of the two-dimensional \textit{subcritical} directed polymer in random environment (DPRE). In the recent work \cite{CNZ2025_max_subcriticalDPRE}, the leading-order asymptotics of the maximum of the rescaled logarithmic partition function for \textit{subcritical} two-dimensional DPRE, viewed as a random field, were determined and the leading term in that asymptotic was shown to admit a form reminiscent to the leading order of the maximum of the BBM with decreasing variance profile (but still of $O(1)$ scale, not 'cooling') obtained in \cite{FangZeitouni2012_slowdown_BBM,MZ2016_slowdownBBM}.

\subsection{Main Results}
\subsubsection{Gaussian fields with covariance weaker than logarithm}
We first construct a family of Gaussian fields on $\mathbb{R}^d$, with $d\geq2$, whose covariance structure interpolates between the $\log\log$-correlated and log-correlated regimes. Specifically, let $W(\td u,\td t)$ be space-time white noise on
$\mathbb{R}^d\times(0,\infty)$. For $t>0$, set
\[
q_t(x)
:=
\frac{g_{\frac{1}{4t} }(x)}
{\bigl(\int_{\mathbb{R}^d}g_{\frac{1}{4t}}(y)^2\,\td y\bigr)^{1/2}}
=
\biggl(\frac{4t}{\pi}\biggr)^{d/4}
\exp\{-2t\lVert x\rVert_2^2\},
\qquad x\in\mathbb{R}^d,
\]
where $g_{1/(4t)}(x):=(2t/\pi)^{d/2}\exp\{-2t\lVert x\rVert_2^2\}$ is the heat kernel in $\mathbb{R}^d$, and
$\lVert x\rVert_2:=\bigl(\sum_{i=1}^d x_i^2\bigr)^{1/2}$ is the Euclidean norm on $\mathbb{R}^d$.
For any $\alpha\in(0,1/2]$, set
\begin{equation}\label{eqModel: alpha epsilon kernel, I_alpha}
	I_\alpha(t):=\int_0^{\log(1+t)} (1+s)^{-2\alpha}\,\td s,
	\qquad t\geq0,
\end{equation}
and define the field $X_\epsilon^\alpha$ by the stochastic integral
\begin{equation}\label{eqModel: X epsilon alpha white noise integral}
	X_\epsilon^\alpha(z)
	:=
	\int_0^{\epsilon^{-2}}
	\int_{\mathbb{R}^d}
	\sqrt{I_\alpha'(t)}\,
	q_t(z-u)\,
	W(\td u,\td t),
	\qquad z\in\mathbb{R}^d,
\end{equation}
with $W$ the $d+1$ white noise and
$I_\alpha'(t)$ is the derivative of $I_\alpha$.
By the Itô isometry for white-noise integrals, we have
\begin{equation}\label{eqModel: alpha epsilon kernel, as Laplacian}
	\begin{aligned}
		k_\alpha^\epsilon(z_1,z_2)&:= \operatorname{Cov}
		\bigl(X_\epsilon^\alpha(z_1),X_\epsilon^\alpha(z_2)\bigr)\\
		&=
		\int_0^{\epsilon^{-2}}
		I_\alpha'(t)
		\int_{\mathbb{R}^d}
		q_t(z_1-u)q_t(z_2-u)\,\td u\,\td t \\
		&=
		\int_0^{\epsilon^{-2}}
		\exp\{-t\lVert z_1-z_2\rVert_2^2\}
		I_\alpha'(t)\,\td t.
	\end{aligned}
\end{equation}
By Proposition~\ref{proposition: k alpha epsilon, covariance estimate}, the covariance structure of $X_\epsilon^\alpha(\cdot)$ has asymptotics, for all $z_1,\ z_2\in[0,1]^d$ with $\lVert z_1-z_2\rVert_2$ sufficiently small,
\[
k_\alpha^\epsilon(z_1,z_2)
\asymp
\begin{cases}
	\displaystyle
	\biggl(\log\frac{1}{\epsilon\vee\lVert z_1-z_2\rVert_2}\biggr)^{1-2\alpha},
	& \alpha\in(0,1/2),\\[6pt]
	\displaystyle
	\log\biggl(\log\frac{1}{\epsilon\vee\lVert z_1-z_2\rVert_2}\biggr),
	& \alpha=1/2.
\end{cases}
\]
where $f\asymp g$ means that there exist positive constants $c$ and $C$ such that $cg\leq f\leq Cg$. Thus as $\alpha\to 0$, the models approaches the $\log$-correlated region, while $\alpha=1/2$ gives the $\log\log$-correlated region. Before introducing our main results, we define the functions needed to specify the centerings for the maxima of our models. For any $\alpha\in(0,1/2]$, define
\begin{equation}\label{eqIntroduction: sigma(t)}
	\sigma_\alpha(t):=\sqrt{2}\,(1+2t)^{-\alpha},\qquad t\geq0,
\end{equation}
which appears in the covariance estimates \eqref{eqK:alpha epsilon kernel, covariance structure} for $\{X_\epsilon^\alpha(\cdot)\}$. Then define
\begin{equation}\label{eqIntroduction: eta(t)}
	\eta_\alpha(t)
	:=
	\alpha_1\,2^{-1/3}(\sqrt{2d})^{2/3}\,
	\lvert\sigma_\alpha'(t)\rvert^{2/3}\sigma_\alpha(t)^{1/3},\qquad t\geq0,
\end{equation}
where $-\alpha_1=-2.33811\ldots$ denotes the principal eigenvalue of the Airy operator $Lu=u''-xu$ on $[0,\infty)$; see \eqref{eqA: Airy eigenvalues} for details. A direct computation yields
\begin{equation}\label{eqIntroduction: eta explicit}
	\eta_\alpha(t)
	=
	c_\eta (1+2t)^{-\alpha-2/3},
	\qquad
	c_\eta
	:=
	\alpha_1\alpha^{2/3}(\sqrt{2d})^{2/3}2^{5/6}.
\end{equation}

We then set
\begin{equation}\label{eqIntroduction: A(t)}
	A_\alpha(t)
	:=
	\int_0^t
	\bigl(\sqrt{2d}\,\sigma_\alpha(s)-\frac{\eta_\alpha(s)}{\sqrt{2d}}
	\bigr)\,\td s,
	\qquad t\geq0.
\end{equation}
In our main results, the centering of the maximum of $\{X_\epsilon^\alpha(x)\colon x\in[0,1]^d\}$ is given by $A_\alpha(T)$, where $T:=\log(1/\epsilon)$. For large $T$, direct computation gives
\begin{equation}\label{eqIntroduction: Aalpha(T) expansion}
	A_\alpha(T)=c_1(\alpha,d)T^{1-\alpha}
	-
	\begin{cases}
		\displaystyle
		c_2(\alpha,d) T^{1/3-\alpha} + O(1),
		& \text{if }\alpha<1/3,\\
		c_2(\alpha,d) \log T + O(1),
		& \text{if }\alpha=1/3,\\
		O(1),
		& \text{if }1/3<\alpha\leq1/2,
	\end{cases}
\end{equation}
with
\begin{equation}\label{eqIntroduction: c1(alpha,d)}
	c_1(\alpha,d):= \frac{2^{1-\alpha}\sqrt{d}}{1-\alpha},
\end{equation}
and
\begin{equation}\label{eqIntroduction: c2(alpha,d)}
	c_2(\alpha,d):= \begin{cases}
		\displaystyle
		\frac{2^{1/3-\alpha}c_\eta}{2\sqrt{2d}\,(1/3-\alpha)} & \text{if }\alpha<\frac{1}{3}, \\[6pt]
		\displaystyle
		\frac{c_\eta}{2\sqrt{2d}} & \text{if }\alpha=\frac{1}{3}.
	\end{cases}
\end{equation}

Here is our main result for the maximum of $\{X_\epsilon^\alpha(x)\colon x\in[0,1]^d\}$.
\begin{theorem}\label{theorem: tightness of recentered maximum for X alpha T}
	Let $T:=\log(1/\epsilon)$ and define $\{X_\epsilon^\alpha(x)\colon x\in[0,1]^d\}$ by \eqref{eqModel: X epsilon alpha white noise integral}. For any $d\geq2$ and $\alpha\in(0,1/2]$, there exist constants $C>0$, $c>0$, $T_*>1$, and $\lambda_*>1$ such that for all $T>T_*$ and $\lambda>\lambda_*$,
	\[
	\mathbb{P}\biggl(
	\bigl\lvert\max_{x\in[0,1]^d}X_\epsilon^\alpha(x)-A_\alpha(T)\bigr\rvert
	\geq\lambda
	\biggr)
	\leq Ce^{-c\lambda}.
	\]
	where $A_\alpha(T)$ is defined in \eqref{eqIntroduction: A(t)}.
\end{theorem}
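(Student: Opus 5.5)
The plan is to reduce the field to a time-inhomogeneous branching structure with variance profile $\sigma_\alpha$, and then use the slowed-down BBM/Airy machinery of Maillard--Zeitouni, adapted to the cooling profile.

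\textbf{Time change.} Substitute $s=\log(1+t)$, or equivalently work on the scale $\log(1/\|z_1-z_2\|)$. Then $X_\epsilon^\alpha$ becomes a field indexed by scales $s\in[0,T]$ with local variance rate $\sigma_\alpha(s)^2\asymp(1+2s)^{-2\alpha}$, up to constants matching \eqref{eqIntroduction: sigma(t)}. Points at distance $e^{-s}$ decorrelate after scale $s$. This is what makes $X_\epsilon^\alpha$ comparable to a BBM/BRW in dimension $d$ with branching rate $d$ per unit scale and time-inhomogeneous variance $\sigma_\alpha$.

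\textbf{Comparison fields.} Following the strategy of Ding--Roy--Zeitouni \cite{DRZ_2017_max_log_generalGaussian}, I would construct two hierarchical fields built from independent increments over dyadic scales, with variances $\sigma_\alpha(s)^2\pm O(\text{error})$. The precise covariance estimate in Proposition~\ref{proposition: k alpha epsilon, covariance estimate}, namely $k^\epsilon_\alpha(z_1,z_2)=\int_0^{T\wedge\log(1/|z_1-z_2|)}\sigma_\alpha^2+O(1)$, lets one use Slepian/Sudakov--Fernique. The field $X$ then dominates, and is dominated by, a modified branching random walk plus a bounded independent Gaussian per box. Only $O(1)$ shifts in the centering are lost, which tightness tolerates.

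\textbf{Upper tail.} Write $A_\alpha(t)$ for the barrier. I would take a first moment over all $e^{dT}$ leaves of the number of particles that end above $A_\alpha(T)+\lambda$ while staying below the curved barrier $A_\alpha(s)+\lambda$ (suitably adjusted) at every scale $s\le T$. The key estimate is the probability that a Brownian motion with variance $\sigma_\alpha^2$ stays in a window of width $\asymp(\sigma/|\sigma'|)^{1/3}\sigma^{1/3}$ below the barrier. After rescaling by the Airy scale, this probability is $\exp(-\int\eta_\alpha/\ldots)$, with exponent $-\alpha_1$. Here I would use the Airy PDE estimates of \cite{MZ2016_slowdownBBM}, which need a profile varying slowly relative to the window. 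Because $|\sigma'|/\sigma\sim 1/s$, the Airy window at time $s$ has size $s^{1/3-\alpha}\cdot$const. The local time scale $s^{2/3}$ is much smaller than $s$, so the adiabatic approximation holds uniformly. The barrier-crossing union bound produces $e^{-c\lambda}$. The constraint-violation part is controlled by a sum over first crossing times, using the exponential drift $e^{-\sqrt{2d}\,\cdot\,\text{excess}/\sigma}$ and $\sigma\ge c T^{-\alpha}$.

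\textbf{Lower tail.} I would run a second-moment argument on the truncated count of particles ending near $A_\alpha(T)-\lambda$ and staying within the Airy window. This gives probability bounded below by a constant $c_0>0$ that the maximum exceeds $A_\alpha(T)-C$. To upgrade this to $1-Ce^{-c\lambda}$, I would use the usual bootstrap. At scale $s_\lambda\asymp\lambda$ there are $e^{d s_\lambda}$ nearly independent subfields, and each succeeds with probability $c_0$ after accounting for the early increments, whose Gaussian lower tail at scale $\le s_\lambda$ costs $e^{-c\lambda}$. One subtlety is that early variance is $O(1)$ while $A_\alpha(T)-A_\alpha(T-s)$ restarted must be compared. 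Since $A_\alpha$ is concave with bounded derivative, the loss is $O(s_\lambda)$. This is absorbed by choosing $s_\lambda=\lambda/C'$.

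\textbf{Main obstacle.} I expect the hardest step to be the uniform Airy asymptotics for the cooling profile. Maillard--Zeitouni treat $\sigma(t/T)$ with macroscopic smoothness, whereas here $\sigma_\alpha$ degenerates like $T^{-\alpha}$. The window sizes and error terms must therefore be tracked in $s$-dependent form, and the accumulated errors $\int (\text{error}(s))\,ds$ must be shown to be $O(1)$. In particular, the Airy correction $\int_0^T\eta_\alpha/\sqrt{2d}$ must be accurate to $O(1)$, which is exactly what makes the phase transition at $\alpha=1/3$ appear in \eqref{eqIntroduction: Aalpha(T) expansion}. I would first try to show that the per-step error decays like $(1+s)^{-1-\delta}$ by a piecewise-constant approximation of $\sigma_\alpha$ on blocks of length $s^{2/3+\delta'}$.
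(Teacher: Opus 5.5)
Your overall architecture matches the paper's: a Slepian comparison with a hierarchical field, a first-moment barrier bound driven by Airy estimates for the upper tail, and a second moment combined with many independent subfields for the lower tail. Your Airy bookkeeping (window $V^{-1/3}\asymp s^{1/3-\alpha}$, relaxation time $s^{2/3}$) also matches Proposition~\ref{propoAiry: Airy estimate}. The gap is that both your comparison step and your second-moment step rest on log-correlated intuition that fails in the cooling regime.

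\textbf{Comparison step.} You quote the covariance estimate as $\int_0^{t_{z_1,z_2}}\sigma_\alpha^2+O(1)$. You then absorb the discrepancy and the intra-box oscillation with a bounded independent Gaussian per box, asserting that only $O(1)$ is lost. Near the terminal scale, however, the tilt governing extremal particles is $a(T)=k(T)/\sigma_\alpha(T)^2\asymp T^{\alpha}$, not $\sqrt{2d}$. The expected number of leaves within $y$ of the barrier grows like $e^{a(T)y}$ (compare the factor $e^{c(T+T_r)^\alpha(n+1)}$ in Proposition~\ref{proposition: BBM fixed splitting time, tail estimate}). Consequently:
\begin{itemize}
\item Independent noise of variance $v$ per $\epsilon$-box raises the maximum by about $\max_y\bigl(-y+\sqrt{2v\,a(T)\,y}\bigr)\asymp vT^{\alpha}$.
\item On the lower-tail side, an $O(1)$ variance mismatch forces a multiplicative correction $b(x)^2=1+O(1/\mathrm{Var})$, which moves $A_\alpha(T)$ by an unbounded amount.
\end{itemize}
What the comparison actually needs is the following.
\begin{itemize}
\item The sharper form of Proposition~\ref{proposition: k alpha epsilon, covariance estimate}: the error is an $O(1)$ shift of the scale variable, i.e.\ an additive error $O((1+t)^{-2\alpha})$. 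After rescaling space and shifting the MBBM time by $T_r$, the Slepian inequalities then hold with a variance mismatch of only $O(T^{-2\alpha})$.
\item Local noise of variance $\theta(\epsilon)\asymp T^{-2\alpha}$, matching Proposition~\ref{proposition: L2 distances for X epsilon alpha}; this is the paper's $\sqrt{\theta(\epsilon)}\,\psi_\epsilon$.
\item Even then, the level-set decomposition into $\Gamma_n$: the bound $\mathbb{E}\lvert\Gamma_n\rvert\le Ce^{dT-cT^{2\alpha}n^2}$ is what beats the $e^{cT^\alpha n}$ growth.
\end{itemize}

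\textbf{Lower-tail second moment.} The same scaling defeats a standard Paley--Zygmund count, so you cannot expect a $T$-uniform constant $c_0$ from it. The diffusivity at the end is only $\sigma_\alpha(T)\asymp T^{-\alpha}$. Hence a terminal window at the barrier carrying $O(1)$ expected particles has width of order $T^{-\alpha}\log T$. Relatives of a particle in that window which split during the last $O(\log T)$ units of time mostly land in the same window, and there are polynomially many in $T$ of them. The second moment is therefore inflated by a power of $T$. Your bootstrap over $e^{ds_\lambda}$ subfields cannot absorb a $T^{-p}$ success probability when $\lambda\ll\log T$.

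The paper handles this with three ingredients:
\begin{itemize}
\item a nonnegative correction $r$ supported on $[T-T^\beta,T]$, $\beta>3/4$, with $\int r/\sigma_\alpha\asymp c_r\log T$; this lowers the barrier by only $O(\sigma_\alpha(T)\log T)=o(1)$;
\item a shrinking target window $I=[0,\ell_T]$, fixed by \eqref{eqL: lower bound, how to choose I corrected}, so that $e^{a(T)y}\le C$ on $I$;
\item Lemma~\ref{lem: killed BM weighted L2}, which controls the terminal term $\mathrm{III}$.
\end{itemize}
This gives a per-box success probability $c\,t_\lambda^{(\alpha-1)/2}$ at $t_\lambda=\log\lambda$, and the $\lambda^d$ independent boxes with $d\ge2$ conclude.

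Your choice $s_\lambda\asymp\lambda$ with $e^{ds_\lambda}$ subfields would be a valid, and more robust, replacement for that last step. It only works, however, once a $T$-uniform per-subfield lower bound of this kind has been established, and your sketch does not supply one.
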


\begin{remark}
	From the expansion \eqref{eqIntroduction: Aalpha(T) expansion} of $A_\alpha(T)$, we find a continuous phase transition at $\alpha=1/3$, which is due to the integral expression \eqref{eqIntroduction: A(t)}. Specifically, the second-order term is of order $T^{1/3 -\alpha}$ for $\alpha\in(0,1/3)$, of order $\log T$ when $\alpha=1/3$, and is absorbed into the $O(1)$ term when $\alpha>1/3$. Note also that the coefficients $c_2(\alpha,d)$ in \eqref{eqIntroduction: c2(alpha,d)} blow up when $\alpha \uparrow \frac{1}{3}$, while $c_2(1/3,d)$ remains finite.
\end{remark}

\subsubsection{Cooling Branching Brownian Motion}
As an analogue of the Gaussian field $X_\epsilon^\alpha(\cdot)$ on $\mathbb{R}^d$, with $d\geq2$, we introduce our BBM models: At time $t=0$, there is a single particle whose position evolves according to
\[
Y_t^\alpha:=\int_0^t\sigma_\alpha(s)\,\td B_s
=\sqrt{2}\int_0^t(1+2s)^{-\alpha}\,\td B_s,\ t\geq0,
\]
where $\sigma_\alpha$ is the same as \eqref{eqIntroduction: sigma(t)}. The particle carries an independent exponential clock with parameter $1$. When the clock rings, the particle splits into two offspring, and each offspring subsequently evolves as an independent copy of the same process, with its own independent $\operatorname{Exp}(1)$ clock. This branching mechanism is iterated until the terminal time $T$.

To distinguish the present model from the slowed-down BBM models studied in \cite{FangZeitouni2012_slowdown_BBM} and \cite{MZ2016_slowdownBBM}, we refer to it as the \emph{cooling BBM}. The terminology is motivated by the behavior of the variance profile $\sigma_\alpha(\cdot)$: for $\alpha>0$, the profile decreases substantially as time evolves, and when $t$ is close to the terminal time $T$, it is only of order $T^{-\alpha}$. Thus, near the terminal time, the particles move with very small diffusivity and may be viewed as being gradually cooled, or almost frozen. See Figure~\ref{fig: BBM in a colling down env} for an illustration. By contrast, the variance profiles in the $O(1)$-slowed-down BBM models considered in \cite{FangZeitouni2012_slowdown_BBM} and \cite{MZ2016_slowdownBBM} are functions of the macroscopic time variable $t/T$ and remain of order one throughout the time interval. In those models, though the variance profile is decreasing, the particles are not gradually 'frozen' at the terminal scale and have variance profile still of order $O(1)$.

\begin{figure}[htbp]
	\centering
	\includegraphics[width=0.6\textwidth]{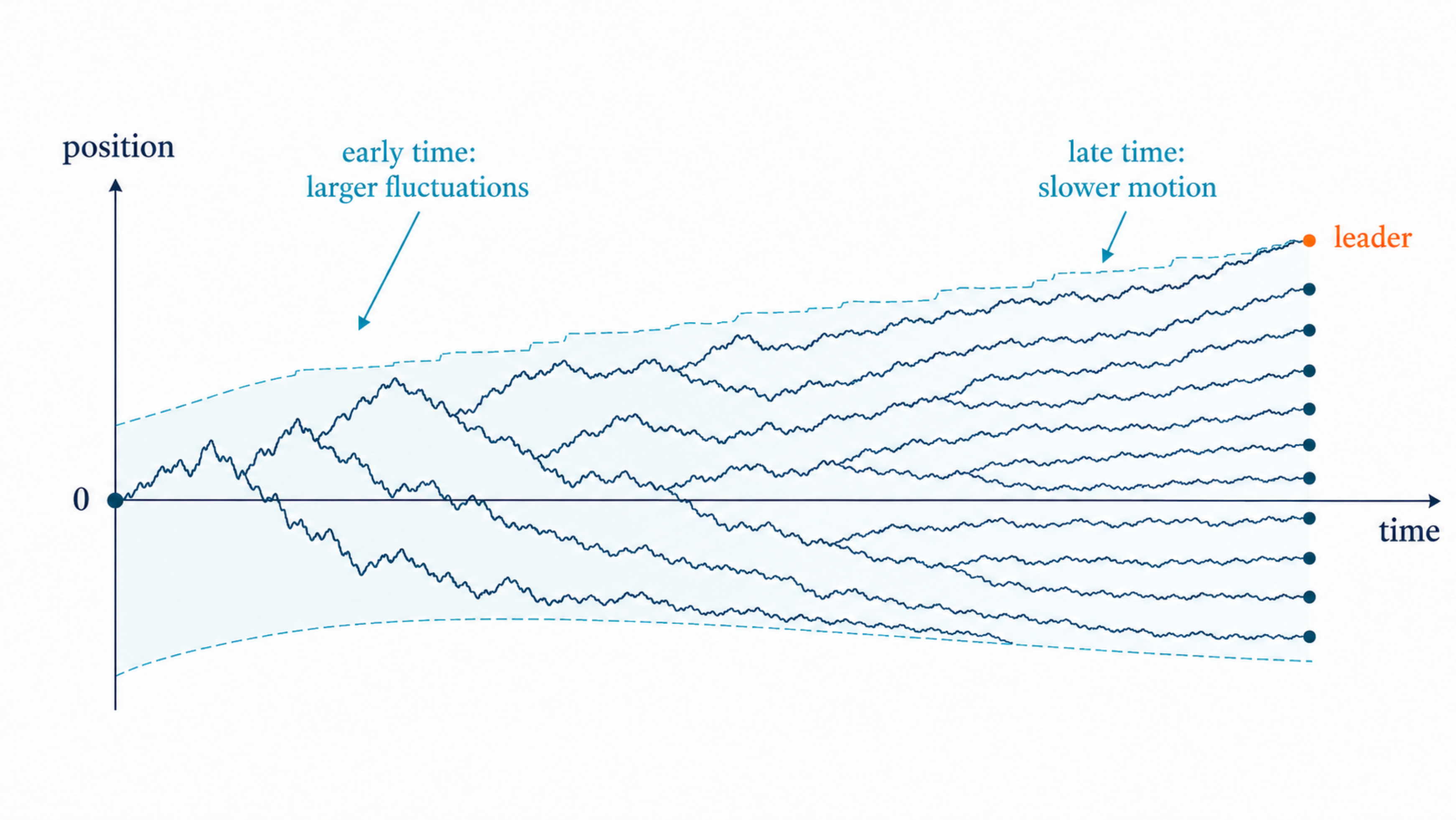}
	\caption{An illustration of cooling BBM}
	\label{fig: BBM in a colling down env}
\end{figure}

Let us introduce some notations for clarity. We write $\mathcal{N}_t$ for the set of particles alive at time $t\in[0,T]$, and $N_t:=\lvert\mathcal{N}_t\rvert$ for the number of such particles. For any $v\in\mathcal{N}_t$, we let $\{Y_s^{\alpha,v}\colon s\in[0,t]\}$ denote the trajectory of particle $v$ before time $t$. It is well known that the number of BBM particles satisfies
\begin{equation}\label{eqBBM: number of BBM particles}
	\begin{aligned}
		\mathbb{E}[N_t] &= e^t,\\
		\mathbb{P}\bigl(N_t=n\bigr) &= e^{-t}(1-e^{-t})^{n-1}, \qquad n\geq1,\\
		\mathbb{P}\bigl(N_t\leq e^{ct}\bigr) &\leq e^{-(1-c)t},
		\qquad \forall\, c<1.
	\end{aligned}
\end{equation}
See, for example, the proof of Lemma~2.2 in \cite{ChenGaoMallein2023_BBMsmallmax}. The following theorem shows that the rightmost positions of our cooling BBM exhibit the same centering formula $A_\alpha(\cdot)$, with dimension $d=1$.

\begin{theorem}\label{theorem: BBM tightness of recentered maximum}
	Fix any $\alpha\in(0,1/2]$. There exist constants $C>0$, $c>0$, $T_*>1$, and $\lambda_*>1$ such that for all $T>T_*$ and $\lambda>\lambda_*$,
	\begin{equation}\label{eqM: main result tightness}
		\mathbb{P}\biggl(
		\bigl\lvert\max_{v\in\mathcal{N}_T}Y_T^{\alpha,v}-A_\alpha(T)\bigr\rvert
		\geq\lambda
		\biggr)
		\leq Ce^{-c\lambda},
	\end{equation}
	where $A_\alpha(\cdot)$ is given in \eqref{eqIntroduction: A(t)} with $d=1$.
\end{theorem}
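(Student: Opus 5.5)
We follow the standard strategy for time-inhomogeneous BBM with decreasing variance (Fang--Zeitouni, Maillard--Zeitouni), adapted to a cooling profile that varies on every scale rather than macroscopically. The proof separates an upper-tail bound, $\mathbb{P}(\max_v Y_T^{\alpha,v}\ge A_\alpha(T)+\lambda)\le Ce^{-c\lambda}$, from a lower-tail bound, $\mathbb{P}(\max_v Y_T^{\alpha,v}\le A_\alpha(T)-\lambda)\le Ce^{-c\lambda}$.

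\textbf{Key estimate.} Both directions rest on the probability that a single path of $Y^\alpha$ stays below the curved barrier $s\mapsto A_\alpha(s)+x$ on $[0,T]$ and ends in a window $[A_\alpha(T)-y-1,\,A_\alpha(T)-y]$. We change time via $\tau(s)=\int_0^s\sigma_\alpha(r)^2\,\td r$, so that $Y^\alpha$ becomes a standard Brownian motion. We then apply Girsanov with drift $\sqrt2\sigma_\alpha$, i.e.\ $\sqrt{2d}\,\sigma_\alpha$ with $d=1$. This produces the factor $e^{-T}$ that cancels $\mathbb{E}[N_T]=e^T$, together with a boundary correction $e^{-\sqrt2 y}$ up to polynomial factors, and a residual term $\int \sigma_\alpha'(s)\,(\text{distance to barrier})\,\td s$. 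Since $\sigma_\alpha'<0$, this residual penalises paths far below the barrier, which forces them to stay near it.

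The optimal balance between entropic repulsion and this linear penalty is an Airy problem. Locally, at time $s$, a path confined at distance $\ell(s)\sim(\sigma_\alpha/|\sigma_\alpha'|)^{1/3}$ below the barrier pays exactly $e^{-\int\eta_\alpha(s)\,\td s/\sqrt2}$, with $-\alpha_1$ the principal Airy eigenvalue. This is the source of the $\eta_\alpha$ term in $A_\alpha$. We will prove two-sided bounds of the form
\[
\mathbb{P}(\cdots)\asymp e^{-T}\,e^{-\sqrt2 y}\,\text{(polynomial corrections in }x,y),
\]
with the polynomial corrections uniform in $T$. The method is to partition $[0,T]$ into blocks $[t_k,t_{k+1}]$ on which $\sigma_\alpha$ and $\sigma_\alpha'$ vary by a factor $1+o(1)$; geometric blocks $t_{k+1}=(1+\delta)t_k$ suffice. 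On each block we compare with a constant-coefficient Airy semigroup with Dirichlet boundary, using the spectral gap of the Airy operator to control transitions between blocks. The Maillard--Zeitouni PDE estimates give the single-block bound.

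\textbf{Main obstacle.} The hard step is making the block errors summable. The local Airy time scale is $\ell(s)^2/\sigma_\alpha^2\sim (1+s)^{2/3}$, which grows but stays much smaller than $s$. Each block must therefore contain many relaxation times, yet the eigenvalue errors from freezing coefficients must contribute only $O(1)$ in total. Summed over blocks, $\int(\text{relative variation})\cdot\eta_\alpha$ must be finite. Since $\eta_\alpha\sim(1+s)^{-\alpha-2/3}$, we would choose blocks of length $(1+s)^{\gamma}$ with $2/3<\gamma<1$ and check that the first-order perturbation errors decay like $(1+s)^{-1-\kappa}$. This is exactly where the phase transition in \eqref{eqIntroduction: Aalpha(T) expansion} could otherwise leak into the fluctuation scale. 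We expect to need a separate treatment of the early period $s\le T_0$ together with a crude barrier there.

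\textbf{Upper tail.} Use a first-moment bound on particles touching the barrier $A_\alpha(s)+\lambda+f(s)$ for a small entropic correction $f$. By the key estimate, the expected number of particles crossing is at most $Ce^{-c\lambda}$. The remaining particles, which stay below the barrier but end above $A_\alpha(T)+\lambda$, are handled by the same estimate with $y\le-\lambda$.

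\textbf{Lower tail.} Use a second moment on the number $Z$ of particles staying below the barrier and ending within $O(1)$ of $A_\alpha(T)$. The key estimate gives $\mathbb{E}[Z]\asymp 1$. For $\mathbb{E}[Z^2]$, decompose over branching times $r$ of pairs of particles. The contribution from branching at time $r$ is controlled by the barrier probability from $r$ to $T$ restarted at height $A_\alpha(r)-z$, and these contributions sum to $O(1)$. Paley--Zygmund then gives $\mathbb{P}(\max\ge A_\alpha(T)-K)\ge p>0$. To upgrade to $1-Ce^{-c\lambda}$, run the system up to time $t_\lambda\asymp\lambda$. By \eqref{eqBBM: number of BBM particles}, there are at least $e^{ct_\lambda}$ particles except with probability $e^{-c\lambda}$. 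All of them lie above $-C t_\lambda$ except with exponentially small probability, since $\sigma_\alpha\le\sqrt2$. Apply the positive-probability bound independently to each subtree over the remaining time $[t_\lambda,T]$, noting that $A_\alpha(T)-A_\alpha(t_\lambda)$ matches the restarted centering up to $O(1)$ by monotonicity of $\sigma_\alpha$. Choosing $t_\lambda=\lambda/C'$ absorbs the displacement into $\lambda$.
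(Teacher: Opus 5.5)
There is a genuine gap: your key estimate is wrong exactly where the cooling matters, and this breaks the second moment in your lower tail.

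\textbf{The terminal factor.} After tilting along the barrier, the endpoint weight is $\exp\{a(T)y\}$, where $a(t):=k(t)/\sigma_\alpha(t)^2\asymp(1+2t)^{\alpha}$ and $y\ge0$ is the distance \emph{below} the barrier (Lemma~\ref{Appendix: Bridge like probability}). It is not $e^{-\sqrt2 y}$: the rate $\sqrt2$ lives in the $B$-scale and becomes $\sqrt2/\sigma_\alpha(T)$ in $Y$-units. The killed Airy kernel also carries $\sqrt{V(0)V(T)}\asymp T^{(3\alpha-1)/2}$ (Proposition~\ref{propoAiry: Airy estimate}), so the prefactors are not uniform in $T$.

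\textbf{Consequences for your $Z$.} Take $Z$ to count particles that stay below the barrier and end within $O(1)$ of $A_\alpha(T)$. Then $\mathbb{E}[Z]$ is of order $e^{a(T)}$ up to lower-order factors (for $\alpha\le1/3$, of order $T^{(\alpha-1)/2}e^{a(T)}$), not $\asymp 1$. Moreover $\mathbb{E}[Z^2]/\mathbb{E}[Z]^2$ is not bounded uniformly in $T$. Consider pairs splitting at time $T-h$ with $h\asymp T^{2\alpha}$, which is when descendants at diffusivity $T^{-\alpha}$ have spread over the $O(1)$ window, from a branch point within $O(T^{-\alpha})$ of the barrier. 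Using the Airy bound on $[0,T-h]$ and the killed-Brownian kernel on $[T-h,T]$, I estimate their contribution at order $T^{(1-7\alpha)/2}\,\mathbb{E}[Z]^2$, which is unbounded for $\alpha<1/7$. So Paley--Zygmund gives no $T$-uniform positive probability as you set it up.

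\textbf{Where the difficulty really is.} The obstacle you flagged, summing coefficient-freezing errors in the bulk, is not the real one; the paper's Appendix handles it via $\mathcal{E}(q)\le Cs_0^{-1/3}$. A working second moment needs terminal devices like those in Proposition~\ref{Prop: max of X, lower tail}:
\begin{itemize}
\item a rectification $r\ge0$ on $[T-T^{\beta},T]$ that lowers the barrier by only $O(T^{-\alpha}\log T)$, while producing a factor $T^{cc_r}$ that suppresses late-splitting pairs relative to $\mathbb{E}[N_{\mathrm{good}}]^2$;
\item a polynomially small terminal window $[0,\ell_T]$;
\item Lemma~\ref{lem: killed BM weighted L2}.
\end{itemize}

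\textbf{How the paper avoids this for the BBM.} For the lower tail it uses no second moment. It takes the mesh $T_m=(2^mH-1)/2$ with $H\asymp\lambda$. By self-similarity of $\sigma_\alpha$, every rescaled block is a Maillard--Zeitouni BBM with the same profile $\sqrt2(1+u)^{-\alpha}$, so Lemma~\ref{lemma: macro BBM, lower bound} applies uniformly in $m$. Following the leading descendant block by block loses only $\sum_m(2^mH)^{-\alpha}(2^{m-1}H)^{\beta}=O(1)$ for $\beta<\alpha$, with failure probability $e^{-cH^{\beta}}$. Your final bootstrapping step, with $e^{ct_\lambda}$ independent subtrees, then matches the paper's.

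\textbf{Upper tail.} Your stopping-line first moment is essentially the paper's argument, but it needs two fixes.
\begin{itemize}
\item The correction $f$ is essential, not cosmetic. Without it the hitting-density integrand $\tfrac12\sigma_\alpha^2\,\partial_yG|_{y=0}$ behaves like $t^{-(1+\alpha)/2}$, which is not integrable. The paper's choice $\bar r(t)=-(1+2T_0+2t)^{-1-\beta}$, $\beta<\alpha$, raises the barrier by only $O(1)$ and supplies a factor $e^{-ct^{\alpha-\beta}}$.
\item Do not bound separately, by a first moment, the particles that end in a window of height $f(T)=O(1)$ below the barrier's endpoint. That first moment carries $e^{cT^{\alpha}f(T)}$. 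Instead shift $\lambda$ by $f(T)$, so that exceeding the threshold at time $T$ is itself a barrier crossing.
\end{itemize}
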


A further consequence of our estimates is that every subsequential weak limit of the recentered maxima $\max_{v\in\mathcal{N}_T}Y_T^{\alpha,v}-A_\alpha(T)$
cannot be the randomly shifted Gumbel law appearing as the limiting law of recentered maxima of $\log$-correlated Gaussian fields. More precisely, a strengthened upper tail estimate proved in Proposition~\ref{propo: stronger upper tial for BBM}
implies that every subsequential weak limit of the recentered maximum has bounded exponential moments for all exponents, while a randomly shifted Gumbel distribution has divergent exponential moments above a finite threshold. See Subsection~\ref{subsection: discussions on the weak limit} for more details.

To end this subsection, we compare the leading-order behavior of our cooling BBM with that of the corresponding i.i.d.\ model. Consider $e^T$ i.i.d.\ centered Gaussian random variables whose common variance matches that of a single BBM particle at time $T$, i.e.,
\[
\operatorname{Var}(Y_T)=\int_0^T \sigma_\alpha(s)^2\,\td s
=
\begin{cases}
	\dfrac{(1+2T)^{1-2\alpha}-1}{1-2\alpha}, & \alpha\in(0,1/2),\\[6pt]
	\log(1+2T), & \alpha=1/2.
\end{cases}
\]
By adapting the proof of Proposition 4.1.1 in \cite{Arguin2017_ExtremaLogCorrelated}, one obtains that the maximum of the i.i.d.\ model has leading order $\sqrt{2T\,\operatorname{Var}(Y_T)}$. In particular, for $\alpha\in(0,1/2)$, it is $2^{1-\alpha}(1-2\alpha)^{-1/2}T^{1-\alpha}$,
and for $\alpha=1/2$ it is of order $\sqrt{T\log T}$. From \eqref{eqIntroduction: Aalpha(T) expansion}, the leading order of the maximum of our BBM models is given by $\frac{2^{1-\alpha}}{1-\alpha}T^{1-\alpha}$. Thus, for $\alpha\in(0,1/2)$, the maximum of our BBM remains on the same scale as that of the i.i.d.\ model, but with a strictly smaller leading constant. By contrast, in the critical case $\alpha=1/2$, corresponding to the $\log\log$-correlated regime, the BBM maximum is only of order $T^{1/2}$, which is substantially smaller than $(T\log T)^{1/2}$ in the i.i.d.\ setting. A similar discussion also applies to the higher-dimensional models $X_\epsilon^\alpha$.

\subsection{Main ideas for the proofs}
\label{subsec: main ideas}

In this subsection, we explain the main ideas behind the analysis of the maxima of the models introduced above.

\subsubsection{Scale decomposition: From cooling BBM to $O(1)$-slowed-down BBM} We first give a heuristic explanation of why $A_\alpha(T)$ in \eqref{eqIntroduction: A(t)} is the natural centering for the maximum of our cooling BBM. We also explain the mechanism behind the phase transition in the second-order term in \eqref{eqIntroduction: Aalpha(T) expansion}. The key observation is a scale-similarity property of the model, which allows us to compare the dynamics on successive time blocks with the $O(1)$-slowed-down BBM studied in \cite{FangZeitouni2012_slowdown_BBM,MZ2016_slowdownBBM}.

Consider a BBM with variance profile
\begin{equation}\label{eqIntroduction: macro variance profile 1}
	\sigma(t)=\widetilde{\sigma}(t/\mathbf{T}),\qquad t\in[0,\mathbf{T}],
\end{equation}
where $\mathbf{T}$ is the terminal time and
$\widetilde{\sigma}\in C^2([0,1];\mathbb{R}_+)$ is nonincreasing. Let $M_{\mathbf{T}}$ denote the rightmost position of this BBM at time $\mathbf{T}$. By \cite{MZ2016_slowdownBBM},
\begin{equation}\label{eqIntro: macro-BBM centering}
	M_{\mathbf{T}}
	= c_1 \mathbf{T} - c_2 \mathbf{T}^{1/3} - c_3 \log \mathbf{T} + O_{\mathbf{T}}(1),
\end{equation}
where the constants $\{c_i\colon i=1,2,3\}$ are determined in \cite{MZ2016_slowdownBBM}, and $O_{\mathbf{T}}(1)$ is tight as $\mathbf{T}\to\infty$.

We now turn to our cooling BBM $\{Y_T^{\alpha,v}\colon v\in\mathcal{N}_T\}$ in Theorem~\ref{theorem: BBM tightness of recentered maximum} with some fixed $\alpha\in(0,1/2]$, whose variance profile is
\begin{equation}\label{eqIntroduction: micro BBM variance}
	\sigma_\alpha(t)=\sqrt{2}(1+2t)^{-\alpha},\qquad \alpha\in(0,1/2].
\end{equation}
To begin with, we decompose the trajectory of a single $Y^\alpha$-particle according to a geometrically increasing time mesh. Specifically, fix $H>0$ large, and assume for simplicity that the terminal time $T=(2^M H-1)/2$ for some large $M\in\mathbb{Z}_+$. Define $T_m:=(2^mH-1)/2$, $m=0,\ldots,M$, so that $T_M=T$. In the discussion here, we will omit the initial time interval $[0,T_0=(H-1)/2]$ whose contribution is only $O(H)$.

A $Y^\alpha$-particle evolves as
$Y_t^\alpha=\int_0^t\sigma_\alpha(s)\,\td B_s
=\int_0^t\sqrt{2}\,(1+2s)^{-\alpha}\,\td B_s$. For every $m\in\{0,\ldots,M-1\}$, we have
\begin{equation}\label{eqIntro: m-th block law}
	\{Y_{T_m+t}^\alpha-Y_{T_m}^\alpha\colon t\in[0,T_{m+1}-T_m]\}
	\overset{\mathrm d}{=}
	\{(2^mH)^{-\alpha}Z_t^{(m)}\colon t\in[0,2^{m-1}H]\},
\end{equation}
where $Z_t^{(m)}\overset{\mathrm d}{=}
\int_0^t\hat\sigma_m(s)\,\td B_s$ for $t\in[0,2^{m-1}H]$, with
\begin{equation}\label{eqIntroduction: hat sigma m}
	\hat\sigma_m(t)
	:= \sqrt{2}\bigl(1+\frac{t}{2^{m-1}H}\bigr)^{-\alpha},
	\qquad t\in[0,2^{m-1}H].
\end{equation}
The profile $\hat\sigma_m$ in \eqref{eqIntroduction: hat sigma m} is exactly of the form \eqref{eqIntroduction: macro variance profile 1}, with the terminal time $\mathbf{T}$ replaced by $2^{m-1}H$. Thus, recalling \eqref{eqIntro: m-th block law} and \eqref{eqIntro: macro-BBM centering}, one single particle at time $T_m$, after time $T_{m+1}-T_m$, will generate a BBM with largest displacement
\[
(2^mH)^{-\alpha}\Bigl(
c_1\,2^{m-1}H-c_2(2^{m-1}H)^{1/3}
-c_3\log(2^{m-1}H)
\Bigr)
\]
where $c_1$, $c_2$ and $c_3$ should be determined by the specific coefficient $\alpha$. Summing these contributions over $m=0,\ldots,M-1$, we obtain the leading order
\begin{equation}\label{eqIntro: sum up leading terms}
	\sum_{m=0}^{M-1}(2^mH)^{-\alpha}(2^{m-1}H)=O(T^{1-\alpha}),
\end{equation}
and the Airy correction is
\begin{equation}\label{eqIntro: sum up Airy terms}
	\sum_{m=0}^{M-1}(2^mH)^{1/3-\alpha}
	\asymp
	\begin{cases}
		T^{1/3-\alpha}, & \alpha<1/3,\\
		\log T, & \alpha=1/3,\\
		1, & \alpha>1/3
	\end{cases},
\end{equation}
which explains the phase transition at $\alpha=1/3$ in \eqref{eqIntroduction: Aalpha(T) expansion}; and the logarithmic correction is controlled by
\begin{equation}\label{eqIntro: sum of decalying log}
	L_T(\alpha):=\sum_{m=0}^{M-1}(2^mH)^{-\alpha}\log(2^{m-1}H)=O(1),
\end{equation}
due to our choice $\alpha>0$. However, we mention that even if the sum in \eqref{eqIntro: sum of decalying log} is not bounded, we cannot simply take it as the third term in the maximum. Here, we only take advantage of the large decaying coefficient $(2^mH)^{-\alpha}$ for $\alpha>0$ to illustrate the influence of the logarithm term can be bounded. In conclusion, we have heuristically explained the centering $A_\alpha(T)$ in \eqref{eqIntroduction: A(t)}, up to additional $O(1)$ terms. In the proof of the lower bound in Theorem~\ref{theorem: BBM tightness of recentered maximum}, specifically in Subsection~\ref{subsection: BBM Ltail}, we make this block-decomposition argument rigorous. The same scale-decomposition idea can also be used for more general micro-models.

To end the discussion here, let us consider the BBM with variance profile \eqref{eqIntroduction: micro BBM variance} in the regime $\alpha:=\alpha_T\to 0$. Following the same reasoning that leads to \eqref{eqIntro: sum up leading terms}--\eqref{eqIntro: sum of decalying log}, it is natural to conjecture that the leading and Airy-order terms are of size

\[
T^{1-\alpha_T},
\qquad
\alpha_T^{2/3}T^{1/3-\alpha_T}.
\]
However, apart from the first two terms above, the $L_T(\alpha_T)$ in \eqref{eqIntro: sum of decalying log} is no longer bounded, which hints that some non-trivial results should exhibit in the third term, which is left as a future research direction.

\subsubsection{Barrier estimates and Airy PDE} To prove the upper-tail bound for the recentered maximum of the cooling BBM, Proposition~\ref{proposition: BBM tight, upper bound}, and the lower-tail bound for the recentered maximum of the Gaussian fields $X_\epsilon^\alpha$, Proposition~\ref{Prop: max of X, lower tail}, we use a barrier argument, a standard tool in the study of maxima of branching processes. Roughly speaking, a barrier is a deterministic curve in time that tracks the typical location of extremal particles. It is chosen so that particles contributing to the maximum are expected to remain below the barrier until close to the terminal time. A central step in the analysis is therefore to estimate the probability that a particle trajectory crosses a prescribed barrier.

As in \cite{MZ2016_slowdownBBM}, a natural choice of barrier in our setting is
\[
b(t):=
\int_0^t
\bigl(\sqrt{2d}\,\sigma_\alpha(s)
-
\frac{\eta_\alpha(s)+r(s)}{\sqrt{2d}}
\bigr)
\td s,
\qquad t\geq0,
\]
where the main term $\sqrt{2d}\,\sigma_\alpha(s)
-
\frac{\eta_\alpha(s)}{\sqrt{2d}}$
is the infinitesimal contribution to the centering in \eqref{eqIntroduction: Aalpha(T) expansion}, and $r$ is a rectifying function introduced to control the barrier-crossing probability. For example, in the study of upper tail for cooling BBM in Subsection~\ref{subsection: micro-BBM, upper tail}, we take $r$ in \eqref{eqBBM: r} which is negative and thus slightly rises the barrier. The decreasing variance profile \eqref{eqIntroduction: sigma(t)} then leads, through Brownian crossing-probability estimates, to an Airy-type PDE; see Subsection~\ref{subsec: Appendix, Girsanov}.

There is, however, an important difference between the Airy PDE used in \cite{MZ2016_slowdownBBM} and the PDE used here. In \cite[(A.4)]{MZ2016_slowdownBBM}, the coefficients depend on the variable $t/T$, and therefore their derivatives are of order $O(1/T)$. By contrast, in our equation~\eqref{eqA: Airy PDE with sigma diffusion}, the coefficients are functions of the time variable $t$ itself, rather than of $t/T$. Their derivatives must therefore be estimated directly.

This distinction is important for the rectifying function $r$ appearing in the Airy PDE \eqref{eqAiry: PDE, u, sigma, r}. In the setting of \cite{MZ2016_slowdownBBM}, the corresponding rectifying function does not significantly affect the Airy estimate, because the derivatives of the coefficients are on the $T^{-1}$ scale. In our setting, the effect of $r$ is not automatically negligible. We must therefore verify explicitly which conditions on $r$ are sufficient for the Airy estimate to remain valid; see \eqref{eqAiry: conditions on r}. We follow the eigenfunction decomposition method used in Appendix A of \cite{MZ2016_slowdownBBM}, while treating the contribution of the rectifying function more carefully.

We also emphasize that, in our setting, the variance profile \eqref{eqIntroduction: sigma(t)} can be very small when $t$ is close to the terminal time $T$. As a result, the probability of crossing a barrier is highly sensitive to the terminal segment of the path; see, for example, Lemma~\ref{Appendix: Bridge like probability}. This sensitivity is also discussed in Remark~\ref{remark: lower bound sensitivity}.

\subsubsection{Gaussian comparison: From BBM to Gaussian fields} To study the maxima of the Gaussian fields $X_\epsilon^\alpha$ defined in \eqref{eqModel: X epsilon alpha white noise integral}, we use the Gaussian comparison strategy based on Slepian's lemma, as in \cite{Acosta2014_tightness}. Specifically, in Subsection~\ref{subsection: auxiliary Gaussian fields}, we construct a Gaussian field on the grid $V_\epsilon := [0,1)^d \cap \epsilon\mathbb{Z}^d$. This field captures the main contribution to the centering of the maximum. We then construct another Gaussian field, given on each box $v+[0,\epsilon)^d$, $v\in V_\epsilon$, which controls the local fluctuations of $X_\epsilon^\alpha$ inside each $\epsilon$-box.

This reduces the study of the maximum of $X_\epsilon^\alpha$ over $[0,1)^d$ to an extreme-value problem for a BBM decorated by an independent Gaussian field with smaller-order fluctuations. We can therefore use the estimates for cooling BBM obtained in Section~\ref{section: proofs for the BBM}.

\subsection{Discussion on the weak limit} \label{subsection: discussions on the weak limit}
In Theorem~\ref{theorem: BBM tightness of recentered maximum}, we have shown the tightness of the recentered maxima for our BBM models $\{Y_T^{\alpha,v}\colon v\in\mathcal{N}_T\}$. As a result, there exists subsequence say
\[
\bigl\{M_{T_n}^\alpha
:=\max_{v\in\mathcal{N}_{T_n}}Y_{T_n}^{\alpha,v}-A_{T_n}^\alpha
\colon n\geq1\bigr\}
\]
weakly converging to some random variable denoted by $M_\infty^\alpha$. We will illustrate here this limit cannot be a randomly shifted Gumbel as in the case of $\log$-correlated BBM (see \cite{MZ2016_slowdownBBM} for example). By Proposition~\ref{propo: stronger upper tial for BBM}, for any $q\geq1$, there exists large $T_q>0$, $x_q>0$, and some constant $C_q$ such that
\[
\sup_{T\geq T_q}
\mathbb{P}\biggl(
\max_{v\in\mathcal{N}_T}Y_T^{\alpha,v}-A_\alpha(T)>x
\biggr)
\leq C_qe^{-qx},
\qquad x\geq x_q.
\]
\begingroup
\emergencystretch=2em
Passing to the subsequential weak limit, we obtain the same type of upper-tail estimate for $M_\infty^\alpha$. Thus we have the finite exponential moments $\mathbb{E}\bigl[e^{pM_\infty^\alpha}\bigr]<\infty$ for any $p\geq1$. However, for a randomly shifted Gumbel random variable $G$ with distribution
\par
\endgroup
\[
\mathbb{P}\bigl(G<x\bigr)
=\mathbb{E}^{(Z)}\bigl[\exp\{-Ze^{-C_Gx}\}\bigr],
\qquad x\in\mathbb{R}
\]
for some positive random variable $Z$, where $\mathbb{E}^{(Z)}$ means the expectation under the law of $Z$, we have $\mathbb{E}\bigl[e^{pG}\bigr]=\infty$ when $p$ is large. Indeed, if we take some $a<b$ such that $p_{a,b}:= \mathbb{P}\bigl(Z\in[a,b]\bigr)>0$,
then
\[
\begin{aligned}
	\mathbb{P}\bigl(G>x\bigr)=1-\mathbb{E}^{(Z)}\bigl[\exp\{-Ze^{-C_Gx}\}\bigr] \geq p_{a,b}\bigl(1-\exp\{-ae^{-C_Gx}\}\bigr)
	\geq \frac{ap_{a,b}}{2}e^{-C_Gx}.
\end{aligned}
\]
for any $x$ large. As a result, $\mathbb{E}\bigl[e^{pG}\bigr]=\infty$ for $p>C_G$. It would be interesting to investigate the weak limits of the recentered maxima for the cooling BBMs introduced here, as well as for more general non-$\log$-correlated models. In particular, one may ask how the limiting law changes as the decay parameter $\alpha\in(0,1/2]$ in the variance profile \eqref{eqIntroduction: sigma(t)} is varied.

\subsection{Outline of the paper}

In Section~\ref{section: proofs for the BBM}, we prove Theorem~\ref{theorem: BBM tightness of recentered maximum}, establishing the tightness of the recentered maximum for cooling BBM. In Section~\ref{section: proof for the main results}, we prove Theorem~\ref{theorem: tightness of recentered maximum for X alpha T}, the corresponding tightness result for higher-dimensional Gaussian fields with weak correlations. Some auxiliary results are included in the appendices.

\section{Proof for theorem \ref{theorem: BBM tightness of recentered maximum}} \label{section: proofs for the BBM}
The goal of this section is to prove Theorem \ref{theorem: BBM tightness of recentered maximum}. We first recall the basic parameters of the branching Brownian motion considered there: each particle branches at rate $1$ and is replaced by two offspring. The terminal time is $T>0$, which will always be assumed large. For any fixed $\alpha\in(0,1/2]$, the motion of each particle is given by
\begin{equation}\label{eqBBM: one particle movement}
	Y_t^\alpha := \int_0^t \sigma_\alpha(s)\,\td B_s, \qquad t\in[0,T],
\end{equation}
where $B$ is a standard Brownian motion and
\begin{equation}\label{eqBBM: sigma}
	\sigma_\alpha(t):=\sqrt{2}\,(1+2t)^{-\alpha}.
\end{equation}

We also recall some notation: for $t\in[0,T]$, we denote the set of particles alive at time $t$ by $\mathcal{N}_t$, and its cardinality by $N_t:=\lvert\mathcal{N}_t\rvert$. We shall repeatedly use \eqref{eqBBM: number of BBM particles} for estimates on $N_t$. For any fixed $s\in[0,t]$ and $v\in\mathcal{N}_s$, we write $\mathcal{N}_t^{s,v}$ for the set of descendants of $v$ that are alive at time $t$. For each particle at the terminal time, say $v\in\mathcal{N}_T$, we denote by $\{Y_t^{\alpha,v}\colon t\in[0,T]\}$ its ancestral trajectory up to time $T$.

\subsection{Upper tail}\label{subsection: micro-BBM, upper tail}

In this subsection we prove the upper tail for Theorem \ref{theorem: BBM tightness of recentered maximum}:
\begin{proposition}\label{proposition: BBM tight, upper bound}
	Fix any $\alpha\in(0,1/2]$. There exists $c>0$ and $\lambda_*>1$, $T_*>1$ large such that
	\[
	\mathbb{P}\Bigl(\max_{v\in\mathcal{N}_T}Y_T^{\alpha,v}\ge A_\alpha(T)+\lambda\Bigr)
	\le \exp\{-c\lambda\},
	\qquad \lambda\ge\lambda_*,\quad T\ge T_*,
	\]
	where $A_\alpha(T)$ is defined in \eqref{eqIntroduction: A(t)} with $d:=1$.
\end{proposition}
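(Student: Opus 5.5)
We prove the upper tail by a first moment argument with a moving barrier, following the strategy of \cite{MZ2016_slowdownBBM} but run in the unscaled time variable $t$. We introduce the barrier
\[
b_\lambda(t):=\lambda+\int_0^t\Bigl(\sqrt{2}\,\sigma_\alpha(s)-\frac{\eta_\alpha(s)+r(s)}{\sqrt{2}}\Bigr)\td s ,\qquad t\in[0,T],
\]
with $d=1$. Here $r\le 0$ is a rectifying function. Its integral $\int_0^T|r|$ must stay bounded uniformly in $T$, so that $b_\lambda(T)=A_\alpha(T)+\lambda+O(1)$. It must also satisfy the regularity and decay conditions needed for the Airy estimate, e.g.\ $r(t)=-\kappa(1+2t)^{-1-\delta}$ or a profile that is compactly supported near $t=0$ and near $t=T$. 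We then split the event $\{\max_v Y^{\alpha,v}_T\ge A_\alpha(T)+\lambda\}$ into two pieces: (i) some particle crosses $b_{\lambda/2}(\cdot)$ at some time $t\le T$; (ii) no particle crosses, but some particle ends in $[A_\alpha(T)+\lambda,\,b_{\lambda/2}(T)]$. For $\lambda$ large this interval is empty or of bounded size, so (ii) is harmless after adjusting constants. Alternatively, we simply use $b_\lambda(T)\le A_\alpha(T)+\lambda$ and count endpoints near the barrier.

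\emph{Step 1 (crossing).} We bound (i) by decomposing according to the first crossing time in unit intervals $[k,k+1]$. The many-to-one lemma gives
\[
\mathbb{P}(\text{(i)})\le \sum_{k}e^{k+1}\,\mathbb{P}\bigl(Y^\alpha_s<b(s)\ \forall s\le k,\ \sup_{[k,k+1]}(Y^\alpha-b)\ge 0\bigr).
\]
By Girsanov with drift $b'(s)/\sigma_\alpha(s)$, the factor $e^{k}$ is exactly cancelled by the quadratic-variation term $\int_0^k b'^2/(2\sigma_\alpha^2)\approx k-\int_0^k\eta_\alpha/\sigma_\alpha^{\,}\cdot\sigma_\alpha^{-1}(\dots)$. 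What remains is a factor $e^{-\sqrt{2}\lambda}$, an Airy gain $\exp\{\int_0^k \eta_\alpha(s)\sigma_\alpha(s)^{-2}\cdots\}$, and the probability that the time-changed Brownian motion stays below a barrier that curves by the $\eta_\alpha$ term. This last probability is exactly the quantity controlled by the Airy PDE estimate of Subsection~\ref{subsec: Appendix, Girsanov}. That estimate yields a decay $\exp\{-\int_0^k\eta_\alpha/\sigma^2\,\td s\}$ times polynomial prefactors, which cancels the Airy gain; the choice of $\eta_\alpha$ through $\alpha_1$ is precisely what makes this cancellation happen. The residual $\int_0^k r$-contribution and the boundary prefactors give a summable factor in $k$, e.g.\ $(1+k)^{-1-\delta'}$. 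Hence $\mathbb{P}(\text{(i)})\le Ce^{-c\lambda}$.

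\emph{Step 2 (endpoint).} For (ii) we again use the first moment: $e^T$ times the probability of staying below $b$ and ending within $O(1)$ of $b(T)-\lambda/2$. The bridge-type estimate Lemma~\ref{Appendix: Bridge like probability} handles the terminal segment, where $\sigma_\alpha\asymp T^{-\alpha}$ is tiny. The endpoint then costs a Gaussian factor from the final stretch instead of the usual $e^{-\sqrt2 x}$, and this gives $e^{-c\lambda}$.

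The main obstacle is Step 1, namely verifying that the Airy PDE bound holds uniformly in the unscaled variable. The coefficients $\sigma_\alpha,\eta_\alpha$ vary on scale $t$ rather than $t/T$, so their derivative corrections are of order $(1+2t)^{-1}$. Integrated against the eigenfunction expansion, these produce terms $\int (1+2t)^{-1}\cdot(\text{Airy scale})\,\td t$ which must be absorbed into $r$. We would first try a rectifying function of the form $r(t)=-\kappa(1+2t)^{-1/3-\alpha}\cdot(1+2t)^{-2/3-\delta}$. We would then check the conditions \eqref{eqAiry: conditions on r}, tuning $\kappa$ large so that the error terms of the eigenfunction decomposition have a favourable sign. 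If integrability fails near $t=0$, we fall back to treating $[0,T_0]$ with $T_0=O(1)$ by a crude union bound, which costs only a constant.
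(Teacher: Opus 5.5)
Your architecture is the paper's: split off an initial segment $[0,T_0]$, raise the barrier $\int_0^t(\sqrt2\sigma_\alpha-(\eta_\alpha+r)/\sqrt2)$ by a negative rectifier with $\int_0^\infty|r|<\infty$, bound the crossing event by a first moment over the crossing time, then use Girsanov and the Airy estimate. Your piece (ii) is in fact empty once $\lambda>\sqrt2\int_0^\infty|r|$, so Step~2 is unnecessary; the paper simply uses containment in the crossing event.

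\textbf{The gap.} It lies in your claim that ``the residual $r$-contribution and the boundary prefactors give a summable factor in $k$.'' After Girsanov, \eqref{eqAiry: upper bound} bounds the crossing density at time $t$ by
\[
C\lambda\, e^{-\frac{k_r(0)}{\sigma_\alpha(0)^2}\lambda}\,\sigma_\alpha(t)^2\sqrt{V(0)V(t)}\,\exp\Bigl\{\int_0^t\frac{r(s)}{\sigma_\alpha(s)}\,\td s\Bigr\}.
\]
Here $\sigma_\alpha(t)^2\sqrt{V(t)}\asymp(1+2t)^{-(1+\alpha)/2}$, which is \emph{not} integrable for $\alpha\le 1/2$. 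The prefactors alone therefore give a bound growing like $T^{(1-\alpha)/2}$.

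All the decay must come from $\exp\{\int_0^t r/\sigma_\alpha\}$, not from $\int_0^t r$. So you need $\int_0^\infty|r|<\infty$ (to keep the centering) together with $\int_0^\infty|r|/\sigma_\alpha=\infty$. This is possible only because $\sigma_\alpha\to0$, and it is the key idea your proposal does not identify.

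The paper takes $r(t)=-(1+2T_0+2t)^{-1-\beta}$ with $\beta\in(0,\alpha)$, which gives $\exp\{\int_0^t r/\sigma_\alpha\}\le e^{-ct^{\alpha-\beta}}$. Your candidates fare as follows:
\begin{itemize}
\item Your first example $-\kappa(1+2t)^{-1-\delta}$ works only if $\delta<\alpha$, which you never impose.
\item Your ``first try'' $-\kappa(1+2t)^{-1-\alpha-\delta}$ has $\int_0^\infty|r|/\sigma_\alpha<\infty$, so it produces no decay for any $\kappa$.
\item A profile supported near $0$ and $T$ fails for the same reason: crossings at bulk times $t$ see no factor from $r$ on $[t,T]$.
\end{itemize}

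\textbf{The roles of $r$ and $T_0$.} The variation of the coefficients on the unscaled time scale is not absorbed into $r$ by tuning $\kappa$ to get a favourable sign. In Proposition~\ref{propoAiry: Airy estimate} it is controlled by the adiabatic quantities $\varepsilon_\infty(q),\mathcal{E}(q)\le Cs_0^{-1/3}$, which are small once the starting scale $s_0=1+2T_0$ is large. That, rather than integrability near $t=0$, is why $[0,T_0]$ is split off. It also makes $k_r(0)/\sigma_\alpha(0)^2\ge c(1+2T_0)^\alpha>0$.

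The conditions \eqref{eqAiry: conditions on r} only ensure that $r$ does not disturb this, i.e.\ $Q\asymp V$ and $\Lambda_q-\Lambda_V=O(1)$. A minor slip: the Airy decay cancelling the $\eta$-gain is $\exp\{-\int\eta_\alpha/\sigma_\alpha\}$, not $\exp\{-\int\eta_\alpha/\sigma_\alpha^2\}$.
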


Choose a fixed $T_0$ large enough so that $T_0\ge s_\star$ where $s_\star$ is the time threshold in Proposition \ref{propoAiry: Airy estimate}. Also, we let $T_*$ large enough so that $T-T_0\ge L_*(1+2T_0)^{2/3}$ where $L_\star$ also appears in Proposition \ref{propoAiry: Airy estimate}. We will decompose the total time interval $[0,T]$ into $[0,T_0]$ and $[T_0,T]$. The reason for such a decomposition is that, we can ensure the the displacement of each particle after $T_0$ has the law falling into the setting of Proposition \ref{propoAiry: Airy estimate}. Indeed,
\begin{equation}\label{eqBBM: upper bound, law of Yt-YT0}
	Y_{t+T_0}^\alpha-Y_{T_0}^\alpha
	\overset{\mathrm d}{=}
	\int_{T_0}^{t+T_0}\sigma_\alpha(s)\,\td B_s
	=
	\int_0^t\bar{\sigma}(s)\,\td B_s,
	\qquad t\in[0,T-T_0],
\end{equation}
with
\begin{equation}\label{eqBBM: bar sigma}
	\bar{\sigma}(t):=\sigma_\alpha(t+T_0)
	=\sqrt{2}\,(1+2T_0+2t)^{-\alpha},
	\qquad t\in[0,T-T_0].
\end{equation}

In the proof, we will see that the displacement accumulated during the long time interval $[T_0,T]$ is the main contribution to the centering of the maximum. By the branching property, conditional on the configuration at time $T_0$, the descendant systems branching from different particles in $\mathcal{N}_{T_0}$ evolve independently and each has the law of the auxiliary BBM
\begin{equation}\label{eqBBM: auxiliary BBM in [T0,T]}
	\bigl\{\bar{Y}_{T_0,t}^{\alpha,w}\colon
	t\in[0,T-T_0],\ w\in\mathcal{M}_t\bigr\},
\end{equation}
where $\mathcal{M}_t$ is the set of all particles at time $t\in[0,T-T_0]$, and the variance profile is $\bar{\sigma}(\cdot)$ given in \eqref{eqBBM: bar sigma}.

We now derive a useful estimate for the maximal displacement accumulated over the interval $[T_0,T]$. By the branching property and a union bound,
\begin{equation}\label{eqBBM: bar Y for max displacement in [T0,T] 1}
	\begin{aligned}
		&\mathbb{P}\Bigl(
		\max_{v\in\mathcal{N}_{T_0}}
		\max_{w\in\mathcal{N}_T^{T_0,v}}
		\bigl(Y_T^{\alpha,w}-Y_{T_0}^{\alpha,v}\bigr)\ge L
		\Bigr)\le
		\mathbb{E}\Bigl[N_{T_0}
		\mathbb{P}\Bigl(
		\max_{w\in\mathcal{M}_{T-T_0}}
		\bar{Y}_{T_0,T-T_0}^{\alpha,w}\ge L
		\Bigr)\Bigr]\\
		&=
		\mathbb{E}[N_{T_0}]
		\mathbb{P}\Bigl(
		\max_{w\in\mathcal{M}_{T-T_0}}
		\bar{Y}_{T_0,T-T_0}^{\alpha,w}\ge L
		\Bigr)=
		\exp\{T_0\}
		\mathbb{P}\Bigl(
		\max_{w\in\mathcal{M}_{T-T_0}}
		\bar{Y}_{T_0,T-T_0}^{\alpha,w}\ge L
		\Bigr),
	\end{aligned}
\end{equation}
where in the first equality we use the fact that particles at time $T_0$, i.e. $\mathcal{N}_{T_0}$, generate $N_{T_0}$ number of independent BBMs with the same law as  $\{\bar{Y}_{T_0,T-T_0}^{\alpha,w}\}$, and in the last equality we use $\mathbb{E}[N_{T_0}]=\exp\{T_0\}$.

As another preparation, we set a suitable moving barrier for the auxiliary BBM introduced in \eqref{eqBBM: auxiliary BBM in [T0,T]}. Roughly speaking, a barrier is a positive function of time such that the probability that the whole BBM system crosses it is small. Due to the slowdown property of our BBM model, it is natural to take the integral of the variance profile minus the Airy correction, as in \cite{MZ2016_slowdownBBM}. Here this is the function $A_\alpha(\cdot)$ defined in \eqref{eqIntroduction: A(t)}. However, to control the exceeding probability, we use a small rectification which raises the barrier by a bounded amount only. This is achieved by the rectifying function $\bar{r}(\cdot)$ defined in \eqref{eqBBM: r}.

Specifically, for any $t\in[0,T-T_0]$, we set $\bar{\eta}(t):=\eta(t+T_0)$, with $\eta(\cdot)$ defined in \eqref{eqIntroduction: eta explicit} with $d=1$. Set the auxiliary function
\begin{equation}\label{eqBBM: r}
	\bar{r}(t):=-(1+2T_0+2t)^{-1-\beta},
	\qquad t\in[0,T-T_0],
\end{equation}
for a fixed $\beta\in(0,\alpha)$. It is direct to check that this choice satisfies the conditions \eqref{eqAiry: conditions on r} for Proposition \ref{propoAiry: Airy estimate}. Define
\begin{equation}\label{eqBBM: A_r(t)}
	\bar{A}_r(t):=\int_0^t\bar{k}_r(s)\,\td s,
	\qquad t\in[0,T-T_0],
\end{equation}
with $\bar{k}_r(s):=\sqrt{2}\,\bar{\sigma}(s)-\bigl(\bar{\eta}(s)+\bar{r}(s)\bigr)/\sqrt{2}$ for $s\in[0,T-T_0]$.
Note that, when $T_0$ is large enough,
\begin{equation}\label{eqBBM: bar kr(0)/bar sigma^2}
	\begin{aligned}
		\frac{\bar{k}_r(0)}{\bar{\sigma}(0)^2}
	=\frac{1}{\bar{\sigma}(0)^2}
		\Bigl(\sqrt{2}\,\bar{\sigma}(0)
		-\frac{\bar{\eta}(0)+\bar{r}(0)}{\sqrt{2}}\Bigr)
		\ge c(1+2T_0)^\alpha>0,
	\end{aligned}
\end{equation}
for some constant $c>0$.

Recall the definition \eqref{eqIntroduction: A(t)} of $A_\alpha(\cdot)$. We have
\begin{equation}\label{eqBBM: upper bound, replace A(T) by}
	\begin{aligned}
		\lvert A_\alpha(T)-A_\alpha(T_0)-\bar{A}_r(T-T_0)\rvert
		&=\Bigl\lvert
		\int_{T_0}^T k_\alpha(s)\,\td s
		-\int_0^{T-T_0}\bar{k}_r(s)\,\td s
		\Bigr\rvert\\
		&=\frac{1}{\sqrt{2}}
		\Bigl\lvert\int_0^{T-T_0}\bar{r}(s)\,\td s\Bigr\rvert
		\le C,
	\end{aligned}
\end{equation}
since $\bar{r}(\cdot)$ defined in \eqref{eqBBM: r} is integrable.

%%%%%%%%%%%%
With all the preparations above, we give the proof of Proposition \ref{proposition: BBM tight, upper bound}. The argument follows the proof of Lemma 2.1 in \cite{MZ2016_slowdownBBM}, with the Airy estimate replaced by Proposition \ref{propoAiry: Airy estimate}.
\begin{proof}
	Fix any $\alpha\in(0,1/2]$. By \eqref{eqBBM: upper bound, replace A(T) by}, it is enough to prove that there exist $T_*>1$, $\lambda_*>1$ and $c>0$ such that
	\begin{equation}\label{eqBBM: upper bound, task with m replaced by A(T)}
		\mathbb{P}\Bigl(
		\max_{v\in\mathcal{N}_T}Y_T^{\alpha,v}
		\ge A_\alpha(T_0)+\bar{A}_r(T-T_0)+\lambda
		\Bigr)
		\le \exp\{-c\lambda\},
		\qquad \lambda\ge\lambda_*,\quad T\ge T_*.
	\end{equation}
	
	By the branching property at time $T_0$ and by \eqref{eqBBM: bar Y for max displacement in [T0,T] 1}, we have
	\begin{equation}\label{eqBBM: upper bound, two segments argument}
		\begin{aligned}
			&\mathbb{P}\Bigl(
			\max_{v\in\mathcal{N}_T}Y_T^{\alpha,v}
			\ge A_\alpha(T_0)+\bar{A}_r(T-T_0)+\lambda
			\Bigr)\\
			&\quad\le
			\mathbb{P}\Bigl(
			\exists v\in\mathcal{N}_{T_0}\colon
			Y_{T_0}^{\alpha,v}\ge A_\alpha(T_0)+\frac{\lambda}{2}
			\Bigr) +
			\exp\{T_0\}
			\mathbb{P}\Bigl(
			\max_{w\in\mathcal{M}_{T-T_0}}
			\bar{Y}_{T_0,T-T_0}^{\alpha,w}
			\ge \bar{A}_r(T-T_0)+\frac{\lambda}{2}
			\Bigr).
		\end{aligned}
	\end{equation}
	The first probability is estimated by a union bound and a Gaussian tail: Set $\mathcal{G}\bigl(0,\int_0^{T_0}\sigma_\alpha^2(s)\,\td s\bigr)$ as the Gaussian random variable with mean $0$ and variance $\int_0^{T_0}\sigma_\alpha^2(s)\,\td s$. We have
	\[
	\begin{aligned}
		&\mathbb{P}\Bigl(
		\exists v\in\mathcal{N}_{T_0}\colon
		Y_{T_0}^{\alpha,v}\ge A_\alpha(T_0)+\frac{\lambda}{2}
		\Bigr)\\
		&\quad\le
		\exp\{T_0\}
		\mathbb{P}\Bigl(
		\mathcal{G}\Bigl(0,\int_0^{T_0}\sigma_\alpha^2(s)\,\td s\Bigr)
		\ge A_\alpha(T_0)+\frac{\lambda}{2}
		\Bigr)
		\le \exp\{-c_1(T_0)\lambda^2\},
	\end{aligned}
	\]
	for all $\lambda>4\sqrt{T_0\int_0^{T_0}\sigma_\alpha^2(s)\,\td s}$, where
	\begin{equation}\label{eqBBM: c_1(T_0)}
		c_1(T_0):=\frac{1}{16\int_0^{T_0}\sigma_\alpha^2(s)\,\td s}.
	\end{equation}
	
	Hence it remains to show that
	\begin{equation}\label{eqBBM: upper bound, essential task 1}
		\mathbb{P}\Bigl(
		\max_{w\in\mathcal{M}_{T-T_0}}
		\bar{Y}_{T_0,T-T_0}^{\alpha,w}
		\ge \bar{A}_r(T-T_0)+\frac{\lambda}{2}
		\Bigr)
		\le C\exp\{-c\lambda\}.
	\end{equation}
	To simplify notation, write $\bar{Y}_t:=\bar{Y}_{T_0,t}^\alpha$ for $t\in[0,T-T_0]$. Since the event in \eqref{eqBBM: upper bound, essential task 1} is contained in the event that the moving barrier is crossed at some time, it is enough to bound
	\[
	\mathbb{P}\Bigl(
	\exists t\in[0,T-T_0],\ \exists w\in\mathcal{M}_t\colon
	\bar{Y}_t^w\ge \bar{A}_r(t)+\frac{\lambda}{2}
	\Bigr).
	\]
	
	Let $\tau_0(\bar{Y})$ denote the first hitting time of zero for the diffusion $\bar{Y}$ under $\mathbb{E}^{(\lambda/2)}$, namely $\bar{Y}_t=\lambda/2+\int_0^t\bar{\sigma}(s)\,\td B_s$. Consider the moving barrier $t\mapsto\bar{A}_r(t)+\lambda/2$, $t\in[0,T-T_0]$, and let $\mathscr{L}$ be the set consisting of particles that hit this barrier for the first time before time $T-T_0$. By continuity of the particle trajectories,
	\[
	\Bigl\{
	\exists t\in[0,T-T_0],\ \exists w\in\mathcal{M}_t\colon
	\bar{Y}_t^w\ge\bar{A}_r(t)+\frac{\lambda}{2}
	\Bigr\}
	=
	\{\mathscr{L}\neq\varnothing\}.
	\]
	Therefore, by Markov's inequality and the many-to-one lemma for BBM,
	\begin{equation}\label{eqBBM: stopping line many to one}
		\begin{aligned}
			&\mathbb{P}\Bigl(
			\exists t\in[0,T-T_0],\ \exists w\in\mathcal{M}_t\colon
			\bar{Y}_t^w\ge\bar{A}_r(t)+\frac{\lambda}{2}
			\Bigr)
			\le \mathbb{E}[\lvert\mathscr{L}\rvert]\\
			&\quad=
			\int_0^{T-T_0}\exp\{t\}
			\mathbb{P}\Bigl(
			\tau_0\Bigl(\bar{A}_r+\frac{\lambda}{2}-\bar{Y}\Bigr)
			\in\td t
			\Bigr),
		\end{aligned}
	\end{equation}
	where, in the last line, $\bar{Y}_t=\int_0^t\bar{\sigma}(s)\,\td B_s$ denotes the motion of a single particle.
	
	We now apply Lemma
	\ref{Appendix: Girsanov, exceeding probability} with $c_A=\sqrt{2}$ to
	the first-hitting-time measure in
	\eqref{eqBBM: stopping line many to one}. Since $\exp\{-c_A^2t/2\}=\exp\{-t\}$, the factor $\exp\{-t\}$ arising from the Girsanov transform cancels exactly
	with the factor $\exp\{t\}$ in \eqref{eqBBM: stopping line many to one}. We therefore obtain
\begin{equation}\label{eqBBM: upper bound, exceeding prob of bar Y}
	\begin{aligned}
		&\mathbb{P}\Bigl(
		\exists t\in[0,T-T_0],\ \exists w\in\mathcal{M}_t\colon
		\bar{Y}_t^w\ge \bar{A}_r(t)+\frac{\lambda}{2}
		\Bigr)\\
		&\quad\le
		\exp\Bigl\{-\frac{\bar{k}_r(0)}{\bar{\sigma}(0)^2}
		\frac{\lambda}{2}\Bigr\}
		\mathbb{E}^{(\lambda/2)}\Biggl[
		\exp\Biggl\{
		\int_0^{\tau_0(\bar{Y})}
		\left[
		-\Bigl(\frac{\bar{k}_r}{\bar{\sigma}^2}\Bigr)'(s)\bar{Y}_s
		+\frac{\bar{\eta}(s)+\bar{r}(s)}{\bar{\sigma}(s)}
		\right]\td s
		\Biggr\}
		\mathds{1}_{\{\tau_0(\bar{Y})\le T-T_0\}}
		\Biggr]\\
		&\quad\le
		\exp\{-c_2(T_0)\lambda\}
		\mathbb{E}^{(\lambda/2)}\Biggl[
		\exp\Biggl\{
		\int_0^{\tau_0(\bar{Y})}
		\left[
		-\Bigl(\frac{\bar{k}_r}{\bar{\sigma}^2}\Bigr)'(s)\bar{Y}_s
		+\frac{\bar{\eta}(s)+\bar{r}(s)}{\bar{\sigma}(s)}
		\right]\td s
		\Biggr\}
		\mathds{1}_{\{\tau_0(\bar{Y})\le T-T_0\}}
		\Biggr].
	\end{aligned}
\end{equation}

	where the last inequality follows from
	\eqref{eqBBM: bar kr(0)/bar sigma^2}, and
	\begin{equation}\label{eqBBM: c_2(T_0)}
		c_2(T_0):=\frac{\bar{k}_r(0)}{2\bar{\sigma}(0)^2}
		\ge c(1+2T_0)^\alpha.
	\end{equation}
	
	By the Feynman--Kac representation \eqref{eqH: PDE representation of hitting in some interval}, the expectation in the last display is
	\begin{equation}\label{eqBBM: upper bound, Airy PDE representation for Expectation}
		\int_0^{T-T_0}\frac{1}{2}\bar{\sigma}^2(t)
		\left.\partial_y G_r(\lambda/2,y;0,t)\right|_{y=0}\,\td t,
	\end{equation}
	where $G_r$ is the fundamental solution of
	\begin{equation}\label{eqBBM: Airy-like PDE}
		\partial_tu
		=\frac{1}{2}\bar{\sigma}(t)^2\partial_{xx}u
		-\Bigl(\frac{\bar{k}_r}{\bar{\sigma}^2}\Bigr)'(t)\,xu
		+\frac{\bar{\eta}(t)+\bar{r}(t)}{\bar{\sigma}(t)}u,
	\end{equation}
	on $[0,T-T_0]\times\mathbb{R}_+$ with Dirichlet boundary condition $u(t,0)=0$.
	
	For this equation, Proposition \ref{propoAiry: Airy estimate} applies with $c_A=\sqrt{2}$, $c_0=2$, $s_0=1+2T_0$ and
	\begin{equation}\label{eqBBM: V function}
		V(t):=\frac{2\sqrt{2}}{\bar{\sigma}^2(t)}
		\Bigl(\frac{1}{\bar{\sigma}(t)}\Bigr)'
		=2\alpha(1+2T_0+2t)^{3\alpha-1}.
	\end{equation}
	For $t\ge L_\star(1+2T_0)^{2/3}$ with $L_\star$ appearing in Theorem \ref{propoAiry: Airy estimate} to ensure the Airy estimate, \eqref{eqAiry: upper bound} gives
	\begin{equation}\label{eqBBM: partial derivative of G, 1}
		\begin{aligned}
			\left.\partial_y G_r(\lambda/2,y;0,t)\right|_{y=0}
			&\le C\lambda\sqrt{V(0)V(t)}
			\exp\Bigl\{\int_0^t\frac{\bar{r}(s)}{\bar{\sigma}(s)}\,\td s\Bigr\}\\
			&\le C\lambda(1+2T_0+2t)^{(3\alpha-1)/2}
			\exp\{-ct^{\alpha-\beta}\},
		\end{aligned}
	\end{equation}
	after increasing $C$ and decreasing $c$, since $T_0$ is fixed and $\beta<\alpha$. For $t\le L_\star(1+2T_0)^{2/3}$, the killed heat-kernel estimate used in \eqref{eqA: killed BB ker estimate, derivative} gives
	\begin{equation}\label{eqBBM: partial derivative of G, 2}
		\left.\partial_y G_r(\lambda/2,y;0,t)\right|_{y=0}
		\le C\lambda^{-2}
		\exp\Bigl\{
		\int_0^t\Bigl(\eta(s)+\frac{r(s)}{\sigma(s)}\Bigr)\,\td s
		\Bigr\}.
	\end{equation}
	Plugging
	\eqref{eqBBM: partial derivative of G, 1}, and
	\eqref{eqBBM: partial derivative of G, 2} into \eqref{eqBBM: upper bound, Airy PDE representation for Expectation}, we obtain
	\begin{equation}\label{eqBBM: upper bound, int ...< C lambda}
\begin{aligned}
	&\int_0^{T-T_0}\frac{1}{2}\bar{\sigma}^2(t)
	\left.\partial_y G_r(\lambda/2,y;0,t)\right|_{y=0}\,\td t\\
	&\quad\le
	C\lambda\int_0^{(1+2T_0)^{2/3}}
	(1+2T_0+2t)^{-2\alpha}\,\td t+
	C\lambda\int_{(1+2T_0)^{2/3}}^T
	(1+2T_0+2t)^{-(1+\alpha)/2}
	\exp\{-ct^{\alpha-\beta}\}\,\td t\\
	&\quad\le C\lambda.
\end{aligned}
	\end{equation}
	Plugging this into \eqref{eqBBM: upper bound, exceeding prob of bar Y} yields
	\[
	\mathbb{P}\Bigl(
	\exists t\in[0,T-T_0],\ \exists w\in\mathcal{M}_t\colon
	\bar{Y}_t^w\ge\bar{A}_r(t)+\frac{\lambda}{2}
	\Bigr)
	\le C\lambda\exp\{-c_2(T_0)\lambda\},
	\]
	for all $\lambda$ large enough. This proves \eqref{eqBBM: upper bound, essential task 1}, and hence the proposition.
\end{proof}

In the proof above, we actually prove a stronger result than Proposition \ref{proposition: BBM tight, upper bound}: For any $T_0$ large, $\lambda>A_\alpha(T_0)$, we have
\begin{equation}\label{eqBBM: stronger upper tail}
	\mathbb{P}\Bigl(
	\max_{v\in\mathcal{N}_T}Y_T^{\alpha,v}\ge A_\alpha(T)+\lambda
	\Bigr)
	\le C\exp\{-c_1(T_0)\lambda^2\}
	+C\lambda\exp\{-c_2(T_0)\lambda\},
\end{equation}
with $c_1(T_0)$ and $c_2(T_0)$ defined in \eqref{eqBBM: c_1(T_0)} and \eqref{eqBBM: c_2(T_0)} respectively. Since we can tune $c_2(T_0)$ to be arbitrarily large and let $\lambda\ge \frac{c_2(T_0)}{c_1(T_0)}$, we have the following result:
\begin{proposition}\label{propo: stronger upper tial for BBM}
	For any $q\ge1$, there exists large $T_q>0$, $x_q>0$, and some constant $C_q$ such that
	\[
	\mathbb{P}\Bigl(
	\max_{v\in\mathcal{N}_T}Y_T^{\alpha,v}-A_\alpha(T)>x
	\Bigr)
	\le C_q\exp\{-qx\},
	\qquad T\ge T_q,\quad x\ge x_q.
	\]
\end{proposition}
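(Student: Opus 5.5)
The plan is to run the proof of Proposition~\ref{proposition: BBM tight, upper bound} again, but this time keeping track of how the constants depend on $T_0$, and then to choose $T_0$ in terms of $q$. Everything rests on the stronger estimate \eqref{eqBBM: stronger upper tail}.

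\textbf{Step 1: the bound for a general $T_0$.} Fix a large $T_0$ with $T_0\ge s_\star$. Two ingredients from the earlier argument hold for this $T_0$.
\begin{itemize}
\item The splitting \eqref{eqBBM: upper bound, two segments argument} at time $T_0$ holds as written.
\item The comparison \eqref{eqBBM: upper bound, replace A(T) by} costs only a constant $C$. This constant is uniform in $T_0$, because $\int_0^\infty(1+2T_0+2t)^{-1-\beta}\,\td t\le 1/(2\beta)$.
\end{itemize}
The two terms on the right of \eqref{eqBBM: upper bound, two segments argument} are handled as follows.
\begin{itemize}
\item The first term is a Gaussian union bound. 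It gives $e^{-c_1(T_0)\lambda^2}$ once $\lambda\ge 4\sqrt{T_0\int_0^{T_0}\sigma_\alpha^2}$, where $c_1(T_0)$ is as in \eqref{eqBBM: c_1(T_0)}.
\item The second term goes through many-to-one, Girsanov (Lemma~\ref{Appendix: Girsanov, exceeding probability}), and the Airy bound of Proposition~\ref{propoAiry: Airy estimate}. It gives $C(T_0)\,\lambda\, e^{-c_2(T_0)\lambda}$.
\end{itemize}
We do not need $C(T_0)$ to be uniform in $T_0$. It may include the factor $e^{T_0}$ and the constants coming from \eqref{eqBBM: upper bound, int ...< C lambda}, since $T_0$ will be fixed once $q$ is given. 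The point that matters is \eqref{eqBBM: c_2(T_0)}: $c_2(T_0)\ge c(1+2T_0)^\alpha$, with $c$ independent of $T_0$. To see this, note that $\bar k_r(0)/\bar\sigma(0)^2$ is dominated by $\sqrt2/\bar\sigma(0)=(1+2T_0)^\alpha$. The corrections $\bar\eta(0)$ and $\bar r(0)$ are of smaller order, since $\bar\eta(0)\sim(1+2T_0)^{-\alpha-2/3}$ and $\bar r(0)$ decays faster than $(1+2T_0)^{-1}$.

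\textbf{Step 2: choosing $T_0$ and $x_q$.} Given $q\ge1$, pick $T_0=T_0(q)$ so large that $c(1+2T_0)^\alpha\ge q+1$, hence $c_2(T_0)\ge q+1$. Then set $T_q:=T_0+L_\star(1+2T_0)^{2/3}$, so that the Airy estimate applies on $[T_0,T]$ for every $T\ge T_q$. With $\lambda=x$ the two terms become:
\begin{itemize}
\item $C\lambda e^{-c_2(T_0)\lambda}\le C\lambda e^{-(q+1)\lambda}\le C'_q e^{-q\lambda}$, using $\sup_\lambda \lambda e^{-\lambda}<\infty$;
\item $e^{-c_1(T_0)\lambda^2}\le e^{-q\lambda}$ as soon as $\lambda\ge q/c_1(T_0)$.
\end{itemize}
So we take $x_q$ to be the maximum of $q/c_1(T_0)$, the Gaussian threshold $4\sqrt{T_0\int_0^{T_0}\sigma_\alpha^2}$, $A_\alpha(T_0)$, and the $\lambda_*$ required in the Airy step. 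We then absorb the constant shift from \eqref{eqBBM: upper bound, replace A(T) by} by replacing $x$ with $x-C$, which only changes $C_q$ by the factor $e^{qC}$.

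\textbf{Main obstacle.} The step I expect to need care is checking that Proposition~\ref{propoAiry: Airy estimate} and \eqref{eqBBM: partial derivative of G, 2} apply with constants allowed to depend on $T_0$ but not on $T$. Concretely, the integral in \eqref{eqBBM: upper bound, int ...< C lambda} must be bounded by $C(T_0)\lambda$ uniformly in $T\ge T_q$. This should hold for two reasons.
\begin{itemize}
\item For $t$ of order $(1+2T_0)^{2/3}$, the integrand $(1+2T_0+2t)^{-(1+\alpha)/2}$ would not be integrable on its own. The stretched-exponential factor $\exp\{-ct^{\alpha-\beta}\}$ from the rectifier $\bar r$, which comes with $\beta<\alpha$, makes the tail finite.
\item The conditions \eqref{eqAiry: conditions on r} on $\bar r$ hold for every large $T_0$.
\end{itemize}
With these checks in place, the proposition follows.
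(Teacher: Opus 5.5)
Your proposal is correct and follows the paper's own route: the paper proves this proposition by observing that the proof of Proposition~\ref{proposition: BBM tight, upper bound} yields the two-term bound \eqref{eqBBM: stronger upper tail}. It then takes $T_0$ large enough that $c_2(T_0)\ge c(1+2T_0)^\alpha$ exceeds $q$, and takes $\lambda$ large enough that the Gaussian term $e^{-c_1(T_0)\lambda^2}$ is also at most $e^{-q\lambda}$. Your version spells out what the paper leaves implicit: the prefactor (including $e^{T_0}$) may depend on $T_0=T_0(q)$ but not on $T$, and the shift from \eqref{eqBBM: upper bound, replace A(T) by} only costs a factor $e^{qC}$ in $C_q$.
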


\subsection{Lower tail}\label{subsection: BBM Ltail}
In this subsection we prove the lower tail in Theorem \ref{theorem: BBM tightness of recentered maximum}:
\begin{proposition}\label{proposition: BBM tight, lower bound}
	Fix any $\alpha\in(0,1/2]$. There exist $c>0$ and $\lambda_*>1$, $T_*>1$ such that
	\[
	\mathbb{P}\Bigl(
	\max_{v\in\mathcal{N}_T}Y_T^{\alpha,v}\le A_\alpha(T)-\lambda
	\Bigr)
	\le \exp\{-c\lambda\},
	\qquad \lambda\ge\lambda_*,\quad T\ge T_*,
	\]
	where $A_\alpha(T)$ is defined in \eqref{eqIntroduction: A(t)} with $d:=1$.
\end{proposition}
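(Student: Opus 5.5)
The lower tail will come from the block decomposition sketched in the subsection on the main ideas: we run the cooling BBM on the geometric mesh $T_m=(2^mH-1)/2$ and use the $O(1)$-slowed-down BBM theory of \cite{MZ2016_slowdownBBM} on each block. There are two ingredients.

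The first is a \emph{one-block lower bound with uniformly controlled constants}. By \eqref{eqIntro: m-th block law}, the displacement on the block $[T_m,T_{m+1}]$ is $(2^mH)^{-\alpha}$ times a BBM with profile $\hat\sigma_m(t)=\widetilde\sigma(t/\mathbf T_m)$, where $\widetilde\sigma(u)=\sqrt2(1+u)^{-\alpha}$ and $\mathbf T_m=2^{m-1}H$. Crucially, the profile $\widetilde\sigma$ is the same for every $m$. So the lower-tail estimate of \cite{MZ2016_slowdownBBM} (tightness with exponential lower tail) gives constants $c_0,C_0$, independent of $m$, such that for all large $\mathbf T$
\[
\mathbb P\bigl(\max Z^{(m)}_{\mathbf T}\le c_1\mathbf T-c_2\mathbf T^{1/3}-c_3\log\mathbf T-x\bigr)\le C_0e^{-c_0x}.
\]
I also need a version in which a \emph{positive fraction} $p>0$ of particles reaches this level. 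More concretely, I want at least one descendant with probability at least $1-\delta$ from each of $K$ independent starting particles. This is obtained by amplification: start the block with many particles, each of which independently succeeds with probability bounded below.

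Second, I \emph{match the centering}. I will check that summing the block centerings reproduces $A_\alpha(T)$ up to $O(1)$. On each block, $(2^mH)^{-\alpha}(c_1\mathbf T_m-c_2\mathbf T_m^{1/3})$ equals $\int_{T_m}^{T_{m+1}}(\sqrt2\sigma_\alpha-\eta_\alpha/\sqrt2)\,\td s+O((2^mH)^{-\alpha}\cdot\text{err}_m)$. This should follow from the Airy constant identification in \cite{MZ2016_slowdownBBM}, namely $c_2\mathbf T^{1/3}=\int \eta/\sqrt2$ for profile $\widetilde\sigma$, together with the exact scaling of $\eta_\alpha$. The errors include $(2^mH)^{-\alpha}\log(2^mH)$ and $O((2^mH)^{-\alpha})$ terms. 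These are summable in $m$ because $\alpha>0$, as in \eqref{eqIntro: sum of decalying log}.

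Next comes the \emph{chaining with a growing number of trials}. Let $x_m$ be the shortfall allowed on block $m$, and choose $x_m=(\lambda/C)\,2^{-\alpha m/2}$ for $m\ge1$, so that $\sum_m(2^mH)^{-\alpha}x_m\le\lambda/2$ after scaling. The key gain is that at time $T_m$ we do not follow a single leader. Instead, by time $T_m+1$ a leader has about $e$ descendants, and by time $T_m+\log K$ it has about $K$ descendants that have moved only $O(\sqrt{\log K}\,(2^mH)^{-\alpha})$. Running the block from $K$ independent starts makes the per-block failure probability $(C_0e^{-c_0 x_m'})^K$. Taking $K$ fixed and large makes the per-block failure probabilities summable, say $\sum_m \epsilon_m\le e^{-c\lambda}$. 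For block $0$ (and the initial interval $[0,T_0]$) I instead use the growth \eqref{eqBBM: number of BBM particles}: with probability $1-e^{-c\lambda}$ there are $e^{c\lambda}$ particles by time $O(\lambda)$, none falling below $-C\lambda$. Running independent blocks from them then makes the early failure exponentially small in $\lambda$.

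Finally, a union bound over blocks gives $\mathbb P(\max Y_T\le A_\alpha(T)-\lambda)\le \sum_m\epsilon_m+e^{-c\lambda}\le e^{-c\lambda}$. When $T$ is not of the form $(2^MH-1)/2$, I absorb the last partial block using the same estimate with a shorter terminal time.

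I expect the main obstacle to be the \emph{uniformity} of the one-block estimate. First, I need the lower tail from \cite{MZ2016_slowdownBBM} with explicit exponential decay, uniformly in $\mathbf T\ge H/2$. Second, I need the per-block failure probabilities to decay fast enough in $m$ even though the scaled shortfalls $x_m$ shrink. If the cited result only gives tightness, I would first try to rederive its lower tail via a second-moment argument on particles staying below the barrier $b(t)$, using the Airy estimate Proposition~\ref{propoAiry: Airy estimate} blockwise.
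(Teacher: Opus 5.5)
There is a genuine gap in the chaining step. Your shortfall allocation $x_m=(\lambda/C)2^{-\alpha m/2}$ is \emph{decreasing} in $m$. So once $m\ge(2/\alpha)\log_2\lambda$, the unscaled target on block $m$ is essentially the block centering minus $O(1)$. There the single-start bound $C_0e^{-c_0x_m}\to C_0$ gives nothing. Even with a genuine bound $1-p$, fixed $K$ only yields a per-block failure $\epsilon_m\approx(1-p)^K$, a positive constant that does not decay in $m$. The number of blocks is $M\approx\log_2((1+2T)/H)$, which is unbounded in $T$. Hence $\sum_{m<M}\epsilon_m$ grows with $T$: it is neither summable nor $\le e^{-c\lambda}$, and uniformity in $T$ fails. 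The claim ``taking $K$ fixed and large makes the per-block failure probabilities summable'' is therefore false. A per-block union bound could be rescued only with $K_m$ of order $\lambda+m$ on every block, paying the growth-time losses. Alternatively, it would need \emph{growing} shortfalls of order $\lambda2^{\beta m}$, $\beta<\alpha$, together with a one-block exponential lower tail valid uniformly over that whole range of $x$.

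The missing idea is that the damping factor $(2^mH)^{-\alpha}$ lets the unscaled shortfall on block $m$ \emph{grow} geometrically. The paper allows shortfall $(2^{m-1}H)^\beta$ with $\beta<\alpha\wedge\frac13$; after scaling these sum to at most $CH^{\beta-\alpha}\le C_\alpha$. Lemma~\ref{lemma: macro BBM, lower bound} gives per-block failure $\exp\{-c(2^mH)^\beta\}$, summable to $e^{-c'H^\beta}$. That lemma is precisely the uniform one-block input you worried about; it is proved from the left tail in \cite{MZ2016_slowdownBBM} by waiting a time $\delta\mathbf T^\beta$ and using the resulting $e^{c\mathbf T^\beta}$ independent trials. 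Consequently a single greedy lineage from any particle at $T_0$ gains at least $\int_{T_0}^Tk_\alpha-C_\alpha$ with probability at least $1-e^{-cH^\beta}$, with no per-block amplification. All $\lambda$-dependence then sits in the initial segment: take $H\asymp\lambda$, so $T_0\asymp\lambda$. With probability at least $1-e^{-T_0/2}$ there are at least $e^{T_0/2}$ particles at $T_0$. A union bound with a Gaussian tail shows that, outside probability $Ce^{-c\lambda}$, none of them lies below $\int_0^{T_0}k_\alpha-\lambda+C_\alpha$. Their independent sub-BBMs all fail with probability at most $\exp\{-ce^{T_0/2}\}$. Your block-$0$ amplification is exactly this mechanism; it must be the \emph{only} amplification, and the later blocks must be handled by tolerances that grow with $m$.
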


Instead of turning to a barrier and second moment argument, as in \cite{MZ2016_slowdownBBM}, we use the scale decomposition idea introduced in Subsection \ref{subsec: main ideas}. Take some large $H>1$ and define the geometrically increasing time mesh
\begin{equation}\label{eqBBM: lower mesh}
	T_m:=\frac{2^mH-1}{2},\qquad m=0,\ldots,M.
\end{equation}
Then $1+2T_m=2^mH$ and $T_{m+1}-T_m=2^{m-1}H$.

For a $Y^\alpha$-particle, $Y_t^\alpha=\int_0^t\sqrt{2}\,(1+2s)^{-\alpha}\,\td B_s$, we have, for $t\in[0,T_{m+1}-T_m]$,
\[
\begin{aligned}
	Y_{t+T_m}^\alpha-Y_{T_m}^\alpha
	&=\int_{T_m}^{t+T_m}\sqrt{2}(1+2s)^{-\alpha}\,\td B_s\\
	&\overset{\mathrm d}{=}
	(2^mH)^{-\alpha}
	\int_0^t\sqrt{2}\bigl(1+\frac{s}{2^{m-1}H}\bigr)^{-\alpha}\,\td B_s.
\end{aligned}
\]
Thus each particle's movement during $[T_m,T_{m+1}]$ has the law
\begin{equation}\label{eqBBM: Y and Z particles}
	\Bigl\{Y_{t+T_m}^\alpha-Y_{T_m}^\alpha\colon
	0\le t\le2^{m-1}H\Bigr\}
	\overset{\mathrm d}{=}
	\Bigl\{(2^mH)^{-\alpha}Z_t^{(m)}\colon
	0\le t\le2^{m-1}H\Bigr\},
\end{equation}
where
\begin{equation}\label{eqBBM: lower bound, Zm definition}
	Z_t^{(m)}:=\int_0^t\hat{\sigma}_m(s)\,\td B_s,
	\qquad
	\hat{\sigma}_m(s):=\sqrt{2}\,
	\Bigl(1+\frac{s}{2^{m-1}H}\Bigr)^{-\alpha}.
\end{equation}
The process $Z^{(m)}$ is in the macroscopic slowdown regime of \cite{FangZeitouni2012_slowdown_BBM,MZ2016_slowdownBBM}. For each $m$, let $\{Z_t^{(m),w}\colon 0\le t\le2^{m-1}H,\ w\in\mathcal{N}_t^Z\}$ be a BBM with branching rate $1$, binary splitting, and variance profile $\hat{\sigma}_m$.

By Lemma \ref{lemma: macro BBM, lower bound}, applied with terminal time $2^{m-1}H$, for any $\beta\in(0,\frac{1}{3}\wedge\alpha)$ there exists $c>0$ such that, uniformly in $m$ and $H$ large,
\begin{equation}\label{eqBBM: lower bound, macro input}
	\mathbb{P}\Bigl(
	\max_{w\in\mathcal{N}^Z_{2^{m-1}H}}
	Z_{2^{m-1}H}^{(m),w}
	\ge
	\int_0^{2^{m-1}H}k_m(s)\,\td s-(2^{m-1}H)^\beta
	\Bigr)
	\ge 1-\exp\{-c(2^{m-1}H)^\beta\},
\end{equation}
where
\[
k_m(s):=\sqrt{2}\,\hat{\sigma}_m(s)
-\frac{\alpha_1}{\sqrt{2}}
\lvert\hat{\sigma}_m'(s)\rvert^{2/3}
\hat{\sigma}_m(s)^{1/3}.
\]
A direct scaling computation gives
\begin{equation}\label{eqBBM: lower bound, km scaling}
	(2^mH)^{-\alpha}
	\int_0^{2^{m-1}H}k_m(s)\,\td s
	=
	\int_{T_m}^{T_{m+1}}k_\alpha(s)\,\td s,
\end{equation}
where
\[
k_\alpha(s):=\sqrt{2}\,\sigma_\alpha(s)
-\frac{\alpha_1}{\sqrt{2}}
\lvert\sigma_\alpha'(s)\rvert^{2/3}
\sigma_\alpha(s)^{1/3}
\]
is the same as the integrand in \ref{eqIntroduction: A(t)}.
Consequently, from \eqref{eqBBM: lower bound, macro input} and \eqref{eqBBM: lower bound, km scaling}, we have
\begin{equation}\label{eqBBM: lower bound, Z BBM lower bound}
	\begin{aligned}
		&\mathbb{P}\Bigl(
		\max_{w\in\mathcal{N}^Z_{2^{m-1}H}}
		(2^mH)^{-\alpha}Z_{2^{m-1}H}^{(m),w}
		\ge
		\int_{T_m}^{T_{m+1}}k_\alpha(s)\,\td s
		-(2^mH)^{-\alpha}(2^{m-1}H)^\beta
		\Bigr)\\
		&\quad\ge 1-\exp\{-c(2^mH)^\beta\}.
	\end{aligned}
\end{equation}

We now use a greedy construction. Starting from a particle $v_0\in\mathcal{N}_{T_0}$, we choose one offspring of $v_0$ at $T_1$ with the largest displacement during $[T_0,T_1]$ and denote it by $v_1$. Then we iterate this process until we find a sequence of leading particles $v_1,\ldots,v_M$. See Figure \ref{fig: leading particles in BBM} for an illustration. By the branching property and \eqref{eqBBM: lower bound, Z BBM lower bound},
\begin{equation}\label{eqBBM: lower bound for max XT- XT0 1}
	\begin{aligned}
		&\mathbb{P}\Bigl(
		\max_{v'\in\mathcal{N}_T^{T_0,v_0}}
		\bigl(Y_T^{\alpha,v'}-Y_{T_0}^{\alpha,v_0}\bigr)
		\ge
		\int_{T_0}^Tk_\alpha(s)\,\td s
		-\sum_{m=0}^{M-1}(2^mH)^{-\alpha}(2^{m-1}H)^\beta
		\Bigr)\\
		&\ge
		\prod_{m=0}^{M-1}\Bigl(1-\exp\{-c(2^mH)^\beta\}\Bigr)\ge
		1-\sum_{m=0}^{M-1}\exp\{-c(2^mH)^\beta\}
		\ge 1-\exp\{-c'H^\beta\}.
	\end{aligned}
\end{equation}
Fix one $\beta\in(0,\frac{1}{3}\wedge\frac{\alpha}{2})$, the series in the event of \eqref{eqBBM: lower bound for max XT- XT0 1} satisfies
\[
\sum_{m=0}^{M-1}(2^mH)^{-\alpha}(2^{m-1}H)^\beta
\le CH^{\beta-\alpha}
\sum_{m=0}^{\infty}2^{-m(\alpha-\beta)}
\le C_\alpha,
\]
for some $C_{\alpha}>0$ only determined by $\alpha$ (recall that we ask $H>1$ to be large). Therefore, for every $v_0\in\mathcal{N}_{T_0}$, we get the following lower tail for sub-BBM originated from $v_0$:
\begin{equation}\label{eqBBM: lower bound for max XT- XT0}
	\mathbb{P}\Bigl(
	\max_{v'\in\mathcal{N}_T^{T_0,v_0}}
	\bigl(Y_T^{\alpha,v'}-Y_{T_0}^{\alpha,v_0}\bigr)
	\ge
	\int_{T_0}^Tk_\alpha(s)\,\td s-C_\alpha
	\Bigr)
	\ge 1-\exp\{-cH^\beta\}.
\end{equation}

\begin{figure}[htbp]
	\centering
	\includegraphics[width=0.6\textwidth]{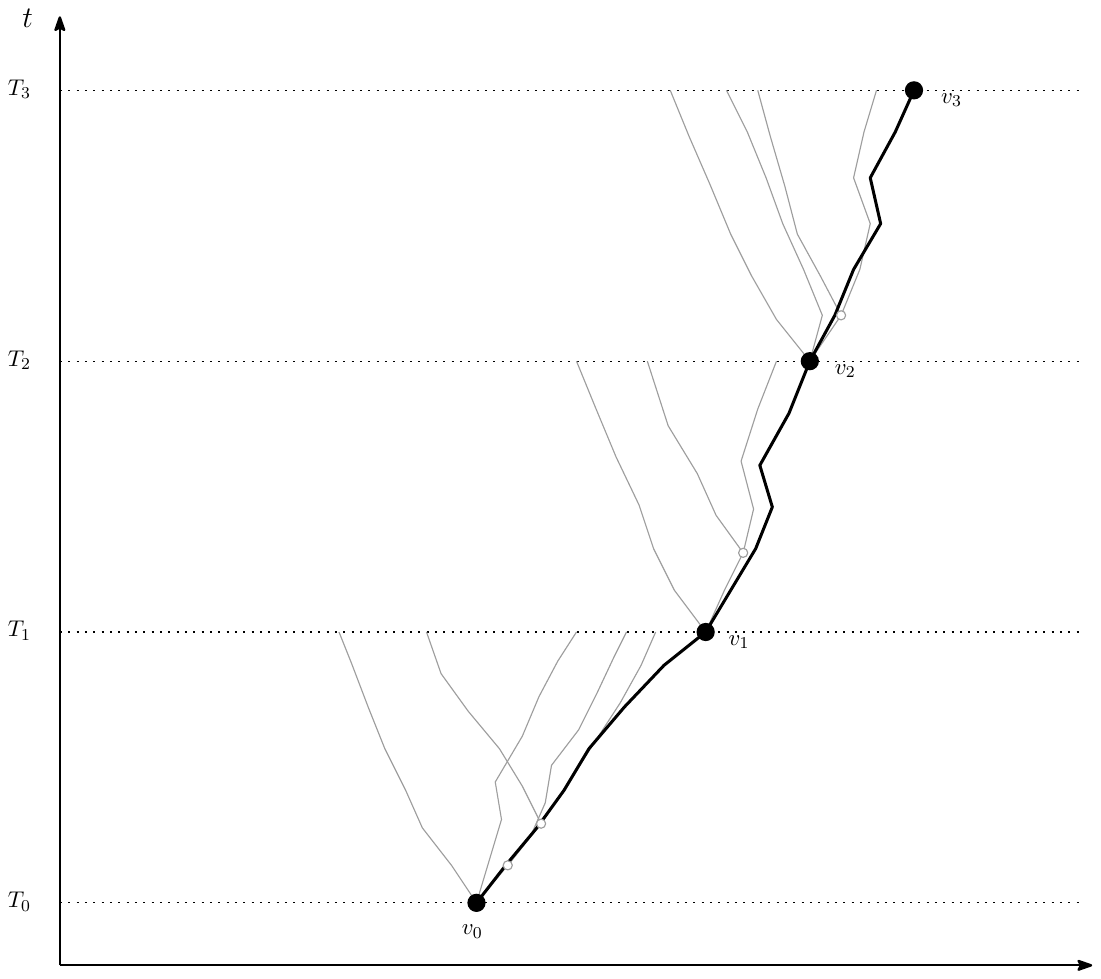}
	\caption{The leading particles in BBM}
	\label{fig: leading particles in BBM}
\end{figure}

\begin{proof}[Proof of Proposition \ref{proposition: BBM tight, lower bound}]
	Fix $\alpha\in(0,1/2]$. First consider the case $\lambda>10T$: To estimate
	\[
	\mathbb{P}\Bigl(
	\max_{v\in\mathcal{N}_T}Y_T^{\alpha,v}\le A_\alpha(T)-\lambda
	\Bigr),
	\]
	we simply use a union bound and a Gaussian tail estimate. Specifically, by \eqref{eqIntroduction: Aalpha(T) expansion}, we have $A_\alpha(T)\le 4T^{1-\alpha}$. For one single particle $v\in\mathcal{N}_T$, its displacement $Y_T^{\alpha,v}$ is a centered Gaussian with variance $\int_0^T\sigma_\alpha(s)^2\,\td s\le 2T$ by \eqref{eqBBM: sigma}. By our assumption $\lambda>10T$, we get
	\[
	\mathbb{P}\Bigl(
	\max_{v\in\mathcal{N}_T}Y_T^{\alpha,v}\le A_\alpha(T)-\lambda
	\Bigr)
	\le
	\mathbb{P}\Bigl(Y_T^{\alpha,v}\le A_\alpha(T)-\lambda\Bigr)
	\le \exp\{-c\lambda\},
	\]
	for some $c>0$ by the Gaussian tail.
	
	It remains to consider the case when $\lambda\le 10T$. Choose $M\in\mathbb{Z}_+$ such that
	$H:=2^{-M}(1+2T)\in\bigl[\lambda/100,\lambda/50\bigr]$.
	Then $T_M=T$ for the mesh \eqref{eqBBM: lower mesh}, and
	\begin{equation}\label{eqBBM: T_0 asymp lambda}
		T_0=\frac{H-1}{2},
		\qquad c_1\lambda\le T_0\le c_2\lambda,
	\end{equation}
	for deterministic constants $c_1,c_2\in(0,1/2)$ and all large $\lambda$.
	
	Define the event basically saying the existence of some particle with rather small displacement during $[0,T_0]$:
	\[
	\mathcal{E}_{T_0}:=
	\Bigl\{
	\exists v\in\mathcal{N}_{T_0}\colon
	Y_{T_0}^{\alpha,v}
	\le \int_0^{T_0}k_\alpha(s)\,\td s-\lambda+C_\alpha
	\Bigr\},
	\]
	where $C_\alpha>0$ appears in \eqref{eqBBM: lower bound for max XT- XT0}. Then we have
	\begin{equation}\label{eqBBM: P(left tail)}
		\begin{aligned}
			&\mathbb{P}\Bigl(
			\max_{v\in\mathcal{N}_T}Y_T^{\alpha,v_0}
			\le A_\alpha(T)-\lambda
			\Bigr)\\
			&\quad\le
			\mathbb{P}(\mathcal{E}_{T_0})
			+\mathbb{P}\Bigl(
			\forall v_0\in\mathcal{N}_{T_0}\colon
			\max_{w\in\mathcal{N}_T^{T_0,v_0}}
			Y_T^{\alpha,w}-Y_{T_0}^{\alpha,v}
			\le \int_{T_0}^Tk_\alpha(s)\,\td s-C_\alpha
			\Bigr).
		\end{aligned}
	\end{equation}
	We estimates the two probability above separately: By a union bound, a Gaussian tail, \eqref{eqBBM: T_0 asymp lambda}, and $\int_0^{T_0}k_\alpha(s)\,\td s\asymp T_0^{1-\alpha}$ by \eqref{eqIntroduction: Aalpha(T) expansion}, we have
	\begin{equation}\label{eqBBM: P(E _T0)}
		\mathbb{P}(\mathcal{E}_{T_0})
		\le \exp\{T_0\}
		\mathbb{P}\Bigl(
		Y_{T_0}^{\alpha,v}
		\le \int_0^{T_0}k_\alpha(s)\,\td s-\lambda+C_\alpha
		\Bigr)
		\le C\exp\{-c\lambda\}.
	\end{equation}
	
	To estimate
	\[
	I(T_0):=
	\mathbb{P}\Bigl(
	\forall v\in\mathcal{N}_{T_0}\colon
	\max_{w\in\mathcal{N}_T^{T_0,v_0}}
	Y_T^{\alpha,w}-Y_{T_0}^{\alpha,v_0}
	\le \int_{T_0}^Tk_\alpha(s)\,\td s-C_\alpha
	\Bigr),
	\]
	it suffices to note that by branching structure, sub-BBMs starting from all particles in $\mathcal{N}_{T_0}$ are independent and have the same law, satisfying \eqref{eqBBM: lower bound for max XT- XT0}. As a result,
	\[
	I(T_0)
	\le \mathbb{E}\Bigl[\exp\{-cH^\beta N_{T_0}\}\Bigr]
	\le \mathbb{E}\Bigl[\exp\{-cN_{T_0}\}\Bigr],
	\]
	for some $\beta\in(0,1/3\wedge\alpha/2)$, where $N_{T_0}$ is the random number of particles at $T_0$. By \eqref{eqBBM: number of BBM particles}, we have $\mathbb{P}\bigl(N_{T_0}\le\exp\{T_0/2\}\bigr)\le\exp\{-T_0/2\}$. Thus
	\begin{equation}\label{eqBBM: I(T_0) <}
		I(T_0)
		\le \exp\{-T_0/2\}
		+\exp\bigl\{-c\exp\{T_0/2\}\bigr\}
		\le C\exp\{-c\lambda\},
	\end{equation}
	where the last inequality is due to \eqref{eqBBM: T_0 asymp lambda}. Plugging \eqref{eqBBM: P(E _T0)} and \eqref{eqBBM: I(T_0) <} into \eqref{eqBBM: P(left tail)} finishes the proof.
\end{proof}

The following lemma is for the macroscopically slowdown BBM, which is used to prove \eqref{eqBBM: lower bound, Z BBM lower bound}.
\begin{lemma}\label{lemma: macro BBM, lower bound}
	Consider a BBM $\{Z_t^v\colon t\in[0,\mathbf{T}],\ v\in\mathcal{N}_t^Z\}$ with branching rate $1$, binary splitting, and variance profile
	\[
	\sigma(t)=\widetilde{\sigma}(t/\mathbf{T}),
	\qquad t\in[0,\mathbf{T}],
	\]
	where $\widetilde{\sigma}\in C^2([0,1];(0,\infty))$ is strictly decreasing and $\inf_{u\in[0,1]}\widetilde{\sigma}(u)>0$. Let
	\[
	\eta(t):=\alpha_1\lvert\sigma'(t)\rvert^{2/3}\sigma(t)^{1/3},
	\]
	where $\alpha_1$ is given in \eqref{eqA: Airy eigenvalues}. Then, for any $\beta\in(0,1/3)$, there exists $c>0$ such that
	\begin{equation}\label{eqBBM: macro BBM, lower bound}
		\mathbb{P}\Bigl(
		\max_{v\in\mathcal{N}_T}Z_{\mathbf{T}}^v
		\le \int_0^{\mathbf{T}}
		\Bigl(\sqrt{2}\,\sigma(s)-\frac{\eta(s)}{\sqrt{2}}\Bigr)\,\td s
		-\mathbf{T}^\beta
		\Bigr)
		\le \exp\{-c\mathbf{T}^\beta\},
	\end{equation}
	for all $\mathbf{T}$ large enough.
\end{lemma}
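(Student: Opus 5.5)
We prove the lemma by combining a constant-probability lower bound, obtained from a truncated second-moment argument, with a bootstrap over an initial time window of length $\mathbf{T}^{\beta}$. Throughout, write $m_{\mathbf T}:=\int_0^{\mathbf T}\bigl(\sqrt2\,\sigma(s)-\eta(s)/\sqrt2\bigr)\td s$.

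\emph{Step 1: constant-probability lower bound.} We first show that there exist $C_0,p_0>0$ such that for all large $\mathbf T$, and uniformly over profiles $\widetilde\sigma(u_0+\cdot)$ restricted to subintervals,
\[
\mathbb{P}\Bigl(\max_{v\in\mathcal N^Z_{\mathbf T}} Z^v_{\mathbf T}\ge m_{\mathbf T}-C_0\Bigr)\ge p_0 .
\]
This is the content of the lower-bound half of Maillard--Zeitouni \cite{MZ2016_slowdownBBM}, where the full centering $m_{\mathbf T}-c_3\log\mathbf T$ is tight. Since we only need the weaker centering $m_{\mathbf T}-\mathbf T^{\beta}$ and $\beta<1/3$, a cruder version suffices. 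Take $\mathcal Z$ to be the number of particles at time $\mathbf T$ that end in $[m_{\mathbf T}-\mathbf T^{\beta},\,m_{\mathbf T}-\mathbf T^{\beta}+1]$ and whose path stays below the barrier $\int_0^t(\sqrt2\sigma-\eta/\sqrt2)\,\td s$ and above a lower barrier of width $O(\mathbf T^{1/3})$. By many-to-one and Girsanov, with $c_A=\sqrt2$ as in Lemma~\ref{Appendix: Girsanov, exceeding probability}, $\mathbb E\mathcal Z$ reduces to a tube probability for Brownian motion with time-dependent diffusivity. The Airy eigenvalue $-\alpha_1$ exactly cancels the $\eta$ drift, giving $\mathbb E\mathcal Z\ge e^{c\mathbf T^{\beta}}$ up to polynomial factors. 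The many-to-two formula then yields $\mathbb E\mathcal Z^2\le C(\mathbb E\mathcal Z)^2$, and Paley--Zygmund gives a positive probability. These tube and Airy-eigenfunction estimates are the ones in Proposition~\ref{propoAiry: Airy estimate} and Appendix A of \cite{MZ2016_slowdownBBM}; here the coefficients vary on scale $\mathbf T$, which is the benign regime.

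\emph{Step 2: bootstrap.} Set $t_0:=\mathbf T^{\beta}$. By \eqref{eqBBM: number of BBM particles}, $N_{t_0}\ge e^{t_0/2}$ except on an event of probability at most $e^{-t_0/2}$. Each particle at time $t_0$ lies above $-\sqrt{2}\,\sup\widetilde\sigma\cdot t_0$ outside a union-bound Gaussian tail event of probability $e^{t_0}e^{-ct_0}$, which is small once the constant in the threshold is large enough. From time $t_0$, each particle starts an independent BBM with profile $\sigma(t_0+\cdot)$ on $[t_0,\mathbf T]$. This profile is still of macroscopic form with the same $\widetilde\sigma$, up to an $O(t_0/\mathbf T)$ shift, so Step~1 applies to it uniformly. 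Each such sub-BBM therefore reaches $\int_{t_0}^{\mathbf T}(\sqrt2\sigma-\eta/\sqrt2)-C_0$ with probability at least $p_0$, independently across particles. Since $\int_0^{t_0}(\sqrt2\sigma-\eta/\sqrt2)\le Ct_0$, the probability that none of them reaches the level is at most $(1-p_0)^{e^{t_0/2}}$. Adjusting constants, we choose the threshold $C t_0=C\mathbf T^{\beta}$ and then rescale $\beta$: apply the argument with $\beta'$ slightly smaller so that $C\mathbf T^{\beta'}\le\mathbf T^{\beta}$. This gives the bound $e^{-c\mathbf T^{\beta'}}$. To recover exponent $\beta$ itself, we instead take $t_0=\epsilon\mathbf T^{\beta}$ with $\epsilon$ small, so that the loss $Ct_0\le\mathbf T^{\beta}/2$ while the failure probability is still $e^{-c\epsilon\mathbf T^{\beta}}$.

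\emph{Main obstacle.} The hard step is Step~1, specifically the second-moment computation with the Airy-shaped tube. It requires upper bounds on the two-point correlation for branching at intermediate times, which follow from the killed-kernel estimate $\partial_yG$ in \eqref{eqA: killed BB ker estimate, derivative} and the Airy estimate, together with the matching lower bound from the principal eigenfunction. If the uniformity in Step~1 is delicate, we would simply cite the tightness theorem of \cite{MZ2016_slowdownBBM}, which holds uniformly for profiles in a compact $C^2$ family. The shifted profiles $\widetilde\sigma(\cdot\,(1-t_0/\mathbf T)+t_0/\mathbf T)$ form such a family.
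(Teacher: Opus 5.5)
Your route is the paper's. You split at $t_*=\delta\mathbf T^{\beta}$ with $\delta$ small, and remove by a union bound the event that some particle at time $t_*$ lies below $-\mathbf T^{\beta}/2$; your threshold $-Ct_0$ with $C$ large is the same thing. You then use $N_{t_*}\ge e^{t_*/2}$ off an event of probability $e^{-t_*/2}$, and let each of the independent sub-BBMs started at time $t_*$ succeed with probability at least $p_0$. As in the paper, $p_0$ comes from the tightness theorem of \cite{MZ2016_slowdownBBM} applied to the shifted profile. Your remark that this needs uniformity over the family of shifted profiles is a point the paper passes over quickly.

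One claim is false as written. Your $m_{\mathbf T}=\int_0^{\mathbf T}(\sqrt2\sigma-\eta/\sqrt2)\,\td s$ omits the term $-\frac{\sigma(\mathbf T)}{\sqrt2}\log\mathbf T$ of the centering in Theorem~\ref{theorem: MZ2016 recentered maximum until the log T term}. Tightness at the correct centering therefore forces $\mathbb P(\max_v Z^v_{\mathbf T}\ge m_{\mathbf T}-C_0)\to0$ for every fixed $C_0$. So Step 1 as stated fails, and so does its use in Step 2 with the level $\int_{t_0}^{\mathbf T}(\sqrt2\sigma-\eta/\sqrt2)\,\td s-C_0$. This contradicts your own remark that the tight centering is $m_{\mathbf T}-c_3\log\mathbf T$.

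The repair is what the paper does. Require each sub-BBM only to exceed $\int_{t_*}^{\mathbf T}(\sqrt2\sigma-\eta/\sqrt2)\,\td s-\mathbf T^{\beta}/4$. This level lies $\mathbf T^{\beta}/4-O(\log\mathbf T)\to\infty$ below the correct centering, so tightness does give probability at least $p_0$. The three losses then add up to exactly $\mathbf T^{\beta}$: the allowed initial dip $\mathbf T^{\beta}/2$, the bound $\int_0^{t_*}(\sqrt2\sigma-\eta/\sqrt2)\,\td s\le\mathbf T^{\beta}/4$ (this is where $\delta$ small is used), and the margin $\mathbf T^{\beta}/4$.

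If you instead prove Step 1 with your own second-moment argument, aim the window at $m_{\mathbf T}-\mathbf T^{\beta}/4$ rather than $m_{\mathbf T}-\mathbf T^{\beta}$, so that the bootstrap loss still fits. Since $\mathbf T^{\beta}/4\le V(\mathbf T)^{-1/3}\asymp\mathbf T^{1/3}$, the Airy lower bound still applies there.
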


\begin{proof}
	Set
	\begin{equation}\label{eqBBM: lower, k(s) for macro}
		k(s):=\sqrt{2}\,\sigma(s)-\frac{\eta(s)}{\sqrt{2}},
	\end{equation}
	and $S_{\mathbf{T}}:=\int_0^{\mathbf{T}}k(s)\,\td s$. Choose a small constant $\delta>0$ such that $\int_0^{\delta\mathbf{T}^\beta}k(s)\,\td s\le\mathbf{T}^\beta/4$ for all large $\mathbf{T}$. This is possible because $\sigma$ is uniformly bounded and $\eta\ge0$. Put $t_*:=\delta\mathbf{T}^\beta$ and define
	\[
	\widetilde{\mathcal{E}}:=
	\Bigl\{
	\exists v\in\mathcal{N}_{t_*}\colon
	Z_{t_*}^v\le-\frac{\mathbf{T}^\beta}{2}
	\Bigr\}.
	\]
	By a union bound, the many-to-one lemma, and the Gaussian tail estimate,
	\begin{equation}\label{eqBBM, lower, P(tilde E) <}
	\mathbb{P}(\widetilde{\mathcal{E}})
\le \exp\{t_*\}
\mathbb{P}\Bigl(
\mathcal{N}\Bigl(0,\int_0^{t_*}\sigma^2(s)\,\td s\Bigr)
\le-\frac{\mathbf{T}^\beta}{2}
\Bigr)
\le \exp\{-c\mathbf{T}^\beta\},
	\end{equation}
	after choosing $\delta$ small enough.
	
	On $\widetilde{\mathcal{E}}^c$, all particles at time $t_*$ have position greater than $-\mathbf{T}^\beta/2$. Hence, if
	\[
	\max_{v\in\mathcal{N}_T}Z_{\mathbf{T}}^v
	\le S_{\mathbf{T}}-\mathbf{T}^\beta,
	\]
	then for every $u\in\mathcal{N}_{t_*}$,
	\[
	\max_{v\in\mathcal{N}_{\mathbf{T}}^{t_*,u}}
	\bigl(Z_{\mathbf{T}}^v-Z_{t_*}^u\bigr)
	\le S_{\mathbf{T}}-\frac{\mathbf{T}^\beta}{2}
	\le \int_{t_*}^{\mathbf{T}}k(s)\,\td s-\frac{\mathbf{T}^\beta}{4}.
	\]
	Therefore, conditional on $\mathcal{F}_{t_*}$,
	\[
	\mathbb{P}\Bigl(
	\max_{v\in\mathcal{N}_T}Z_{\mathbf{T}}^v
	\le S_{\mathbf{T}}-\mathbf{T}^\beta,\ \tilde  {\mathcal E}^c
	\,\Big|\,\mathcal{F}_{t_*}
	\Bigr)
	\le p_{\mathbf{T}}^{N_{t_*}},
	\]
	where
	\[
	p_{\mathbf{T}}:=
	\mathbb{P}\Bigl(
	\max_{v\in\mathcal{N}_{\mathbf{T}}^{t_*,u}}
	\bigl(Z_{\mathbf{T}}^v-Z_{t_*}^u\bigr)
	\le \int_{t_*}^{\mathbf{T}}k(s)\,\td s-\frac{\mathbf{T}^\beta}{4}
	\Bigr).
	\]
	To employ Theorem \ref{theorem: MZ2016 recentered maximum until the log T term}, we do a renormalization on the time interval: During $[t_*=\delta T^\beta,T]$, the displacement of each particle has the variance profile
	\[
	\widetilde{\sigma}_T^*\Bigl(\frac{s}{T-t_*}\Bigr),
	\qquad
	\text{with }\quad
	\sigma_T^*(r):=\widetilde{\sigma}\Bigl(
	\frac{t_*}{T}+\Bigl(1-\frac{t_*}{T}\Bigr)r
	\Bigr),
	\]
	which falls in the setting of Theorem \ref{theorem: MZ2016 recentered maximum until the log T term} for any fixed $\mathbf{T}$. By \eqref{eqBBM: macro BBM, lower bound} with $K=1$, applied on the interval $[t_*,\mathbf{T}]$, we have $p_{\mathbf{T}}\ge C\exp\{-1/\widetilde{\sigma}_T^*(0)\}\ge p_0>0$ for some numerical $p_0$. Hence
	\[
	\begin{aligned}
		\mathbb{E}\bigl[p_{\mathbf{T}}^{N_{t_*}}\bigr]
		&\le \mathbb{P}\bigl(N_{t_*}\le\exp\{t_*/2\}\bigr)
		+(p_0)^{\exp\{t_*/2\}}\\
		&\le \exp\{-ct_*\}+\exp\bigl\{-\exp\{ct_*\}\bigr\}
		\le \exp\{-c\mathbf{T}^\beta\},
	\end{aligned}
	\]
	where we used \eqref{eqBBM: number of BBM particles} and the definition $t_* := \delta T^\beta$. Combining this bound with the estimate \eqref{eqBBM, lower, P(tilde E) <} on $\mathbb{P}( \tilde {\mathcal E} )$ proves \eqref{eqBBM: macro BBM, lower bound}.
\end{proof}

In the proofs above, we have used the following result from \cite{MZ2016_slowdownBBM}:
\begin{theorem}[{Theorem 1.1 of \cite{MZ2016_slowdownBBM}}]
	\label{theorem: MZ2016 recentered maximum until the log T term}
	Let $M_{\mathbf{T}}$ denote the maximal displacement at time $\mathbf{T}$ of the branching Brownian motion with the same variance profile $\sigma(t)=\widetilde{\sigma}(t/\mathbf{T})$ as in Lemma \ref{lemma: macro BBM, lower bound}. Set
	\begin{equation}\label{eq:def-mT}
		\begin{aligned}
			m_{\mathbf{T}}
			&:=\sqrt{2}\int_0^{\mathbf{T}}\sigma(t)\,\td t
			-(\sqrt{2})^{-1/3}\alpha_1 2^{-1/3}
			\int_0^{\mathbf{T}}
			\lvert\sigma'(t)\rvert^{2/3}
			\lvert\sigma(t)\rvert^{1/3}\,\td t -\frac{\sigma(\mathbf{T})}{\sqrt{2}}\log \mathbf{T},
		\end{aligned}
	\end{equation}
	where $-\alpha_1$ is the largest zero of the Airy function $\mathrm{Ai}$, as in \eqref{eqA: Airy eigenvalues}. Then there exists $C>0$ such that
	\begin{equation}\label{eq:MT-concentration}
		\lvert M_{\mathbf{T}}-m_{\mathbf{T}}\rvert
	\end{equation}
	converges in law as $T\to\infty$ and there is $C>0$ such that
	\begin{equation}\label{eq: MZ, left tail}
		\mathbb{P}\Bigl(
		\lvert M_{\mathbf{T}}-m_{\mathbf{T}}\rvert\le K
		\Bigr)
		\ge CK\exp\Bigl\{-\frac{K}{\sigma(0)}\Bigr\},
	\end{equation}
	uniformly for all $k\in[1,\mathbf{T}^{1/3}]$ when $\mathbf{T}$ is large.
\end{theorem}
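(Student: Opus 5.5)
This theorem is quoted from \cite{MZ2016_slowdownBBM}, so the plan is to recall how the argument there goes rather than to reprove it from scratch. It has three ingredients: an upper bound by a barrier and first moment, a lower bound by a truncated second moment, and the convergence in law via a derivative-martingale type argument. The convergence statement is used only qualitatively in this paper. The estimate actually needed in Lemma~\ref{lemma: macro BBM, lower bound} is the left-tail bound \eqref{eq: MZ, left tail}, so the proposal concentrates on that.

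\textbf{Step 1: barrier and Airy estimate.} For the upper bound one proceeds exactly as in Proposition~\ref{proposition: BBM tight, upper bound}. Apply the many-to-one lemma along the barrier $m_t+K$. Use Girsanov with drift $\sqrt{2}\sigma$, which cancels the factor $e^{t}$. By Feynman--Kac this leaves the Dirichlet Airy-type PDE with potential $(k/\sigma^2)'x$. Because the coefficients depend on $t/\mathbf{T}$, their derivatives are $O(1/\mathbf{T})$. The principal Airy eigenfunction then gives, up to constants, the density of paths staying below the barrier and ending near $m_{\mathbf{T}}-y$. The $\log\mathbf{T}$ correction $\sigma(\mathbf{T})\log\mathbf{T}/\sqrt2$ comes from the endpoint behaviour: near time $\mathbf{T}$ the staying-below density behaves like a Brownian-bridge ballot factor $y\,e^{-\sqrt2 y/\sigma(\mathbf{T})}\mathbf{T}^{-1}$. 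This also yields the upper tail $\mathbb{P}(M_{\mathbf{T}}\ge m_{\mathbf{T}}+K)\le CK e^{-\sqrt2 K/\sigma(\mathbf{T})}$.

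\textbf{Step 2: truncated second moment.} For the event $|M_{\mathbf{T}}-m_{\mathbf{T}}|\le K$, count particles $v$ whose path stays below the barrier $m_t+K$ on $[0,\mathbf{T}]$ and ends in $[m_{\mathbf{T}}-K,m_{\mathbf{T}}]$. By Step 1 the first moment of this count is $\asymp K e^{-cK}$, with the exponent tied to $\sigma(0)$ through the initial drift factor $e^{-k(0)x/\sigma(0)^2}$. The second moment is computed by splitting over the branching time $s$ of pairs of particles. The Airy density estimate is applied on $[0,s]$ and again on $[s,\mathbf{T}]$ for two conditionally independent descendants. The barrier forces the contribution of late branchings to be summable, so the second moment is bounded by $C$ times the first. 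Paley--Zygmund then gives $\mathbb{P}(\text{count}\ge1)\ge cKe^{-K/\sigma(0)}$. Combined with the upper tail from Step 1, this gives \eqref{eq: MZ, left tail} for $K\in[1,\mathbf{T}^{1/3}]$.

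\textbf{Main obstacle.} The restriction $K\le\mathbf{T}^{1/3}$ is where the Airy asymptotics stop being uniform, because the Airy time scale is $\mathbf{T}^{1/3}$. The hardest step is therefore making the PDE density bounds uniform in the starting height up to that scale. This requires the eigenfunction expansion with error terms controlled by the $O(1/\mathbf{T})$ coefficient derivatives.
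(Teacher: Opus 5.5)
The paper gives no argument for this statement. It imports Theorem~1.1 of \cite{MZ2016_slowdownBBM}, and in Lemma~\ref{lemma: macro BBM, lower bound} it only uses the case $K=1$, i.e.\ a uniform lower bound on $\mathbb{P}(M_{\mathbf T}\ge m_{\mathbf T}-1)$. Your reconstruction of the tail bound has a genuine gap.

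\textbf{Step~1 has the wrong exponent.} The Girsanov prefactor in Lemma~\ref{Appendix: Girsanov, exceeding probability} is $\exp\{-k(0)K/\sigma(0)^2\}\approx e^{-\sqrt2K/\sigma(0)}$. So the barrier argument gives $\mathbb{P}(M_{\mathbf T}\ge m_{\mathbf T}+K)\le CKe^{-\sqrt2K/\sigma(0)}$, and this is sharp for $K\le\mathbf T^{1/3}$. An excess $K$ can be built up during the first $K^2$ units of time. There the variance is still about $\sigma(0)^2$, and the tilt $k/\sigma^2$ changes by only $O(K^2/\mathbf T)$; $K$ times that change is $O(K^3/\mathbf T)=O(1)$, which is where the range $K\le\mathbf T^{1/3}$ comes from. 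Such an excess costs about $Ke^{-\sqrt2K/\sigma(0)}$ and is passed on to the descendants. Since $\sigma(\mathbf T)=\widetilde\sigma(1)<\widetilde\sigma(0)=\sigma(0)$, your bound $CKe^{-\sqrt2K/\sigma(\mathbf T)}$ is false for large $K$ in that range.

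\textbf{This error hides the failure of your last step.} You want $\mathbb{P}(\lvert M_{\mathbf T}-m_{\mathbf T}\rvert\le K)\ge\mathbb{P}(\mathrm{count}\ge1)-\mathbb{P}(M_{\mathbf T}>m_{\mathbf T}+K)$. Your Paley--Zygmund bound carries the same initial factor $e^{-k(0)K/\sigma(0)^2}$, which in this normalization is $e^{-\sqrt2K/\sigma(0)}$, not $e^{-K/\sigma(0)}$. Both terms are therefore of order $Ke^{-\sqrt2K/\sigma(0)}$, and their difference has no sign.

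\textbf{Step~2 is also internally inconsistent.} Particles ending in $[m_{\mathbf T}-K,m_{\mathbf T}]$ lie at distance $y\in[K,2K]$ below the terminal barrier. The terminal Girsanov factor is $e^{+k(\mathbf T)y/\sigma(\mathbf T)^2}$: particle density grows downwards, so your ballot factor $ye^{-\sqrt2y/\sigma(\mathbf T)}$ has the wrong sign. Hence $\mathbb{E}[N]$ is exponentially large in $K$, of order $e^{\sqrt2K(2/\sigma(\mathbf T)-1/\sigma(0))}$ up to polynomial factors, not $\asymp Ke^{-cK}$. Then $\mathbb{E}[N^2]\le C\,\mathbb{E}[N]$ is impossible, since it would force $\mathbb{P}(N\ge1)>1$. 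Pairs that branch early dominate the second moment, and Paley--Zygmund returns only $\asymp Ke^{-\sqrt2K/\sigma(0)}$. A single-barrier first/second moment computation proves the right-tail lower bound $\mathbb{P}(M_{\mathbf T}\ge m_{\mathbf T}+K)\ge cKe^{-\sqrt2K/\sigma(0)}$; it cannot give a two-sided estimate.

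\textbf{The missing observation is monotonicity.} The left side of \eqref{eq: MZ, left tail} is nondecreasing in $K$, while $CKe^{-K/\sigma(0)}\le C\sigma(0)/e$. So the bound is equivalent to $\inf_{\mathbf T\ge\mathbf T_0}\mathbb{P}(\lvert M_{\mathbf T}-m_{\mathbf T}\rvert\le1)>0$. This follows from the convergence in law you set aside, as soon as the limit law charges $(-1,1)$, which a randomly shifted Gumbel law does. The one-sided fact the paper actually needs, $\mathbb{P}(M_{\mathbf T}\ge m_{\mathbf T}-1)\ge p_0$, comes directly from Paley--Zygmund with an $O(1)$ barrier, with no upper-tail subtraction at all.
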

We mention that in \cite{MZ2016_slowdownBBM}, the BBM has splitting time $\operatorname{Exp}(1/2)$, while here we set it to be $\operatorname{Exp}(1)$; this is the reason for the $\sqrt{2}$-contained factors appearing in $m_{\mathbf{T}}$.

\section{Proof for theorem \ref{theorem: tightness of recentered maximum for X alpha T}}\label{section: proof for the main results}
In this section, we prove Theorem~\ref{theorem: tightness of recentered maximum for X alpha T} on the maxima of the Gaussian field $X_\epsilon^\alpha$ defined in \eqref{eqModel: X epsilon alpha white noise integral} for any $\epsilon\in(0,1)$ and $\alpha\in(0,1/2]$.

As a preparation, we show the continuity of $X_\epsilon^\alpha$. By Proposition~\ref{proposition: L2 distances for X epsilon alpha},
\begin{equation}\label{eqIntroduction: L2 distance for X epsilon}
	\mathbb{E}\bigl[(X_\epsilon^\alpha(z_1)-X_\epsilon^\alpha(z_2))^2\bigr]
	\leq C\,\theta_\alpha(\epsilon)\frac{\lVert z_1-z_2\rVert_2}{\epsilon},
\end{equation}
where $\theta_\alpha(\epsilon):=(1+\log(1/\epsilon))^{-2\alpha}$. By \eqref{eqIntroduction: L2 distance for X epsilon} and the moments of Gaussian random variables,
\[
\mathbb{E}\bigl[(X_\epsilon^\alpha(z_1)-X_\epsilon^\alpha(z_2))^{2h}\bigr]
\leq C\biggl(\theta_\alpha(\epsilon)
\frac{\lVert z_1-z_2\rVert_2}{\epsilon}\biggr)^h,
\qquad h\geq2.
\]
Thus, the Kolmogorov continuity theorem implies that, for each fixed $\epsilon>0$, the Gaussian field $X_\epsilon^\alpha$ is almost surely continuous. Consequently,
$\sup_{x\in[0,1]^d}X_\epsilon^\alpha(x)=\sup_{x\in[0,1)^d}X_\epsilon^\alpha(x)$ almost surely. Hence, to prove Theorem~\ref{theorem: tightness of recentered maximum for X alpha T}, it suffices to consider $X_\epsilon^\alpha(x)$ for $x\in[0,1)^d$. We prove the upper and lower tails in Subsections~\ref{subsec: X field, upper tail} and \ref{subsec: X field, lower tail}, respectively.

\subsection{Auxiliary Gaussian fields}\label{subsection: auxiliary Gaussian fields}
In this subsection, we introduce two auxiliary Gaussian fields that will be used later in Subsections~\ref{subsec: X field, upper tail} and \ref{subsec: X field, lower tail} for comparison with $\{X_\epsilon^\alpha(x)\colon x\in[0,1)^d\}$. The mollified branching Brownian motion and the mollified Brownian sheet were introduced in \cite{Acosta2014_tightness} in the study of the recentered maximum of log-correlated Gaussian fields. Here we follow the same general idea, with several modifications adapted to our setting.

For any fixed dimension $d\geq2$, let $V_\epsilon:=\epsilon\mathbb{Z}^d\cap[0,1)^d$. Fix $\alpha\in(0,1/2]$, and let $s_0>0$ be a large but fixed constant. We define the mollified branching Brownian motion (MBBM) by
\begin{equation}\label{eqM: MBBM}
	\xi_\epsilon^{\alpha,z}(t)
	:=\int_{[0,t]\times\mathbb{R}^d}
	\sigma_\alpha(s)\,
	\mathds{1}_{\Lambda_1(e^{s-s_0}z)}(y)\,W(\td s,\td y),
	\qquad z\in V_\epsilon,\quad t\geq0,
\end{equation}
where
\[
\sigma_\alpha(s):=\sqrt{2}\,(1+2s)^{-\alpha},
\qquad
\Lambda_1(x):=x+[-1/2,1/2]^d,
\qquad s\geq0,\quad x\in\mathbb{R}^d,
\]
and $W$ denotes space--time white noise on $[0,\infty)\times\mathbb{R}^d$. The role of the shift $s_0$ is to slightly enlarge the covariance of the field $\{\xi_\epsilon^{\alpha,z}\colon z\in V_\epsilon\}$ without affecting its extreme-value behavior in any essential way. This modification will be convenient when we compare $\{\xi_\epsilon^{\alpha,z}\colon z\in V_\epsilon\}$ with a BBM in the proof of Proposition~\ref{proposition: MBBM exceeding particles num for a subset}.

For distinct $z_1,z_2\in V_\epsilon$, define $t_{z_1,z_2}:=\log(1/\lVert z_1-z_2\rVert_\infty)$. Since the boxes $\Lambda_1(e^{s-s_0}z_1)$ and $\Lambda_1(e^{s-s_0}z_2)$ are disjoint for all $s>t_{z_1,z_2}+s_0$, the increments of $\xi_\epsilon^{\alpha,z_1}$ and $\xi_\epsilon^{\alpha,z_2}$ are independent after time $t_{z_1,z_2}+s_0$. In particular, for every $t\geq s\geq t_{z_1,z_2}+s_0$,
\begin{equation}\label{eqMBBM: independent increments}
	\operatorname{Cov}\Bigl(
	\xi_\epsilon^{\alpha,z_1}(t)-\xi_\epsilon^{\alpha,z_1}(s),
	\xi_\epsilon^{\alpha,z_2}(t)-\xi_\epsilon^{\alpha,z_2}(s)
	\Bigr)=0.
\end{equation}

\begin{remark}
	The MBBM introduced in \eqref{eqM: MBBM} serves as an auxiliary model for studying the maximum of the Gaussian field $X_\epsilon^\alpha$, defined in \eqref{eqModel: X epsilon alpha white noise integral}, on the discrete grid $V_\epsilon$. The key point is that the kernel $q_t(\cdot)$ appearing in the integrand of \eqref{eqModel: X epsilon alpha white noise integral} is not compactly supported. Hence, $X_\epsilon^\alpha$ does not admit an independent-increment decomposition of the form \eqref{eqMBBM: independent increments}. The auxiliary MBBM field is introduced precisely to recover such a hierarchical independent-increment structure.
\end{remark}

The following lemma gives the covariance estimate for the mollified BBM constructed above.

\begin{lemma}\label{lemma: covariance estimate for MBBM}
	Let $\{\xi_\epsilon^{\alpha,z}(t)\colon t\geq0,\ z\in V_\epsilon\}$ be the MBBM defined in \eqref{eqM: MBBM}. There exists $s_*>0$ sufficiently large such that, for all $s_0>s_*$, all distinct $z_1,z_2\in V_\epsilon$, and all $t\geq t_{z_1,z_2}+s_0$,
	\begin{equation}\label{eqMBBM: cov}
		\int_0^{t_{z_1,z_2}}\sigma_\alpha(s)^2\,\td s
		\leq
		\operatorname{Cov}\bigl(\xi_\epsilon^{\alpha,z_1}(t),\xi_\epsilon^{\alpha,z_2}(t)\bigr)
		\leq
		\int_0^{t_{z_1,z_2}+s_0}\sigma_\alpha(s)^2\,\td s.
	\end{equation}
\end{lemma}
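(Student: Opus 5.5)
By the It\^o isometry applied to \eqref{eqM: MBBM}, for every $t\ge0$,
\[
\operatorname{Cov}\bigl(\xi_\epsilon^{\alpha,z_1}(t),\xi_\epsilon^{\alpha,z_2}(t)\bigr)
=\int_0^t\sigma_\alpha(s)^2\,\bigl\lvert\Lambda_1(e^{s-s_0}z_1)\cap\Lambda_1(e^{s-s_0}z_2)\bigr\rvert\,\td s ,
\]
where $\lvert\cdot\rvert$ is Lebesgue measure on $\mathbb{R}^d$. The two boxes are unit cubes with centers differing by $e^{s-s_0}(z_1-z_2)$. Their intersection has volume $\prod_{i=1}^d\bigl(1-e^{s-s_0}\lvert z_{1,i}-z_{2,i}\rvert\bigr)_+$. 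This lies in $[0,1]$, and it vanishes as soon as $e^{s-s_0}\lVert z_1-z_2\rVert_\infty\ge1$, that is, for $s\ge t_{z_1,z_2}+s_0$. So the plan is to compute this overlap explicitly and compare it with the indicator of a suitable time interval.

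\emph{Upper bound.} The overlap is at most $\mathds{1}_{\{s<t_{z_1,z_2}+s_0\}}$. Hence for $t\ge t_{z_1,z_2}+s_0$ the covariance is at most $\int_0^{t_{z_1,z_2}+s_0}\sigma_\alpha(s)^2\,\td s$, with no condition on $s_0$.

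\emph{Lower bound.} Write $\rho:=\lVert z_1-z_2\rVert_\infty$ and $t':=t_{z_1,z_2}=\log(1/\rho)$. For $s\le t'+s_0$ set $u:=t'+s_0-s\ge0$. Then $e^{s-s_0}\lvert z_{1,i}-z_{2,i}\rvert\le e^{-u}$ in each coordinate, so the overlap is at least $(1-e^{-u})^d$. The covariance is therefore at least
\[
\int_0^{t'+s_0}\sigma_\alpha(s)^2\bigl(1-e^{-(t'+s_0-s)}\bigr)^d\,\td s .
\]
It remains to show this dominates $\int_0^{t'}\sigma_\alpha^2$.

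Split the integral at $s=t'$. On $[t',t'+s_0]$ the integrand is nonnegative, and restricting to $s\in[t',t'+s_0-1]$ gives a gain of at least $(1-e^{-1})^d\int_{t'}^{t'+s_0-1}\sigma_\alpha^2$. On $[0,t']$ we have $u\ge s_0$, so $(1-e^{-u})^d\ge 1-de^{-u}$. The loss there is at most
\[
d\int_0^{t'}\sigma_\alpha(s)^2e^{-(t'+s_0-s)}\,\td s\le d\,e^{-s_0}\,\sigma_\alpha(0)^2=2d\,e^{-s_0}.
\]
For the gain, use that $\sigma_\alpha$ is decreasing to get $\int_{t'}^{t'+s_0-1}\sigma_\alpha^2\ge (s_0-1)\,\sigma_\alpha(t'+s_0)^2$. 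The gain then beats the loss as soon as $(s_0-1)(1-e^{-1})^d\,2(1+2t'+2s_0)^{-2\alpha}\ge 2de^{-s_0}$.

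\emph{Main obstacle: uniformity in $t'$.} Since $z_1,z_2\in V_\epsilon$ are distinct, $\rho\ge\epsilon$, so $t'$ ranges over $[0,T]$ with $T$ arbitrary. The gain decays like $(1+t')^{-2\alpha}$ while the loss bound $e^{-s_0}$ is fixed, so the naive comparison above fails for very large $t'$. The fix is to sharpen the loss estimate rather than the gain. In the loss integral $\int_0^{t'}\sigma_\alpha(s)^2e^{-(t'-s)}\,\td s$, the weight $e^{-(t'-s)}$ concentrates on $s$ near $t'$, where $\sigma_\alpha$ varies only polynomially. Splitting at $s=t'/2$ gives
\[
\int_0^{t'}\sigma_\alpha(s)^2e^{-(t'-s)}\,\td s\le C\,\sigma_\alpha(t'/2)^2+2e^{-t'/2}\le C'(1+2t')^{-2\alpha}.
\]
Hence the loss is at most $C d\,e^{-s_0}(1+2t')^{-2\alpha}$.

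The gain is at least $(s_0-1)(1-e^{-1})^d\cdot 2(1+2t'+2s_0)^{-2\alpha}$. Since $(1+2t'+2s_0)/(1+2t')\le 1+2s_0$ uniformly in $t'\ge0$, the gain is at least $c_d(s_0-1)(1+2s_0)^{-2\alpha}(1+2t')^{-2\alpha}$. Because $2\alpha\le 1$, $(s_0-1)(1+2s_0)^{-2\alpha}$ grows at least like a constant for large $s_0$, whereas $e^{-s_0}\to0$. So there is $s_*$, depending only on $d$ and $\alpha$, such that the gain exceeds the loss for all $s_0>s_*$ and all $t'\ge0$. This yields the lower bound in \eqref{eqMBBM: cov}.
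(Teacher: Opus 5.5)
Your proof is correct and follows essentially the same route as the paper. Both use the It\^o isometry with the explicit overlap $\prod_i(1-e^{s-s_0}\lvert z_1^i-z_2^i\rvert)_+$ and the trivial upper bound. For the lower bound, both show that the gain on $[t_{z_1,z_2},t_{z_1,z_2}+s_0]$, of order $s_0$ times the variance rate at that scale, beats an exponentially weighted loss; that loss is controlled by splitting its integral at the midpoint.

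The only difference is bookkeeping. The paper applies the single bound $1-de^{s-L}$ on all of $[0,L]$, with $L=t_{z_1,z_2}+s_0$, and measures both gain and loss against $(1+2L)^{-2\alpha}$, so uniformity in $t_{z_1,z_2}$ is immediate. You instead normalize at $t_{z_1,z_2}$ and carry an extra factor $e^{-s_0}$ in the loss.
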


\begin{proof}
	By \eqref{eqM: MBBM},
	\begin{equation}\label{eqMBBM: exact covariance}
		\operatorname{Cov}\bigl(\xi_\epsilon^{\alpha,z_1}(t),\xi_\epsilon^{\alpha,z_2}(t)\bigr)
		=
		\int_0^t\sigma_\alpha(s)^2
		\bigl\lvert\Lambda_1(e^{s-s_0}z_1)\cap\Lambda_1(e^{s-s_0}z_2)\bigr\rvert\,\td s.
	\end{equation}
	Moreover, for every $s\geq0$,
	\[
	\bigl\lvert\Lambda_1(e^{s-s_0}z_1)\cap\Lambda_1(e^{s-s_0}z_2)\bigr\rvert
	=
	\prod_{i=1}^d\bigl(1-e^{s-s_0}\lvert z_1^i-z_2^i\rvert\bigr)_+,
	\]
	where $a_+:=\max\{a,0\}$. Since the intersection vanishes whenever $s>t_{z_1,z_2}+s_0$, for all $t\geq t_{z_1,z_2}+s_0$,
	\[
	\operatorname{Cov}\bigl(\xi_\epsilon^{\alpha,z_1}(t),\xi_\epsilon^{\alpha,z_2}(t)\bigr)
	=
	\int_0^{t_{z_1,z_2}+s_0}\sigma_\alpha(s)^2
	\prod_{i=1}^d\bigl(1-e^{s-s_0}\lvert z_1^i-z_2^i\rvert\bigr)_+\,\td s.
	\]
	The upper bound in \eqref{eqMBBM: cov} follows immediately because the product is at most $1$.
	
	For the lower bound, put $t_{12}:=t_{z_1,z_2}$ and $L:=t_{12}+s_0$. For $0\leq s\leq L$,
	\[
	\prod_{i=1}^d\bigl(1-e^{s-s_0}\lvert z_1^i-z_2^i\rvert\bigr)_+
	\geq1-\sum_{i=1}^d e^{s-s_0}\lvert z_1^i-z_2^i\rvert
	\geq1-de^{s-L}.
	\]
	Hence,
	\[
	\operatorname{Cov}\bigl(\xi_\epsilon^{\alpha,z_1}(t),\xi_\epsilon^{\alpha,z_2}(t)\bigr)
	\geq
	\int_0^L\sigma_\alpha(s)^2\,\td s
	-d\int_0^L\sigma_\alpha(s)^2e^{s-L}\,\td s.
	\]
	Moreover,
	\[
	\int_0^L(1+2s)^{-2\alpha}e^{s-L}\,\td s
	\leq C_\alpha(1+2L)^{-2\alpha},
	\]
	for some constant $C_\alpha>0$ independent of $z_1,z_2$, and $L$; this follows by estimating the integral separately over $[0,L/2]$ and $[L/2,L]$. On the other hand,
	\[
	\int_{t_{12}}^L\sigma_\alpha(s)^2\,\td s
	\geq2s_0(1+2L)^{-2\alpha}.
	\]
	Choosing $s_0$ sufficiently large so that the last display dominates $d\int_0^L\sigma_\alpha(s)^2e^{s-L}\,\td s$, we obtain
	\[
	\operatorname{Cov}\bigl(\xi_\epsilon^{\alpha,z_1}(t),\xi_\epsilon^{\alpha,z_2}(t)\bigr)
	\geq\int_0^{t_{12}}\sigma_\alpha(s)^2\,\td s,
	\]
	which proves the lower bound.
\end{proof}

The following proposition will be used in Subsection~\ref{subsec: X field, upper tail}.

\begin{proposition}\label{proposition: MBBM exceeding particles num for a subset}
	Let $\{\xi_\epsilon^{\alpha,z}(t)\colon t\geq0,\ z\in V_\epsilon\}$ be the MBBM defined in \eqref{eqM: MBBM}, and recall $A_\alpha(\cdot)$ from \eqref{eqIntroduction: A(t)}. Fix $T_r\geq s_0$, with $s_0$ as in Lemma~\ref{lemma: covariance estimate for MBBM}, and assume that $T_r=N_r\log 2$ for some $N_r\in\mathbb{Z}_+$. Then there exist constants $C,c>0$ and $T_*,\lambda_*>0$ such that, for every $N\in\mathbb{Z}_+$ with $T:=N\log 2\geq T_*$ and $\epsilon:=2^{-N}$, every subset $\Lambda\subset V_\epsilon$, and every $\lambda\geq\lambda_*$,
	\begin{equation}\label{eqMBBM: exceeding particles num for a subset 1}
		\mathbb{P}\Bigl(
		\max_{z\in\Lambda}\xi_\epsilon^{\alpha,z}(T+T_r)
		\geq A_\alpha(T+T_r)+\lambda-n
		\Bigr)
		\leq Ce^{-c(\lambda-n)},
		\qquad n\in\mathbb{Z}_+,\quad n\leq\lambda,
	\end{equation}
	and
	\begin{equation}\label{eqMBBM: exceeding particles num for a subset 2}
		\begin{aligned}
			&\mathbb{P}\Bigl(
			\max_{z\in\Lambda}\xi_\epsilon^{\alpha,z}(T+T_r)
			\geq A_\alpha(T+T_r)+\lambda-n
			\Bigr)\\
			&\quad\leq
			Ce^{-c(\lambda+n)}
			+C\lvert\Lambda\rvert
			\exp\bigl\{-d(T+T_r)-c\lambda+c(T+T_r)^\alpha(n+1)\bigr\},
			\qquad n\in\mathbb{Z}_+.
		\end{aligned}
	\end{equation}
\end{proposition}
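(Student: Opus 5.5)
The plan is to compare the MBBM with a cooling BBM by exploiting the dyadic structure $T=N\log 2$, $\epsilon=2^{-N}$, $T_r=N_r\log 2$. Split the index set $V_\epsilon$ (and hence any $\Lambda$) into nested dyadic boxes: at time $s$ the boxes $\Lambda_1(e^{s-s_0}z)$ for $z$ in distinct dyadic cubes of side $2^{-k}$, with $k\log 2\le s-s_0$, are disjoint. By \eqref{eqMBBM: independent increments} and Lemma~\ref{lemma: covariance estimate for MBBM}, $\{\xi_\epsilon^{\alpha,z}(T+T_r)\}_{z\in V_\epsilon}$ has the same variance $\int_0^{T+T_r}\sigma_\alpha^2$ as a cooling BBM particle, and its covariances are dominated by those of a Gaussian field indexed by the leaves of a $2^d$-ary tree with branching times $k\log 2+s_0$. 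I would use Slepian's lemma to dominate the upper tail of $\max_{z\in\Lambda}\xi_\epsilon^{\alpha,z}$ by that of the corresponding tree field, and then redo the barrier/first-moment argument of Proposition~\ref{proposition: BBM tight, upper bound} directly for this branching random walk with Gaussian increments and deterministic branching. The expected number of particles at time $t$ is $\asymp e^{d t}$, which is why $\sqrt{2d}$ replaces $\sqrt2$ in $A_\alpha$. In the many-to-one formula the weight $e^{dt}$ cancels against the Girsanov factor from Lemma~\ref{Appendix: Girsanov, exceeding probability} with $c_A=\sqrt{2d}$. Proposition~\ref{propoAiry: Airy estimate}, with the rectifier $\bar r$ from \eqref{eqBBM: r}, then gives the bound.

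For \eqref{eqMBBM: exceeding particles num for a subset 1}, since $n\le\lambda$, apply the upper tail with threshold $\lambda-n$. Splitting at a fixed $T_0$ and bounding the first stage by a Gaussian tail, the barrier argument gives $C(\lambda-n)e^{-c(\lambda-n)}$, uniformly in $\Lambda\subset V_\epsilon$ because restricting to a subset only decreases the count. For \eqref{eqMBBM: exceeding particles num for a subset 2} the threshold $A_\alpha(T+T_r)+\lambda-n$ may lie below the typical maximum, so the count must use $|\Lambda|$. I would split according to whether the path of the maximiser crosses the barrier $\bar A_r(t)+\lambda+n$ before $T+T_r$ (possibly with a different constant in front of $n$). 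The crossing event is handled exactly as in the first part and gives $Ce^{-c(\lambda+n)}$.

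On the non-crossing event I would use a first moment over $z\in\Lambda$. This is $|\Lambda|$ times the probability that a single Gaussian path with profile $\sigma_\alpha$ stays below $\bar A_r+\lambda+n$ and ends above $A_\alpha(T+T_r)+\lambda-n$. The Girsanov tilt with drift $\sqrt{2d}$ produces $e^{-d(T+T_r)}$. The endpoint factor $\exp\{-\frac{k}{\sigma^2}(T+T_r)\cdot(\text{end height})\}$ contributes $e^{-c\lambda}$ from the initial term, as in \eqref{eqBBM: bar kr(0)/bar sigma^2}, together with $e^{c(T+T_r)^\alpha(n+1)}$. The latter arises because the window of length $\approx 2n$ below the barrier at the terminal time is weighted by $k/\sigma^2\asymp (1+2(T+T_r))^{\alpha}$. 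The Airy integral is bounded via \eqref{eqAiry: upper bound} and the bridge-type estimate of Lemma~\ref{Appendix: Bridge like probability} near the terminal time.

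The main obstacle is the endpoint weight. Since $\sigma_\alpha$ is of order $T^{-\alpha}$ near the end, the Girsanov exponent $\frac{k_r}{\sigma^2}(t)\,y$ grows like $T^{\alpha}y$. Keeping track of the terminal segment therefore requires care, and I would treat the last $O(1)$ time with the killed heat-kernel bound \eqref{eqA: killed BB ker estimate, derivative} rather than the Airy asymptotics. A secondary obstacle is checking that Slepian's comparison preserves the uniform constants, including the shift $s_0$.
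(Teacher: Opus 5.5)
There is a genuine gap in your comparison step, and it is the step that makes the rest of your plan applicable.

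\textbf{The gap.} You claim that the covariances of $\{\xi_\epsilon^{\alpha,z}(T+T_r)\}$ are dominated by those of a $2^d$-ary tree field with branching times $k\log 2+s_0$. You then use Slepian's lemma to bound the upper tail of $\max_{z\in\Lambda}\xi_\epsilon^{\alpha,z}(T+T_r)$ by that of the tree. This fails twice.

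First, the domination is false. Take $z_1=(1/2-\epsilon,0,\dots,0)$ and $z_2=(1/2,0,\dots,0)$ in $V_\epsilon$. They lie in different level-$1$ dyadic cubes, so $k_{z_1,z_2}=0$, and their tree covariance stays bounded as $T\to\infty$. But $\lVert z_1-z_2\rVert_\infty=\epsilon$, so Lemma~\ref{lemma: covariance estimate for MBBM} gives an MBBM covariance of at least $\int_0^{T}\sigma_\alpha(s)^2\,\td s$. For the same reason, your claim that the boxes $\Lambda_1(e^{s-s_0}z)$ are disjoint for $z$ in distinct dyadic cubes is wrong. Disjointness is governed by $\lVert z_1-z_2\rVert_\infty$, not by dyadic membership.

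Second, even if the domination held, Slepian's inequality goes the other way. With equal variances, \emph{smaller} covariances give a stochastically \emph{larger} maximum. So $\operatorname{Cov}(\xi)\le\operatorname{Cov}(\text{tree})$ would yield $\mathbb{P}(\max_\Lambda\xi\ge u)\ge\mathbb{P}(\max_\Lambda\text{tree}\ge u)$, which is useless for an upper tail.

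\textbf{The fix, which is the paper's argument.} Compare instead with the unshifted tree $S^{\alpha,z}=\sum_{k=1}^{N+N_r}Y_k^{I_k^{(z)}}$, where $\operatorname{Var}(Y_k^i)=\int_{(k-1)\log 2}^{k\log 2}\sigma_\alpha(s)^2\,\td s$. It has the same variances $\int_0^{T+T_r}\sigma_\alpha(s)^2\,\td s$. Since $\lVert z_1-z_2\rVert_\infty\le 2^{-k_{z_1,z_2}}$, we have $k_{z_1,z_2}\log 2\le t_{z_1,z_2}$. Therefore
$\int_0^{k_{z_1,z_2}\log 2}\sigma_\alpha(s)^2\,\td s\le\int_0^{t_{z_1,z_2}}\sigma_\alpha(s)^2\,\td s\le\operatorname{Cov}\bigl(\xi_\epsilon^{\alpha,z_1}(T+T_r),\xi_\epsilon^{\alpha,z_2}(T+T_r)\bigr)$.
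The last step uses the \emph{lower} bound of Lemma~\ref{lemma: covariance estimate for MBBM}. That bound is valid because $t_{z_1,z_2}\le T$ and $T_r\ge s_0$ give $T+T_r\ge t_{z_1,z_2}+s_0$. Securing this lower bound is the only purpose of the shift $s_0$; the upper bound in the lemma plays no role here. Slepian then gives $\mathbb{P}(\max_\Lambda\xi\ge u)\le\mathbb{P}(\max_\Lambda S\ge u)$.

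From that point your barrier and first-moment plan is essentially the paper's Propositions~\ref{proposition: aBBM exceeding probability} and \ref{proposition: BBM fixed splitting time, tail estimate}:
\begin{itemize}
\item many-to-one over the fixed-splitting tree with $c_A=\sqrt{2d}$;
\item crossing of $A_r+\lambda+n$ contributing $Ce^{-c(\lambda+n)}$;
\item on the non-crossing event, a union bound over $\Lambda$ with the bridge estimate, producing $e^{-d(T+T_r)}$, the initial weight $e^{-c\lambda}$ and the terminal weight $e^{c(T+T_r)^\alpha(n+1)}$.
\end{itemize}
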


\begin{proof}
	By assumption, $T=N\log 2$, $T_r=N_r\log 2$, and $\epsilon=2^{-N}$. We compare $\{\xi_\epsilon^{\alpha,z}(T+T_r)\colon z\in V_\epsilon\}$ with an auxiliary Gaussian field arising from a BBM with deterministic branching times. The construction proceeds in two steps.
	
	\begin{enumerate}
		\item \textbf{Dyadic hierarchy.}
		For each $k=0,1,\dots,N+N_r$, partition $[0,1)^d$ into the $2^{dk}$ half-open dyadic boxes of side length $2^{-k}$, with the convention that the partition at level $k=0$ consists of the single box $[0,1)^d$. For $z\in V_\epsilon$, let $I_k^{(z)}$ denote the index of the unique dyadic box of scale $2^{-k}$ containing $z$. For $z_1,z_2\in V_\epsilon$, define
		\[
		k_{z_1,z_2}:=
		\max\bigl\{0\leq k\leq N+N_r\colon I_k^{(z_1)}=I_k^{(z_2)}\bigr\}.
		\]
		Thus, $k_{z_1,z_2}$ is the largest dyadic scale at which $z_1$ and $z_2$ lie in the same box.
		
		\item \textbf{Construction of the auxiliary BBM field.}
		For each $1\leq k\leq N+N_r$ and $1\leq i\leq2^{dk}$, let $Y_k^i$ be independent centered Gaussian random variables with variance
		$\mathbb{E}[(Y_k^i)^2]=\int_{(k-1)\log 2}^{k\log 2}\sigma_\alpha(s)^2\,\td s$.
		For any $z\in V_\epsilon$, define
		\begin{equation}\label{eqMBBM: aux BBM}
			S_{T+T_r}^{\alpha,z}:=\sum_{k=1}^{N+N_r}Y_k^{I_k^{(z)}}.
		\end{equation}
	\end{enumerate}
	
	By construction, $\{S_{T+T_r}^{\alpha,z}\colon z\in V_\epsilon\}$ can be identified with a subcollection of the terminal positions of a BBM with deterministic branching period $\log 2$, offspring number $2^d$, and variance profile $\sigma_\alpha(\cdot)$. Denote this BBM by $\{Y_{T+T_r}^{\alpha,v}\colon v\in\mathcal{N}_{T+T_r}\}$, where $\mathcal{N}_{T+T_r}$ is the set of particles at time $T+T_r$.
	
	In the notation of Section~\ref{Appendix: BBM with fixed time}, this corresponds to $\Delta t=\log 2$, $m=2^d$, $c_0=2$, and the parameter $s_0$ in \eqref{eqaBBM: sigma} equal to $1$; in particular, $c_A=\sqrt{2d}$.
	
	Proposition~\ref{proposition: BBM fixed splitting time, tail estimate}, applied at terminal time $T+T_r$ with $K_0=\lambda$ and with $\Gamma$ corresponding to the labels indexed by $\Lambda$, gives, for $n\geq0$,
	\begin{equation}\label{eqMBBM: from S to Y BBM 1}
		\begin{aligned}
			&\mathbb{P}\Bigl(
			\max_{z\in\Lambda}S_{T+T_r}^{\alpha,z}
			\geq A_\alpha(T+T_r)+\lambda-n
			\Bigr)\\
			&\quad\leq
			Ce^{-c(\lambda+n)}
			+C\lvert\Lambda\rvert
			\exp\bigl\{-d(T+T_r)-c\lambda+c(T+T_r)^\alpha(n+1)\bigr\}.
		\end{aligned}
	\end{equation}
	Similarly, Proposition~\ref{proposition: aBBM exceeding probability} implies that, for $n\leq\lambda$,
	\begin{equation}\label{eqMBBM: from S to Y BBM 2}
		\mathbb{P}\Bigl(
		\max_{z\in\Lambda}S_{T+T_r}^{\alpha,z}
		\geq A_\alpha(T+T_r)+\lambda-n
		\Bigr)
		\leq Ce^{-c(\lambda-n)}.
	\end{equation}
	Indeed, if $\lambda-n$ is below the threshold required in Proposition~\ref{proposition: aBBM exceeding probability}, the bound follows trivially after increasing $C$.
	
	It remains to compare $\{\xi_\epsilon^{\alpha,z}(T+T_r)\colon z\in\Lambda\}$ and $\{S_{T+T_r}^{\alpha,z}\colon z\in\Lambda\}$. By construction, these fields have the same pointwise variances. Moreover, for $z_1,z_2\in V_\epsilon$,
	\[
	\operatorname{Cov}\bigl(S_{T+T_r}^{\alpha,z_1},S_{T+T_r}^{\alpha,z_2}\bigr)
	=
	\int_0^{k_{z_1,z_2}\log 2}\sigma_\alpha(s)^2\,\td s.
	\]
	For distinct $z_1,z_2$, since they belong to the same dyadic box of side length $2^{-k_{z_1,z_2}}$,
	$\lVert z_1-z_2\rVert_\infty\leq2^{-k_{z_1,z_2}}$, and hence $k_{z_1,z_2}\log 2\leq t_{z_1,z_2}$. Since also $t_{z_1,z_2}\leq T$ and $T_r\geq s_0$, Lemma~\ref{lemma: covariance estimate for MBBM} yields
	\[
	\operatorname{Cov}\bigl(S_{T+T_r}^{\alpha,z_1},S_{T+T_r}^{\alpha,z_2}\bigr)
	\leq
	\operatorname{Cov}\bigl(\xi_\epsilon^{\alpha,z_1}(T+T_r),
	\xi_\epsilon^{\alpha,z_2}(T+T_r)\bigr).
	\]
	For $z_1=z_2$, the variances are equal. Therefore, Slepian's comparison lemma (see Theorem~2.21 in \cite{AdlerTaylor2007_randomfieldGeometry}) yields, for every $u\in\mathbb{R}$,
	\[
	\mathbb{P}\Bigl(
	\max_{z\in\Lambda}\xi_\epsilon^{\alpha,z}(T+T_r)\geq u
	\Bigr)
	\leq
	\mathbb{P}\Bigl(
	\max_{z\in\Lambda}S_{T+T_r}^{\alpha,z}\geq u
	\Bigr).
	\]
	Taking $u=A_\alpha(T+T_r)+\lambda-n$ and combining this comparison with \eqref{eqMBBM: from S to Y BBM 1} and \eqref{eqMBBM: from S to Y BBM 2} completes the proof.
\end{proof}

The mollified BBM field $\xi_\epsilon^\alpha$ is indexed only by the lattice $V_\epsilon=[0,1)^d\cap\epsilon\mathbb{Z}^d$, whereas the Gaussian field $X_\epsilon^\alpha$ is indexed by the whole cube $[0,1)^d$. Thus, the two fields cannot be compared directly by Slepian's lemma. To control the fluctuations of $X_\epsilon^\alpha$ inside each $\epsilon$-box, we introduce an auxiliary Gaussian field $\{\psi_\epsilon^x\colon x\in[0,1)^d\}$.

Recall from Proposition~\ref{proposition: L2 distances for X epsilon alpha} that, for $x,y$ in the same $\epsilon$-box,
\begin{equation}\label{eqB: X epsilon local L2 distance}
	\mathbb{E}\bigl[(X_\epsilon^\alpha(x)-X_\epsilon^\alpha(y))^2\bigr]
	\leq C\,\theta_\alpha(\epsilon)
	\frac{\lVert x-y\rVert_2}{\epsilon},
\end{equation}
where $\theta_\alpha(\epsilon):=(1+\log(1/\epsilon))^{-2\alpha}$. We choose $\{\psi_\epsilon^x\}$ to be the Brownian-sheet field used in \cite{Acosta2014_tightness}. Its $L^2$ distance, recalled below in \eqref{eqB: Brownian sheet L2 distance}, has the same linear bound inside each $\epsilon$-box.

For $x=(x^1,\dots,x^d)\in[0,1)^d$, define
\[
[x]_\epsilon
:=\bigl(\epsilon\lfloor x^1/\epsilon\rfloor,\dots,
\epsilon\lfloor x^d/\epsilon\rfloor\bigr)\in V_\epsilon.
\]
Thus, $[x]_\epsilon$ is the lower-left corner of the unique $\epsilon$-box containing $x$. For $z\in V_\epsilon$ and $r>0$, write
\[
\Box_r^z
:=\bigl\{x\in[0,1)^d\colon
z^i\leq x^i<z^i+r,\ i=1,\dots,d\bigr\}.
\]

We now define the auxiliary Gaussian field $\psi_\epsilon^\cdot$. Fix $x,y\in[0,1)^d$. If $[x]_\epsilon\neq[y]_\epsilon$, we take $\psi_\epsilon^\cdot $ to be two independent Brownian sheets on $\Box_\epsilon^x$ and $\Box_\epsilon ^y$. Also, we set $\psi_\epsilon ^\cdot$ to be independent to any other auxiliary fields in the rest of this section. \par 

If $[x]_\epsilon=[y]_\epsilon=z\in V_\epsilon$, set
\[
\operatorname{Cov}(\psi_\epsilon^x,\psi_\epsilon^y)
=
\prod_{i=1}^d
\min\bigl\{L_{\epsilon,1}^z(x)^i,L_{\epsilon,1}^z(y)^i\bigr\},
\]
where $L_{\epsilon,1}^z\colon\Box_\epsilon^z\to\mathbf{1}+[0,1)^d$ is the affine map defined by
\[
L_{\epsilon,1}^z\biggl(z+\sum_{i=1}^d a_i\epsilon e_i\biggr)
=\mathbf{1}+\sum_{i=1}^d a_i e_i,
\qquad (a_1,\dots,a_d)\in[0,1)^d,
\]
and $\mathbf{1}=(1,\dots,1)$. Equivalently, when $x$ and $y$ belong to the same $\epsilon$-box, the covariance is the volume of the rectangle anchored at the origin whose opposite corner is the coordinatewise minimum of $L_{\epsilon,1}^z(x)$ and $L_{\epsilon,1}^z(y)$, as illustrated in Figure~\ref{fig: L epsilon 1 map}.

\begin{figure}[htbp]
	\centering
	\includegraphics[width=0.6\textwidth]{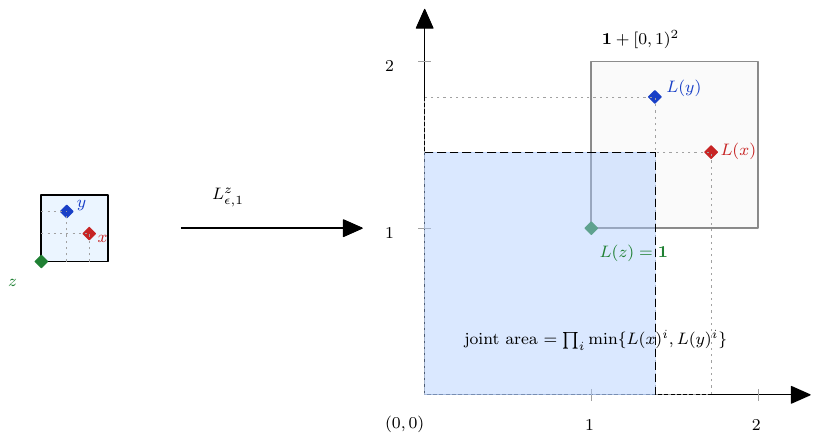}
	\caption{The affine map $L_{\epsilon,1}^z$.}
	\label{fig: L epsilon 1 map}
\end{figure}

By formulas (2.7)--(2.8) in \cite{Acosta2014_tightness}, the field $\{\psi_\epsilon^x\}$ satisfies
\begin{equation}\label{eqB: Brownian sheet variance}
	1\leq\operatorname{Var}(\psi_\epsilon^x)\leq2^d,
	\qquad x\in[0,1)^d,
\end{equation}
and, for every $z\in V_\epsilon$ and $x,y\in\Box_\epsilon^z$,
\begin{equation}\label{eqB: Brownian sheet L2 distance}
	\frac{\lVert x-y\rVert_1}{\epsilon}
	\leq
	\mathbb{E}\bigl[(\psi_\epsilon^x-\psi_\epsilon^y)^2\bigr]
	\leq
	2^d\frac{\lVert x-y\rVert_1}{\epsilon}.
\end{equation}
Finally, Lemma~2.2 of \cite{Acosta2014_tightness}, which follows from Fernique's majorizing-measure criterion, gives constants $c,C>0$, independent of $\lambda$ and $\epsilon$, such that
\begin{equation}\label{eqB: Brownian sheet max in a epsilon box}
	\sup_{v\in V_\epsilon}
	\mathbb{P}\Bigl(
	\sup_{x\in\Box_\epsilon^v}\psi_\epsilon^x\geq\lambda
	\Bigr)
	\leq Ce^{-c\lambda^2},
	\qquad \lambda\geq0,\quad\epsilon>0.
\end{equation}

\subsection{Upper Tail}\label{subsec: X field, upper tail}
In this subsection, we prove the following upper-tail estimate.

\begin{proposition}\label{Prop: upper bound for X epsilon}
	Fix $\alpha\in(0,1/2]$. For $\epsilon\in(0,1)$, set $T:=\log(1/\epsilon)$ and let $X_\epsilon^\alpha$ be the Gaussian field defined in \eqref{eqModel: X epsilon alpha white noise integral}. There exist constants $C,c>0$ and $T_*,\lambda_*>0$ such that
	\begin{equation}\label{eqU: upper bound, main result}
		\mathbb{P}\Bigl(
		\sup_{x\in[0,1)^d}X_\epsilon^\alpha(x)
		\geq A_\alpha(T)+\lambda
		\Bigr)
		\leq Ce^{-c\lambda}
	\end{equation}
	for every $T\geq T_*$ and $\lambda\geq\lambda_*$, where $A_\alpha(T)$ is given by \eqref{eqIntroduction: A(t)}.
\end{proposition}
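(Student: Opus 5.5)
We compare $X_\epsilon^\alpha$ with the sum of the MBBM and the Brownian-sheet field and apply Slepian's lemma, following the strategy of \cite{Acosta2014_tightness}. We may first reduce to dyadic $\epsilon=2^{-N}$, since for general $\epsilon$ we can pick $N$ with $2^{-N-1}<\epsilon\le 2^{-N}$. The field $X_\epsilon^\alpha$ differs from $X_{2^{-N-1}}^\alpha$ by an independent Gaussian field whose variance is $\int_{\epsilon^{-2}}^{4^{N+1}} I_\alpha'(t)\,\td t = O(T^{-2\alpha})$. Also $A_\alpha$ changes by $O(1)$ over a time shift of $\log 2$. This reduction can therefore be absorbed into constants, or avoided altogether by redoing the argument with non-dyadic meshes.

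For dyadic $\epsilon$, define on $[0,1)^d$ the comparison field
\[
\Xi(x):=\xi_\epsilon^{\alpha,[x]_\epsilon}(T+T_r)+a_\epsilon\,\psi_\epsilon^x+b\,G ,
\]
where $G$ is an independent standard Gaussian and $a_\epsilon:=C_0\theta_\alpha(\epsilon)^{1/2}$. The constants $T_r=N_r\log 2\ge s_0$, $C_0$ and $b$ are fixed, and $b$ is chosen so that $\operatorname{Var}\Xi(x)\ge \operatorname{Var}X_\epsilon^\alpha(x)$. Slepian's lemma needs two inputs. The first is that the variances match. We equalize them by also adding a deterministic-variance correction to $X_\epsilon^\alpha$: we use the version of Slepian (or Sudakov--Fernique) requiring equal variances and $L^2$-distance domination, i.e.
\[
\mathbb{E}[(X(x)-X(y))^2]\le \mathbb{E}[(\Xi(x)-\Xi(y))^2].
\]
The second input is this increment domination, which we check in two cases using Proposition~\ref{proposition: k alpha epsilon, covariance estimate} and Proposition~\ref{proposition: L2 distances for X epsilon alpha}.

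\textbf{Case (i): same $\epsilon$-box.} For $x,y$ in the same $\epsilon$-box, the MBBM part of $\Xi$ coincides at $x$ and $y$. Hence \eqref{eqB: X epsilon local L2 distance} together with the lower bound in \eqref{eqB: Brownian sheet L2 distance} gives domination once $C_0$ is large.

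\textbf{Case (ii): different boxes.} For $x,y$ in different boxes we need
\[
\operatorname{Cov}(X(x),X(y))\ge \operatorname{Cov}\bigl(\xi^{[x]},\xi^{[y]}\bigr)-O(1).
\]
The right side is at most $\int_0^{t_{[x],[y]}+s_0}\sigma_\alpha^2$ by Lemma~\ref{lemma: covariance estimate for MBBM}. The left side is $\int_0^{\log(1/(\epsilon\vee|x-y|))}\sigma_\alpha^2-O(1)$ by the precise covariance asymptotics, noting that $\sigma_\alpha^2$ integrates with the correct constant against $e^{-t|z|^2}I'_\alpha$ after the change of variables $t=e^{2s}$. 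The $O(1)$ discrepancies are absorbed by the $bG$ term, because shifting all covariances up by $b^2$ uniformly preserves the domination. This is the delicate point: the absorption needs uniform constants, and I expect verifying this covariance comparison with explicit $O(1)$ control across all scales to be the main obstacle. Proposition~\ref{proposition: k alpha epsilon, covariance estimate} may need to be sharpened from $\asymp$ to an additive $O(1)$ statement.

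Given the comparison, it remains to bound $\mathbb{P}(\sup_x\Xi(x)\ge A_\alpha(T)+\lambda)$. Note $A_\alpha(T+T_r)-A_\alpha(T)=O(1)$. Drop $bG$ at cost $e^{-c\lambda^2}$. For the rest, decompose according to the value of the sheet maximum in each box. Let
\[
\Lambda_n:=\Bigl\{z\in V_\epsilon\colon \sup_{\Box_\epsilon^z}a_\epsilon\psi_\epsilon\in[n,n+1)\Bigr\},
\]
so that the event is contained in $\bigcup_{n\ge0}\{\max_{z\in\Lambda_n}\xi^{z}\ge A_\alpha(T+T_r)+\lambda-n-1-O(1)\}$. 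Since $\psi$ is independent of the MBBM, we apply Proposition~\ref{proposition: MBBM exceeding particles num for a subset} conditionally on $\Lambda_n$. For $n\le\lambda/2$ we use \eqref{eqMBBM: exceeding particles num for a subset 1}, giving $Ce^{-c(\lambda-n)}$, weighted by the probability that $\Lambda_n\neq\varnothing$. For large $n$ we use \eqref{eqMBBM: exceeding particles num for a subset 2}, taking expectation of $|\Lambda_n|$. By \eqref{eqB: Brownian sheet max in a epsilon box} with $a_\epsilon\asymp T^{-\alpha}$, we have $\mathbb{E}|\Lambda_n|\le e^{dT}Ce^{-cn^2T^{2\alpha}}$. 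This Gaussian decay beats the factor $e^{c(T+T_r)^\alpha(n+1)}$, so the sum over $n$ is $\le Ce^{-c\lambda}$. The total $e^{-c\lambda}$ bound follows.
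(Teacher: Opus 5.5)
There is a genuine gap in Case (ii): you correctly identify the order-one covariance mismatch there, but the mechanism you propose for absorbing it does not work. Slepian's inequality with equal variances compares covariances, equivalently $L^2$ increments, and the common term $bG$ does not change the increments of $\Xi$. Once you equalize variances by adding $Z$ to $X_\epsilon^\alpha$, the most favourable choice is the fully correlated one, $Z(x)=\sqrt{v(x)}\,G'$. Here
\[
v(x)=b^2+w(x),\qquad w(x):=\operatorname{Var}\xi_\epsilon^{\alpha,[x]_\epsilon}(T+T_r)+a_\epsilon^2\operatorname{Var}\psi_\epsilon^x-\operatorname{Var}X_\epsilon^\alpha(x).
\]
Since $\sqrt{v(x)v(y)}\le b^2+\tfrac12(w(x)+w(y))$, the requirement $\operatorname{Cov}(X+Z)\ge\operatorname{Cov}(\Xi)$ between distinct boxes forces
\[
\operatorname{Cov}\bigl(\xi_\epsilon^{\alpha,[x]_\epsilon},\xi_\epsilon^{\alpha,[y]_\epsilon}\bigr)-\operatorname{Cov}\bigl(X_\epsilon^\alpha(x),X_\epsilon^\alpha(y)\bigr)\le\tfrac12\bigl(w(x)+w(y)\bigr),
\]
and $b$ has dropped out. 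The slack $w$ comes only from the top scales: $w=O\bigl((T_r+C_K+C_0^2)T^{-2\alpha}\bigr)$. The left side, however, is of order one for points at macroscopic distance. The MBBM with $s_0$ large gives all points a common covariance of roughly $\int_0^{s_0-O(1)}\sigma_\alpha^2$. By contrast, $\operatorname{Cov}(X_\epsilon^\alpha(x),X_\epsilon^\alpha(y))$ is an $O(1)$ constant that does not grow with $s_0$, and Proposition~\ref{proposition: k alpha epsilon, covariance estimate} only guarantees $\int_0^{(t_{x,y}-C_K)_+}\sigma_\alpha^2$. Enlarging $T_r$ to $\gtrsim T^{2\alpha}$ to compensate would shift the centering by $\asymp T_rT^{-\alpha}\to\infty$. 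Sharpening the covariance estimate does not help either: the mismatch is a shift of size $s_0+C_K+\log C_d$ in the scale parameter, and it points the wrong way.

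The missing idea is the spatial rescaling used in the paper. Fix $\delta=2^{-m}$ with $\log(1/\delta)\ge C_K+\log C_d+s_0+10$, and compare $x\mapsto X_{\delta\epsilon}^\alpha(\delta x)$ on $[0,1)^d$ with
\[
a(x)\,\xi_\epsilon^{\alpha,[x]_\epsilon}(T+T_r)+\sqrt{\theta(\epsilon)}\,\psi_\epsilon^x .
\]
The rescaled field has covariance at least $\int_0^{\log(1/\delta)+\log(1/\lVert x-y\rVert)-C_K}\sigma_\alpha^2$. These extra $\log(1/\delta)$ units of scale beat the $+s_0$ of Lemma~\ref{lemma: covariance estimate for MBBM} at every scale. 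Choosing $T_r\ge C_K+\log(1/\delta)$ makes the MBBM variance dominate, so variances are matched by a factor $a(x)\le1$. Since MBBM covariances are nonnegative, this factor only lowers them, and no additive constant is needed. You recover the original statement by a union bound over the $\delta^{-d}$ translates of $[0,\delta)^d$ together with translation invariance. This costs a factor $\delta^{-d}$ and the shift $A_\alpha(T+\log(1/\delta))-A_\alpha(T)=O_\delta(1)$. With this change, the rest of your argument goes through essentially as in the paper: the dyadic reduction, Case (i), the level sets of the sheet maxima, and the two bounds of Proposition~\ref{proposition: MBBM exceeding particles num for a subset}.
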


According to Remark~\ref{Remark: dyadic subsequence}, it suffices to consider $\epsilon=2^{-N}$ for some $N\in\mathbb{Z}_+$, which makes it convenient to use Proposition~\ref{proposition: MBBM exceeding particles num for a subset}.

Instead of working directly with $X_\epsilon^\alpha$, we first study the rescaled field $\{X_{\delta\epsilon}^\alpha(\delta x)\colon x\in[0,1)^d\}$, where $\delta:= 2^{-m}>0$ for some $m$ large  is a small fixed constant to be chosen below. We mention that, the certain form $2^{-m}$ is only for convenience of using Proposition \ref{proposition: MBBM exceeding particles num for a subset} concerning the fixed time BBM. The point of this rescaling is that the $\delta$-rescaled field has, up to constants, larger covariance than $X_\epsilon^\alpha$, which is needed for the Slepian comparison. We consider the following form of the probability in \eqref{eqU: upper bound, main result}:
\[
\begin{aligned}
	&\mathbb{P}\Bigl(
	\sup_{x\in[0,1)^d}X_{\delta\epsilon}^\alpha(x)
	\geq A_\alpha\bigl(\log(1/(\delta\epsilon))\bigr)+\lambda
	\Bigr)\\
	&\qquad=
	\mathbb{P}\Bigl(
	\sup_{x\in[0,1)^d}X_{\delta\epsilon}^\alpha(x)
	\geq A_\alpha\bigl(T+\log(1/\delta)\bigr)+\lambda
	\Bigr).
\end{aligned}
\]
Assume, for notational simplicity, that $\delta^{-1}\in\mathbb{N}$. By a union bound over the $\delta$-boxes and translation invariance of the covariance structure,
\[
\begin{aligned}
	\mathbb{P}\Bigl(
	\sup_{x\in[0,1)^d}X_{\delta\epsilon}^\alpha(x)
	\geq A_\alpha\bigl(T+\log(1/\delta)\bigr)+\lambda
	\Bigr)
	\leq &
	\delta^{-d}\mathbb{P}\Bigl(
	\sup_{x\in[0,\delta)^d}X_{\delta\epsilon}^\alpha(x)
	\geq A_\alpha\bigl(T+\log(1/\delta)\bigr)+\lambda
	\Bigr)\\
	=&
	\delta^{-d}\mathbb{P}\Bigl(
	\sup_{x\in[0,1)^d}X_{\delta\epsilon}^\alpha(\delta x)
	\geq A_\alpha\bigl(T+\log(1/\delta)\bigr)+\lambda
	\Bigr).
\end{aligned}
\]
Moreover, by the expansion \eqref{eqIntroduction: Aalpha(T) expansion},
$\lvert A_\alpha(T+\log(1/\delta))-A_\alpha(T)\rvert\leq C_\delta$ uniformly for $T\geq0$. Therefore, it suffices to prove that, for some $C,c>0$,
\begin{equation}\label{eqU: upper bound, delta rescale field, main result}
	\mathbb{P}\Bigl(
	\sup_{x\in[0,1)^d}X_{\delta\epsilon}^\alpha(\delta x)
	\geq A_\alpha(T)+\lambda
	\Bigr)
	\leq Ce^{-c\lambda}
\end{equation}
for all $T\geq T_*$ and $\lambda\geq\lambda_*$.

Recall the mollified BBM $\{\xi_\epsilon^{\alpha,z}(t)\colon z\in V_\epsilon,\ t\geq0\}$ defined in \eqref{eqM: MBBM} and the Brownian-sheet field $\{\psi_\epsilon^x\colon x\in[0,1)^d\}$ from Subsection~\ref{subsection: auxiliary Gaussian fields}. We take these two auxiliary fields to be independent. Let
\begin{equation}\label{eqU: upper bound, theta}
	\theta(\epsilon):=C_{\mathrm{loc}}\bigl(1+\log(1/\epsilon)\bigr)^{-2\alpha}
	\asymp T^{-2\alpha},
\end{equation}
where $C_{\mathrm{loc}}$ is chosen large enough so that the local $L^2$ estimate for $X_\epsilon^\alpha$ in \eqref{eqK: alpha epsilon kernel, L2 distance} is dominated by $\theta(\epsilon)$.

We compare $\{X_{\delta\epsilon}^\alpha(\delta x)\colon x\in[0,1)^d\}$ with
\begin{equation}\label{eqU: comparison field Y}
	Y_x:=a(x)\xi_\epsilon^{\alpha,[x]_\epsilon}(T+T_r)
	+\sqrt{\theta(\epsilon)}\,\psi_\epsilon^x,
	\qquad x\in[0,1)^d,
\end{equation}
where $a\colon[0,1)^d\to\mathbb{R}_+$ is defined in \eqref{eqU: upper bound, a(x)}. For the Slepian comparison below, choose $\delta>0$ small and then $T_r>0$ so that
\begin{equation}\label{eqU: upper bound, coefficients for fields}
	\begin{aligned}
		&-C_X+\log(1/\delta)\geq\log C_d+s_0+10,\\
		&T_r=N_r\log 2\quad\text{for some }N_r\in\mathbb{Z}_+,\quad 
		T_r\geq\max\bigl\{s_0,C_K+\log(1/\delta)\bigr\},
	\end{aligned}
\end{equation}
where $s_0$ is the constant in Lemma~\ref{lemma: covariance estimate for MBBM}, and $C_d$ is a fixed constant such that
\begin{equation}\label{eqU: grid distance dimensional constant}
	\lVert[x]_\epsilon-[y]_\epsilon\rVert_\infty
	\geq C_d^{-1}\lVert x-y\rVert_\infty
	\qquad\text{whenever }[x]_\epsilon\neq[y]_\epsilon.
\end{equation}
For instance, one may take $C_d=2\sqrt{d}$.

We now check the hypotheses of Slepian's lemma.

\begin{enumerate}
	\item \textbf{Equal variances.}
	Define
	\begin{equation}\label{eqU: upper bound, a(x)}
		a(x)^2
		:=
		\frac{
			\mathbb{E}[X_{\delta\epsilon}^\alpha(\delta x)^2]
			-\theta(\epsilon)\mathbb{E}[(\psi_\epsilon^x)^2]
		}{
			\mathbb{E}[(\xi_\epsilon^{\alpha,[x]_\epsilon}(T+T_r))^2]
		}.
	\end{equation}
	For $T\geq T_*$, after increasing $T_*$ if necessary, the numerator is nonnegative. Hence, $a(x)$ is well defined and
	$\operatorname{Var}(X_{\delta\epsilon}^\alpha(\delta x))=\operatorname{Var}(Y_x)$. Moreover, by \eqref{eqM: MBBM}, \eqref{eqB: Brownian sheet variance}, and \eqref{eqK:alpha epsilon kernel, covariance structure},
	\begin{equation}\label{eqU: upper bound, a() <=1}
		a(x)^2
		\leq
		\frac{
			\displaystyle\int_0^{C_K+\log(1/(\delta\epsilon))}\sigma_\alpha(s)^2\,\td s
		}{
			\displaystyle\int_0^{T_r+\log(1/\epsilon)}\sigma_\alpha(s)^2\,\td s
		}
		\leq1,
	\end{equation}
	where the last inequality follows from \eqref{eqU: upper bound, coefficients for fields}. Thus, $0\leq a(x)\leq1$.
	
	\item \textbf{Comparison of local $L^2$ distances.}
	Suppose $[x]_\epsilon=[y]_\epsilon$. Then $\delta x$ and $\delta y$ lie in the same $(\delta\epsilon)$-box. Hence, by \eqref{eqK: alpha epsilon kernel, L2 distance} and the monotonicity $\theta(\delta\epsilon)\leq\theta(\epsilon)$,
	\[
	\mathbb{E}\bigl[(X_{\delta\epsilon}^\alpha(\delta x)
	-X_{\delta\epsilon}^\alpha(\delta y))^2\bigr]
	\leq\theta(\delta\epsilon)\frac{\lVert x-y\rVert_2}{\epsilon}
	\leq\theta(\epsilon)\frac{\lVert x-y\rVert_2}{\epsilon}.
	\]
	On the other hand, by \eqref{eqB: Brownian sheet L2 distance},
	\[
	\begin{aligned}
		\mathbb{E}[(Y_x-Y_y)^2]
		&\geq\theta(\epsilon)
		\mathbb{E}[(\psi_\epsilon^x-\psi_\epsilon^y)^2]\\
		&\geq\theta(\epsilon)\frac{\lVert x-y\rVert_1}{\epsilon}
		\geq\theta(\epsilon)\frac{\lVert x-y\rVert_2}{\epsilon}.
	\end{aligned}
	\]
	Thus,
	\[
	\mathbb{E}\bigl[(X_{\delta\epsilon}^\alpha(\delta x)
	-X_{\delta\epsilon}^\alpha(\delta y))^2\bigr]
	\leq\mathbb{E}[(Y_x-Y_y)^2].
	\]
	
	\item \textbf{Covariance comparison between distinct boxes.}
	Suppose $[x]_\epsilon\neq[y]_\epsilon$. Since the Brownian-sheet fields are independent across distinct $\epsilon$-boxes and $a(\cdot)\leq1$,
	\[
	\begin{aligned}
		\operatorname{Cov}(Y_x,Y_y)
		&=a(x)a(y)\operatorname{Cov}\Bigl(
		\xi_\epsilon^{\alpha,[x]_\epsilon}(T+T_r),
		\xi_\epsilon^{\alpha,[y]_\epsilon}(T+T_r)
		\Bigr)\\
		&\leq\operatorname{Cov}\Bigl(
		\xi_\epsilon^{\alpha,[x]_\epsilon}(T+T_r),
		\xi_\epsilon^{\alpha,[y]_\epsilon}(T+T_r)
		\Bigr)\\
		&\leq
		\int_0^{\log(1/\lVert[x]_\epsilon-[y]_\epsilon\rVert_\infty)+s_0}
		\sigma_\alpha(s)^2\,\td s,
	\end{aligned}
	\]
	where the last inequality follows from \eqref{eqMBBM: cov}. On the other hand, \eqref{eqU: grid distance dimensional constant} and \eqref{eqU: upper bound, coefficients for fields} imply
	\[
	\log\frac{1}{\lVert[x]_\epsilon-[y]_\epsilon\rVert_\infty}+s_0
	\leq-C_X+\log\frac{1}{\delta\lVert x-y\rVert_2}.
	\]
	Therefore, the covariance lower bound in \eqref{eqK:alpha epsilon kernel, covariance structure} gives
	\[
	\operatorname{Cov}(Y_x,Y_y)
	\leq
	\operatorname{Cov}\bigl(
	X_{\delta\epsilon}^\alpha(\delta x),
	X_{\delta\epsilon}^\alpha(\delta y)
	\bigr).
	\]
	Equivalently, using the established equality of variances,
	\[
	\mathbb{E}\bigl[(X_{\delta\epsilon}^\alpha(\delta x)
	-X_{\delta\epsilon}^\alpha(\delta y))^2\bigr]
	\leq\mathbb{E}[(Y_x-Y_y)^2].
	\]
\end{enumerate}

The variance identity and the two $L^2$ comparisons imply, by Slepian's lemma, that
\begin{equation}\label{eqU: Slepian comparison}
	\mathbb{P}\Bigl(
	\sup_{x\in[0,1)^d}X_{\delta\epsilon}^\alpha(\delta x)
	\geq A_\alpha(T)+\lambda
	\Bigr)
	\leq
	\mathbb{P}\Bigl(
	\sup_{x\in[0,1)^d}Y_x
	\geq A_\alpha(T)+\lambda
	\Bigr).
\end{equation}
To bound the probability on the right, write
\begin{equation}\label{eqU: P(sup Y_x ge ...)=}
\begin{aligned}
	\mathbb{P}\Bigl(
	\sup_{x\in[0,1)^d}Y_x\geq A_\alpha(T)+\lambda
	\Bigr)=
	\mathbb{P}\Bigl(
	\sup_{x\in[0,1)^d}
	\bigl[a(x)\xi_\epsilon^{\alpha,[x]_\epsilon}(T+T_r)
	+\sqrt{\theta(\epsilon)}\,\psi_\epsilon^x\bigr]
	\geq A_\alpha(T)+\lambda
	\Bigr).
\end{aligned}
\end{equation}

We decompose the grid $V_\epsilon=[0,1)^d\cap\epsilon\mathbb{Z}^d$ according to the level sets of $\sqrt{\theta(\epsilon)}\,\psi_\epsilon^x$ and discuss the maxima of $a(x)\xi_\epsilon^{\alpha,[x]_\epsilon}(T+T_r)$ on these level sets. For $z\in V_\epsilon$, define $\psi_\epsilon^{*,z}:=\sup_{x\in\Box_\epsilon^z}\psi_\epsilon^x$. For $n\geq1$, let
\begin{equation}\label{eqU: upper bound, Gamma n def}
	\begin{aligned}
		\Gamma_0
		&:=\bigl\{z\in V_\epsilon\colon
		\sqrt{\theta(\epsilon)}\,\psi_\epsilon^{*,z}\leq0\bigr\},\ 
		\Gamma_n:=\bigl\{z\in V_\epsilon\colon
		\sqrt{\theta(\epsilon)}\,\psi_\epsilon^{*,z}\in[n-1,n)\bigr\},
		\qquad n\in\mathbb{N}.
	\end{aligned}
\end{equation}
From \eqref{eqB: Brownian sheet max in a epsilon box}, for all $n\geq\lambda/2$ and all sufficiently large $\lambda$,
\begin{equation}\label{eqU: upper bound, E Gamma_n}
	\begin{aligned}
		\mathbb{E}\lvert\Gamma_n\rvert
		&\leq
		\sum_{z\in V_\epsilon}
		\mathbb{P}\Bigl(
		\sqrt{\theta(\epsilon)}\,\psi_\epsilon^{*,z}\geq n-1
		\Bigr)\\
		&\leq C\epsilon^{-d}
		\exp\Bigl\{-c\frac{n^2}{\theta(\epsilon)}\Bigr\}\leq C\exp\bigl\{dT-cT^{2\alpha}n^2\bigr\},
	\end{aligned}
\end{equation}
where the last line uses \eqref{eqU: upper bound, theta}.

For $n\geq1$, on the event $z\in\Gamma_n$,
$\sqrt{\theta(\epsilon)}\,\psi_\epsilon^{*,z}<n$, while for $z\in\Gamma_0$,
$\sqrt{\theta(\epsilon)}\,\psi_\epsilon^{*,z}\leq0$. Also, for some $C_0>0$,
$\lvert A_\alpha(T)-A_\alpha(T+T_r)\rvert\leq C_0$ uniformly for $T\geq0$.  

By \eqref{eqU: P(sup Y_x ge ...)=}, we have 
\[
\begin{aligned}
\mathbb{P}\Bigl(
\sup_{x\in[0,1)^d}Y_x\geq A_\alpha(T)+\lambda
\Bigr) \le \sum_{n\ge 0} \mathbb P \bigl(
\sup_{z\in \Gamma_n} \sup_{x\in \Box_{z}^\epsilon} a(x) \xi_\epsilon ^{\alpha,z} (T+T_r) \ge A_\alpha(T) + \lambda-n
\bigr).
\end{aligned}
\]
When $n\le n_T:= \lceil A_\alpha(T) +\lambda\rceil -1$, we have $A_\alpha(T) + \lambda-n\ge 0$. Together with $a(x)\in [0,1]$ for all $x\in [0,1)^d$ and 
\[
| A_\alpha(T) - A_\alpha(T+T_r) | \le C_0
\]
for some $C_0>0$ uniformly for $T\ge 0$, we have 
\[
\begin{aligned}
& \mathbb P \bigl(
\sup_{z\in \Gamma_n} \sup_{x\in \Box_{z}^\epsilon} a(x) \xi_\epsilon ^{\alpha,z} (T+T_r) \ge A_\alpha(T) + \lambda-n
\bigr)\\
\le &\mathbb{P}\Bigl(
\sup_{z\in\Gamma_n}\xi_\epsilon^{\alpha,z}(T+T_r)
\geq A_\alpha(T+T_r)+\lambda-n-C_0
\Bigr)
\end{aligned}
\]

Thus we have 
\begin{equation}\label{eqU: decomposition by Gamma n polished}
	\begin{aligned}
		&\mathbb{P}\Bigl(
		\sup_{x\in[0,1)^d}Y_x\geq A_\alpha(T)+\lambda
		\Bigr)
		\\
		&\quad\leq
		\sum_{0\leq n\leq\lfloor\lambda/2\rfloor}
		\mathbb{P}\Bigl(
		\sup_{z\in\Gamma_n}\xi_\epsilon^{\alpha,z}(T+T_r)
		\geq A_\alpha(T+T_r)+\lambda-n-C_0
		\Bigr)\\
		&\qquad+
		\sum_{\lambda/2<n\leq n_T}
		\mathbb{P}\Bigl(
		\sup_{z\in\Gamma_n}\xi_\epsilon^{\alpha,z}(T+T_r)
		\geq A_\alpha(T+T_r)+\lambda-n-C_0
		\Bigr)\\
		&\qquad+
		\sum_{n>n_T}
		\mathbb{P}(\Gamma_n\neq\varnothing).
	\end{aligned}
\end{equation}

For the first sum in \eqref{eqU: decomposition by Gamma n polished}, since $\Gamma_n$ is independent of the MBBM field, Proposition~\ref{proposition: MBBM exceeding particles num for a subset}, specifically \eqref{eqMBBM: exceeding particles num for a subset 1}, gives
\[
\begin{aligned}
	&\sum_{0\leq n\leq\lfloor\lambda/2\rfloor}
	\mathbb{P}\Bigl(
	\sup_{z\in\Gamma_n}\xi_\epsilon^{\alpha,z}(T+T_r)
	\geq A_\alpha(T+T_r)+\lambda-n-C_0
	\Bigr)\\
	&\qquad\leq C\lambda e^{-c\lambda}
	\leq Ce^{-c\lambda}.
\end{aligned}
\]
We next treat the intermediate range
$\lambda/2<n\leq n_T$. Using \eqref{eqMBBM: exceeding particles num for a subset 2} in Proposition~\ref{proposition: MBBM exceeding particles num for a subset} and \eqref{eqU: upper bound, E Gamma_n},
\[
\begin{aligned}
	&\sum_{\lambda/2<n\leq n_T}
	\mathbb{P}\Bigl(
	\sup_{z\in\Gamma_n}\xi_\epsilon^{\alpha,z}(T+T_r)
	\geq A_\alpha(T+T_r)+\lambda-n-C_0
	\Bigr)\\
	&\quad\leq
	\sum_{\lambda/2<n\leq n_T}
	\Bigl(
	Ce^{-c\lambda-cn}
	+C\mathbb{E}\lvert\Gamma_n\rvert
	\exp\bigl\{-d(T+T_r)-c\lambda+c(T+T_r)^\alpha(n+1)\bigr\}
	\Bigr)\\
	&\quad\leq
	\sum_{\lambda/2<n\leq n_T}
	\Bigl(
	Ce^{-c\lambda-cn}
	+C\exp\bigl\{-c\lambda+c(T+T_r)^\alpha(n+1)-c'T^{2\alpha}n^2\bigr\}
	\Bigr)\\
	&\quad\leq Ce^{-c\lambda}.
\end{aligned}
\]
Finally, for the large-$n$ range, Markov's inequality and \eqref{eqU: upper bound, E Gamma_n} give
\[
\begin{aligned}
	\sum_{n>n_T}
	\mathbb{P}(\Gamma_n\neq\varnothing)
	&\leq
	\sum_{n>n_T}
	\mathbb{E}\lvert\Gamma_n\rvert\\
	&\leq
	C\sum_{n>n_T}
	\exp\bigl\{dT-cT^{2\alpha}n^2\bigr\}.
\end{aligned}
\]
Since $A_\alpha(T+T_r)\asymp T^{1-\alpha}$, we have $T^{2\alpha}A_\alpha(t)^2\asymp T^2$. Thus, after increasing $T_*$ and $\lambda_*$ if necessary, for $T\geq T_*$ and $\lambda\geq\lambda_*$,
\[
\sum_{n>n_T}
\mathbb{P}(\Gamma_n\neq\varnothing)
\leq Ce^{-cT^2-c\lambda}.
\]
Combining these three bounds with \eqref{eqU: decomposition by Gamma n polished} and \eqref{eqU: Slepian comparison} proves \eqref{eqU: upper bound, delta rescale field, main result}, and hence Proposition~\ref{Prop: upper bound for X epsilon}.

\subsection{Lower tail}\label{subsec: X field, lower tail}

The goal of this subsection is to prove the following lower-tail estimate for the maximum of the Gaussian field $X_\epsilon^\alpha$. The essential tools are the Paley--Zygmund inequality and a second-moment estimate.

\begin{remark}\label{remark: lower bound sensitivity}
	We emphasize that, unlike the case in \cite{MZ2016_slowdownBBM}, where the variance profile is always of order $O(1)$, the fast decay of the variance profile $\sigma_\alpha(t)$ in \eqref{eqIntroduction: sigma(t)} requires particular care in choosing a suitable barrier; see \eqref{eqL: lower bound, good event}. Specifically, when $t$ is very close to $T$, $\sigma_\alpha(t)$ is of order $T^{-\alpha}$, so all ``particles'' move rather slowly. Any slight change in the barrier---see the rectified function $r$ in \eqref{eqL: lower bound, r function} and the target set $I$ in \eqref{eqL: lower bound, how to choose I corrected} at terminal time---may have an overwhelming influence on the exceedance probability. Moreover, a technical estimate such as Lemma~\ref{lem: killed BM weighted L2} is required.
\end{remark}

\begin{proposition}\label{Prop: max of X, lower tail}
	Fix $\alpha\in(0,1/2]$ and $d\geq2$, and set $T:=\log(1/\epsilon)$. Consider the Gaussian field $X_\epsilon^\alpha$ defined in \eqref{eqModel: X epsilon alpha white noise integral}. There exist constants $C,c>0$ and $T_*,\lambda_*>0$ such that
	\begin{equation}\label{eqL: lower bound, main result}
		\mathbb{P}\Bigl(
		\sup_{x\in[0,1)^d}X_\epsilon^\alpha(x)
		\leq A_\alpha(T)-\lambda
		\Bigr)
		\leq Ce^{-c\lambda},
		\qquad T\geq T_*,\quad\lambda\geq\lambda_*,
	\end{equation}
	where $A_\alpha(T)$ is defined in \eqref{eqIntroduction: A(t)}.
\end{proposition}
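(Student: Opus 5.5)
We split $\lambda$ into two regimes, mirroring the proof of Proposition~\ref{proposition: BBM tight, lower bound}, and handle the hard regime by a second-moment argument.

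\emph{Reduction.} If $\lambda>10T$, a single point $X_\epsilon^\alpha(0)$ has variance at most $CT$. A Gaussian tail bound then gives $\mathbb{P}(X_\epsilon^\alpha(0)\le A_\alpha(T)-\lambda)\le e^{-c\lambda}$.

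For $\lambda\le 10T$ we argue by scale separation. Pick $T_0\asymp\lambda$ (say $T_0=\lambda/100$) and write $X_\epsilon^\alpha=X^{\le T_0}+X^{>T_0}$, where $X^{\le T_0}$ is the white-noise integral over $t\le e^{2T_0}$ and $X^{>T_0}$ the remainder. The two pieces are independent, and $X^{\le T_0}$ is smooth at spatial scale $e^{-T_0}$. Partition $[0,1)^d$ into $\asymp e^{dT_0/2}$ well-separated boxes $B_j$ of side $e^{-T_0}$, taking them at mutual distance $\gg e^{-T_0}$ so that the fine fields on different boxes are nearly independent.

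We use two bad events. The first is that $\inf_{x}X^{\le T_0}(x)\le -\lambda/2$. Since $\operatorname{Var}X^{\le T_0}\le CT_0^{1-2\alpha}$ (or $C\log T_0$), a chaining/Fernique bound over $e^{dT_0}$ cells gives probability at most $e^{dT_0-c\lambda^2/T_0}\le e^{-c\lambda}$. The second is that every box $B_j$ fails to contain a point with $X^{>T_0}\ge A_\alpha(T)-A_\alpha(T_0)-C$. Because $A_\alpha(T_0)\ll\lambda/2$, it suffices to show that each box succeeds with probability at least $p_0>0$ uniformly. Decorrelation across separated boxes (compare via Slepian with an independent family, or discard the tails of $q_t$, which cost only $O(e^{-cT_0})$ in covariance) then gives failure probability at most $(1-p_0)^{e^{cT_0}}\le e^{-c\lambda}$.

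\emph{Key step: a uniform positive-probability lower bound in one box.} We restrict to grid points $z\in V_\epsilon\cap B_j$ and compare $X^{>T_0}$ by Slepian with the MBBM $\xi_\epsilon^{\alpha}$ from \eqref{eqM: MBBM} run on $[T_0,T]$. For this direction the MBBM must have covariance \emph{at most} that of $X$, so we shift the MBBM boxes by $-s_0$ instead of $+s_0$ and rescale variances to match, as in the upper-tail proof. Alternatively, we run the second moment directly on $X$ using the covariance bounds \eqref{eqK:alpha epsilon kernel, covariance structure}.

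We then apply Paley--Zygmund to the count
\[
Z:=\#\{z: \text{the path } t\mapsto \xi^{z}(t) \text{ stays below } b(t)=\textstyle\int_{T_0}^t(\sqrt{2d}\sigma_\alpha-(\eta_\alpha-r)/\sqrt{2d})\text{ and ends in a window } I \text{ near } b(T)\}.
\]
Here $r$ is a rectifying function satisfying \eqref{eqAiry: conditions on r}, now lowering the barrier. The first moment comes from the Girsanov lemma and the Airy lower estimate of Proposition~\ref{propoAiry: Airy estimate}: we get $\mathbb{E}Z\asymp e^{-\int r/\sigma}\cdot(\text{terminal factor})$. The second moment is computed by splitting pairs according to their branching time $t_{z_1,z_2}$ and applying the Airy upper bound on both sides of the split, yielding $\mathbb{E}Z^2\le C(\mathbb{E}Z)^2$. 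Since the set of extremal points is only defined through the expectation counts, we normalise the window so that $\mathbb{E}Z\asymp1$ rather than computing it exactly.

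\emph{Main obstacle.} The main difficulty is the second moment near the terminal time. As Remark~\ref{remark: lower bound sensitivity} warns, $\sigma_\alpha\sim T^{-\alpha}$ there, so the barrier-to-endpoint window $I$ must have width of order $T^{-\alpha}$ times the appropriate diffusive scale. Otherwise, pairs branching late contribute a factor polynomial in $T$ and the ratio $\mathbb{E}Z^2/(\mathbb{E}Z)^2$ does not stay bounded. Our first attempt is to choose $I=[b(T)-\kappa,b(T)]$ with $\kappa$ of order one and control the late-branching sum using the weighted $L^2$ bound for killed Brownian motion (Lemma~\ref{lem: killed BM weighted L2}). The decaying factor $e^{-(d)(T-t)}$ from the number of pairs, combined with $\sigma_\alpha(t)$ small, should make this sum summable. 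The terminal bridge-type estimate, Lemma~\ref{Appendix: Bridge like probability}, is used to compare the first moment with the conditional probabilities entering the second moment.
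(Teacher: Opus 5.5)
\textbf{There is a genuine gap in the key step: Slepian's lemma is applied in the wrong direction.} With equal variances, the field with \emph{smaller} off-diagonal covariances has the stochastically \emph{larger} maximum. So if your MBBM has covariance at most that of $X^{>T_0}$, you only get $\mathbb{P}(\max_{B_j}X^{>T_0}\ge L)\le\mathbb{P}(\max_{B_j}\xi\ge L)$. That is an upper bound, so a Paley--Zygmund lower bound for $\xi$ says nothing about $X$. Shifting the boxes by $-s_0$ and rescaling by a factor $\le1$ ``as in the upper-tail proof'' is exactly the construction for the opposite inequality.

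The same reversal breaks your decorrelation step. $X^{>T_0}$ has strictly positive covariance across boxes, so comparing with an independent family gives $\mathbb{P}(\text{all boxes fail})\ge\prod_j\mathbb{P}(B_j\text{ fails})$, not $\le$. Truncating the tails of $q_t$ might work, but you would have to control the discarded field uniformly over $\asymp e^{dT}$ points with $T/T_0$ unbounded, and you do not address this. Running the second moment directly on $X$ is not available as stated either: the barrier paths of two points of $X$ never have independent increments, and the splitting-time factorization \eqref{eqL: P(E z1 cap E z2)} needs that.

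What you need is a comparison field with \emph{larger} covariance, and this is what the paper does. It takes the MBBM with $s_0=0$ and compares it on $V_\epsilon\cap\rho[0,1)^d$ with $X_{\delta\epsilon}^\alpha(x/\rho)$, with $\delta,\rho$ chosen as in \eqref{eqL: lower bound two aux constants}. Then the variance-matching factor satisfies $1\le b(x)\le 1+C/T$; a factor $\ge1$ only increases the nonnegative MBBM covariances, giving $\operatorname{Cov}(b(x)\xi^x,b(y)\xi^y)\ge\operatorname{Cov}(X_{\delta\epsilon}^\alpha(x/\rho),X_{\delta\epsilon}^\alpha(y/\rho))$. The paper does this comparison once, globally, before any decomposition. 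Afterwards the MBBM increments after $t_\lambda$ are exactly independent on boxes at distance $e^{-t_\lambda}$, so no decorrelation argument is needed.

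\textbf{Once the comparison is reversed, the rest of your plan is a workable variant with different bookkeeping.} You cut at $T_0\asymp\lambda$ and aim for a uniform per-box probability $p_0$ over $e^{cT_0}$ boxes. The paper cuts at $t_\lambda=\log\lambda$ and controls the initial segment with a single Gaussian, using that $z_*$ is independent of $\{\xi_\epsilon^z(t_\lambda)\}$. It then proves only $\mathbb{P}(\text{box succeeds})\ge c\,t_\lambda^{(\alpha-1)/2}$, which suffices because there are $\asymp\lambda^d$ boxes and $d\ge2$. Your cut leaves so much room that a uniform $p_0$ is unnecessary; a polynomially small per-box probability would do.

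What you leave unresolved is the terminal window, and your two descriptions of it are inconsistent. A window of width $\kappa$ of order one does not fit the available tools. Lemma~\ref{lem: killed BM weighted L2} needs $\sup_I e^{a(T)z}\le C$, while $a(T)\asymp T^\alpha$. Also, for $\alpha>1/3$ the Airy lower bound \eqref{eqAiry: lower bound} covers only $y\le V(T)^{-1/3}\to0$. The paper instead takes $I=[0,\ell_T]$ with $\ell_T$ fixed by \eqref{eqL: lower bound, how to choose I corrected}, together with a rectifier $r\ge0$ supported on $[T-T^\beta,T]$, $\beta>3/4$, with $\int r/\sigma_\alpha\asymp c_r\log T$. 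Taking $c_r$ large forces $a(T)\ell_T\le C$ and makes the terminal-splitting contribution $\mathrm{III}$ negligible.
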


\begin{proof}
	We compare $X_\epsilon^\alpha$ with the auxiliary mollified BBM field introduced in \eqref{eqM: MBBM}, now with $s_0=0$. More precisely, define
	\begin{equation}\label{eqL: MBBM}
		\xi_\epsilon^{\alpha,z}(t)
		:=\int_{[0,t]\times\mathbb{R}^d}
		\sigma_\alpha(s)\,
		\mathds{1}_{\Lambda_1(e^sz)}(y)\,W(\td s,\td y),
		\qquad z\in V_\epsilon,\quad t\geq0,
	\end{equation}
	where $V_\epsilon:=[0,1)^d\cap\epsilon\mathbb{Z}^d$ and $\Lambda_1(z):=z+[-1/2,1/2]^d$. As in the derivation of \eqref{eqMBBM: cov}, there exists $C_\xi>0$ such that, for any $t\geq0$ and $z_1,z_2\in V_\epsilon$,
	\begin{equation}\label{eqL: lower bound, Gaussian fields, cov of MBBM}
		\operatorname{Cov}\bigl(
		\xi_\epsilon^{\alpha,z_1}(t),
		\xi_\epsilon^{\alpha,z_2}(t)
		\bigr)
		\geq
		\int_0^{t\wedge(t_{z_1,z_2}-C_\xi)_+}
		\sigma_\alpha(s)^2\,\td s,
	\end{equation}
	where $t_{z_1,z_2}:=\log(1/\lVert z_1-z_2\rVert_\infty)$ and $t_{z,z}:=+\infty$.
	
	Fix $\delta,\rho\in(0,1)$ such that
	\begin{equation}\label{eqL: lower bound two aux constants}
		-C_K+\log(1/\delta)\geq1,
		\qquad
		C_K+\log\rho\leq-C_\xi,
	\end{equation}
	where $C_K$ and $C_\xi$ are the constants in \eqref{eqK:alpha epsilon kernel, covariance structure} and \eqref{eqL: lower bound, Gaussian fields, cov of MBBM}, respectively. We claim that, to prove \eqref{eqL: lower bound, main result}, it suffices to show that there exist constants $C,c>0$, $\epsilon_0>0$, and $\lambda_*>0$ such that
	\begin{equation}\label{eqL: reduced lower bound}
		\mathbb{P}\Bigl(
		\max_{x\in V_\epsilon\cap\rho[0,1)^d}
		X_{\delta\epsilon}^\alpha(x/\rho)
		\leq A_\alpha\bigl(\log(1/(\delta\epsilon))\bigr)-\lambda
		\Bigr)
		\leq Ce^{-c\lambda}
	\end{equation}
	for all $0<\epsilon<\epsilon_0/\delta$ and $\lambda\geq\lambda_*$. Indeed, once this is proved, setting $\epsilon'=\delta\epsilon$ gives
	$\{x/\rho\colon x\in V_\epsilon\cap\rho[0,1)^d\}\subset[0,1)^d$, and therefore
	\[
	\begin{aligned}
		&\mathbb{P}\Bigl(
		\sup_{x\in[0,1)^d}X_{\epsilon'}^\alpha(x)
		\leq A_\alpha\bigl(\log(1/\epsilon')\bigr)-\lambda
		\Bigr)\\
		&\quad\leq
		\mathbb{P}\Bigl(
		\max_{x\in V_\epsilon\cap\rho[0,1)^d}
		X_{\delta\epsilon}^\alpha(x/\rho)
		\leq A_\alpha\bigl(\log(1/(\delta\epsilon))\bigr)-\lambda
		\Bigr)
		\leq Ce^{-c\lambda}.
	\end{aligned}
	\]
	This yields \eqref{eqL: lower bound, main result}.
	
	We first study the range $\lambda\geq(10+d)T$ directly for $X_{\delta\epsilon}^\alpha$. Fix $x_0\in V_\epsilon\cap\rho[0,1)^d$. By the covariance estimates for $X_{\delta\epsilon}^\alpha$, and since $\delta$ and $\rho$ are fixed,
	$\operatorname{Var}(X_{\delta\epsilon}^\alpha(x_0/\rho))\leq CT$. Moreover, by the expansion \eqref{eqIntroduction: Aalpha(T) expansion},
	$A_\alpha(T-\log\delta)\leq CT^{1-\alpha}$. Hence, for all sufficiently large $T$ and all $\lambda\geq(10+d)T$,
	$A_\alpha(T-\log\delta)-\lambda\leq-\lambda/2$. Therefore,
	\[
	\begin{aligned}
		&\mathbb{P}\Bigl(
		\max_{x\in V_\epsilon\cap\rho[0,1)^d}
		X_{\delta\epsilon}^\alpha(x/\rho)
		\leq A_\alpha(T-\log\delta)-\lambda
		\Bigr)\\
		&\quad\leq
		\mathbb{P}\Bigl(
		X_{\delta\epsilon}^\alpha(x_0/\rho)\leq-\lambda/2
		\Bigr)
		\leq\exp\Bigl\{-c\frac{\lambda^2}{T}\Bigr\}
		\leq e^{-c\lambda}.
	\end{aligned}
	\]
	
	From now on, consider only $\lambda\leq(10+d)T$. We compare
	\[
	\bigl\{X_{\delta\epsilon}^\alpha(x/\rho)\colon
	x\in V_\epsilon\cap\rho[0,1)^d\bigr\}
	\quad\text{and}\quad
	\bigl\{b(x)\xi_\epsilon^{\alpha,x}(T)\colon
	x\in V_\epsilon\cap\rho[0,1)^d\bigr\},
	\]
	where
	\begin{equation}\label{eqL: definition of bx}
		b(x)^2
		:=
		\frac{\operatorname{Var}(X_{\delta\epsilon}^\alpha(x/\rho))}
		{\operatorname{Var}(\xi_\epsilon^{\alpha,x}(T))}.
	\end{equation}
	By \eqref{eqK:alpha epsilon kernel, covariance structure}, \eqref{eqL: MBBM}, and the choice of $\delta$,
	\begin{equation}\label{eqL: bx geq one}
		b(x)^2
		\geq
		\frac{
			\displaystyle\int_0^{-C_K+\log(1/(\delta\epsilon))}
			\sigma_\alpha(s)^2\,\td s
		}{
			\displaystyle\int_0^{\log(1/\epsilon)}\sigma_\alpha(s)^2\,\td s
		}
		\geq1.
	\end{equation}
	Moreover, since $\delta$ and $\rho$ are fixed, the same covariance estimates imply
	\begin{equation}\label{eqL: bx close to one}
		1\leq b(x)\leq1+\frac{C}{T},
		\qquad x\in V_\epsilon\cap\rho[0,1)^d,
	\end{equation}
	for all sufficiently large $T$.
	
	For distinct $x,y\in V_\epsilon\cap\rho[0,1)^d$, the covariance upper bound for $X_{\delta\epsilon}^\alpha$ gives
	\[
	\operatorname{Cov}\bigl(
	X_{\delta\epsilon}^\alpha(x/\rho),
	X_{\delta\epsilon}^\alpha(y/\rho)
	\bigr)
	\leq
	\int_0^{(C_K+\log(\rho/\lVert x-y\rVert_\infty))_+}
	\sigma_\alpha(s)^2\,\td s.
	\]
	On the other hand, by \eqref{eqL: lower bound, Gaussian fields, cov of MBBM} and $b(x),b(y)\geq1$,
	\[
	\operatorname{Cov}\bigl(
	b(x)\xi_\epsilon^{\alpha,x}(T),
	b(y)\xi_\epsilon^{\alpha,y}(T)
	\bigr)
	\geq
	\int_0^{(\log(1/\lVert x-y\rVert_\infty)-C_\xi)_+}
	\sigma_\alpha(s)^2\,\td s.
	\]
	The choice of $\rho$ in \eqref{eqL: lower bound two aux constants} therefore implies
	\[
	\operatorname{Cov}\bigl(
	b(x)\xi_\epsilon^{\alpha,x}(T),
	b(y)\xi_\epsilon^{\alpha,y}(T)
	\bigr)
	\geq
	\operatorname{Cov}\bigl(
	X_{\delta\epsilon}^\alpha(x/\rho),
	X_{\delta\epsilon}^\alpha(y/\rho)
	\bigr).
	\]
	For $x=y$, the two fields have the same variance by definition of $b(x)$. Hence, Slepian's comparison theorem gives
	\[
	\begin{aligned}
		&\mathbb{P}\Bigl(
		\max_{x\in V_\epsilon\cap\rho[0,1)^d}
		X_{\delta\epsilon}^\alpha(x/\rho)
		\leq A_\alpha(T-\log\delta)-\lambda
		\Bigr)\\
		&\quad\leq
		\mathbb{P}\Bigl(
		\max_{x\in V_\epsilon\cap\rho[0,1)^d}
		b(x)\xi_\epsilon^{\alpha,x}(T)
		\leq A_\alpha(T-\log\delta)-\lambda
		\Bigr).
	\end{aligned}
	\]
	Using \eqref{eqL: bx close to one}, the expansion \eqref{eqIntroduction: Aalpha(T) expansion}, and the restriction $\lambda\leq(10+d)T$, we have, uniformly in $x\in V_\epsilon\cap\rho[0,1)^d$,
	\[
	\frac{A_\alpha(T-\log\delta)-\lambda}{b(x)}
	\leq A_\alpha(T)-\lambda+C,
	\]
	where $C$ is independent of $T$ and $\lambda$. Hence,
	\[
	\begin{aligned}
		\mathbb{P}\Bigl(
		\max_{x\in V_\epsilon\cap\rho[0,1)^d}
		b(x)\xi_\epsilon^{\alpha,x}(T)
		\leq A_\alpha(T-\log\delta)-\lambda
		\Bigr)\leq
		\mathbb{P}\Bigl(
		\max_{x\in V_\epsilon\cap\rho[0,1)^d}\xi_\epsilon^{\alpha,x}(T)
		\leq A_\alpha(T)-\lambda+C
		\Bigr).
	\end{aligned}
	\]
	Thus, it suffices to prove
	\begin{equation}\label{eqL: lower bound, Gaussian field, MBBM left tail}
		\mathbb{P}\Bigl(
		\max_{x\in V_\epsilon\cap\rho[0,1)^d}\xi_\epsilon^{\alpha,x}(T)
		\leq A_\alpha(T)-\lambda
		\Bigr)
		\leq Ce^{-c\lambda}.
	\end{equation}
	For notational simplicity, write $\xi_\epsilon^x$ for $\xi_\epsilon^{\alpha,x}$ in the rest of the proof.
	
	\medskip
	\noindent\textbf{Step 1: Decomposition at time $t_\lambda$.}
	Set
	\begin{equation}\label{eqL: t_lambda, K_lambda}
		t_\lambda:=\log\lambda\leq\log\bigl((10+d)T\bigr),
		\qquad
		K_\lambda:=t_\lambda^{-\alpha}.
	\end{equation}
	We first construct an auxiliary function $r(\cdot)$ that will be used to define the barrier. Let $\beta\in(3/4,1)$. Choose a nonnegative $C^2$ function $r\colon[0,T]\to\mathbb{R}_+$ such that $r=0$ on $[0,T-T^\beta]$ and, for some constants $a_0,a_1,a_2>0$,
	\begin{equation}\label{eqL: lower bound, r function}
		\begin{aligned}
			&\lvert r(t)\rvert
			\leq\frac{a_0c_r\sigma_\alpha(T)\log T}{T^\beta},
			\lvert r'(t)\rvert
			\leq\frac{a_1c_r\sigma_\alpha(T)\log T}{T^{2\beta}},
			\lvert r''(t)\rvert
			\leq\frac{a_2c_r\sigma_\alpha(T)\log T}{T^{3\beta}},
			 t\in[0,T],\\
			&a_2^{-1}c_r\log T
			\leq\int_0^T\frac{r(t)}{\sigma_\alpha(t)}\,\td t
			\leq a_2c_r\log T,
			\int_0^T r(t)\,\td t
			\leq a_2c_r(\log T)\sigma_\alpha(T).
		\end{aligned}
	\end{equation}
	Such a function is obtained by smoothing a piecewise quadratic-linear profile supported on $[T-T^\beta,T]$. Our auxiliary function is similar to that in Section~3.2 of \cite{MZ2016_slowdownBBM}; however, the support of the function therein is $[T-T^{2/3},T]$. In our case, the condition $\beta>3/4$ guarantees that $r$ satisfies the regularity assumptions required in Proposition~\ref{propoAiry: Airy estimate}.
	
	Define
	\begin{equation}\label{eqL: lower bound, kr definition}
		k_r(t):=k(t)-\frac{r(t)}{\sqrt{2d}},
		\qquad t\in[t_\lambda,T].
	\end{equation}
	By \eqref{eqL: lower bound, r function} and the definitions \eqref{eqIntroduction: sigma(t)} and \eqref{eqIntroduction: eta explicit} of $\sigma_\alpha$ and $\eta_\alpha$, respectively,
	\begin{equation}\label{eqGL: Gaussian field, lower tail, condition for eta + r/sigma}
		\int_0^T
		\biggl(\frac{\eta(s)+r(s)}{\sigma_\alpha(s)}\biggr)^2\,\td s
		\leq C,
	\end{equation}
	and
	\begin{equation}\label{eqL: repalce A_alpha(T)}
		\begin{aligned}
			A_\alpha(T)
			&=\int_0^T k(s)\,\td s\\
			&=\int_0^{t_\lambda}k(s)\,\td s
			+\int_{t_\lambda}^T k_r(s)\,\td s
			+\frac{1}{\sqrt{2d}}\int_0^T r(s)\,\td s\\
			&=\int_0^{t_\lambda}k(s)\,\td s
			+\int_{t_\lambda}^T k_r(s)\,\td s+o_T(1).
		\end{aligned}
	\end{equation}
	Consequently, by \eqref{eqL: repalce A_alpha(T)}, it is enough to prove \eqref{eqL: lower bound, Gaussian field, MBBM left tail} with $A_\alpha(T)$ replaced by
	$\int_0^{t_\lambda}k(s)\,\td s+\int_{t_\lambda}^T k_r(s)\,\td s$.
	
	Choose
	\[
	z_*\in\operatorname*{arg\,max}_{z\in V_\epsilon\cap\rho[0,1)^d}
	\bigl\{\xi_\epsilon^z(T)-\xi_\epsilon^z(t_\lambda)\bigr\}.
	\]
	Then
	\begin{equation}\label{eqL: lower bound, framework eq 1}
		\begin{aligned}
			&\mathbb{P}\Bigl(
			\max_{x\in V_\epsilon\cap\rho[0,1)^d}\xi_\epsilon^x(T)
			\leq
			\int_0^{t_\lambda}k(s)\,\td s
			+\int_{t_\lambda}^T k_r(s)\,\td s-\lambda
			\Bigr)\\
			&\leq
			\mathbb{P}\Bigl(
			\xi_\epsilon^{z_*}(t_\lambda)
			\leq\int_0^{t_\lambda}k(s)\,\td s-\lambda-K_\lambda
			\Bigr) +
			\mathbb{P}\Bigl(
			\xi_\epsilon^{z_*}(T)-\xi_\epsilon^{z_*}(t_\lambda)
			\leq\int_{t_\lambda}^T k_r(s)\,\td s+K_\lambda
			\Bigr).
		\end{aligned}
	\end{equation}
	Recall \eqref{eqL: MBBM}. The random point $z_*$ is measurable with respect to the white noise on $(t_\lambda,T]\times\mathbb{R}^d$, and hence is independent of the $\sigma$-field generated by $\{\xi_\epsilon^z(t_\lambda)\colon z\in V_\epsilon\cap\rho[0,1)^d\}$. Conditioning on $z_*$, the variable $\xi_\epsilon^{z_*}(t_\lambda)$ is centered Gaussian with variance $\int_0^{t_\lambda}\sigma_\alpha(s)^2\,\td s$. Since
	\[
	\operatorname{Var}\bigl(\xi_\epsilon^{z_*}(t_\lambda)\bigr)
	=\int_0^{t_\lambda}\sigma_\alpha(s)^2\,\td s
	\leq Ct_\lambda=C\log\lambda
	\]
	and $\int_0^{t_\lambda}k(s)\,\td s-\lambda-K_\lambda\leq-\lambda/2$ for all sufficiently large $\lambda$, the Gaussian tail estimate gives
	\begin{equation}\label{eqL: lower bound, framework eq 2}
		\mathbb{P}\Bigl(
		\xi_\epsilon^{z_*}(t_\lambda)
		\leq\int_0^{t_\lambda}k(s)\,\td s-\lambda-K_\lambda
		\Bigr)
		\leq C\exp\Bigl\{-c\frac{\lambda^2}{\log\lambda}\Bigr\}
		\leq Ce^{-c\lambda}.
	\end{equation}
	
	It remains to estimate
	\[
	\begin{aligned}
		&\mathbb{P}\Bigl(
		\xi_\epsilon^{z_*}(T)-\xi_\epsilon^{z_*}(t_\lambda)
		\leq\int_{t_\lambda}^T k_r(s)\,\td s+K_\lambda
		\Bigr)\\
		&\quad=
		\mathbb{P}\Bigl(
		\max_{z\in V_\epsilon\cap\rho[0,1)^d}
		\bigl(\xi_\epsilon^z(T)-\xi_\epsilon^z(t_\lambda)\bigr)
		\leq\int_{t_\lambda}^T k_r(s)\,\td s+K_\lambda
		\Bigr).
	\end{aligned}
	\]
	To exploit independence, choose disjoint $\epsilon\mathbb{Z}^d$-lattice boxes $E_1,\dots,E_{N_\lambda}\subset V_\epsilon\cap[0,\rho]^d$ that are translates of one another, have side length comparable to $1/\lambda$, and are separated by distance at least $1/\lambda=e^{-t_\lambda}$. Then
	\[
	N_\lambda\geq c\lambda^d,
	\qquad
	\lvert E_i\rvert\asymp(\lambda\epsilon)^{-d}
	=e^{d(T-t_\lambda)}.
	\]
	For definiteness, we may take $E_1=V_\epsilon\cap[0,1/\lambda]^d$. Since the sets $E_i$ are separated by distance at least $e^{-t_\lambda}$, the definition \eqref{eqL: MBBM} implies that the families
	\[
	\bigl\{\xi_\epsilon^z(T)-\xi_\epsilon^z(t_\lambda)\colon z\in E_i\bigr\},
	\qquad i=1,\dots,N_\lambda,
	\]
	are independent and identically distributed. Hence,
	\[
	\begin{aligned}
		&\mathbb{P}\Bigl(
		\max_{z\in V_\epsilon\cap\rho[0,1)^d}
		\bigl(\xi_\epsilon^z(T)-\xi_\epsilon^z(t_\lambda)\bigr)
		\leq\int_{t_\lambda}^T k_r(s)\,\td s+K_\lambda
		\Bigr)\\
		&\quad\leq
		\exp\Biggl\{
		-N_\lambda\mathbb{P}\Bigl(
		\max_{z\in E_1}
		\bigl(\xi_\epsilon^z(T)-\xi_\epsilon^z(t_\lambda)\bigr)
		\geq\int_{t_\lambda}^T k_r(s)\,\td s+K_\lambda
		\Bigr)
		\Biggr\}.
	\end{aligned}
	\]
	We prove in the next step that
	\begin{equation}\label{eqL: lower bound, final prob to estimate}
		\mathbb{P}\Bigl(
		\max_{z\in E_1}
		\bigl(\xi_\epsilon^z(T)-\xi_\epsilon^z(t_\lambda)\bigr)
		\geq\int_{t_\lambda}^T k_r(s)\,\td s+K_\lambda
		\Bigr)
		\geq ct_\lambda^{(\alpha-1)/2}.
	\end{equation}
	Once this is established, \eqref{eqL: t_lambda, K_lambda} gives
	\begin{equation}\label{eqL: lower bound, framework eq 3}
		\begin{aligned}
			&\mathbb{P}\Bigl(
			\max_{z\in V_\epsilon\cap\rho[0,1)^d}
			\bigl(\xi_\epsilon^z(T)-\xi_\epsilon^z(t_\lambda)\bigr)
			\leq\int_{t_\lambda}^T k_r(s)\,\td s+K_\lambda
			\Bigr)\\
			&\quad\leq
			\exp\bigl\{-c\lambda^d(\log\lambda)^{(\alpha-1)/2}\bigr\}
			\leq e^{-c\lambda},
		\end{aligned}
	\end{equation}
	where the last inequality uses $d\geq2$. Combining \eqref{eqL: lower bound, framework eq 1}, \eqref{eqL: lower bound, framework eq 2}, and \eqref{eqL: lower bound, framework eq 3} proves \eqref{eqL: lower bound, Gaussian field, MBBM left tail}.
	
	\medskip
	\noindent\textbf{Step 2: Second moment and barrier estimate.}
	We now prove \eqref{eqL: lower bound, final prob to estimate}. Set
	\begin{equation}\label{eqL: lower bound, V function}
		V(t)
		:=\frac{2\sqrt{2d}}{\sigma_\alpha(t)^2}
		\biggl(\frac{1}{\sigma_\alpha}\biggr)'(t)
		\asymp(1+2t)^{3\alpha-1},
		\qquad t\geq t_\lambda,
	\end{equation}
	as in Proposition~\ref{propoAiry: Airy estimate}, and write $a(t):=k_r(t)/\sigma_\alpha(t)^2$. By \eqref{eqL: lower bound, kr definition} and \eqref{eqIntroduction: sigma(t)}, uniformly for $t\geq t_\lambda$,
	\begin{equation}\label{eqL: lower bound, a and V estimates}
		a(t)\asymp(1+2t)^\alpha,
		\qquad
		V(t)\asymp(1+2t)^{3\alpha-1}.
	\end{equation}
	
	For any $z\in E_1$, define the good event
	\begin{equation}\label{eqL: lower bound, good event}
		\begin{aligned}
			\mathcal{E}_z:=\Biggl\{
			&\xi_\epsilon^z(t)-\xi_\epsilon^z(t_\lambda)
			\leq\int_{t_\lambda}^t k_r(s)\,\td s+2K_\lambda,
			\qquad t\in[t_\lambda,T],\\
			&\xi_\epsilon^z(T)-\xi_\epsilon^z(t_\lambda)
			\in\int_{t_\lambda}^T k_r(s)\,\td s+2K_\lambda-I
			\Biggr\},
		\end{aligned}
	\end{equation}
	where $I\subset[0,K_\lambda\wedge V(T)^{-1/3}]$ will be chosen in \eqref{eqL: lower bound, how to choose I corrected}. The restriction $I\subset[0,V(T)^{-1/3}]$ allows us to apply the lower bound \eqref{eqAiry: lower bound} at terminal time. Since
	\[
	\frac{2K_\lambda}{V(t_\lambda)^{-1/3}}
	\asymp t_\lambda^{-1/3}\longrightarrow0,
	\]
	we have $2K_\lambda\leq V(t_\lambda)^{-1/3}$ for all sufficiently large $\lambda$. Thus, the initial point $2K_\lambda$ is also in the Airy boundary regime.
	
	Let $N_{\mathrm{good}}:=\sum_{z\in E_1}\mathds{1}_{\mathcal{E}_z}$. By the Paley--Zygmund inequality,
	\begin{equation}\label{eqL: lower bound, step 2, P-Z inequality}
		\mathbb{P}(N_{\mathrm{good}}\geq1)
		\geq
		\frac{\mathbb{E}[N_{\mathrm{good}}]^2}
		{\mathbb{E}[N_{\mathrm{good}}^2]}.
	\end{equation}
	Furthermore, since $I\subset[0,K_\lambda]$,
	\begin{equation}\label{eqL: lower bound, N good implies}
		\{N_{\mathrm{good}}\geq1\}
		\subset
		\Biggl\{
		\max_{z\in E_1}\bigl(\xi_\epsilon^z(T)-\xi_\epsilon^z(t_\lambda)\bigr)
		\geq\int_{t_\lambda}^T k_r(s)\,\td s+K_\lambda
		\Biggr\}.
	\end{equation}
	Thus, it suffices to estimate the ratio in \eqref{eqL: lower bound, step 2, P-Z inequality}.
	
	For $t_\lambda\leq t_1<t_2\leq T$, define $A(x,y;t_1,t_2)\,\td y$ by
	\[
	\begin{aligned}
		A(x,y;t_1,t_2)\,\td y
		:=\mathbb{P}\Biggl(
		&\xi_\epsilon^z(t_2)-\xi_\epsilon^z(t_\lambda)
		\in\int_{t_\lambda}^{t_2}k_r(s)\,\td s+2K_\lambda-\td y,\\
		&\xi_\epsilon^z(t)-\xi_\epsilon^z(t_\lambda)
		\leq\int_{t_\lambda}^t k_r(s)\,\td s+2K_\lambda,
		\quad t\in[t_1,t_2]\\
		&\Bigm|
		\xi_\epsilon^z(t_1)-\xi_\epsilon^z(t_\lambda)
		=\int_{t_\lambda}^{t_1}k_r(s)\,\td s+2K_\lambda-x
		\Biggr).
	\end{aligned}
	\]
	This kernel is independent of $z$ by translation invariance. By Lemma~\ref{Appendix: Bridge like probability} and \eqref{eqGL: Gaussian field, lower tail, condition for eta + r/sigma},
	\begin{equation}\label{eqL: lower bound, Az corrected}
		A(x,y;t_1,t_2)
		\asymp
		e^{-d(t_2-t_1)}e^{-a(t_1)x+a(t_2)y}
		G_r(x,y;t_1,t_2),
	\end{equation}
	where $G_r$ is the fundamental solution of the Airy-type PDE \eqref{eqA: Airy PDE with sigma diffusion} with the present choice of $r$.
	
	We first estimate the first moment. Using \eqref{eqL: lower bound, Az corrected} and the lower bound \eqref{eqAiry: lower bound} in Proposition~\ref{propoAiry: Airy estimate},
	\begin{equation}\label{eqL: lower bound, first moment corrected}
		\begin{aligned}
			\mathbb{E}[N_{\mathrm{good}}]
			&=\sum_{z\in E_1}\mathbb{P}(\mathcal{E}_z)\\
			&\geq
			ce^{-2a(t_\lambda)K_\lambda}
			K_\lambda\sqrt{V(t_\lambda)}\sqrt{V(T)}
			\exp\Biggl\{\int_{t_\lambda}^T
			\frac{r(s)}{\sigma_\alpha(s)}\,\td s\Biggr\}
			\int_I e^{a(T)y}y\,\td y.
		\end{aligned}
	\end{equation}
	
	We now choose $I$ appearing in \eqref{eqL: lower bound, good event} and \eqref{eqL: lower bound, first moment corrected} above. Let $I:= [0,\ell_T]$, for some $\ell_T>0$ such that
	\begin{equation}\label{eqL: lower bound, how to choose I corrected}
		\sqrt{V(T)}
		\exp\Biggl\{\int_{t_\lambda}^T
		\frac{r(s)}{\sigma_\alpha(s)}\,\td s\Biggr\}
		\int_0^{\ell_T}e^{a(T)y}y\,\td y
		=1,
	\end{equation}
	By \eqref{eqL: lower bound, r function}, \eqref{eqL: lower bound, V function}, for all sufficiently large $T$,
	\[
	\begin{aligned}
		1=	\sqrt{V(T)}
		\exp\Biggl\{\int_{t_\lambda}^T
		\frac{r(s)}{\sigma_\alpha(s)}\,\td s\Biggr\}
		\int_0^{\ell_T}e^{a(T)y}y\,\td y \ge C T^{(3\alpha -1)/2 + c_r/a_2} \ell _T^2
	\end{aligned}
	\]
	Thus $\ell_T \le C^{-1} T^{ -(3\alpha-1)/4 -c_r /(2a_2)}$. Especially, when we take $c_r$ to be large, we can ensure the condition $I \subset [0,K_\lambda \wedge V(T)^{-1/3}]$ in \eqref{eqL: lower bound, good event}, and also 
	\begin{equation}\label{eqL: e^a(T) q <C}
		e^{a(T)y}\leq C,
		\qquad y\in[0,\ell_T],
	\end{equation}
	which will be used later.
	
	With this choice of $I$, \eqref{eqL: lower bound, first moment corrected}, \eqref{eqL: lower bound, V function}, and $K_\lambda=t_\lambda^{-\alpha}$ give
	\begin{equation}\label{eqL: E N_good >}
		\mathbb{E}[N_{\mathrm{good}}]
		\geq
		cK_\lambda\sqrt{V(t_\lambda)}e^{-2a(t_\lambda)K_\lambda}
		\geq ct_\lambda^{(\alpha-1)/2}.
	\end{equation}
	
	We now estimate the second moment. Write
	\begin{equation}\label{eqL: E N_good ^2 =}
		\mathbb{E}[N_{\mathrm{good}}^2]
		=
		\mathbb{E}[N_{\mathrm{good}}]
		+
		\sum_{\substack{z_1,z_2\in E_1\\z_1\neq z_2}}
		\mathbb{P}(\mathcal{E}_{z_1}\cap\mathcal{E}_{z_2}).
	\end{equation}

	For distinct $z_1,z_2\in E_1$, write
	\[
	h_{12}:
	=\log\frac{1}{\lVert z_1-z_2\rVert_\infty}.
	\]
	For every $s\in[h_{12},T)$, the noises driving
	$\xi_\cdot^{z_1}$ and $\xi_\cdot^{z_2}$ on $[s,T]$ are independent;
	see \eqref{eqL: MBBM}. Set
	\[
	H_s(y):=\int_I A(y,z;s,T)\,\td z,
	\qquad y>0.
	\]
We claim that
	\begin{equation}\label{eqL: P(E z1 cap E z2)}
		\mathbb{P}(\mathcal{E}_{z_1}\cap\mathcal{E}_{z_2})
		\leq
		\int_0^\infty
		A(2K_\lambda,y;t_\lambda,s)H_s(y)^2\,\td y,
		\qquad s\in[h_{12},T).
	\end{equation}
	Indeed, if $\mathcal{B}_{z_i}(s)$ denotes the event that the path corresponding to $z_i$ stays below the barrier up to time $s$, and $Y_{z_i}(s)$ denotes its distance from the barrier at time $s$, then the left-hand side of \eqref{eqL: P(E z1 cap E z2)} is bounded by
	\[
	\mathbb{E}\bigl[
	\mathds{1}_{\mathcal{B}_{z_1}(s)}
	\mathds{1}_{\mathcal{B}_{z_2}(s)}
	H_s(Y_{z_1}(s))H_s(Y_{z_2}(s))
	\bigr].
	\]
	Using Cauchy--Schwarz inequality and the fact that $\xi^{z_1}$ and $\xi^{z_2}$ have the same law and independent increment after $h_{1,2}\le s$, we can further bound it by 
	\[
	\mathbb{E}\bigl[
	\mathds{1}_{\mathcal{B}_{z_1}(s)}
	H_s(Y_{z_1}(s))^2
	\bigr],
	\]
	which is precisely the right-hand side of
	\eqref{eqL: P(E z1 cap E z2)}.
	
	We next organize the pairs into unit splitting-time shells. First, consider the pairs with
	\[
	h_{12}\geq T-2.
	\]
	For each fixed $z_1\in E_1$, there are at most $C$ such points $z_2$, because
	\[
	h_{12}\geq T-2
	\quad\Longrightarrow\quad
	\lVert z_1-z_2\rVert_\infty\leq e^2\epsilon.
	\]
	Consequently,
	\begin{equation}\label{eqL: terminal splitting pairs}
		\sum_{\substack{z_1,z_2\in E_1,\ z_1\neq z_2\\
				h_{12}\geq T-2}}
		\mathbb{P}(\mathcal{E}_{z_1}\cap\mathcal{E}_{z_2})
		\leq
		C\sum_{z_1\in E_1}\mathbb{P}(\mathcal{E}_{z_1})
		=
		C\mathbb{E}[N_{\mathrm{good}}].
	\end{equation}
	
	For the remaining pairs, set
	\[
	s_j:=t_\lambda+j,
	\qquad
	\mathcal{J}:=\{j\in\mathbb{Z}_{\geq0}:s_j<T-2\},
	\]
	and define the set of ordered pairs
	\[
	\mathcal{P}_j
	:=
	\Bigl\{
	(z_1,z_2)\in E_1^2:
	z_1\neq z_2,\ 
	s_j\leq h_{12}<s_j+1,\ 
	h_{12}<T-2
	\Bigr\}.
	\]
	Since $\epsilon=e^{-T}$ and
	$\lvert E_1\rvert\leq Ce^{d(T-t_\lambda)}$, a lattice-ball count gives
	\begin{equation}\label{eqL: splitting shell count}
		\begin{aligned}
			\lvert\mathcal{P}_j\rvert
			&\leq
			C\lvert E_1\rvert
			\biggl(1+\frac{e^{-s_j}}{\epsilon}\biggr)^d \leq
			Ce^{d(2T-t_\lambda-s_j)},
			\qquad j\in\mathcal{J}.
		\end{aligned}
	\end{equation}
	Here the last inequality uses $s_j<T-2$, so that
	$e^{-s_j}/\epsilon=e^{T-s_j}\geq e^2$.
	
	For $(z_1,z_2)\in\mathcal{P}_j$ and every
	$s\in[s_j+1,s_j+2]$, we have $s\geq h_{12}$ and $s<T$.
	Thus, by \eqref{eqL: P(E z1 cap E z2)}, we have
	\[
	\mathbb{P}(\mathcal{E}_{z_1}\cap\mathcal{E}_{z_2})
	\leq
	\int_{s_j+1}^{s_j+2}\int_0^\infty
	A(2K_\lambda,y;t_\lambda,g)
	\biggl[\int_I A(y,z;s,T)\,\td z\biggr]^2
	\,\td y\,\td s.
	\]

	By \eqref{eqL: lower bound, Az corrected},
	\[
	\mathbb{P}(\mathcal{E}_{z_1}\cap\mathcal{E}_{z_2})
	\leq
	Ce^{-2a(t_\lambda)K_\lambda}
	\int_{s_j+1}^{s_j+2}
	e^{-d(2T-t_\lambda-s)}\Phi(s)\,\td s
	\]
	with 
	
	\[
	\Phi(s)
	:=
	\int_0^\infty
	e^{-a(s)y}G_r(2K_\lambda,y;t_\lambda,s)
	\biggl[
	\int_I e^{a(T)z}G_r(y,z;s,T)\,\td z
	\biggr]^2
	\,\td y.
	\]
	Combining this with \eqref{eqL: splitting shell count}, we obtain
	\[
	\begin{aligned}
		\sum_{(z_1,z_2)\in\mathcal{P}_j}
		\mathbb{P}(\mathcal{E}_{z_1}\cap\mathcal{E}_{z_2})
		&\leq
		Ce^{-2a(t_\lambda)K_\lambda}
		\int_{s_j+1}^{s_j+2}
		e^{d(s-s_j)}\Phi(s)\,\td s\\
		&\leq
		Ce^{-2a(t_\lambda)K_\lambda}
		\int_{s_j+1}^{s_j+2}\Phi(s)\,\td s.
	\end{aligned}
	\]
	Summing over $j$, and then combining the result with
	\eqref{eqL: E N_good ^2 =} and
	\eqref{eqL: terminal splitting pairs}, yields
	\begin{equation}\label{eqL: lower bound second moment representation corrected}
		\begin{aligned}
			\mathbb{E}[N_{\mathrm{good}}^2]
			&\leq
			C\mathbb{E}[N_{\mathrm{good}}]\\
			&\quad+
			Ce^{-2a(t_\lambda)K_\lambda}
			\int_{t_\lambda}^T\int_0^\infty
			e^{-a(h)y}G_r(2K_\lambda,y;t_\lambda,h)
			\biggl[
			\int_I e^{a(T)z}G_r(y,z;h,T)\,\td z
			\biggr]^2
			\,\td y\,\td h.
		\end{aligned}
	\end{equation}
	
	We split the $h$-integral according to the time scale required by the Airy estimate. Choose $C_0>0$ larger than the constants $L_*$ in Proposition~\ref{propoAiry: Airy estimate}, and set
	\[
	\Delta_\lambda:=C_0(1+2t_\lambda)^{2/3},
	\qquad
	\Delta_T:=C_0(1+2T)^{2/3}.
	\]
	Then
	\[
	[t_\lambda,T]
	=[t_\lambda,t_\lambda+\Delta_\lambda]
	\cup[t_\lambda+\Delta_\lambda,T-\Delta_T]
	\cup[T-\Delta_T,T].
	\]
	Denote the corresponding contributions to the second term in \eqref{eqL: lower bound second moment representation corrected} by $\mathrm{I}$, $\mathrm{II}$, and $\mathrm{III}$. They are estimated separately in \eqref{eqL: lower bound second moment initial}, \eqref{eqL: lower bound second moment middle}, and \eqref{eqL: lower bound second moment terminal final}. We then obtain
	\begin{equation}\label{eqL: E N_good^2 <}
		\mathbb{E}[N_{\mathrm{good}}^2]
		\leq
		C\mathbb{E}[N_{\mathrm{good}}]
		+CK_\lambda e^{-2a(t_\lambda)K_\lambda}
		\bigl(t_\lambda^{2\alpha-1}
		+t_\lambda^{(3\alpha-1)/2}o_T(1)\bigr).
	\end{equation}
	By \eqref{eqL: lower bound, step 2, P-Z inequality}, \eqref{eqL: E N_good^2 <}, and \eqref{eqL: E N_good >},
	\[
	\mathbb{P}(N_{\mathrm{good}}\geq1)
	\geq Ct_\lambda^{(\alpha-1)/2}
	\]
	when $T>0$ and $\lambda>0$ are sufficiently large. Together with \eqref{eqL: lower bound, N good implies}, this proves \eqref{eqL: lower bound, final prob to estimate} and hence Proposition~\ref{Prop: max of X, lower tail}.
\end{proof}

\noindent\textbf{Estimates for integrals $\mathrm{I}$, $\mathrm{II}$, and $\mathrm{III}$.}
We shall repeatedly use the killed Brownian-motion kernel
\begin{equation}\label{eqL: p_u^D}
	p_u^D(x,y)
	:=\frac{1}{\sqrt{2\pi u}}
	\biggl[
	\exp\Bigl\{-\frac{(x-y)^2}{2u}\Bigr\}
	-\exp\Bigl\{-\frac{(x+y)^2}{2u}\Bigr\}
	\biggr],
	\qquad x,y>0,\quad u>0.
\end{equation}
For convenience, we collect the relevant notations and some asymptotics:

\[ \begin{aligned} &\lambda\in[\lambda_*,(10+d)T],\quad T\ge T_*, \qquad t_\lambda:=\log\lambda,\quad K_\lambda:=t_\lambda^{-\alpha},\\ &\Delta_\lambda:=C_0(1+2t_\lambda)^{2/3},\quad \Delta_T:=C_0(1+2T)^{2/3},\qquad \sigma_\alpha(t):=\sqrt{2}\,(1+2t)^{-\alpha},\\ &\eta_\alpha(t):=c_\eta(1+2t)^{-\alpha-2/3},\qquad a(t)\asymp(1+2t)^\alpha,\quad V(t)\asymp(1+2t)^{3\alpha-1}\quad (t\ge t_\lambda). \end{aligned} \]

Also recall the auxiliary function $r(\cdot)$ satisfying \eqref{eqL: lower bound, r function}. In particular, by increasing $c_r$, we can ensure that
\[
\int_{t_\lambda}^T\frac{r(s)}{\sigma_\alpha(s)}\,\td s
\geq a_2^{-1}c_r\log T
\]
is arbitrarily large. In what follows, we use these definitions and asymptotics without further comment.

We first consider the initial interval $h\in[t_\lambda,t_\lambda+\Delta_\lambda]$. By definition,
\begin{equation}\label{eqL: lower bound second moment initial expression}
	\begin{aligned}
		\mathrm{I}
		:=Ce^{-2a(t_\lambda)K_\lambda}
		\int_{t_\lambda}^{t_\lambda+\Delta_\lambda}\int_0^\infty
		&e^{-a(h)y}G_r(2K_\lambda,y;t_\lambda,h)\\[-0.2ex]
		&\times
		\biggl[\int_I e^{a(T)z}G_r(y,z;h,T)\,\td z\biggr]^2
		\,\td y\,\td h.
	\end{aligned}
\end{equation}
On this interval, $r=0$, and $a(h)\asymp a(t_\lambda)$, $V(h)\asymp V(t_\lambda)$, and $\sigma_\alpha(h)\asymp\sigma_\alpha(t_\lambda)$. Since $[h,T]$ is long enough for the Airy estimate, the upper bound in Proposition~\ref{propoAiry: Airy estimate}, together with the choice of $I$ in \eqref{eqL: lower bound, how to choose I corrected}, gives
\[
\int_I e^{a(T)z}G_r(y,z;h,T)\,\td z
\leq Cy\sqrt{V(h)}
\leq Cy\sqrt{V(t_\lambda)}.
\]
For the short interval $[t_\lambda,h]$,
\[
G_r(2K_\lambda,y;t_\lambda,h)
\le 
p^D_{\int_{t_\lambda}^h\sigma_\alpha(s)^2\,\td s}(2K_\lambda,y)
\exp\Biggl\{\int_{t_\lambda}^h
\frac{\eta_\alpha(s)+r(s)}{\sigma_\alpha(s)}\,\td s\Biggr\}.
\]
Since
\[
\int_{t_\lambda}^h
\frac{\eta_\alpha(s)+r(s)}{\sigma_\alpha(s)}\,\td s
\leq C\int_{t_\lambda}^{t_\lambda+\Delta_\lambda}
(1+2s)^{-2/3}\,\td s
\leq C,
\]
we have
\[
G_r(2K_\lambda,y;t_\lambda,h)
\leq
Cp^D_{\int_{t_\lambda}^h\sigma_\alpha(s)^2\,\td s}(2K_\lambda,y).
\]
Substituting these two bounds into \eqref{eqL: lower bound second moment initial expression}, we obtain
\[
\begin{aligned}
	\mathrm{I}
	&\leq
	Ce^{-2a(t_\lambda)K_\lambda}V(t_\lambda)
	\int_{t_\lambda}^{t_\lambda+\Delta_\lambda}\int_0^\infty
	e^{-a(t_\lambda)y}y^2
	p^D_{\int_{t_\lambda}^h\sigma_\alpha(s)^2\,\td s}(2K_\lambda,y)
	\,\td y\,\td h\\
	&\leq
	Ce^{-2a(t_\lambda)K_\lambda}
	\frac{V(t_\lambda)}{\sigma_\alpha(t_\lambda)^2}
	\int_0^\infty e^{-a(t_\lambda)y}y^2
	\biggl(\int_0^\infty p_u^D(2K_\lambda,y)\,\td u\biggr)\,\td y.
\end{aligned}
\]
The killed Brownian resolvent satisfies
\[
\int_0^\infty p_u^D(x,y)\,\td u
\leq C(x\wedge y),
\qquad x,y>0.
\]
Therefore,
\[
\begin{aligned}
	\mathrm{I}
	&\leq
	Ce^{-2a(t_\lambda)K_\lambda}
	\frac{V(t_\lambda)}{\sigma_\alpha(t_\lambda)^2}
	\int_0^\infty e^{-a(t_\lambda)y}y^2(2K_\lambda\wedge y)\,\td y\\
	&\leq
	Ce^{-2a(t_\lambda)K_\lambda}
	\frac{V(t_\lambda)}{\sigma_\alpha(t_\lambda)^2}
	a(t_\lambda)^{-4}.
\end{aligned}
\]
Since
\[
a(t_\lambda)\asymp t_\lambda^\alpha,
\qquad
V(t_\lambda)\asymp t_\lambda^{3\alpha-1},
\qquad
\sigma_\alpha(t_\lambda)^2\asymp t_\lambda^{-2\alpha},
\qquad
K_\lambda=t_\lambda^{-\alpha},
\]
we conclude that
\begin{equation}\label{eqL: lower bound second moment initial}
	\mathrm{I}\leq CK_\lambda t_\lambda^{2\alpha-1}.
\end{equation}

We next consider the middle interval $h\in[t_\lambda+\Delta_\lambda,T-\Delta_T]$. Here both $[t_\lambda,h]$ and $[h,T]$ are long enough for the Airy estimate. Thus,
\[
G_r(2K_\lambda,y;t_\lambda,h)
\leq
CK_\lambda y\sqrt{V(t_\lambda)V(h)}
\exp\Biggl\{\int_{t_\lambda}^h
\frac{r(s)}{\sigma_\alpha(s)}\,\td s\Biggr\},
\]
and
\[
G_r(y,z;h,T)
\leq
Cyz\sqrt{V(h)V(T)}
\exp\Biggl\{\int_h^T
\frac{r(s)}{\sigma_\alpha(s)}\,\td s\Biggr\}.
\]
Using the choice of $I$ in \eqref{eqL: lower bound, how to choose I corrected},
\[
\begin{aligned}
	\int_I e^{a(T)z}G_r(y,z;h,T)\,\td z
	&\leq
	Cy\sqrt{V(h)V(T)}
	\exp\Biggl\{\int_h^T
	\frac{r(s)}{\sigma_\alpha(s)}\,\td s\Biggr\}
	\int_I e^{a(T)z}z\,\td z\\
	&=
	Cy\sqrt{V(h)}
	\exp\Biggl\{-\int_{t_\lambda}^h
	\frac{r(s)}{\sigma_\alpha(s)}\,\td s\Biggr\}.
\end{aligned}
\]
Since $r\geq0$, the last exponential is bounded by $1$. Consequently,
\begin{equation}\label{eqL: lower bound second moment middle}
	\begin{aligned}
		\mathrm{II}
		&:=Ce^{-2a(t_\lambda)K_\lambda}
		\int_{t_\lambda+\Delta_\lambda}^{T-\Delta_T}\int_0^\infty
		e^{-a(h)y}G_r(2K_\lambda,y;t_\lambda,h)
		\biggl[\int_I e^{a(T)z}G_r(y,z;h,T)\,\td z\biggr]^2
		\,\td y\,\td h\\
		&\leq
		CK_\lambda\sqrt{V(t_\lambda)}
		\int_{t_\lambda+\Delta_\lambda}^{T-\Delta_T}
		V(h)^{3/2}
		\biggl(\int_0^\infty e^{-a(h)y}y^3\,\td y\biggr)\,\td h\\
		&\leq
		CK_\lambda\sqrt{V(t_\lambda)}
		\int_{t_\lambda}^T V(h)^{3/2}a(h)^{-4}\,\td h \leq
		CK_\lambda\sqrt{V(t_\lambda)}
		\int_{t_\lambda}^T h^{(\alpha-3)/2}\,\td h \leq CK_\lambda t_\lambda^{2\alpha-1}.
	\end{aligned}
\end{equation}

It remains to control the terminal interval $h\in[T-\Delta_T,T]$. Its contribution is
\begin{equation}\label{eqL: lower bound second moment terminal expression}
	\begin{aligned}
		\mathrm{III}
		:=Ce^{-2a(t_\lambda)K_\lambda}
		\int_{T-\Delta_T}^T\int_0^\infty
		&e^{-a(h)y}G_r(2K_\lambda,y;t_\lambda,h)\\[-0.2ex]
		&\times
		\biggl[\int_I e^{a(T)z}G_r(y,z;h,T)\,\td z\biggr]^2
		\,\td y\,\td h.
	\end{aligned}
\end{equation}
In this range, $[h,T]$ is too short for the Airy estimate to be applied to $G_r(y,z;h,T)$. We use the Airy estimate only on $[t_\lambda,h]$ and the killed Brownian-motion kernel on $[h,T]$.

The Airy upper bound on $[t_\lambda,h]$ gives
\[
G_r(2K_\lambda,y;t_\lambda,h)
\leq
CK_\lambda y\sqrt{V(t_\lambda)V(h)}
\exp\Biggl\{\int_{t_\lambda}^h
\frac{r(s)}{\sigma_\alpha(s)}\,\td s\Biggr\}.
\]
Moreover, by \eqref{eqIntroduction: sigma(t)} and \eqref{eqIntroduction: eta(t)},
\[
\int_h^T\frac{\eta_\alpha(s)}{\sigma_\alpha(s)}\,\td s
\leq C,
\qquad h\in[T-\Delta_T,T].
\]
The killed Brownian-motion estimate \eqref{eqA: killed BB ker estimate} therefore gives
\[
G_r(y,z;h,T)
\leq
C\exp\Biggl\{\int_h^T
\frac{r(s)}{\sigma_\alpha(s)}\,\td s\Biggr\}
p^D_{\int_h^T\sigma_\alpha(s)^2\,\td s}(y,z).
\]

Set $M_T:=\int_I e^{a(T)z}z\,\td z$. By the choice of $I$ in \eqref{eqL: lower bound, how to choose I corrected},
\begin{equation}\label{eqL: terminal MT value revised}
	M_T
	=
	\frac{
		\displaystyle\exp\Biggl\{-\int_{t_\lambda}^T
		\frac{r(s)}{\sigma_\alpha(s)}\,\td s\Biggr\}
	}{\sqrt{V(T)}}.
\end{equation}
Moreover, since $a(T)\ell_T\leq C$ and $I=[0,\ell_T]$,
$\sup_{z\in I}e^{a(T)z}\leq C$.

We claim that, uniformly in $h\in[T-\Delta_T,T]$,
\begin{equation}\label{eqL: ineq used for III}
	\int_0^\infty y
	\biggl(
	\int_I e^{a(T)z}
	p^D_{\int_h^T\sigma_\alpha(s)^2\,\td s}(y,z)\,\td z
	\biggr)^2\,\td y
	\leq
	C\biggl(
	M_T\wedge
	\frac{M_T^2}{\int_h^T\sigma_\alpha(s)^2\,\td s}
	\biggr).
\end{equation}
Indeed, this follows from Lemma~\ref{lem: killed BM weighted L2} below, applied to
$w(z)=e^{a(T)z}\mathds{1}_I(z)$. By \eqref{eqL: e^a(T) q <C},
$\lVert w\rVert_\infty\leq C$, and, since $I=[0,\ell_T]$,
\[
M_T=\int_0^{\ell_T}e^{a(T)z}z\,\td z
\geq\frac{\ell_T^2}{2}.
\]

Substituting the Airy bound for $G_r(2K_\lambda,y;t_\lambda,h)$, the killed Brownian comparison for $G_r(y,z;h,T)$, and \eqref{eqL: ineq used for III} into \eqref{eqL: lower bound second moment terminal expression}, and using $e^{-a(h)y}\leq1$, gives
\[
\begin{aligned}
	\mathrm{III}
	&\leq
	CK_\lambda e^{-2a(t_\lambda)K_\lambda}\sqrt{V(t_\lambda)}
	\int_{T-\Delta_T}^T\sqrt{V(h)}
	\exp\Biggl\{
	\int_{t_\lambda}^h\frac{r(s)}{\sigma_\alpha(s)}\,\td s
	+2\int_h^T\frac{r(s)}{\sigma_\alpha(s)}\,\td s
	\Biggr\}\\[-0.2ex]
	&\hspace{8em}\times
	\biggl(
	M_T\wedge
	\frac{M_T^2}{\int_h^T\sigma_\alpha(s)^2\,\td s}
	\biggr)\,\td h.
\end{aligned}
\]
On $h\in[T-\Delta_T,T]$,
\[
V(h)\asymp V(T),
\qquad
\int_h^T\sigma_\alpha(s)^2\,\td s
\asymp\sigma_\alpha(T)^2(T-h),
\qquad
\int_h^T\frac{r(s)}{\sigma_\alpha(s)}\,\td s\leq C.
\]
Consequently,
\[
\begin{aligned}
	\mathrm{III}
	&\leq
	CK_\lambda e^{-2a(t_\lambda)K_\lambda}
	\sqrt{V(t_\lambda)V(T)}
	\exp\Biggl\{\int_{t_\lambda}^T
	\frac{r(s)}{\sigma_\alpha(s)}\,\td s\Biggr\}
	\int_{T-\Delta_T}^T
	\biggl(
	M_T\wedge
	\frac{M_T^2}{\int_h^T\sigma_\alpha(s)^2\,\td s}
	\biggr)\,\td h.
\end{aligned}
\]
Using $\int_h^T\sigma_\alpha(s)^2\,\td s\asymp\sigma_\alpha(T)^2(T-h)$ and splitting the $h$-integral according to whether this quantity is at most or greater than $M_T$, we obtain
\[
\int_{T-\Delta_T}^T
\biggl(
M_T\wedge
\frac{M_T^2}{\int_h^T\sigma_\alpha(s)^2\,\td s}
\biggr)\,\td h
\leq
C\frac{M_T^2}{\sigma_\alpha(T)^2}\log T.
\]
Therefore, by \eqref{eqL: terminal MT value revised},
\[
\begin{aligned}
	\mathrm{III}
	&\leq
	CK_\lambda e^{-2a(t_\lambda)K_\lambda}
	\sqrt{V(t_\lambda)V(T)}
	\exp\Biggl\{\int_{t_\lambda}^T
	\frac{r(s)}{\sigma_\alpha(s)}\,\td s\Biggr\}
	\frac{M_T^2}{\sigma_\alpha(T)^2}\log T\\
	&=
	CK_\lambda e^{-2a(t_\lambda)K_\lambda}
	\sqrt{V(t_\lambda)}
	\frac{
		\displaystyle\exp\Biggl\{-\int_{t_\lambda}^T
		\frac{r(s)}{\sigma_\alpha(s)}\,\td s\Biggr\}
	}{\sqrt{V(T)}\,\sigma_\alpha(T)^2}\log T.
\end{aligned}
\]
By \eqref{eqL: lower bound, r function},
\[
\int_{t_\lambda}^T\frac{r(s)}{\sigma_\alpha(s)}\,\td s
\geq cc_r\log T.
\]
Since
\[
\frac{1}{\sqrt{V(T)}\,\sigma_\alpha(T)^2}
\leq CT^{(1+\alpha)/2},
\]
choosing $c_r$ sufficiently large yields
\[
\frac{
	\displaystyle\exp\Biggl\{-\int_{t_\lambda}^T
	\frac{r(s)}{\sigma_\alpha(s)}\,\td s\Biggr\}
}{\sqrt{V(T)}\,\sigma_\alpha(T)^2}\log T
=o_T(1).
\]
Thus,
\begin{equation}\label{eqL: lower bound second moment terminal final}
	\mathrm{III}
	\leq CK_\lambda t_\lambda^{(3\alpha-1)/2}o_T(1).
\end{equation}

\medskip

The following lemma is used to prove \eqref{eqL: ineq used for III}. Let $D:=[0,\infty)$, and recall from \eqref{eqL: p_u^D} that $p_u^D(x,y)$ is the transition density of Brownian motion killed at the origin. We use the two standard identities
\begin{equation}\label{eqL: properties of p_u^D}
	\int_0^\infty p_u^D(y,z)\,\td z\leq1,
	\qquad
	\int_0^\infty y p_u^D(y,z)\,\td y=z.
\end{equation}

\begin{lemma}\label{lem: killed BM weighted L2}
	Let $w\geq0$ be supported on $[0,L]$, and assume
	\[
	M_1(w):=\int_0^\infty zw(z)\,\td z>0,
	\qquad
	\lVert w\rVert_\infty\leq C_0,
	\qquad
	L^2\leq C_0M_1(w),
	\]
	for some constant $C_0<\infty$. Then there exists $C=C(C_0)<\infty$ such that, for every $u>0$,
	\begin{equation}\label{eq: killed BM weighted L2 estimate}
		\int_0^\infty y
		\biggl(\int_0^\infty w(z)p_u^D(y,z)\,\td z\biggr)^2\,\td y
		\leq
		C\biggl(M_1(w)\wedge\frac{M_1(w)^2}{u}\biggr).
	\end{equation}
\end{lemma}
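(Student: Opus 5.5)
Set $F(y):=\int_0^\infty w(z)p_u^D(y,z)\,\td z\ge0$. The plan is to prove the two bounds $\int_0^\infty yF(y)^2\,\td y\le CM_1(w)$ and $\int_0^\infty yF(y)^2\,\td y\le CM_1(w)^2/u$ separately, and then take the minimum.

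For the first bound, we use the sup-norm of $F$ together with the second identity in \eqref{eqL: properties of p_u^D}. Since $w\le C_0$ and $\int_0^\infty p_u^D(y,z)\,\td z\le1$ by symmetry of the kernel and the first identity, we get $F(y)\le C_0$. Hence
\[
\int_0^\infty yF(y)^2\,\td y\le C_0\int_0^\infty yF(y)\,\td y=C_0\int_0^\infty w(z)\int_0^\infty yp_u^D(y,z)\,\td y\,\td z=C_0M_1(w).
\]
This holds for every $u>0$.

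For the second bound, with $u$ large, we use the standard pointwise estimate for the killed kernel,
\[
p_u^D(y,z)\le C\,\frac{yz}{u^{3/2}}\exp\Bigl\{-\frac{(y-z)^2}{2u}\Bigr\}.
\]
It follows from $1-e^{-2yz/u}\le 2yz/u$. For $y\ge 2L$ and $z\le L$ we have $(y-z)^2\ge y^2/4$, so $F(y)\le C y u^{-3/2}e^{-y^2/(8u)}M_1(w)$. For $y\le 2L$ the exponential is bounded by $1$, and the same bound $F(y)\le Cyu^{-3/2}M_1(w)$ holds. Splitting the integral at $y=2L$:
\[
\int_{2L}^\infty y\,\frac{y^2M_1^2}{u^3}e^{-y^2/(4u)}\,\td y\le C\frac{M_1^2}{u^3}\,u^2=C\frac{M_1^2}{u},
\qquad
\int_0^{2L}y\,\frac{y^2M_1^2}{u^3}\,\td y\le C\frac{L^4M_1^2}{u^3}.
\]
The assumption $L^2\le C_0M_1$ is what controls the second piece. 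When $u\ge L^2$, that piece is at most $CM_1^2/u$. When $u\le L^2\le C_0M_1$, we have $M_1\le C_0M_1^2/u$, so the first bound already gives the claim.

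Combining the two regimes yields $C(M_1\wedge M_1^2/u)$. The only mildly delicate step is the case split between $u$ and $L^2$, and the hypothesis $L^2\le C_0M_1$ is exactly what makes it close. The remaining steps are routine Gaussian integrals.
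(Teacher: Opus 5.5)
Your proof is correct and follows essentially the same route as the paper: the same sup-norm/first-moment argument gives the $CM_1(w)$ bound, and the pointwise killed-kernel estimate $p_u^D(y,z)\le C\,yz\,u^{-3/2}e^{-c(y-z)^2/u}$ gives the $CM_1(w)^2/u$ bound. The only cosmetic difference is in the case split. The paper splits on $u<M_1(w)$ versus $u\ge M_1(w)$ and uses $z\le L\le C\sqrt{u}$ to replace $e^{-c(y-z)^2/u}$ by $e^{-c'y^2/u}$ uniformly. You split on $u<L^2$ versus $u\ge L^2$ and cut the $y$-integral at $2L$. Both close via $L^2\le C_0M_1(w)$.
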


\begin{proof}
	For $u>0$, define
	\[
	F_u(y):=\int_0^\infty w(z)p_u^D(y,z)\,\td z,
	\qquad y>0.
	\]
	We first prove the bound by $CM_1(w)$. By \eqref{eqL: properties of p_u^D} and the assumption on $w$, $F_u(y)\leq C_0$. Hence, by Fubini's theorem and \eqref{eqL: properties of p_u^D},
	\[
	\begin{aligned}
		\int_0^\infty yF_u(y)^2\,\td y
		&\leq C_0\int_0^\infty yF_u(y)\,\td y\\
		&=C_0\int_0^\infty w(z)
		\biggl(\int_0^\infty yp_u^D(y,z)\,\td y\biggr)\,\td z\\
		&=C\int_0^\infty zw(z)\,\td z
		=CM_1(w).
	\end{aligned}
	\]
	
	It remains to prove the bound by $CM_1(w)^2/u$. If $u<M_1(w)$, then $M_1(w)^2/u>M_1(w)$, and the desired estimate follows from the preceding bound. Assume now that $u\geq M_1(w)$. Since $L^2\leq C_0M_1(w)$,
	\[
	L\leq C\sqrt{M_1(w)}\leq C\sqrt{u}.
	\]
	For $z\in[0,L]$, the killed-kernel estimate gives
	\[
	p_u^D(y,z)
	\leq C\frac{yz}{u^{3/2}}
	\exp\Bigl\{-c\frac{(y-z)^2}{u}\Bigr\}
	\leq C\frac{yz}{u^{3/2}}
	\exp\Bigl\{-c'\frac{y^2}{u}\Bigr\},
	\]
	where the last inequality uses $z\leq C\sqrt{u}$. Therefore, for all $y>0$,
	\[
	F_u(y)
	\leq C\frac{y}{u^{3/2}}e^{-c'y^2/u}M_1(w).
	\]
	Hence,
	\[
	\begin{aligned}
		\int_0^\infty yF_u(y)^2\,\td y
		&\leq C\frac{M_1(w)^2}{u^3}
		\int_0^\infty y^3e^{-2c'y^2/u}\,\td y \leq C\frac{M_1(w)^2}{u},
	\end{aligned}
	\]
	where the last line uses
	$\int_0^\infty y^3e^{-c'y^2/u}\,\td y\leq Cu^2$.
	This proves \eqref{eq: killed BM weighted L2 estimate}.
\end{proof}

\appendix

\section{Estimates on the covariance kernels}
\label{Appendix, estimate for the covariance kernels}

Recall from \eqref{eqModel: alpha epsilon kernel, I_alpha} and
\eqref{eqModel: alpha epsilon kernel, as Laplacian} that, for any
$\alpha\in(0,1/2]$, the centered Gaussian field $X_\epsilon^\alpha$
defined in \eqref{eqModel: X epsilon alpha white noise integral} has
covariance kernel
\begin{equation}\label{eqK: alpha epsilon kernel, definition}
	\begin{aligned}
		k_\alpha^\epsilon(z_1,z_2)
		&:=\operatorname{Cov}\bigl(X_\epsilon^\alpha(z_1),X_\epsilon^\alpha(z_2)\bigr)\\
		&=\int_0^{\epsilon^{-2}}
		\exp\{-t\lVert z_1-z_2\rVert_2^2\}I_\alpha'(t)\,\td t,
		\qquad z_1,z_2\in\mathbb{R}^d.
	\end{aligned}
\end{equation}
where
\begin{equation}\label{eqK: alpha epsilon kernel, I alpha}
	I_\alpha(t)
	:=
	\int_0^{\log(1+t)}
	(1+s)^{-2\alpha}\,\td s.
\end{equation}
Equivalently,
\begin{equation}\label{eqK: I alpha derivative}
	I_\alpha'(t)
	=
	\frac{(1+\log(1+t))^{-2\alpha}}{1+t}.
\end{equation}

We first record a few elementary integral estimates.

\begin{lemma}\label{lemma: I alpha elementary estimates}
	Fix $\alpha\in(0,1/2]$. For every $A>0$, there exists $C_A>0$
	such that, for every $t\geq1$,
	\begin{equation}\label{eqK: alpha epsilon kernel, I_alpha upper property}
		I_\alpha(t)
		+
		A(1+\log(1+t))^{-2\alpha}
		\leq
		\int_0^{2\log(1+\sqrt{t})+C_A}
		(1+s)^{-2\alpha}\,\td s .
	\end{equation}
	Moreover, for every $A>0$, there exist constants $C_A>0$ and
	$t_A\geq1$ such that, for every $t\geq t_A$,
	\begin{equation}\label{eqK: alpha epsilon kernel, I_alpha lower property}
		I_\alpha(t)
		-
		A(1+\log(1+t))^{-2\alpha}
		\geq
		\int_0^{2\log(1+\sqrt{t})-C_A}
		(1+s)^{-2\alpha}\,\td s .
	\end{equation}
	Finally, there exists $C>0$ such that
	\begin{equation}\label{eqK: alpha epsilon kernel, an integral estimate}
		\int_0^L t\,I_\alpha'(t)\,\td t
		\leq
		C\,L\,(1+\log(1+L))^{-2\alpha},
		\qquad L\geq1.
	\end{equation}
\end{lemma}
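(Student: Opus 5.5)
The plan is to reduce all three estimates to the substitution $u=\log(1+t)$ and to the fact that $f(u):=(1+u)^{-2\alpha}$ is decreasing and slowly varying. Write $F(x):=\int_0^x f(s)\,\td s$, so that $I_\alpha(t)=F(\log(1+t))$.

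For \eqref{eqK: alpha epsilon kernel, I_alpha upper property}, first compare the upper limits. For $t\ge1$ we have $2\log(1+\sqrt t)=\log(1+2\sqrt t+t)\ge\log(1+t)$, and also $2\log(1+\sqrt t)\le\log(1+t)+\log 4$. Hence it suffices to show
\[
\int_{\log(1+t)}^{\log(1+t)+C_A}f(s)\,\td s\ \ge\ A\,f(\log(1+t)).
\]
Since $f$ is decreasing, the left side is at least $C_A\,f(\log(1+t)+C_A)$. Moreover $f(x+C_A)/f(x)=\bigl((1+x)/(1+x+C_A)\bigr)^{2\alpha}\ge(1+C_A)^{-2\alpha}$ for $x\ge0$. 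Therefore it is enough to pick $C_A$ with $C_A(1+C_A)^{-2\alpha}\ge A$. This is possible because $2\alpha\le1$ gives $C_A(1+C_A)^{-2\alpha}\ge C_A/(1+C_A)^{1}\cdot(1+C_A)^{1-2\alpha}$, which grows at least like $C_A^{1-2\alpha}$ for $\alpha<1/2$. At $\alpha=1/2$ it only tends to $1$, so we handle that case differently: use the lower bound $C_A\,f(x+C_A)\ge C_A/(2(1+x))$ valid when $C_A\le 1+x$, or more simply note that $\int_x^{x+C}f\ge f(x)\cdot\frac{1+x}{1}\log\frac{1+x+C}{1+x}$. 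Either way, a careful choice works if we allow $C_A$ to exceed $1+x$. The cleanest route is therefore to use $\int_x^{x+C}(1+s)^{-2\alpha}\,\td s\ge\int_x^{x+C}(1+s)^{-1}\,\td s=\log\frac{1+x+C}{1+x}$ together with the decay of $f(x)\le (1+x)^{-2\alpha}$. This comparison is the delicate point: for $\alpha=1/2$, a fixed shift $C_A$ gains only an increment of order $C_A/(1+x)$, which is comparable to $f(x)=(1+x)^{-1}$. So $C_A\ge eA$-type choices suffice, via $\log(1+C/(1+x))\ge \frac{C}{1+x}\cdot\frac{1}{1+C}$. I expect this uniform-in-$x$ comparison at $\alpha=1/2$ to be the main obstacle, and I would treat it by splitting into $x\ge C_A$ and $x<C_A$. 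In the region $x<C_A$ the right side is bounded and $C_A$ can be enlarged freely.

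Inequality \eqref{eqK: alpha epsilon kernel, I_alpha lower property} is symmetric. We need
\[
\int_{2\log(1+\sqrt t)-C_A}^{\log(1+t)}f\ \ge\ A f(\log(1+t)).
\]
Since $2\log(1+\sqrt t)\le\log(1+t)+\log4$, the interval contains $[\log(1+t)-(C_A-\log4),\log(1+t)]$. On that interval $f\ge f(\log(1+t))$ because $f$ is decreasing, so $C_A=A+\log4$ works once $t_A$ is large enough that the lower limit is nonnegative. This part is easier.

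For \eqref{eqK: alpha epsilon kernel, an integral estimate}, write $tI_\alpha'(t)=\frac{t}{1+t}f(\log(1+t))\le f(\log(1+t))$. Then $\int_0^L f(\log(1+t))\,\td t$ is to be compared with $Lf(\log(1+L))$. Split at $L/2$. On $[L/2,L]$ the integrand is at most $f(\log(1+L/2))\le Cf(\log(1+L))$, by the slow variation of $f$ (a shift of $\log 2$ changes $f$ by a bounded factor). On $[0,L/2]$, substitute $u=\log(1+t)$ to get $\int_0^{\log(1+L/2)}e^u(1+u)^{-2\alpha}\,\td u$. Integration by parts, or simply the fact that $e^u(1+u)^{-2\alpha}$ is eventually increasing with logarithmic derivative at least $1/2$, bounds this by $C e^{x}(1+x)^{-2\alpha}$ at $x=\log(1+L/2)$. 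That is $\le CLf(\log(1+L))$, which completes the proof.
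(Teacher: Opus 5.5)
Your lower estimate is the paper's argument. Your proof of the integral estimate is fine (you substitute $u=\log(1+t)$ where the paper splits at $\sqrt L$). For $\alpha<1/2$, the bound $\int_x^{x+C_A}f\ge C_A(1+C_A)^{-2\alpha}f(x)$ settles the upper estimate.

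\textbf{The gap.} It is the upper estimate at $\alpha=1/2$, the $\log\log$ case the paper cares most about. You flag it as the main obstacle, but neither route you propose closes it.

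\emph{The inequality you cite is too weak.} $\log\bigl(1+C/(1+x)\bigr)\ge \frac{C}{(1+x)(1+C)}$ only yields $\frac{C}{1+C}f(x)<f(x)$, so it cannot give $Af(x)$ once $A\ge1$. In fact no $C_A$ of order $A$ works at all. At $t=1$ the required inequality is exactly
\[
\log(1+\log 2)+\frac{A}{1+\log 2}\le\log(1+2\log 2+C_A),
\]
which forces $C_A\ge(1+\log 2)e^{A/(1+\log 2)}-1-2\log 2$.

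\emph{Your two-region split does not rescue this.} The region $\{x<C_A\}$ grows with $C_A$. Uniformly on it, the left side $\log\frac{1+x+C_A}{1+x}$ is only bounded below by $\log\frac{1+2C_A}{1+C_A}<\log 2$. So ``the right side is bounded and $C_A$ can be enlarged freely'' proves nothing.

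\emph{The difficulty is at the other end.} Since $(1+x)\log\bigl(1+C_A/(1+x)\bigr)\to C_A$ as $x\to\infty$, large $x$ only needs $C_A\gtrsim A$. The binding case is the smallest $x=\log(1+t)$, i.e.\ $t=1$.

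\textbf{The missing idea: monotonicity in $x$.} The quantity you wrote down, $(1+x)\log\frac{1+x+C}{1+x}$, is the right one; what is missing is that it is nondecreasing in $x$. This is implicit in the paper's choice of $C_A$ through an infimum over $a\ge\log 2$. For general $\alpha$, set $h:=C/(1+a)$; then
\[
(1+a)^{2\alpha}\int_a^{a+C}(1+s)^{-2\alpha}\,\mathrm{d}s=C\cdot\frac1h\int_1^{1+h}u^{-2\alpha}\,\mathrm{d}u .
\]
The right side is $C$ times the average of a decreasing function over $[1,1+h]$. That average increases as $h$ decreases, i.e.\ as $a$ increases. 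Hence the infimum over $a\ge\log 2$ equals
\[
(1+\log 2)\int_1^{1+C/(1+\log 2)}u^{-2\alpha}\,\mathrm{d}u ,
\]
which tends to $\infty$ as $C\to\infty$ because $2\alpha\le1$. This handles every $\alpha\in(0,1/2]$ at once.

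\textbf{Alternative: a three-region split at $\alpha=1/2$.} If you prefer a splitting argument, you need three regions, not two. Set $X_0:=A/\log 2$.
\begin{itemize}
\item For $x\le X_0$: $\log\bigl(1+C_A/(1+x)\bigr)\ge\log\bigl(1+C_A/(1+X_0)\bigr)$, which exceeds $A\ge Af(x)$ once $C_A$ is large.
\item For $X_0<x\le C_A-1$: $\log\bigl(1+C_A/(1+x)\bigr)\ge\log 2>A/(1+x)$.
\item For $x>C_A-1$: $C_Af(x+C_A)\ge\frac{C_A}{2}f(x)$, which is at least $Af(x)$ when $C_A\ge 2A$.
\end{itemize}
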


\begin{proof}
	For $t\geq1$,
	\begin{equation}\label{eqK: log comparison}
		0
		\leq
		2\log(1+\sqrt{t})-\log(1+t)
		\leq
		\log 2 .
	\end{equation}
	
	We first prove the upper estimate. Choose $C_A>0$ so large that
	\[\inf_{a\geq\log 2}
	(1+a)^{2\alpha}
	\int_a^{a+C_A}(1+s)^{-2\alpha}\,\td s
	\geq A .
	\]
	This is possible because the left-hand side tends to infinity as
	$C_A\to\infty$. Applying this with $a=\log(1+t)$ and using
	\eqref{eqK: log comparison}, we obtain
	\[\int_{\log(1+t)}^{2\log(1+\sqrt{t})+C_A}
	(1+s)^{-2\alpha}\,\td s
	\geq
	A(1+\log(1+t))^{-2\alpha}.
	\]
	This proves \eqref{eqK: alpha epsilon kernel, I_alpha upper property}.
	
	We next prove the lower estimate. Choose $C_A>A+\log 2+1$, and then
	choose $t_A\geq1$ so large that
	$2\log(1+\sqrt{t})-C_A\geq0$ for all $t\geq t_A$. By \eqref{eqK: log comparison}, for every $t\geq t_A$,
	$2\log(1+\sqrt{t})-C_A\leq\log(1+t)-(C_A-\log 2)$.
	Therefore
	\[\begin{aligned}
		I_\alpha(t)
		-
		\int_0^{2\log(1+\sqrt{t})-C_A}
		(1+s)^{-2\alpha}\,\td s
		&\geq
		\int_{\log(1+t)-(C_A-\log 2)}^{\log(1+t)}
		(1+s)^{-2\alpha}\,\td s  \\
		&\geq
		(C_A-\log 2)(1+\log(1+t))^{-2\alpha}  \\
		&\geq
		A(1+\log(1+t))^{-2\alpha}.
	\end{aligned}
	\]
	This proves \eqref{eqK: alpha epsilon kernel, I_alpha lower property}.
	
	Finally, by \eqref{eqK: I alpha derivative},
	$\int_0^L t\,I_\alpha'(t)\,\td t
	\leq\int_0^L(1+\log(1+t))^{-2\alpha}\,\td t$.
	For $L\geq4$, split the last integral at $\sqrt{L}$:
	\[\begin{aligned}
		\int_0^L (1+\log(1+t))^{-2\alpha}\,\td t
		&\leq
		\sqrt{L}
		+
		L(1+\log(1+\sqrt{L}))^{-2\alpha}  \\
		&\leq
		C\,L\,(1+\log(1+L))^{-2\alpha}.
	\end{aligned}
	\]
	The case $1\leq L<4$ is absorbed by increasing $C$. This proves
	\eqref{eqK: alpha epsilon kernel, an integral estimate}.
\end{proof}

\begin{proposition}\label{proposition: k alpha epsilon, covariance estimate}
	Fix $\alpha\in(0,1/2]$. Moreover, there exists $C_K>0$ such that, for all $\epsilon\in(0,1)$ and all $z_1,z_2\in[0,1]^d$,
	\begin{equation}\label{eqK:alpha epsilon kernel, covariance structure}
		\int_0^{(t_{z_1,z_2}-C_K)_+}
		\sigma_\alpha(s)^2\,\td s
		\leq
		k_\alpha^\epsilon(z_1,z_2)
		\leq
		\int_0^{t_{z_1,z_2}+C_K}
		\sigma_\alpha(s)^2\,\td s,
	\end{equation}
	where $\sigma_\alpha(s):=\sqrt{2}\,(1+2s)^{-\alpha}$ and
	$t_{z_1,z_2}:=\log\bigl(1/(\epsilon\vee\lVert z_1-z_2\rVert_\infty)\bigr)$.
\end{proposition}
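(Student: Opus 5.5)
The plan is to reduce both inequalities in \eqref{eqK:alpha epsilon kernel, covariance structure} to Lemma~\ref{lemma: I alpha elementary estimates}, after the change of variables that identifies the two sides. Write $r:=\lVert z_1-z_2\rVert_2$. Since $\lVert\cdot\rVert_\infty\le\lVert\cdot\rVert_2\le\sqrt d\lVert\cdot\rVert_\infty$, replacing $r$ by $\lVert z_1-z_2\rVert_\infty$ only shifts $t_{z_1,z_2}$ by at most $\tfrac12\log d$, which goes into $C_K$. The substitution $s'=2s$ gives
\[
\int_0^{L}\sigma_\alpha(s)^2\,\td s=\int_0^{2L}(1+s')^{-2\alpha}\,\td s'.
\]
Set $t_*:=\epsilon^{-2}\wedge r^{-2}$. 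Then $2\log(1+\sqrt{t_*})=2t_{z_1,z_2}+O(1)$ when $t_*\ge1$. So it suffices to show
\[
\int_0^{2\log(1+\sqrt{t_*})-C}(1+s)^{-2\alpha}\,\td s\le k_\alpha^\epsilon(z_1,z_2)\le\int_0^{2\log(1+\sqrt{t_*})+C}(1+s)^{-2\alpha}\,\td s .
\]
The $O(1)$ shifts cost only bounded changes in the upper limits, and these are absorbed because the integrand is bounded by $1$. The case $t_*<t_A$, where $r$ is of order one, is trivial: the lower bound has empty range after enlarging $C_K$, and the upper bound follows from $k\le I_\alpha(\epsilon^{-2}\wedge C)$.

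\textbf{Upper bound.} Split $\int_0^{\epsilon^{-2}}e^{-tr^2}I_\alpha'(t)\,\td t$ at $t_*$. On $[0,t_*]$ use $e^{-tr^2}\le1$, which gives at most $I_\alpha(t_*)$. On $[t_*,\epsilon^{-2}]$, the range is nonempty only if $t_*=r^{-2}$. There $I_\alpha'(t)\le(1+\log(1+t_*))^{-2\alpha}/t$, and $\int_{t_*}^\infty e^{-tr^2}t^{-1}\,\td t=\int_1^\infty e^{-u}u^{-1}\,\td u$, so this piece is at most $A(1+\log(1+t_*))^{-2\alpha}$ with $A$ absolute. Then \eqref{eqK: alpha epsilon kernel, I_alpha upper property} closes the upper bound.

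\textbf{Lower bound.} Use $e^{-tr^2}\ge1-tr^2$ on $[0,t_*]$. This gives
\[
k\ge I_\alpha(t_*)-r^2\int_0^{t_*}tI_\alpha'(t)\,\td t\ge I_\alpha(t_*)-Cr^2t_*(1+\log(1+t_*))^{-2\alpha}
\]
by \eqref{eqK: alpha epsilon kernel, an integral estimate}. Since $r^2t_*\le1$, the error is at most $A(1+\log(1+t_*))^{-2\alpha}$. Then \eqref{eqK: alpha epsilon kernel, I_alpha lower property} applies for $t_*\ge t_A$.

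The main obstacle is the upper-bound tail piece. There the $e^{-tr^2}$ decay has to be turned into a bounded multiple of the local density $(1+\log(1+t_*))^{-2\alpha}$. This uses the slow variation of $(1+\log(1+t))^{-2\alpha}$ together with the scale invariance of $\td t/t$.
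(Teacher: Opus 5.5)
Your proposal is correct and follows the paper's proof essentially step for step. You split at $t_*=\epsilon^{-2}\wedge\lVert z_1-z_2\rVert_2^{-2}$. For the upper bound, you control the tail by $\int_1^\infty e^{-u}u^{-1}\,\mathrm{d}u$ times the slowly varying density. For the lower bound, you linearize the exponential and use \eqref{eqK: alpha epsilon kernel, an integral estimate}. Both bounds are then closed with the two comparison inequalities of Lemma~\ref{lemma: I alpha elementary estimates}.

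The remaining differences are cosmetic. The paper treats the cases $\lVert z_1-z_2\rVert_2\le\epsilon$ and $\lVert z_1-z_2\rVert_2>\epsilon$ separately in the upper bound. In the lower bound it replaces $\lVert z_1-z_2\rVert_2$ by $\epsilon\vee\lVert z_1-z_2\rVert_2$ in the exponent, where you use $r^2t_*\le1$ instead.
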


\begin{proof}
	We use the identity
	\begin{equation}\label{eqK: sigma integral identity}
		\int_0^T \sigma_\alpha(s)^2\,\td s
		=
		\int_0^{2T}(1+u)^{-2\alpha}\,\td u,
		\qquad T\geq0.
	\end{equation}
	We also use, without further comment, the equivalence of
	$\lVert\cdot\rVert_2$ and $\lVert\cdot\rVert_\infty$ on $\mathbb{R}^d$.
	
	First we prove the lower bound. Since
	$(\epsilon\vee\lVert z_1-z_2\rVert_2)^{-2}\leq\epsilon^{-2}$,
	\[\begin{aligned}
		k_\alpha^\epsilon(z_1,z_2)
		&\geq
		\int_0^{(\epsilon\vee\lVert z_1-z_2\rVert_2)^{-2}}
		\exp\{-t(\epsilon\vee\lVert z_1-z_2\rVert_2)^2\}
		I_\alpha'(t)\,\td t  \\
		&=
		I_\alpha\bigl((\epsilon\vee\lVert z_1-z_2\rVert_2)^{-2}\bigr)   -
		\int_0^{(\epsilon\vee\lVert z_1-z_2\rVert_2)^{-2}}
		\bigl(1-\exp\{-t(\epsilon\vee\lVert z_1-z_2\rVert_2)^2\}\bigr)
		I_\alpha'(t)\,\td t .
	\end{aligned}
	\]
	Using $1-e^{-x}\leq x$ and
	\eqref{eqK: alpha epsilon kernel, an integral estimate}, we get
	\[\begin{aligned}
		&\int_0^{(\epsilon\vee\lVert z_1-z_2\rVert_2)^{-2}}
		\bigl(1-\exp\{-t(\epsilon\vee\lVert z_1-z_2\rVert_2)^2\}\bigr)
		I_\alpha'(t)\,\td t  \\
		&\hspace{2cm}\leq C\bigl(1+\log\bigl(1+(\epsilon\vee\lVert z_1-z_2\rVert_2)^{-2}\bigr)\bigr)^{-2\alpha}.
	\end{aligned}
	\]
	Therefore
	\[
	\begin{aligned}
		k_\alpha^\epsilon(z_1,z_2)
		&\geq I_\alpha\bigl((\epsilon\vee\lVert z_1-z_2\rVert_2)^{-2}\bigr)\\
		&\quad-C\bigl(1+\log\bigl(1+(\epsilon\vee\lVert z_1-z_2\rVert_2)^{-2}\bigr)\bigr)^{-2\alpha}.
	\end{aligned}
	\]
	If $\epsilon\vee\lVert z_1-z_2\rVert_2$ is bounded away from zero, then
	$t_{z_1,z_2}$ is bounded from above, and the lower bound follows by
	choosing $C_K$ large enough. Otherwise, applying
	\eqref{eqK: alpha epsilon kernel, I_alpha lower property} with
	$t=(\epsilon\vee\lVert z_1-z_2\rVert_2)^{-2}$ gives
	\[k_\alpha^\epsilon(z_1,z_2)
	\geq
	\int_0^{2\log\bigl(1+(\epsilon\vee\lVert z_1-z_2\rVert_2)^{-1}\bigr)-C}
	(1+s)^{-2\alpha}\,\td s .
	\]
	By \eqref{eqK: sigma integral identity}, we have
	$k_\alpha^\epsilon(z_1,z_2)\geq
	\int_0^{(t_{z_1,z_2}-C_K)_+}\sigma_\alpha(s)^2\,\td s$
	after increasing $C_K$.
	
	We now prove the upper bound. If $\lVert z_1-z_2\rVert_2\leq\epsilon$, then
	$k_\alpha^\epsilon(z_1,z_2)\leq I_\alpha(\epsilon^{-2})
	\leq\int_0^{2\log(1+\epsilon^{-1})}(1+s)^{-2\alpha}\,\td s$.
	Since $\log(1+\epsilon^{-1})\leq \log(1/\epsilon)+\log 2$, and since
	$t_{z_1,z_2}=\log(1/\epsilon)+O(1)$ in this case, the upper bound follows
	from \eqref{eqK: sigma integral identity} after increasing $C_K$.
	
	It remains to consider the case $\lVert z_1-z_2\rVert_2>\epsilon$. Then
	\[\begin{aligned}
		k_\alpha^\epsilon(z_1,z_2)
		&\leq
		\int_0^{\lVert z_1-z_2\rVert_2^{-2}} I_\alpha'(t)\,\td t
		+
		\int_{\lVert z_1-z_2\rVert_2^{-2}}^\infty
		\exp\{-t\lVert z_1-z_2\rVert_2^2\}I_\alpha'(t)\,\td t  \\
		&=
		I_\alpha(\lVert z_1-z_2\rVert_2^{-2})
		+
		\int_{\lVert z_1-z_2\rVert_2^{-2}}^\infty
		\exp\{-t\lVert z_1-z_2\rVert_2^2\}I_\alpha'(t)\,\td t .
	\end{aligned}
	\]
	Using the monotonicity of $(1+\log(1+t))^{-2\alpha}$,
	\[
	\begin{aligned}
		&\int_{\lVert z_1-z_2\rVert_2^{-2}}^\infty
		\exp\{-t\lVert z_1-z_2\rVert_2^2\}I_\alpha'(t)\,\td t\\
		&\quad\leq
		\bigl(1+\log(1+\lVert z_1-z_2\rVert_2^{-2})\bigr)^{-2\alpha}
		\int_{\lVert z_1-z_2\rVert_2^{-2}}^\infty
		\frac{\exp\{-t\lVert z_1-z_2\rVert_2^2\}}{1+t}\,\td t\\
		&\quad\leq C\bigl(1+\log(1+\lVert z_1-z_2\rVert_2^{-2})\bigr)^{-2\alpha}.
	\end{aligned}
	\]
	Therefore
	\[k_\alpha^\epsilon(z_1,z_2)
	\leq
	I_\alpha(\lVert z_1-z_2\rVert_2^{-2})
	+
	C\bigl(1+\log(1+\lVert z_1-z_2\rVert_2^{-2})\bigr)^{-2\alpha}.
	\]
	If $\lVert z_1-z_2\rVert_2$ is bounded away from zero, this is bounded by a constant, and the desired upper bound follows by increasing $C_K$. Otherwise, applying
	\eqref{eqK: alpha epsilon kernel, I_alpha upper property} with
	$t=\lVert z_1-z_2\rVert_2^{-2}$ yields
	\[k_\alpha^\epsilon(z_1,z_2)
	\leq
	\int_0^{2\log(1+\lVert z_1-z_2\rVert_2^{-1})+C}
	(1+s)^{-2\alpha}\,\td s .
	\]
	Using again \eqref{eqK: sigma integral identity}, we obtain
	$k_\alpha^\epsilon(z_1,z_2)\leq
	\int_0^{t_{z_1,z_2}+C_K}\sigma_\alpha(s)^2\,\td s$
	after increasing $C_K$.
\end{proof}

\begin{proposition}\label{proposition: L2 distances for X epsilon alpha}
	Fix $\alpha\in(0,1/2]$. There exists $C>0$ such that, for every
	$\epsilon\in(0,1)$ and every $z_1,z_2\in[0,1]^d$ satisfying
	$\lVert z_1-z_2\rVert_2\leq\epsilon$,
	\begin{equation}\label{eqK: alpha epsilon kernel, L2 distance}
		\mathbb{E}\bigl[(X_\epsilon^\alpha(z_1)-X_\epsilon^\alpha(z_2))^2\bigr]
		\leq
		C\,\theta_\alpha(\epsilon)\,
		\frac{\lVert z_1-z_2\rVert_2}{\epsilon},
	\end{equation}
	where $\theta_\alpha(\epsilon):=\bigl(1+\log(1/\epsilon)\bigr)^{-2\alpha}$.
\end{proposition}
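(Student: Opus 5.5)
The natural starting point is the identity
\[
\mathbb{E}\bigl[(X_\epsilon^\alpha(z_1)-X_\epsilon^\alpha(z_2))^2\bigr]
=2k_\alpha^\epsilon(z_1,z_1)-2k_\alpha^\epsilon(z_1,z_2)
=2\int_0^{\epsilon^{-2}}\bigl(1-\exp\{-t\lVert z_1-z_2\rVert_2^2\}\bigr)I_\alpha'(t)\,\td t,
\]
which follows directly from \eqref{eqK: alpha epsilon kernel, definition}. Write $r:=\lVert z_1-z_2\rVert_2\le\epsilon$. The plan is to bound the integrand and then reduce to an elementary integral of $(1+\log(1+t))^{-2\alpha}$ against a suitable weight.

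For the integrand we use $1-e^{-x}\le x\wedge 1\le\sqrt{x}$, valid for $x\ge0$. This gives
\[
\mathbb{E}\bigl[(X_\epsilon^\alpha(z_1)-X_\epsilon^\alpha(z_2))^2\bigr]\le 2r\int_0^{\epsilon^{-2}}\sqrt{t}\,I_\alpha'(t)\,\td t .
\]
Using \eqref{eqK: I alpha derivative}, we have $\sqrt{t}\,I_\alpha'(t)\le t^{-1/2}(1+\log(1+t))^{-2\alpha}$. It therefore suffices to show that, for $L:=\epsilon^{-2}\ge1$,
\[
\int_0^L t^{-1/2}(1+\log(1+t))^{-2\alpha}\,\td t\le C\sqrt{L}\,(1+\log(1+L))^{-2\alpha}.
\]
Since $\sqrt{L}=\epsilon^{-1}$ and $1+\log(1+\epsilon^{-2})\asymp 1+\log(1/\epsilon)$ for $\epsilon\in(0,1)$, this yields $C\theta_\alpha(\epsilon)\,r/\epsilon$, which is the desired bound.

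The elementary estimate is proved in the same way as \eqref{eqK: alpha epsilon kernel, an integral estimate}, by splitting the integral at $t=\sqrt{L}$. On $[0,\sqrt{L}]$ the integral is at most $2L^{1/4}$, which is $o\bigl(\sqrt{L}(\log L)^{-2\alpha}\bigr)$ and so is absorbed. On $[\sqrt{L},L]$ we use the monotonicity of $(1+\log(1+t))^{-2\alpha}$ together with $\log(1+\sqrt{L})\ge\frac12\log(1+L)$. This bounds the integrand's logarithmic factor by $2^{2\alpha}(1+\log(1+L))^{-2\alpha}$, and integrating $t^{-1/2}$ contributes $2\sqrt{L}$. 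Small $L$, namely $\epsilon$ bounded away from $0$, is handled by enlarging $C$.

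The only step needing care is the crude inequality $1-e^{-x}\le\sqrt{x}$. It is what produces the linear factor $\lVert z_1-z_2\rVert_2/\epsilon$ rather than the quadratic factor that $1-e^{-x}\le x$ would give, and it is exactly the form matching the Brownian-sheet comparison \eqref{eqB: Brownian sheet L2 distance}. Without the restriction $r\le\epsilon$, one would instead split the time integral at $r^{-2}$; under the hypothesis $r\le\epsilon$ this splitting is not needed, so I expect no genuine obstacle.
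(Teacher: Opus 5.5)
Your proof is correct, and it differs from the paper's only in the elementary bound on the integrand. The paper uses $1-e^{-x}\le x$, which reduces the problem to $2\lVert z_1-z_2\rVert_2^2\int_0^{\epsilon^{-2}}t\,I_\alpha'(t)\,\td t$. It then cites the already-proved estimate \eqref{eqK: alpha epsilon kernel, an integral estimate} with $L=\epsilon^{-2}$ to get the \emph{quadratic} bound $C\theta_\alpha(\epsilon)\lVert z_1-z_2\rVert_2^2/\epsilon^2$. Only at the end does it use $\lVert z_1-z_2\rVert_2\le\epsilon$ to weaken this to the linear bound.

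So your remark that $1-e^{-x}\le\sqrt{x}$ is what gives the linear rather than the quadratic factor is backwards. Under the stated hypothesis the quadratic factor is the smaller one, and $1-e^{-x}\le x$ already suffices.

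What your route actually buys is that the hypothesis $\lVert z_1-z_2\rVert_2\le\epsilon$ is never used. Contrary to your last sentence, no splitting at $r^{-2}$ would be needed to drop it. Your chain gives $C\theta_\alpha(\epsilon)\lVert z_1-z_2\rVert_2/\epsilon$ for all $z_1,z_2$. This is the form quoted without restriction in \eqref{eqIntroduction: L2 distance for X epsilon} for the continuity argument. It also applies directly to two points of a common $\epsilon$-box, whose Euclidean distance can reach $\sqrt{d}\,\epsilon$.

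The cost is that you must prove the weighted variant $\int_0^L t^{-1/2}(1+\log(1+t))^{-2\alpha}\,\td t\le C\sqrt{L}\,(1+\log(1+L))^{-2\alpha}$ instead of citing the existing lemma. Your split at $\sqrt{L}$ does this correctly.
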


\begin{proof}
	By the covariance formula,
	\[\begin{aligned}
		\mathbb{E}\bigl[(X_\epsilon^\alpha(z_1)-X_\epsilon^\alpha(z_2))^2\bigr]
		&=
		2\int_0^{\epsilon^{-2}}
		\bigl(1-\exp\{-t\lVert z_1-z_2\rVert_2^2\}\bigr)
		I_\alpha'(t)\,\td t  \\
		&\leq
		2\lVert z_1-z_2\rVert_2^2
		\int_0^{\epsilon^{-2}} t\,I_\alpha'(t)\,\td t .
	\end{aligned}
	\]
	Using \eqref{eqK: alpha epsilon kernel, an integral estimate} with
	$L=\epsilon^{-2}$, we obtain
	\[\mathbb{E}\bigl[(X_\epsilon^\alpha(z_1)-X_\epsilon^\alpha(z_2))^2\bigr]
	\leq
	C
	\frac{\lVert z_1-z_2\rVert_2^2}{\epsilon^2}
	\bigl(1+\log\frac{1}{\epsilon}\bigr)^{-2\alpha}.
	\]
	Since $\lVert z_1-z_2\rVert_2\leq\epsilon$, we have
	$\lVert z_1-z_2\rVert_2^2/\epsilon^2
	\leq\lVert z_1-z_2\rVert_2/\epsilon$. The claim follows.
\end{proof}

\begin{remark}\label{Remark: dyadic subsequence}
To prove the tails in Theorem \ref{theorem: tightness of recentered maximum for X alpha T}, we only need to discuss the dyadic small scale $\epsilon_N=2^{-N}$ for convenience. For a general $\epsilon \in [2^{-N}, 2^{-N+1}]$, we can turn to the standard Slepian comparison between $X_\epsilon^\alpha$ and $X_{2^{-N+1}}^\alpha(\cdot)$ added by some auxiliary Gaussian constant-value field with suitable variance. See also the proof of Proposition 4.3 in \cite{Acosta2014_tightness}. The comparison introduces only a spatially constant Gaussian of uniformly bounded variance, while the centerings at neighbouring scales differ only by $O(1)$. 
\end{remark}

\section{Airy-like PDE estimate}

In this section, we introduce two tools used in the analysis of slowed-down BBM and its higher-dimensional analogues. The first is a pair of Girsanov-transform-based representations for exceedance probabilities and Brownian bridge-like barrier probabilities. The second consists of Feynman--Kac estimates for the parabolic PDEs associated with these representations.

\subsection{Results from the Girsanov transform} \label{subsec: Appendix, Girsanov}

For any continuous stochastic process $X=\{X_t\colon t\geq 0\}$, we write $\tau_0(X):=\inf\{t\geq 0\colon X_t=0\}$ for its first hitting time of zero.

\begin{lemma}[Hitting probability under a moving barrier]
	\label{Appendix: Girsanov, exceeding probability}
	Fix a probability space $(\Omega,\mathcal{F},\mathbb{P})$ carrying a Brownian motion $B$. Let $\sigma\in C^2(\mathbb{R}_+;\mathbb{R}_+)$ and define
	\[
	Y_t:=\int_0^t\sigma(s)\,\td B_s,
	\qquad t\geq 0.
	\]
	Let $c_A>0$ and let $\eta,r\in C^2([0,\infty);\mathbb R)$ be deterministic. Set
	\begin{equation}\label{eqG: Girsanov, kr(t)}
		k_r(t):=c_A\sigma(t)-\frac{\eta(t)+r(t)}{c_A},
		\qquad t\geq 0,
	\end{equation}
	and
	\begin{equation}\label{eqG: Girsanov, A(t)}
		A_r(t):=\int_0^t k_r(s)\,\td s.
	\end{equation}
	Fix $T>0$. Then, for any $\lambda>0$ and any interval $I\subset[0,T]$,
	\begin{equation}\label{eqG: Girsanov, exceeding prob}
		\begin{aligned}
			\mathbb{P}&(\tau_0(A_r+\lambda-Y)\in I)
			\leq
			\exp\{-\frac{k_r(0)}{\sigma(0)^2}\lambda\}\\
			&\times \mathbb{E}^{(\lambda)}\Biggl[
			\exp\Biggl\{
			-\frac{1}{2}c_A^2\tau_0(Y)
			+\int_0^{\tau_0(Y)}
			\Biggl(
			-\Bigl(\frac{k_r}{\sigma^2}\Bigr)'(s)Y_s
			+\frac{\eta(s)+r(s)}{\sigma(s)}
			\Biggr)\,\td s
			\Biggr\}
			\mathds{1}_{\{\tau_0(Y)\in I\}}
			\Biggr].
		\end{aligned}
	\end{equation}
	Here $\mathbb{E}^{(\lambda)}$ denotes expectation for the diffusion $Y_t=\lambda+\int_0^t\sigma(s)\,\td B_s$.
\end{lemma}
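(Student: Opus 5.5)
The plan is a Girsanov change of measure that turns the process $A_r+\lambda-Y$ into a driftless diffusion started at $\lambda$, followed by an integration by parts of the stochastic integral in the Radon--Nikodym density. Write $X_t:=\lambda+A_r(t)-Y_t$, so that
\[
\td X_t=k_r(t)\,\td t-\sigma(t)\,\td B_s|_{s=t}.
\]
Set $\theta(t):=k_r(t)/\sigma(t)$ and define the measure $\mathbb{Q}$ on $\mathcal{F}_t$ by
\[
\frac{\td\mathbb{Q}}{\td\mathbb{P}}\Big|_{\mathcal{F}_t}=\exp\Bigl\{\int_0^t\theta\,\td B-\tfrac12\int_0^t\theta^2\,\td s\Bigr\}.
\]
Since $\theta$ is deterministic and bounded on $[0,T]$, Novikov's condition holds trivially. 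Under $\mathbb{Q}$, $\tilde B_t:=B_t-\int_0^t\theta$ is a Brownian motion, and $X_t=\lambda-\int_0^t\sigma\,\td\tilde B$. Thus $X$ under $\mathbb{Q}$ has the law of the diffusion $Y$ under $\mathbb{E}^{(\lambda)}$ (up to the symmetric sign of the noise). Consequently
\[
\mathbb{P}(\tau_0(X)\in I)=\mathbb{E}_{\mathbb{Q}}\Bigl[\exp\Bigl\{-\int_0^{\tau}\theta\,\td\tilde B-\tfrac12\int_0^\tau\theta^2\,\td s\Bigr\}\mathds{1}_{\{\tau\in I\}}\Bigr].
\]
This uses optional stopping at $\tau\wedge T$, which is legitimate since $I\subset[0,T]$.

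The key step is to rewrite the density as a functional of the path alone. Since $\sigma\,\td\tilde B=\lambda\cdot 0-\td X$, we have $\theta\,\td\tilde B=-(k_r/\sigma^2)\,\td X$. Itô's product rule with the deterministic $C^1$ function $k_r/\sigma^2$ gives
\[
\int_0^\tau\frac{k_r}{\sigma^2}\,\td X=\frac{k_r(\tau)}{\sigma(\tau)^2}X_\tau-\frac{k_r(0)}{\sigma(0)^2}\lambda-\int_0^\tau\Bigl(\frac{k_r}{\sigma^2}\Bigr)'(s)X_s\,\td s,
\]
and $X_\tau=0$ at the hitting time. Hence the exponent becomes
\[
-\frac{k_r(0)}{\sigma(0)^2}\lambda-\int_0^\tau\Bigl(\frac{k_r}{\sigma^2}\Bigr)'X_s\,\td s-\tfrac12\int_0^\tau\frac{k_r^2}{\sigma^2}\,\td s.
\]

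It remains to expand the quadratic term:
\[
\frac{k_r^2}{\sigma^2}=c_A^2-\frac{2(\eta+r)}{\sigma}+\frac{(\eta+r)^2}{c_A^2\sigma^2}.
\]
The first piece yields $-\tfrac12 c_A^2\tau$ and the second yields $+\int_0^\tau(\eta+r)/\sigma$. The last piece is nonnegative, so it enters the exponent with a minus sign; dropping it gives the inequality, which is why the lemma is an upper bound rather than an identity. Renaming $X$ under $\mathbb{Q}$ as $Y$ under $\mathbb{E}^{(\lambda)}$ then yields \eqref{eqG: Girsanov, exceeding prob}.

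The main technical care lies in justifying the change of measure up to the random time $\tau$ on the event $\{\tau\in I\}$. I plan to handle this by working on $[0,T]$ with the true martingale density and stopping at $\tau\wedge T$. The integration by parts also requires $k_r/\sigma^2\in C^1$, which follows from $\sigma,\eta,r\in C^2$ and $\sigma>0$.
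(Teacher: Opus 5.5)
Your proposal is correct and follows essentially the same route as the paper: the same Girsanov density $\exp\{\int_0^t (k_r/\sigma^2)\,\td Y-\frac12\int_0^t k_r^2/\sigma^2\,\td s\}$, optional stopping at $\tau\wedge T$, integration by parts against the deterministic $C^1$ function $k_r/\sigma^2$, and discarding the nonnegative term $(\eta+r)^2/(2c_A^2\sigma^2)$. The only difference is bookkeeping. You integrate by parts against $X=\lambda+A_r-Y$ itself, so the boundary term vanishes at $\tau$ and $-\frac{k_r(0)}{\sigma(0)^2}\lambda$ appears directly; the paper instead works with $Y-A_r$ started at $0$, obtains $-\frac{k_r(\tau)}{\sigma(\tau)^2}\lambda$, and recovers the time-$0$ coefficient only after the reflection $Y\mapsto -Y$ and the shift to the process started at $\lambda$.
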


\begin{proof}
	Write
	\[
	\overline{Y}_t:=Y_t-A_r(t)
	=Y_t-\int_0^t k_r(s)\,\td s.
	\]
	Define a probability measure $\mathbb{Q}$ on $\mathcal{F}_T$ by
	\[
	\left.\frac{\td\mathbb{Q}}{\td\mathbb{P}}\right|_{\mathcal{F}_T}
	=
	\exp\Biggl\{
	\int_0^T\frac{k_r(s)}{\sigma(s)^2}\,\td Y_s
	-\frac{1}{2}\int_0^T\frac{k_r(s)^2}{\sigma(s)^2}\,\td s
	\Biggr\}.
	\]
	For $0\leq t\leq T$, the corresponding density process is
	\[
	\left.\frac{\td\mathbb{Q}}{\td\mathbb{P}}\right|_{\mathcal{F}_t}
	=
	\exp\Biggl\{
	\int_0^t\frac{k_r(s)}{\sigma(s)^2}\,\td Y_s
	-\frac{1}{2}\int_0^t\frac{k_r(s)^2}{\sigma(s)^2}\,\td s
	\Biggr\}.
	\]
	By Girsanov's theorem, under $\mathbb{Q}$ the process $\overline{Y}$ has the same law as $Y$ under $\mathbb{P}$.
	
	Let $f(t):=k_r(t)/\sigma(t)^2$. By integration by parts,
	\[
	\int_0^t f(s)\,\td\overline{Y}_s
	=f(t)\overline{Y}_t-f(0)\overline{Y}_0
	-\int_0^t f'(s)\overline{Y}_s\,\td s.
	\]
	Since $\overline{Y}_0=0$, this gives
	\[
	\left.\frac{\td\mathbb{Q}}{\td\mathbb{P}}\right|_{\mathcal{F}_t}
	=
	\exp\Biggl\{
	f(t)\overline{Y}_t
	-\int_0^t f'(s)\overline{Y}_s\,\td s
	+\frac{1}{2}\int_0^t\frac{k_r(s)^2}{\sigma(s)^2}\,\td s
	\Biggr\}.
	\]
	Set $\tau:=\tau_0(\lambda-\overline{Y})$ and $\tau_T:=\tau\wedge T$. Applying the preceding identity at the bounded stopping time $\tau_T$ and then restricting to the event $\{\tau\in I\}$ gives

	\begin{equation}\label{eqG: Girsanov, long}
		\begin{aligned}
			\mathbb{P}\bigl(\tau_0(A_r+\lambda-Y)\in I\bigr)
			&=\mathbb{E}_{\mathbb{Q}}\Biggl[
			\exp\Biggl\{-f(\tau)\overline{Y}_{\tau}
			+\int_0^{\tau}f'(s)\overline{Y}_s\,\td s
			-\frac{1}{2}\int_0^{\tau}\frac{k_r(s)^2}{\sigma(s)^2}\,\td s
			\Biggr\}\\[-0.2ex]
			&\hspace{7em}\times
			\mathds{1}_{\{\tau_0(\lambda-\overline{Y})\in I\}}
			\Biggr]\\
			&=\mathbb{E}_{\mathbb{P}}\Biggl[
			\exp\Biggl\{-f(\tau_0)Y_{\tau_0}
			+\int_0^{\tau_0}f'(s)Y_s\,\td s
			-\frac{1}{2}\int_0^{\tau_0}\frac{k_r(s)^2}{\sigma(s)^2}\,\td s
			\Biggr\}\\[-0.2ex]
			&\hspace{7em}\times
			\mathds{1}_{\{\tau_0(\lambda-Y)\in I\}}
			\Biggr].
		\end{aligned}
	\end{equation}
	
	where, in the last line, $\tau_0=\tau_0(\lambda-Y)$.
	
	By the symmetry $Y\overset{\mathrm d}{=}-Y$, the last display equals
	\[
	\mathbb{E}_{\mathbb{P}}\Biggl[
	\exp\Biggl\{
	-f(\tau_0)\lambda
	-\int_0^{\tau_0}f'(s)Y_s\,\td s
	-\frac{1}{2}\int_0^{\tau_0}\frac{k_r(s)^2}{\sigma(s)^2}\,\td s
	\Biggr\}
	\mathds{1}_{\{\tau_0(\lambda+Y)\in I\}}
	\Biggr].
	\]
	Equivalently, if under $\mathbb{E}^{(\lambda)}$ we write $Y_t=\lambda+\int_0^t\sigma(s)\,\td B_s$, then the preceding expression becomes
	\[
	\mathbb{E}^{(\lambda)}\Biggl[
	\exp\Biggl\{
	-f(0)\lambda
	-\int_0^{\tau_0(Y)}f'(s)Y_s\,\td s
	-\frac{1}{2}\int_0^{\tau_0(Y)}\frac{k_r(s)^2}{\sigma(s)^2}\,\td s
	\Biggr\}
	\mathds{1}_{\{\tau_0(Y)\in I\}}
	\Biggr].
	\]
	Finally, using
	\[
	-\frac{1}{2}\frac{k_r(s)^2}{\sigma(s)^2}
	=-\frac{1}{2}c_A^2
	+\frac{\eta(s)+r(s)}{\sigma(s)}
	-\frac{(\eta(s)+r(s))^2}{2c_A^2\sigma(s)^2}
	\leq -\frac{1}{2}c_A^2+\frac{\eta(s)+r(s)}{\sigma(s)},
	\]
	we obtain \eqref{eqG: Girsanov, exceeding prob}.
\end{proof}

In the main text, we need to estimate an expectation slightly different from \eqref{eqG: Girsanov, exceeding prob}, namely
\[
\mathbb{E}^{(\lambda)}\Biggl[
\exp\Biggl\{
\int_0^{\tau_0(Y)}
\Biggl(
-\Bigl(\frac{k_r}{\sigma^2}\Bigr)'(s)Y_s
+\frac{\eta(s)+r(s)}{\sigma(s)}
\Biggr)\,\td s
\Biggr\}
\mathds{1}_{\{\tau_0(Y)\in I\}}
\Biggr].
\]
This expectation can be represented as
\begin{equation}\label{eqH: PDE representation of hitting in some interval}
	\int_I\frac{1}{2}\sigma(t)^2
	\left.\partial_y G_r(\lambda,y;0,t)\right|_{y=0}\,\td t,
\end{equation}
where $G_r$ is the fundamental solution of the parabolic equation
\begin{equation}\label{eqA: Airy PDE with sigma diffusion}
	\partial_tu
	=\frac{1}{2}\sigma(t)^2\partial_{xx}u
	-\Bigl(\frac{k_r}{\sigma^2}\Bigr)'(t)xu
	+\frac{\eta(t)+r(t)}{\sigma(t)}u,
\end{equation}
on $\mathbb{R}_+\times\mathbb{R}_+$ with Dirichlet boundary condition $u(t,0)=0$. For references on this representation, see the discussion following formula $(2.5)$ in \cite{MZ2016_slowdownBBM}.

The following lemma gives a PDE representation of the bridge--barrier probability.

\begin{lemma}[Bridge-like probability]
	\label{Appendix: Bridge like probability}
	Under the same assumptions as Lemma \ref{Appendix: Girsanov, exceeding probability}, for any $x,y>0$ and $0\leq t_1<t_2$, we have
	\begin{equation}
		\begin{aligned}
			&\mathbb{P}\Biggl(
			Y_t-Y_{t_1}\le x+\int_{t_1}^t k_r(s)\,\td s
			\ \forall t\in[t_1,t_2],\ 
			x+\int_{t_1}^{t_2}k_r(s)\,\td s
			-(Y_{t_2}-Y_{t_1})\in\td y
			\Biggr)\\
			&\quad=
			\exp\Biggl\{
			-\frac{k_r(t_1)}{\sigma(t_1)^2}x
			+\frac{k_r(t_2)}{\sigma(t_2)^2}y
			-\frac{c_A^2}{2}(t_2-t_1)
			-\frac{1}{2}\int_{t_1}^{t_2}
			\frac{(\eta(s)+r(s))^2}{c_A^2\sigma(s)^2}\,\td s
			\Biggr\}
			G_r(x,y;t_1,t_2)\,\td y.
		\end{aligned}
	\end{equation}
	
	Here $G_r$ is the fundamental solution of \eqref{eqA: Airy PDE with sigma diffusion}.
\end{lemma}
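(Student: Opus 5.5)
The plan is to derive the lemma from a Girsanov change of measure, following the pattern of the proof of Lemma~\ref{Appendix: Girsanov, exceeding probability}, but now on the window $[t_1,t_2]$ with a fixed endpoint instead of a hitting time.

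First I reduce to $t_1=0$ by time shift. The increments $Y_t-Y_{t_1}$, $t\ge t_1$, have the law of $\int_0^{t-t_1}\sigma(t_1+s)\,\td B_s$, and the coefficients $\sigma,\eta,r,k_r$ shift the same way. This shift turns the fundamental solution $G_r(\cdot,\cdot;t_1,t_2)$ of \eqref{eqA: Airy PDE with sigma diffusion} into the shifted one. So I work with $\overline{Y}_t:=Y_t-Y_{t_1}-\int_{t_1}^t k_r(s)\,\td s$; the event is $\{x-\overline{Y}_t\ge 0\ \forall t\in[t_1,t_2],\ x-\overline{Y}_{t_2}\in\td y\}$.

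Next I define $\mathbb{Q}$ on $\mathcal{F}_{t_2}$ with density
\[
\exp\Bigl\{\int_{t_1}^{t_2} f\,\td Y_s-\tfrac12\int_{t_1}^{t_2} \frac{k_r^2}{\sigma^2}\,\td s\Bigr\},\qquad f:=k_r/\sigma^2 .
\]
Under $\mathbb{Q}$, $\overline{Y}$ is a centered martingale with quadratic variation $\int\sigma^2$. I invert the density, integrate by parts as in the earlier lemma (with $f(t_1)$ and $\overline{Y}_{t_1}=0$), and replace $\overline{Y}$ by the law of $Y$. This gives the probability as
\[
\mathbb{E}\Bigl[e^{-f(t_2)\overline{Y}_{t_2}+\int_{t_1}^{t_2} f'(s)\overline{Y}_s\,\td s-\frac12\int_{t_1}^{t_2} k_r^2/\sigma^2\,\td s};\ \overline{Y}\le x \text{ on }[t_1,t_2],\ x-\overline{Y}_{t_2}\in\td y\Bigr].
\]
Set $Z_t:=x-\overline{Y}_t$, which under the new law is a diffusion started at $x$ with diffusivity $\sigma$ (by symmetry), and write $\overline{Y}_{t_2}=x-y$. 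The exponent then becomes
\[
-f(t_2)(x-y)+\int f'(s)(x-Z_s)\,\td s-\tfrac12\int k_r^2/\sigma^2\,\td s .
\]
Since $\int_{t_1}^{t_2} f'=f(t_2)-f(t_1)$, the constant terms combine to $-f(t_1)x+f(t_2)y$, and the path-dependent part is $-\int f'(s)Z_s\,\td s$. Expanding $k_r^2/\sigma^2$ exactly, with no inequality this time, splits $-\tfrac12\int k_r^2/\sigma^2$ into three pieces:
\[
-\tfrac{c_A^2}{2}(t_2-t_1),\qquad +\int\frac{\eta+r}{\sigma}\,\td s,\qquad -\tfrac12\int\frac{(\eta+r)^2}{c_A^2\sigma^2}\,\td s .
\]

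The remaining expectation is
\[
\mathbb{E}^{(x)}\Bigl[\exp\Bigl\{\int_{t_1}^{t_2}\Bigl(-f'(s)Z_s+\frac{\eta+r}{\sigma}\Bigr)\td s\Bigr\};\ \tau_0(Z)>t_2,\ Z_{t_2}\in\td y\Bigr].
\]
By Feynman--Kac with Dirichlet killing at $0$, this equals $G_r(x,y;t_1,t_2)\,\td y$, exactly as in the representation \eqref{eqH: PDE representation of hitting in some interval}. Collecting the exponential prefactors then gives the stated identity.

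The main obstacle is justifying the Girsanov step and the Feynman--Kac identification rigorously. The potential $-f'(s)z$ is unbounded in $z$ and has sign $\le 0$ for large $s$, because $f$ is increasing when $\sigma$ decreases. So I would localize, using Novikov on the finite window with deterministic $f$ (which makes the density a true martingale), and then apply monotone convergence in a cutoff of the potential to identify the expectation with the (minimal) fundamental solution $G_r$.
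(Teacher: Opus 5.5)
Your proposal is correct and follows essentially the same route as the paper. Both arguments use a Girsanov change of measure on $[t_1,t_2]$ with $f=k_r/\sigma^2$, integration by parts using $\overline{Y}_{t_1}=0$, symmetry to pass to the diffusion started at $x$ and killed at $0$, the exact expansion of $k_r^2/\sigma^2$, and Feynman--Kac to identify $G_r$. Your localization remarks (Novikov on the finite window, a cutoff of the linear potential) add rigor the paper leaves implicit, and the preliminary reduction to $t_1=0$ is unnecessary but harmless.
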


\begin{proof}
	Let
	\[
	Z_t:=x+\int_{t_1}^t k_r(s)\,\td s-(Y_t-Y_{t_1}),
	\qquad t\in[t_1,t_2].
	\]
	Then the event in the statement is $\{Z_t\geq 0\ \forall\,t\in[t_1,t_2],\ Z_{t_2}\in\td y\}$. Applying the same Girsanov calculation as in \eqref{eqG: Girsanov, long}, now on the interval $[t_1,t_2]$, gives
	\[
	\begin{aligned}
		&\mathbb{P}\Biggl(
		Y_t-Y_{t_1}\leq x+\int_{t_1}^t k_r(s)\,\td s
		\ \forall\,t\in[t_1,t_2],\ Z_{t_2}\in\td y
		\Biggr)\\
		&\quad=
		\mathbb{E}_{\mathbb{P}}^{(t_1,x)}\Biggl[
		\exp\Biggl\{
		-\frac{k_r(t_1)}{\sigma(t_1)^2}x
		+\frac{k_r(t_2)}{\sigma(t_2)^2}y
		+\int_{t_1}^{t_2}
		\Biggl(
		-\Bigl(\frac{k_r}{\sigma^2}\Bigr)'(s)Y_s
		-\frac{1}{2}\frac{k_r(s)^2}{\sigma(s)^2}
		\Biggr)\,\td s
		\Biggr\} \\
		& \times \mathds{1}_{\{\tau_0(Y)>t_2\}}
		\mathds{1}_{\{Y_{t_2}\in\td y\}}
		\Biggr].
	\end{aligned}
	\]
	Here $\mathbb{E}_{\mathbb{P}}^{(t_1,x)}$ denotes expectation for the diffusion $Y$ started from $x$ at time $t_1$, and $\tau_0(Y):=\inf\{t\geq t_1\colon Y_t=0\}$. Using
	\[
	-\frac{1}{2}\frac{k_r(s)^2}{\sigma(s)^2}
	=-\frac{1}{2}c_A^2
	+\frac{\eta(s)+r(s)}{\sigma(s)}
	-\frac{(\eta(s)+r(s))^2}{2c_A^2\sigma(s)^2},
	\]
	and applying the Feynman--Kac formula for the killed diffusion on $\mathbb{R}_+$ yields the claimed representation.
\end{proof}

\subsection{Airy estimate for general potentials}
We begin by recalling some basic facts about the Airy operator. The Airy function of the first kind is defined by
\[
\operatorname{Ai}(x):=\frac{1}{\pi}\int_0^\infty
\cos\Bigl(\frac{t^3}{3}+xt\Bigr)\,\td t,
\]
and satisfies the ODE $\operatorname{Ai}''(x)-x\operatorname{Ai}(x)=0$. Its zeros are located at $-\alpha_n$, where
\begin{equation}\label{eqA: Airy eigenvalues}
	0<\alpha_1<\alpha_2<\cdots,
	\qquad \alpha_1=2.33811\ldots
\end{equation}
For each $q>0$, consider the operator $L_qu:=\partial_{xx}u-qxu$ on $L^2(\mathbb{R}_+)$ with Dirichlet boundary condition at $x=0$. Its eigenfunctions are $\psi_n^q(x):=q^{1/6}\psi_n(q^{1/3}x)$, $n\geq 1$, with corresponding eigenvalues $-\alpha_nq^{2/3}$, where
\begin{equation}\label{eqA: Airy, psi_n}
	\psi_n(x):=
	\frac{\operatorname{Ai}(x-\alpha_n)}
	{\lVert\operatorname{Ai}(\cdot-\alpha_n)\rVert_{L^2(\mathbb{R}_+)}}.
\end{equation}
Then $\{\psi_n^q\}_{n\geq 1}$ is an orthonormal basis of $L^2(\mathbb{R}_+)$ and
\begin{equation}\label{eq:airy-eigen-equation}
	(\partial_{xx}-qx)\psi_n^q=-\alpha_nq^{2/3}\psi_n^q,
	\qquad \psi_n^q(0)=0.
\end{equation}
We shall use the following facts from Lemma~A.1 of \cite{MZ2016_slowdownBBM} and the proof of Corollary~A.4 therein.

\begin{lemma}\label{lem: Airy bounds}
	There is a constant $C_{\mathrm A}<\infty$ such that, for every $n\geq 1$, $q>0$, and $x\geq 0$,
	\begin{equation}\label{eqAiry: eigenvector, upper}
		\lvert\psi_n^q(x)\rvert\leq C_{\mathrm A}\sqrt{q}\,x.
	\end{equation}
	Moreover, $\alpha_nn^{-2/3}\longrightarrow(3\pi/2)^{2/3}$. Consequently, for every $m>0$,
	\begin{equation}\label{eqAiry: sum e^{-an-a1}}
		\rho(m)^2:=\sum_{n=2}^{\infty}
		\exp\{-2(\alpha_n-\alpha_1)m\}<\infty,
		\qquad \rho(m)\longrightarrow 0\quad(m\to\infty).
	\end{equation}
	Finally, for every $L>0$, there is $C_L>0$ such that
	\begin{equation}\label{eqAiry: eigenvector, lower}
		\psi_1^q(x)\geq C_L\sqrt{q}\,x,
		\qquad x\in[0,Lq^{-1/3}].
	\end{equation}
\end{lemma}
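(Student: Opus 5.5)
By the scaling $\psi_n^q(x)=q^{1/6}\psi_n(q^{1/3}x)$, all three claims reduce to statements about the unscaled functions $\psi_n$. A bound $\lvert\psi_n(y)\rvert\le C_{\mathrm A}y$ gives $\lvert\psi_n^q(x)\rvert\le C_{\mathrm A}q^{1/6}q^{1/3}x=C_{\mathrm A}\sqrt q\,x$. Likewise, a bound $\psi_1(y)\ge C_Ly$ on $[0,L]$ gives \eqref{eqAiry: eigenvector, lower} on $[0,Lq^{-1/3}]$. So I will work only with $q=1$.

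\emph{Normalization.} The first step is to make the denominator in \eqref{eqA: Airy, psi_n} explicit. Using the Airy equation, one checks that $\frac{\td}{\td x}\bigl(\operatorname{Ai}'(x)^2-x\operatorname{Ai}(x)^2\bigr)=-\operatorname{Ai}(x)^2$. Integrating from $-\alpha_n$ to $\infty$, and using $\operatorname{Ai}(-\alpha_n)=0$ together with the decay of $\operatorname{Ai}$ at $+\infty$, gives
\[
\lVert\operatorname{Ai}(\cdot-\alpha_n)\rVert_{L^2(\mathbb{R}_+)}^2=\operatorname{Ai}'(-\alpha_n)^2 .
\]
Hence $\psi_n(y)=\operatorname{Ai}(y-\alpha_n)/\lvert\operatorname{Ai}'(-\alpha_n)\rvert$.

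\emph{Upper bound \eqref{eqAiry: eigenvector, upper}.} Since $\psi_n(0)=0$, the mean value theorem gives
\[
\lvert\psi_n(y)\rvert\le y\,\frac{\sup_{z\ge-\alpha_n}\lvert\operatorname{Ai}'(z)\rvert}{\lvert\operatorname{Ai}'(-\alpha_n)\rvert}.
\]
I then invoke two classical asymptotics:
\[
\lvert\operatorname{Ai}'(z)\rvert\le C(1+\lvert z\rvert)^{1/4}\ \text{ for all } z\in\mathbb{R},
\qquad
\lvert\operatorname{Ai}'(-\alpha_n)\rvert\sim\pi^{-1/2}\alpha_n^{1/4}.
\]
These bound the ratio by $C(1+\alpha_n)^{1/4}/(c\,\alpha_n^{1/4})$ for $n$ large. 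For the finitely many small $n$, $\operatorname{Ai}'(-\alpha_n)\neq0$ because Airy zeros are simple, so the ratio is finite there too. This yields a constant $C_{\mathrm A}$ uniform in $n$. The only delicate point of the whole lemma is this uniformity in $n$. It rests on the matching $\alpha_n^{1/4}$ growth of $\sup\lvert\operatorname{Ai}'\rvert$ on $[-\alpha_n,\infty)$ and of $\lvert\operatorname{Ai}'(-\alpha_n)\rvert$, both of which come from the standard oscillatory asymptotics of $\operatorname{Ai}$ at $-\infty$.

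\emph{Eigenvalue growth and \eqref{eqAiry: sum e^{-an-a1}}.} The classical zero asymptotics are
\[
\alpha_n=\Bigl(\tfrac{3\pi}{8}(4n-1)\Bigr)^{2/3}(1+o(1)),
\]
which give $\alpha_nn^{-2/3}\to(3\pi/2)^{2/3}$. Hence $\alpha_n-\alpha_1\ge cn^{2/3}$ for $n\ge2$, using $\alpha_n>\alpha_1$ for the finitely many small $n$. The series $\rho(m)^2$ is then dominated by $\sum_n e^{-2cmn^{2/3}}$, which is finite for every $m>0$. Each term decreases to $0$ as $m\to\infty$, so dominated convergence (with dominating series taken at $m=1$) gives $\rho(m)\to0$.

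\emph{Lower bound \eqref{eqAiry: eigenvector, lower}.} The largest zero of $\operatorname{Ai}$ is $-\alpha_1$, so $\operatorname{Ai}(y-\alpha_1)>0$ for $y>0$. Moreover, $\operatorname{Ai}'(-\alpha_1)>0$, since $\operatorname{Ai}$ is positive to the right of this simple zero. Thus $y\mapsto\psi_1(y)/y$ extends to a continuous function on $[0,L]$ that is strictly positive everywhere: at $y=0$ it equals $\operatorname{Ai}'(-\alpha_1)/\lvert\operatorname{Ai}'(-\alpha_1)\rvert=1$. Its minimum $C_L>0$ over the compact interval $[0,L]$ gives the claim, and scaling finishes the proof.
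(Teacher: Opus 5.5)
Your proof is correct. The paper gives no argument for this lemma; it imports the three facts from Lemma~A.1 and the proof of Corollary~A.4 of \cite{MZ2016_slowdownBBM}. Your route is therefore a self-contained substitute for that citation: scaling to $q=1$, the identity $\lVert\operatorname{Ai}(\cdot-\alpha_n)\rVert_{L^2(\mathbb{R}_+)}^2=\operatorname{Ai}'(-\alpha_n)^2$, the classical asymptotics of $\operatorname{Ai}'$ and of the zeros, and compactness of $[0,L]$ for $\psi_1(y)/y$. In particular it makes explicit that $C_{\mathrm A}$ is uniform in $n$. That uniformity is exactly what the spectral sums $\sum_n e^{-2(\alpha_n-\alpha_1)J}\lvert\psi_n^{p}(y)\rvert^2$ in the proof of Proposition~\ref{propoAiry: any q potential} require. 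The citation buys brevity but leaves that check external.

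You can shorten your upper-bound step. The energy function you already used, $E_n(y):=\psi_n'(y)^2-(y-\alpha_n)\psi_n(y)^2$, satisfies $E_n'=-\psi_n^2$. Hence $\psi_n'(0)^2=E_n(0)=\int_0^\infty\psi_n^2=1$ and $E_n\le1$ on $[0,\infty)$. On $[0,\alpha_n]$ the term $-(y-\alpha_n)\psi_n^2$ is nonnegative, so $\lvert\psi_n'\rvert\le1$ and $\lvert\psi_n(y)\rvert\le y$ there. For $y\ge\alpha_n$, $\lvert\psi_n(y)\rvert$ is a multiple of $\operatorname{Ai}(y-\alpha_n)$, which is positive and decreasing, so $\lvert\psi_n(y)\rvert\le\lvert\psi_n(\alpha_n)\rvert\le\alpha_n\le y$. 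Thus $C_{\mathrm A}=1$ works without the oscillatory asymptotics of $\operatorname{Ai}'$ at $-\infty$. The classical asymptotics are then needed only for the growth of $\alpha_n$.
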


The following elementary scaling estimate will be used in the proof of the lower bound in Proposition~\ref{propoAiry: any q potential}.

\begin{lemma}\label{lemAiry: psi_1^p -psi_1^q}
	There is a numerical constant $C>0$ such that, for all $p,q>0$,
	\[
	\lVert\psi_1^p-\psi_1^q\rVert_{L^2(\mathbb{R}_+)}
	\leq C\bigl\lvert\log\frac{p}{q}\bigr\rvert.
	\]
\end{lemma}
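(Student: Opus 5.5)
The plan is to use the scaling structure of the Airy eigenfunctions directly. By definition $\psi_1^q(x)=q^{1/6}\psi_1(q^{1/3}x)$, so $\psi_1^p$ is obtained from $\psi_1^q$ by an $L^2$-unitary dilation. Set $s:=\frac13\log(p/q)$ and $D_s f(x):=e^{s/2}f(e^s x)$, which is unitary on $L^2(\mathbb{R}_+)$. Then $\psi_1^p=D_s\psi_1^q$, and after applying $D_{-\frac13\log q}$ we get
\[
\lVert\psi_1^p-\psi_1^q\rVert_{L^2}=\lVert D_s\psi_1-\psi_1\rVert_{L^2}.
\]
The lemma therefore reduces to the single-function estimate $\lVert D_s\psi_1-\psi_1\rVert\le C|s|$ for all $s\in\mathbb{R}$.

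I would prove this estimate in two regimes.

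\emph{Large $|s|$.} For $|s|\ge1$, the trivial bound $\lVert D_s\psi_1-\psi_1\rVert\le 2\lVert\psi_1\rVert=2\le 2|s|$ suffices.

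\emph{Small $|s|$.} For $|s|\le1$, write
\[
D_s\psi_1-\psi_1=\int_0^s\frac{\td}{\td u}D_u\psi_1\,\td u,
\qquad
\frac{\td}{\td u}D_u\psi_1=D_u\Bigl(\tfrac12\psi_1+x\psi_1'\Bigr),
\]
using the generator of the dilation group. By unitarity of $D_u$ and Minkowski's inequality,
\[
\lVert D_s\psi_1-\psi_1\rVert\le |s|\,\Bigl\lVert\tfrac12\psi_1+x\psi_1'\Bigr\rVert_{L^2(\mathbb{R}_+)}.
\]
So it remains to check that $x\psi_1'\in L^2(\mathbb{R}_+)$. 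This holds because $\psi_1(x)=c\operatorname{Ai}(x-\alpha_1)$, and $\operatorname{Ai}$ and $\operatorname{Ai}'$ decay like $\exp\{-\tfrac23x^{3/2}\}$ times polynomial factors as $x\to\infty$, while being smooth and bounded near $0$. This gives a numerical constant $C$.

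The only genuine point to verify is the differentiation under the dilation. To make it rigorous I would first do the computation for smooth rapidly decaying functions, which $\psi_1$ already is, so the fundamental theorem of calculus applies pointwise in $x$. Minkowski's integral inequality then moves the $u$-integral outside the $L^2$ norm. None of this is a real obstacle; the result is a numerical constant $C=\max\{2,\lVert\frac12\psi_1+x\psi_1'\rVert\}$, and since $|s|=\frac13|\log(p/q)|$ the claim follows with $C/3$.
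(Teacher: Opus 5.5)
Your proposal is correct and is essentially the paper's argument: you write $\psi_1^p$ as an $L^2$-unitary dilation of $\psi_1^q$, differentiate along the dilation parameter, and bound the derivative by the constant $\lVert\tfrac16\psi_1+\tfrac13 x\psi_1'\rVert_{L^2}$, which is your generator $\tfrac12\psi_1+x\psi_1'$ times the factor $\tfrac13$ coming from $s=\tfrac13\log(p/q)$. The only difference is your split into large and small $\lvert s\rvert$, which is unnecessary because the integrated derivative bound already holds for every $s$.
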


\begin{proof}
	Set
	\[
	F_\lambda(x):=\psi_1^{\exp\{\lambda\}}(x)
	=\exp\{\lambda/6\}\psi_1\bigl(\exp\{\lambda/3\}x\bigr),
	\qquad x\geq 0.
	\]
	A direct calculation gives
	\[
	\partial_\lambda F_\lambda(x)
	=\exp\{\lambda/6\}
	\Biggl[
	\frac{1}{6}\psi_1\bigl(\exp\{\lambda/3\}x\bigr)
	+\frac{1}{3}\exp\{\lambda/3\}x
	\psi_1'\bigl(\exp\{\lambda/3\}x\bigr)
	\Biggr].
	\]
	After the change of variables $z=\exp\{\lambda/3\}x$,
	\[
	\lVert\partial_\lambda F_\lambda\rVert_2^2
	=\int_0^\infty
	\bigl\lvert\frac{1}{6}\psi_1(z)+\frac{1}{3}z\psi_1'(z)\bigr\rvert^2\,\td z
	=:C_0^2<\infty,
	\]
	independently of $\lambda$. Therefore,
	\[
	\begin{aligned}
		\lVert\psi_1^p-\psi_1^q\rVert_2
		&=\lVert F_{\log p}-F_{\log q}\rVert_2\\
		&\leq
		\int_{\min\{\log p,\log q\}}^{\max\{\log p,\log q\}}
		\lVert\partial_\lambda F_\lambda\rVert_2\,\td\lambda
		\leq C_0\bigl\lvert\log\frac{p}{q}\bigr\rvert.
	\end{aligned}
	\]
\end{proof}

We now introduce the Airy PDE considered in this subsection. Let $a<b$ and $q\in C^1([a,b])$ be strictly positive. Consider
\begin{equation}\label{eqAiry: Airy PDE with q}
	\begin{cases}
		\partial_\theta w(\theta,x)
		=\partial_{xx}w(\theta,x)-q(\theta)xw(\theta,x),
		&x>0,\ a\leq\theta\leq b,\\
		w(\theta,0)=0.
	\end{cases}
\end{equation}
Denote its fundamental solution by $K_q(x,y;a,b)$. Standard Feynman--Kac theory for Brownian motion killed at the origin gives positivity of the kernel and the comparison rule
\[
q_-(\theta)\leq q(\theta)\leq q_+(\theta)
\quad\Longrightarrow\quad
K_{q_+}(x,y;a,b)
\leq K_q(x,y;a,b)
\leq K_{q_-}(x,y;a,b).
\]
The same comparison remains valid for bounded piecewise continuous potentials.

We introduce notations that will be used below. Define the Airy action
\begin{equation}\label{eqAiry: Lambda_q(a,b)}
	\Lambda_q(a,b):=\int_a^bq(\theta)^{2/3}\,\td\theta
\end{equation}
and set
\begin{equation}\label{eqAiry: s(theta)}
	s(\theta):=\int_a^\theta q(u)^{2/3}\,\td u,
	\qquad 0\leq s\leq\Lambda_q(a,b).
\end{equation}
Let $\theta=\theta(s)$ denote the inverse map, and put
\begin{equation}\label{eqAiry: B_q(s)}
	B_q(s):=\frac{\td}{\td s}\log q(\theta(s)).
\end{equation}
For brevity, write $\Lambda_q=\Lambda_q(a,b)$, and define
\begin{equation}\label{eqAiry: epsilon(q), mathcal E(q)}
	\varepsilon_\infty(q):=\lVert B_q\rVert_{L^\infty(0,\Lambda_q)},
	\qquad
	\mathcal{E}(q):=\int_0^{\Lambda_q}\lvert B_q(s)\rvert^2\,\td s.
\end{equation}

To study \eqref{eqAiry: Airy PDE with q}, we use the same eigenfunction-decomposition strategy as in \cite{MZ2016_slowdownBBM}. For any $L^2$-solution $w(\theta,\cdot)$ of \eqref{eqAiry: Airy PDE with q}, define
\begin{equation}\label{eqAiry: W(theta,)}
	W(\theta,\cdot)
	:=
	\exp\Bigl\{
	\alpha_1\int_a^\theta q(u)^{2/3}\,\td u
	\Bigr\}w(\theta,\cdot)
\end{equation}
and expand
\begin{equation}\label{eqAiry: expand W by Airy eigenvectors}
	W(\theta,\cdot)
	=
	\sum_{n\geq 1}c_n(\theta)\psi_n^{q(\theta)}.
\end{equation}
The point of the normalization in \eqref{eqAiry: W(theta,)} is that the principal coefficient $c_1$ has no spectral damping, whereas all higher modes decay at rates $\alpha_n-\alpha_1$. Thus, if $c_1$ varies little and the higher modes are sufficiently damped, then $w(\theta,\cdot)$ is well approximated by the principal eigenfunction $\psi_1^{q(\theta)}$ multiplied by
\[
\exp\Bigl\{-\alpha_1\int_a^\theta q(u)^{2/3}\,\td u\Bigr\}.
\]

More precisely, set
\[
c_n(\theta)
:=\bigl\langle W(\theta,\cdot),\psi_n^{q(\theta)}\bigr\rangle,
\qquad n\geq 1,
\]
and write $c_n(s):=c_n(\theta(s))$. The same computation as in formula~(A.2) and Lemma~A.3 of \cite{MZ2016_slowdownBBM} gives
\begin{equation}\label{eqAiry: dc/ds ODE}
	\frac{\td c}{\td s}
	=\bigl(-\Gamma+B_q(s)A\bigr)c,
\end{equation}
where
\[
\begin{gathered}
	c(s):=(c_1(s),c_2(s),\ldots)^{\mathsf T},
	\qquad
	\Gamma:=\operatorname{diag}
	\bigl(0,\alpha_2-\alpha_1,\alpha_3-\alpha_1,\ldots\bigr),\\
	A_{nk}:=
	\biggl\langle
	\psi_k,\frac{1}{6}\psi_n+\frac{1}{3}x\psi_n'
	\biggr\rangle_{L^2(\mathbb{R}_+)}.
\end{gathered}
\]
The matrix $A$ is real and skew-symmetric. Moreover,
\begin{equation}\label{eqAiry: sum |A_1j|^2}
	C_A^2:=\sum_{j=2}^{\infty}\lvert A_{1j}\rvert^2<\infty,
\end{equation}
and
\begin{equation}\label{eqAiry: |c(s)| decreases}
	\frac{\td}{\td s}\lVert c(s)\rVert_{\ell^2}^2
	=-2\sum_{n=2}^{\infty}
	(\alpha_n-\alpha_1)\lvert c_n(s)\rvert^2
	\leq 0.
\end{equation}
The proof is identical to that of Lemma~A.3 in \cite{MZ2016_slowdownBBM} and is omitted.

Let $\gamma:=\alpha_2-\alpha_1>0$ and $c_\perp:=(c_2,c_3,\ldots)$. The following lemma controls the variation of the principal coefficient $c_1$.

\begin{lemma}\label{lemAiry: c_1 small difference}
	Suppose that $c$ solves \eqref{eqAiry: dc/ds ODE} on $[s_0,s_1]$. Then, for $s\in[s_0,s_1]$,
	\begin{equation}\label{eq:excited-mode-variation}
		\lVert c_\perp(s)\rVert_{\ell^2}
		\leq
		\exp\{-\gamma(s-s_0)\}\lVert c_\perp(s_0)\rVert_{\ell^2}
		+C_A\lVert c(s_0)\rVert_{\ell^2}
		\int_{s_0}^s
		\exp\{-\gamma(s-u)\}\lvert B_q(u)\rvert\,\td u
	\end{equation}
	with $C_A$ defined in \eqref{eqAiry: sum |A_1j|^2}. Also,
	\begin{equation}\label{eqAiry: c_1(s) -c_1(s_0)}
		\lvert c_1(s)-c_1(s_0)\rvert
		\leq C \lVert c(s_0)\rVert_{\ell^2}
		\bigl(\mathcal{E}(q)^{1/2}+\mathcal{E}(q)\bigr)
	\end{equation}
	for a numerical constant $C>0$.
\end{lemma}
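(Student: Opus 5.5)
The plan is to work directly with the ODE \eqref{eqAiry: dc/ds ODE}, splitting it into the principal and transverse parts. Since $\Gamma_{11}=0$ and $A$ is skew-symmetric, so that $A_{11}=0$, the first component reads
\[
\frac{\td c_1}{\td s}=B_q(s)\sum_{k\ge2}A_{1k}c_k(s),
\]
and the transverse part reads
\[
\frac{\td c_\perp}{\td s}=-\Gamma_\perp c_\perp+B_q(s)\bigl(A_{\perp\perp}c_\perp+A_{\perp1}c_1\bigr).
\]

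For \eqref{eq:excited-mode-variation}, I would compute $\frac{\td}{\td s}\lVert c_\perp\rVert^2$. Skew-symmetry kills the term $\langle c_\perp,A_{\perp\perp}c_\perp\rangle$, and $\Gamma_\perp\ge\gamma$ gives
\[
\frac{\td}{\td s}\lVert c_\perp\rVert^2\le-2\gamma\lVert c_\perp\rVert^2+2\lvert B_q\rvert\,\lvert c_1\rvert\,\lVert A_{\perp1}\rVert\,\lVert c_\perp\rVert .
\]
Here $\lVert A_{\perp1}\rVert=C_A$ by skew-symmetry and \eqref{eqAiry: sum |A_1j|^2}, and $\lvert c_1(s)\rvert\le\lVert c(s)\rVert\le\lVert c(s_0)\rVert$ by the monotonicity \eqref{eqAiry: |c(s)| decreases}. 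Dividing by $2\lVert c_\perp\rVert$ gives a differential inequality for $\lVert c_\perp\rVert$. To avoid trouble where $\lVert c_\perp\rVert$ vanishes, I would work with $\sqrt{\lVert c_\perp\rVert^2+\delta}$ and then let $\delta\to0$. Grönwall's inequality then yields \eqref{eq:excited-mode-variation}.

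For \eqref{eqAiry: c_1(s) -c_1(s_0)}, I would integrate the $c_1$ equation and apply Cauchy--Schwarz in $\ell^2$:
\[
\lvert c_1(s)-c_1(s_0)\rvert\le C_A\int_{s_0}^s\lvert B_q(u)\rvert\,\lVert c_\perp(u)\rVert\,\td u .
\]
Substituting \eqref{eq:excited-mode-variation} splits this into two terms.

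The first term is $C_A\lVert c(s_0)\rVert\int\lvert B_q(u)\rvert e^{-\gamma(u-s_0)}\td u$. By Cauchy--Schwarz in $u$ it is at most $C\lVert c(s_0)\rVert\,\mathcal E(q)^{1/2}(2\gamma)^{-1/2}$.

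The second term is
\[
C_A^2\lVert c(s_0)\rVert\int_{s_0}^s\lvert B_q(u)\rvert\int_{s_0}^u e^{-\gamma(u-v)}\lvert B_q(v)\rvert\,\td v\,\td u .
\]
The inner integral is a convolution of $\lvert B_q\rvert$ with $e^{-\gamma\cdot}$, so Young's inequality bounds its $L^2$ norm by $\gamma^{-1}\lVert B_q\rVert_{L^2}$. A further Cauchy--Schwarz then gives at most $C\lVert c(s_0)\rVert\,\mathcal E(q)$. Since $\gamma=\alpha_2-\alpha_1$ is numerical, the constants are numerical.

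The main delicate point is the transverse differential inequality: one must check that skew-symmetry genuinely removes the $A_{\perp\perp}$ contribution. This requires the infinite-dimensional ODE to make sense in $\ell^2$, which I would take from the regularity asserted in Lemma~A.3 of \cite{MZ2016_slowdownBBM}. The rest of the argument is routine.
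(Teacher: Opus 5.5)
Your proposal is correct and follows essentially the same route as the paper: an energy estimate for $c_\perp$ in which skew-symmetry of $A$ removes the $A_{\perp\perp}$ term, combined with $\lvert c_1\rvert\le\lVert c(s_0)\rVert_{\ell^2}$ and Gr\"onwall; then integration of the $c_1$ equation, with the two resulting terms bounded by Cauchy--Schwarz and Young's convolution inequality. Your $\sqrt{\lVert c_\perp\rVert^2+\delta}$ regularization simply makes explicit what the paper calls the ``standard ODE comparison argument.''
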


\begin{proof}
	By \eqref{eqAiry: dc/ds ODE} and the skew-symmetry of $A$,
	\begin{equation}\label{eq:bar-energy}
		\frac{1}{2}\frac{\td}{\td s}\lVert c_\perp(s)\rVert_{\ell^2}^2
		\leq
		-\gamma\lVert c_\perp(s)\rVert_{\ell^2}^2
		+C_A\lvert B_q(s)\rvert\lvert c_1(s)\rvert
		\lVert c_\perp(s)\rVert_{\ell^2}.
	\end{equation}
	Using \eqref{eqAiry: |c(s)| decreases} and applying the standard ODE comparison argument to \eqref{eq:bar-energy} yields \eqref{eq:excited-mode-variation}.
	
	The first coordinate of \eqref{eqAiry: dc/ds ODE} satisfies
	\begin{equation}\label{eq:c1-derivative}
		\lvert \frac{\td c_1}{\td s}(s)\rvert
		\leq C_A\lvert B_q(s)\rvert\lVert c_\perp(s)\rVert_{\ell^2}.
	\end{equation}
	Insert \eqref{eq:excited-mode-variation} into the integral of \eqref{eq:c1-derivative}. The contribution of the first term in \eqref{eq:excited-mode-variation} is bounded by
	\[
	\int_{s_0}^s
	\lvert B_q(v)\rvert \exp\{-\gamma(v-s_0)\}\,\td v
	\leq(2\gamma)^{-1/2}
	\lVert B_q\rVert_{L^2(s_0,s)}.
	\]
	For the second term, Young's convolution inequality gives
	\[
	\int_{s_0}^s\lvert B_q(v)\rvert
	\int_{s_0}^v\exp\{-\gamma(v-u)\}\lvert B_q(u)\rvert\,\td u\,\td v
	\leq\frac{1}{\gamma}
	\lVert B_q\rVert_{L^2(s_0,s)}^2.
	\]
	Since $\lVert B_q\rVert_{L^2(s_0,s)}\leq\mathcal{E}(q)^{1/2}$ by the definition \eqref{eqAiry: epsilon(q), mathcal E(q)}, this proves \eqref{eqAiry: c_1(s) -c_1(s_0)}.
\end{proof}

We now state the main Airy estimates for \eqref{eqAiry: Airy PDE with q}. Recall that $\Lambda_q$, $s(\cdot)$, $B_q(\cdot)$, $\varepsilon_\infty(q)$, and $\mathcal{E}(q)$ are defined in \eqref{eqAiry: Lambda_q(a,b)}--\eqref{eqAiry: epsilon(q), mathcal E(q)}.

\begin{proposition}\label{propoAiry: any q potential}
	Let $K_q$ be the fundamental solution of \eqref{eqAiry: Airy PDE with q}. There exist numerical constants $\varepsilon_\star,\mathcal{E}_\star,\Lambda_\star>0$ with the following properties.
	
	\begin{enumerate}
		\item For every $\ell_0>0$, there is $C=C(\ell_0)<\infty$ such that, if $\Lambda_q(a,b)\geq\ell_0$ and $\varepsilon_\infty(q)\leq\varepsilon_\star$, then
		\begin{equation}\label{eqAiry: abstract upper}
			K_q(x,y;a,b)
			\leq
			Cxy\sqrt{q(a)q(b)}
			\exp\{-\alpha_1\Lambda_q(a,b)\},
			\qquad x,y\geq 0.
		\end{equation}
		
		\item If $\Lambda_q(a,b)\geq\Lambda_\star$, $\varepsilon_\infty(q)\leq\varepsilon_\star$, and $\mathcal{E}(q)\leq\mathcal{E}_\star$, then
		\begin{equation}\label{eqAiry: abstract-lower}
			K_q(x,y;a,b)
			\geq
			C^{-1}xy\sqrt{q(a)q(b)}
			\exp\{-\alpha_1\Lambda_q(a,b)\}
		\end{equation}
		for $0\leq x\leq 2q(a)^{-1/3}$ and $0\leq y\leq 2q(b)^{-1/3}$.
		Here $C<\infty$ is numerical.
	\end{enumerate}
\end{proposition}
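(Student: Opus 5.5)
The plan is to reduce both bounds to the spectral picture \eqref{eqAiry: W(theta,)}--\eqref{eqAiry: dc/ds ODE}, combined with the Chapman--Kolmogorov identity
\[
K_q(x,y;a,b)=\int_0^\infty K_q(x,z;a,\theta_1)\,K_q(z,\theta_1\to b;y)\,\td z ,
\]
where the second factor stands for $K_q(z,y;\theta_1,b)$. I split $[a,b]$ in the Airy clock $s$ of \eqref{eqAiry: s(theta)} into three pieces: an initial piece $[a,\theta_1]$ of action $m$, a middle piece, and a terminal piece $[\theta_2,b]$ of action $m$. Here $m$ is a fixed large numerical constant. If $\Lambda_q<2m$, I skip the middle piece; this is the only place where $\ell_0$ enters $C(\ell_0)$.

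\textbf{Short pieces, constant potentials.} On a piece of action $m$ we have $|\log q(\theta)-\log q(a)|\le \varepsilon_\infty(q)m\le \varepsilon_\star m$, so $q$ is pinched between constants $q_\pm=q(a)e^{\pm\varepsilon_\star m}$. By the comparison rule, $K_{q_+}\le K_q\le K_{q_-}$ on that piece. For constant $q$ the kernel is explicit:
\[
\sum_n e^{-\alpha_n q^{2/3}(\theta-a)}\psi_n^q(x)\psi_n^q(y).
\]
This gives two bounds.
\begin{itemize}
\item Using \eqref{eqAiry: eigenvector, upper} on one factor and orthonormality in $y$, we get $\|K_{q_-}(x,\cdot)\|_{L^2}\le C\sqrt{q}\,x\,e^{-\alpha_1 q^{2/3}(\theta-a)}(1+\rho(\cdot))$, with $\rho$ from \eqref{eqAiry: sum e^{-an-a1}}.
\item The mismatch between $q_-^{2/3}(\theta-a)$ and the true action $\int q^{2/3}$ is at most $C\varepsilon_\star m\cdot m$ in the exponent. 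This is harmless once $\varepsilon_\star$ is chosen after $m$.
\end{itemize}
So after the initial piece, $W(\theta_1,\cdot)$ lies in $L^2$ with norm at most $C\sqrt{q(a)}\,x$. The terminal piece is handled the same way, applied to the adjoint kernel in the variable $y$.

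\textbf{Upper bound.} By \eqref{eqAiry: |c(s)| decreases}, $\|c(s)\|_{\ell^2}=\|W(\theta(s),\cdot)\|_{L^2}$ is nonincreasing along the middle piece. Hence
\[
\|K_q(x,\cdot;a,\theta_2)\|_{L^2}\le C\sqrt{q(a)}\,x\,e^{-\alpha_1(\Lambda_q-m)}.
\]
Cauchy--Schwarz against $\|K_q(\cdot,y;\theta_2,b)\|_{L^2}\le C\sqrt{q(b)}\,y\,e^{-\alpha_1 m}$ then gives \eqref{eqAiry: abstract upper}. Only $\varepsilon_\infty\le\varepsilon_\star$ is used here.

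\textbf{Lower bound.} Here I track the principal coefficient.
\begin{enumerate}
\item After the initial piece, compare with $q_+$ to get $K_q\ge K_{q_+}$. Its first mode dominates: the higher modes are $O(\rho(m))$ relative to it by \eqref{eqAiry: sum e^{-an-a1}}. Lemma~\ref{lemAiry: psi_1^p -psi_1^q} converts $\psi_1^{q_+}$ into $\psi_1^{q(\theta_1)}$ at cost $C\varepsilon_\star m$. With \eqref{eqAiry: eigenvector, lower} for $x\le 2q(a)^{-1/3}$, this yields $c_1(s_1)\ge c\sqrt{q(a)}\,x$ and $\|c_\perp(s_1)\|\le\delta\sqrt{q(a)}\,x$ for small $\delta$.
\item Strictly speaking the comparison only gives a pointwise lower bound, not the coefficients of $K_q$ itself. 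I would instead restart from the positive function $K_{q_+}(x,\cdot)$ as a lower initial datum, using positivity of the semigroup.
\item Lemma~\ref{lemAiry: c_1 small difference}, with $\mathcal E(q)\le\mathcal E_\star$, keeps $c_1\ge\frac c2\sqrt{q(a)}\,x$ through the middle piece. Estimate \eqref{eq:excited-mode-variation} keeps $c_\perp$ small, because $\varepsilon_\infty$ is small and the initial $c_\perp$ is damped at rate $\gamma$.
\item On the terminal piece I pair $W(\theta_2,\cdot)=c_1\psi_1+\sum_{n\ge2}c_n\psi_n$ against the lower kernel $K_{q_+}(\cdot,y;\theta_2,b)$. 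Its projection onto $\psi_1$ is $\ge c\sqrt{q(b)}\,y\,e^{-\alpha_1 m}$ for $y\le 2q(b)^{-1/3}$. Its $L^2$-norm is $\le C\sqrt{q(b)}\,y\,e^{-\alpha_1 m}$, so the $c_\perp$-part contributes at most $\delta$ times the main term.
\end{enumerate}

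The main obstacle is step 4 together with step 2. Positivity is only available for the full kernel, while the error terms are controlled only in $L^2$. The constants therefore have to be fixed in the order $m$, then $\varepsilon_\star$, then $\mathcal E_\star$, so that the $L^2$ errors (of size $\rho(m)$, $\varepsilon_\star m$ and $\mathcal E_\star^{1/2}$) are strictly smaller than the principal overlap constant coming from \eqref{eqAiry: eigenvector, lower}. Finally, $\Lambda_\star\ge 2m$ is chosen so that the middle piece exists.
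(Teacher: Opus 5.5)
Your proposal is correct and is essentially the paper's argument, and the positivity-versus-$L^2$ obstacle you flag is already resolved by your step 2: sandwiching the true $q$ between constant potentials on two end pieces of Airy action $m$ is exactly the paper's single comparison $K_q\le K_{\underline q}$ (resp.\ $K_q\ge K_{\overline q}$) with such a piecewise potential. The rest matches as well: monotonicity of $\lVert c\rVert_{\ell^2}$ in the middle, the $c_1$-stability lemma, and the $\lVert\psi_1^p-\psi_1^q\rVert_{L^2}$ lemma at the basis changes; the only cosmetic difference is that the paper kills the excited modes at the end through the terminal damping $\rho(m_0)$, using only the total coefficient bound, whereas you keep $c_\perp$ small across the middle via \eqref{eq:excited-mode-variation} and $\varepsilon_\infty\le\varepsilon_\star$, and either works.
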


\begin{proof}
	\emph{Upper bound.}
	Fix $\ell_0>0$ and, for this part of the proof, set $m_0:=\min\{1,\ell_0/4\}$.
	Since $\Lambda_q(a,b)\geq\ell_0$, we have $2m_0<\Lambda_q(a,b)$. Hence there are unique $\theta_L,\theta_R\in(a,b)$, with $\theta_L<\theta_R$, such that
	\begin{equation}\label{eqAiry: int_a^theta_L ...=m_0}
		\int_a^{\theta_L}q(\theta)^{2/3}\,\td\theta=m_0,
		\qquad
		\int_{\theta_R}^{b}q(\theta)^{2/3}\,\td\theta=m_0.
	\end{equation}
	Write $\varepsilon:=\varepsilon_\infty(q)$. For every $\theta\in[a,\theta_L]$,
	\begin{equation}\label{eqAiry: log q -log q}
		\lvert\log q(\theta)-\log q(a)\rvert
		=
		\lvert\int_0^{s(\theta)}B_q(u)\,\td u\rvert
		\leq\varepsilon m_0.
	\end{equation}
	Similarly,
	\[
	\lvert\log q(\theta)-\log q(b)\rvert
	\leq\varepsilon m_0,
	\qquad \theta\in[\theta_R,b].
	\]
	Define
	\begin{equation}\label{eqAiry: underline q}
		\underline{q}(\theta):=
		\begin{cases}
			p_L:=\exp\{-\varepsilon m_0\}q(a),&a\leq\theta<\theta_L,\\
			q(\theta),&\theta_L\leq\theta\leq\theta_R,\\
			p_R:= \exp\{-\varepsilon m_0\}q(b),&\theta_R<\theta\leq b.
		\end{cases}
	\end{equation}
	It follows from \eqref{eqAiry: log q -log q} and its right-end analogue that
	\begin{equation}\label{eqAiry: underline q <q}
		\underline{q}(\theta)\leq q(\theta),
		\qquad \theta\in[a,b].
	\end{equation}
	
	Set $\underline{J}_L:=p_L^{2/3}(\theta_L-a)$ and $\underline{J}_R:=p_R^{2/3}(b-\theta_R)$.
	Since $q(\theta)\leq \exp\{\varepsilon m_0\}q(a)$ on $[a,\theta_L]$,
	\begin{equation}\label{eqAiry: int underline q ^2/3 L}
		\underline{J}_L
		\geq m_0\exp\{-\frac{4}{3}\varepsilon m_0\}
		\geq m_0\exp\{-\frac{4}{3}\varepsilon_\star\}
		>0,
	\end{equation}
	where we used $m_0\leq 1$. Likewise,
	\begin{equation}\label{eqAiry: int underline q ^2/3 R}
		\underline{J}_R\geq m_0\exp\{-\frac{4}{3}\varepsilon_\star\}>0.
	\end{equation}
	Notice that the lower bound here is allowed to depend on $\ell_0$; this is precisely where the constant in \eqref{eqAiry: abstract upper} acquires its $\ell_0$-dependence.
	
	Let $\underline{K}$ be the fundamental solution of \eqref{eqAiry: Airy PDE with q} associated with $\underline{q}$. For fixed $x\geq 0$, define
	\begin{equation}\label{eq:minorant-normalized-kernel}
		\underline{W}(\theta,y)
		:=
		\exp\Bigl\{
		\alpha_1\int_a^\theta\underline{q}(u)^{2/3}\,\td u
		\Bigr\}
		\underline{K}(x,y;a,\theta)
	\end{equation}
	and let $\underline{c}_n(\theta)$ be its coefficients in the basis $\{\psi_n^{\underline{q}(\theta)}\}_{n\geq 1}$ on each interval of continuity of $\underline{q}$.
	
	On $[a,\theta_L)$, the potential $\underline{q}$ defined in \eqref{eqAiry: underline q} is constant and equal to $p_L$. Therefore,
	\[
	\underline{c}_n(\theta_L^-)
	=
	\exp\{-(\alpha_n-\alpha_1)\underline{J}_L\}
	\psi_n^{p_L}(x),\qquad n\geq 1.
	\]
	Using \eqref{eqAiry: eigenvector, upper}, \eqref{eqAiry: sum e^{-an-a1}}, and \eqref{eqAiry: int underline q ^2/3 L}, we obtain
	\[
	\begin{aligned}
		\lVert \underline{c}(\theta_L^-)\rVert_{\ell^2}^2
		\leq
		C_{\mathrm A}^2p_Lx^2
		\bigl(1+\rho(\underline{J}_L)^2\bigr) \leq C(\ell_0)p_Lx^2
		\leq C(\ell_0)q(a)x^2.
	\end{aligned}
	\]
	At a jump of the potential, the function $\underline{W}$ is unchanged and only the orthonormal basis changes. Parseval's identity therefore gives
	\[
	\lVert \underline{c}(\theta_L^+)\rVert_{\ell^2}
	=
	\lVert \underline{c}(\theta_L^-)\rVert_{\ell^2}.
	\]
	On $[\theta_L,\theta_R]$, \eqref{eqAiry: |c(s)| decreases} implies that the coefficient norm is non-increasing. Applying Parseval once more at $\theta_R$, we obtain
	\[
	\lVert \underline{c}(\theta_R^+)\rVert_{\ell^2}
	\leq C(\ell_0)\sqrt{q(a)}\,x.
	\]
	
	On $(\theta_R,b]$, the potential is constant and equal to $p_R$. Hence
	\[
	\underline{W}(b,y)
	=
	\sum_{n\geq 1}
	\exp\{-(\alpha_n-\alpha_1)\underline{J}_R\}
	\underline{c}_n(\theta_R^+)\psi_n^{p_R}(y).
	\]
	By Cauchy--Schwarz, \eqref{eqAiry: eigenvector, upper}, and \eqref{eqAiry: int underline q ^2/3 R},
	\[
	\begin{aligned}
		\lvert \underline{W}(b,y)\rvert
		&\leq
		\lVert \underline{c}(\theta_R^+)\rVert_{\ell^2}
		\bigl(
		\sum_{n\geq 1}
		\exp\{-2(\alpha_n-\alpha_1)\underline{J}_R\}
		\lvert \psi_n^{p_R}(y)\rvert^2
		\bigr)^{1/2}\\
		&\leq C(\ell_0)xy\sqrt{p_Lp_R}
		\leq C(\ell_0)xy\sqrt{q(a)q(b)}.
	\end{aligned}
	\]
	
	Finally, by \eqref{eqAiry: underline q <q} and the comparison principle,
	$K_q\leq\underline{K}$. Moreover,
	\[
	0\leq
	\Lambda_q(a,b)-\int_a^b\underline{q}(u)^{2/3}\,\td u
	\leq 2m_0
	\]
	by \eqref{eqAiry: int_a^theta_L ...=m_0} and \eqref{eqAiry: underline q}. Consequently,
	\begin{equation}\label{eqAiry: final stroke for K_q upper bound}
		\begin{aligned}
			K_q(x,y;a,b)
			&\leq K_{\underline{q}}(x,y;a,b)\\
			&=
			\exp\Bigl\{-\alpha_1\int_a^b\underline{q}(u)^{2/3}\,\td u\Bigr\}
			\underline{W}(b,y)\\
			&\leq
			C(\ell_0)\exp\{2\alpha_1m_0\}
			xy\sqrt{q(a)q(b)}
			\exp\{-\alpha_1\Lambda_q(a,b)\}.
		\end{aligned}
	\end{equation}
	Absorbing $\exp\{2\alpha_1m_0\}$ into $C(\ell_0)$ proves \eqref{eqAiry: abstract upper}.
	
	\emph{Lower bound.}
	Let $C_3$ be the constant in \eqref{eqAiry: eigenvector, lower} for $L=3$, and set $C_0:=C_{\mathrm A}(1+\rho(1)^2)^{1/2}$, with $C_A$ defined in \eqref{eqAiry: sum |A_1j|^2} and $\rho(\cdot)$ in \eqref{eqAiry: sum e^{-an-a1}}. Choose a numerical constant $m_0>1$ so large that $C_{\mathrm A}C_0\rho(m_0)\leq c_0^2/4$.
	We shall choose $\Lambda_\star>2m_0$ and, after fixing $m_0$, shrink $\varepsilon_\star$ so that $\varepsilon_\star m_0\leq 1$. If $\Lambda_q(a,b)\geq\Lambda_\star$, there are $\theta_L,\theta_R\in(a,b)$, with $\theta_L<\theta_R$, satisfying \eqref{eqAiry: int_a^theta_L ...=m_0}. Again write $\varepsilon:=\varepsilon_\infty(q)$, and define
	\begin{equation}\label{eqAiry: overline q}
		\overline{q}(\theta):=
		\begin{cases}
			p_L:=\exp\{\varepsilon m_0\}q(a),&a\leq\theta<\theta_L,\\
			q(\theta),&\theta_L\leq\theta\leq\theta_R,\\
			p_R:=\exp\{\varepsilon m_0\}q(b),&\theta_R<\theta\leq b.
		\end{cases}
	\end{equation}
	The endpoint oscillation estimate \eqref{eqAiry: log q -log q} and its right-end analogue imply
	\[
	\overline{q}(\theta)\geq q(\theta),
	\qquad \theta\in[a,b].
	\]
	Let $\overline{J}_L:=p_L^{2/3}(\theta_L-a)$ and $\overline{J}_R:=p_R^{2/3}(b-\theta_R)$.
	The same endpoint estimates as \eqref{eqAiry: int underline q ^2/3 L} and \eqref{eqAiry: int underline q ^2/3 R} give
	\begin{equation}\label{eqAiry: int overline q ^2/3 L}
		m_0\leq\overline{J}_L
		\leq m_0\exp\{\frac{4}{3}\varepsilon m_0\},
	\end{equation}
	and
	\begin{equation}\label{eqAiry: int overline q ^2/3 R}
		m_0\leq\overline{J}_R
		\leq m_0\exp\{\frac{4}{3}\varepsilon m_0\}.
	\end{equation}
	
	Denote by $\overline{K}$ the kernel associated with $\overline{q}$, and define
	\[
	\overline{W}(\theta,y)
	:=
	\exp\Bigl\{
	\alpha_1\int_a^\theta\overline{q}(u)^{2/3}\,\td u
	\Bigr\}
	\overline{K}(x,y;a,\theta).
	\]
	Let $\overline{c}_n(\theta)$ be the corresponding coefficients in the basis $\{\psi_n^{\overline{q}(\theta)}\}_{n\geq 1}$.
	
	On $[a,\theta_L)$,
	\[
	\overline{c}_n(\theta_L^-)
	=
	\exp\{-(\alpha_n-\alpha_1)\overline{J}_L\}
	\psi_n^{p_L}(x).
	\]
	Hence, by \eqref{eqAiry: eigenvector, upper}, \eqref{eqAiry: sum e^{-an-a1}}, and $\overline{J}_L\geq m_0>1$,
	\begin{equation}\label{eqAiry: | bar c(theta_L)| 2}
		\lVert \overline{c}(\theta_L^-)\rVert_{\ell^2}
		\leq C_0\sqrt{p_L}\,x
	\end{equation}
	Parseval's identity at the two jumps, together with \eqref{eqAiry: |c(s)| decreases} on the three smooth pieces, gives
	\begin{equation}\label{eqAiry: | bar c| 2}
		\sup_{\theta\in[\theta_L,b]}
		\lVert \overline{c}(\theta^\pm)\rVert_{\ell^2}
		\leq C_0\sqrt{p_L}\,x.
	\end{equation}
	
	Under the restriction $\varepsilon_\star m_0\leq 1$, assume that $0\leq x\leq 2q(a)^{-1/3}$. Then
	\[
	x\leq 2\exp\{\varepsilon m_0/3\}p_L^{-1/3}
	\leq 3p_L^{-1/3}.
	\]
	Thus \eqref{eqAiry: eigenvector, lower}, with $L=3$, gives
	\begin{equation}\label{eqAiry: bar c 1 (theta L)}
		\overline{c}_1(\theta_L^-)
		=\psi_1^{p_L}(x)
		\geq C_3\sqrt{p_L}\,x,
		\qquad 0\leq x\leq 2q(a)^{-1/3}.
	\end{equation}
	
	At $\theta_L$, the potential jumps from $p_L$ to $q(\theta_L)$. Moreover, $\lvert\log(q(\theta_L)/p_L)\rvert\leq 2\varepsilon m_0$. Hence, Lemma~\ref{lemAiry: psi_1^p -psi_1^q} and \eqref{eqAiry: | bar c(theta_L)| 2} give
	\begin{equation}\label{eqAiry: bar c_1 diff 1}
		\lvert \overline{c}_1(\theta_L^+)-\overline{c}_1(\theta_L^-)\rvert
		\leq C\varepsilon m_0\sqrt{p_L}\,x.
	\end{equation}
	On $[\theta_L,\theta_R]$, the potential is $q$. Lemma~\ref{lemAiry: c_1 small difference} applies with the same function $B_q$, restricted to the middle interval. Using \eqref{eqAiry: | bar c| 2},
	\begin{equation}\label{eqAiry: bar c_1 diff 2}
		\lvert \overline{c}_1(\theta_R^-)-\overline{c}_1(\theta_L^+)\rvert
		\leq
		C\sqrt{p_L}\,x
		\bigl(\mathcal{E}(q)^{1/2}+\mathcal{E}(q)\bigr).
	\end{equation}
	At $\theta_R$, the same argument as in \eqref{eqAiry: bar c_1 diff 1} gives
	\begin{equation}\label{eqAiry: bar c-1 diff 3}
		\lvert \overline{c}_1(\theta_R^+)-\overline{c}_1(\theta_R^-)\rvert
		\leq C\varepsilon m_0\sqrt{p_L}\,x.
	\end{equation}
	Combining \eqref{eqAiry: bar c 1 (theta L)}--\eqref{eqAiry: bar c-1 diff 3}, we obtain
	\[
	\overline{c}_1(\theta_R^+)
	\geq
	\bigl[
	C_3-C\bigl(\varepsilon m_0+\mathcal{E}(q)^{1/2}+\mathcal{E}(q)\bigr)
	\bigr]\sqrt{p_L}\,x.
	\]
	After choosing $\varepsilon_\star$ and $\mathcal{E}_\star$ sufficiently small, depending only on the numerical constants above, we therefore have $\overline{c}_1(\theta_R^+)\geq (C_3/2)\sqrt{p_L}\,x$.
	
	On $(\theta_R,b]$, the potential is constant and equal to $p_R$. Hence
	\begin{equation}\label{eqAiry: bar W(b,y)}
		\begin{aligned}
			\overline{W}(b,y)
			&=\overline{c}_1(\theta_R^+)\psi_1^{p_R}(y)\\
			&\quad+
			\sum_{n=2}^\infty
			\exp\{-(\alpha_n-\alpha_1)\overline{J}_R\}
			\overline{c}_n(\theta_R^+)\psi_n^{p_R}(y).
		\end{aligned}
	\end{equation}
	Since $\varepsilon_\star m_0\leq 1$, the same use of \eqref{eqAiry: eigenvector, lower} with $L=3$ gives $\psi_1^{p_R}(y)\geq c_0\sqrt{p_R}\,y$ for $0\leq y\leq 2q(b)^{-1/3}$.
	Therefore, the first term in \eqref{eqAiry: bar W(b,y)} is bounded below by $(C_3^2/2)\sqrt{p_Lp_R}\,xy$.
	On the other hand, by Cauchy--Schwarz, \eqref{eqAiry: eigenvector, upper}, \eqref{eqAiry: | bar c| 2}, and \eqref{eqAiry: int overline q ^2/3 R}, the absolute value of the second term in \eqref{eqAiry: bar W(b,y)} is at most
	\[
	C_{\mathrm A}C_0\rho(m_0)\sqrt{p_Lp_R}\,xy
	\leq \frac{C_3^2}{4}\sqrt{p_Lp_R}\,xy.
	\]
	By the choice of $m_0$, the second term is at most half the first. Hence $\overline{W}(b,y)\geq c\sqrt{p_Lp_R}\,xy$, and therefore $\overline{W}(b,y)\geq c\sqrt{q(a)q(b)}\,xy$, for $0\leq x\leq 2q(a)^{-1/3}$ and $0\leq y\leq 2q(b)^{-1/3}$, where $c>0$ is numerical.
	
	Finally, $\overline{q}\geq q$, so the comparison principle gives $K_q\geq K_{\overline{q}}$. Moreover, by \eqref{eqAiry: int overline q ^2/3 L} and \eqref{eqAiry: int overline q ^2/3 R},
	\[
	0\leq
	\int_a^b\overline{q}(u)^{2/3}\,\td u-\Lambda_q(a,b)
	\leq
	2m_0\bigl(\exp\{\frac{4}{3}\varepsilon m_0\}-1\bigr),
	\]
	which is bounded by a numerical constant after the above choice of $m_0$ and the restriction $\varepsilon\leq\varepsilon_\star$. It follows that
	\[
	\begin{aligned}
		K_q(x,y;a,b)
		&\geq K_{\overline{q}}(x,y;a,b)\\
		&=
		\exp\Bigl\{-\alpha_1\int_a^b\overline{q}(u)^{2/3}\,\td u\Bigr\}
		\overline{W}(b,y)\\
		&\geq
		C^{-1}xy\sqrt{q(a)q(b)}
		\exp\{-\alpha_1\Lambda_q(a,b)\}.
	\end{aligned}
	\]
	which completes the proof of \eqref{eqAiry: abstract-lower}.
\end{proof}

\subsection{Airy estimate for \eqref{eqA: Airy PDE with sigma diffusion}} \label{Appendix Section: Airy PDE estimate}
In this subsection, we estimate the fundamental solution of the Airy-type PDE \eqref{eqA: Airy PDE with sigma diffusion}. The difference between the setting here and that of \cite{MZ2016_slowdownBBM} can be seen in Subsection \ref{subsec: main ideas}.

Fix $\alpha\in(0,1/2]$, $c_A>0$, $c_0>0$, and $s_0>0$, and set
\[
S(t):=s_0+c_0t,
\qquad
\sigma(t):=\sqrt{c_0}\,S(t)^{-\alpha}.
\]
Let $-\alpha_1$ be the largest zero of the Airy function $\operatorname{Ai}$, so that $\alpha_1=2.33811\ldots$. Define
\begin{equation}\label{eqAiry: eta}
	\eta(t):=
	\alpha_1 2^{-1/3}c_A^{2/3}
	\lvert\sigma'(t)\rvert^{2/3}\sigma(t)^{1/3}.
\end{equation}
Let $r\in C^2([0,\infty))$. We assume that, for some constant $C_r<\infty$,
\begin{equation}\label{eqAiry: conditions on r}
	\lvert r(t)\rvert\leq C_rS(t)^{-\beta_1},
	\qquad
	\lvert r'(t)\rvert\leq C_rS(t)^{-\beta_2},
	\qquad
	\lvert r''(t)\rvert\leq C_rS(t)^{-\beta_3},
\end{equation}
where
\begin{equation}\label{eqAiry: beta 123 assumptions}
	\beta_1>\alpha+\frac{1}{2},
	\qquad
	\beta_2>\alpha+\frac{3}{2},
	\qquad
	\beta_3>\alpha+2.
\end{equation}
Define $k(t):=c_A\sigma(t)-(\eta(t)+r(t))/c_A$. We consider the Dirichlet problem on $(t,x)\in\mathbb{R}_+\times\mathbb{R}_+$,
\begin{equation}\label{eqAiry: PDE, u, sigma, r}
	\begin{aligned}
		\partial_tu(t,x)
		&=\frac{1}{2}\sigma(t)^2\partial_{xx}u(t,x)
		-\Bigl(\frac{k}{\sigma^2}\Bigr)'(t)xu(t,x)
		+\frac{\eta(t)+r(t)}{\sigma(t)}u(t,x),\\
		u(t,0)&=0.
	\end{aligned}
\end{equation}
Let $G_u(x,y;t_1,t_2)$ denote its positive fundamental solution. Finally, set
\begin{equation}\label{eqAiry: V(t)}
	V(t):=
	\frac{2}{\sigma(t)^2}\Bigl(\frac{c_A}{\sigma(t)}\Bigr)'
	=2c_A\alpha c_0^{-1/2}S(t)^{3\alpha-1},
\end{equation}
which will be used below.

\begin{proposition}\label{propoAiry: Airy estimate}
	Fix $\alpha\in(0,1/2]$ and consider the PDE \eqref{eqAiry: PDE, u, sigma, r} with $r$ satisfying \eqref{eqAiry: conditions on r}. There exist constants $s_\star,L_\star,C\in(0,\infty)$ such that, for every $s_0\geq s_\star$, the following hold.
	\begin{enumerate}
		\item If $0\leq t_1<t_2$ and $t_2-t_1\geq L_\star S(t_1)^{2/3}$, then, for all $x,y\geq 0$,
		\begin{equation}\label{eqAiry: upper bound}
			G_u(x,y;t_1,t_2)
			\leq Cxy\sqrt{V(t_1)V(t_2)}
			\exp\Biggl\{\int_{t_1}^{t_2}\frac{r(s)}{\sigma(s)}\,\td s\Biggr\}.
		\end{equation}
		\item If $T>0$ and $T\geq L_\star S(0)^{2/3}$, then, for all $0\leq x\leq V(0)^{-1/3}$ and $0\leq y\leq V(T)^{-1/3}$,
		\begin{equation}\label{eqAiry: lower bound}
			G_u(x,y;0,T)
			\geq C^{-1}xy\sqrt{V(0)V(T)}
			\exp\Biggl\{\int_0^T\frac{r(s)}{\sigma(s)}\,\td s\Biggr\}.
		\end{equation}
	\end{enumerate}
\end{proposition}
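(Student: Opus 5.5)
Our approach is to reduce \eqref{eqAiry: PDE, u, sigma, r} to the constant-diffusion Airy problem \eqref{eqAiry: Airy PDE with q}, and then read off both bounds from Proposition~\ref{propoAiry: any q potential}.

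\textbf{Time change and removal of the zero-order term.} Introduce the intrinsic clock $\theta(t):=\frac12\int_0^t\sigma(s)^2\,\td s$. Put $\tilde u(\theta,x):=\exp\{-\int_{0}^{t}\frac{\eta+r}{\sigma}\,\td s\}\,u(t,x)$. Then $\tilde u$ solves
\[
\partial_\theta\tilde u=\partial_{xx}\tilde u-q(\theta)x\tilde u,
\qquad
q(\theta):=\frac{2}{\sigma(t)^2}\Bigl(\frac{k}{\sigma^2}\Bigr)'(t).
\]
It follows that $G_u(x,y;t_1,t_2)=\exp\{\int_{t_1}^{t_2}\frac{\eta+r}{\sigma}\}K_q(x,y;\theta(t_1),\theta(t_2))$.

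Expanding $k=c_A\sigma-(\eta+r)/c_A$ gives
\[
q=V(t)\bigl(1+\rho(t)\bigr),
\]
where $\rho$ collects the contributions of $(\eta/\sigma^2)'$ and $(r/\sigma^2)'$ relative to $V$. We need $|\rho(t)|\le CS(t)^{-\kappa}$ for some $\kappa>0$. For the $\eta$-part this is a direct power count: $\eta\sim S^{-\alpha-2/3}$, so the correction is $O(S^{-2/3})$. For the $r$-part we use the conditions \eqref{eqAiry: conditions on r}.

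\textbf{Cancellation of $\eta$ against the Airy action.} The key identity is
\[
\alpha_1\int q^{2/3}\,\td\theta=\int\frac{\eta}{\sigma}\,\td t+O(\text{error}),
\]
because $\alpha_1\frac{\sigma^2}{2}V^{2/3}=\eta/\sigma$ by the definition \eqref{eqAiry: eta} (this is exactly why $\eta$ was chosen). The error is $\int\frac{\sigma^2}{2}V^{2/3}|\rho|\,\td t$. This integrand is of order $S^{-2/3}\cdot S^{-\kappa'}$, where $\kappa'$ comes from $\rho$. The $\eta$-part of $\rho$ gives $S^{-4/3}$, which is integrable. The $r$-part must also be integrable; checking this is where $\beta_2>\alpha+3/2$ enters, since $(r/\sigma^2)'/\sigma^2\sim S^{4\alpha-\beta_2}$ or $S^{4\alpha-\beta_1-1}$, and the prefactor $\sigma^2V^{2/3}/V\sim S^{-1-\alpha}$. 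So the total action error is bounded uniformly. Combined with the exponential prefactor, this yields $\exp\{\int r/\sigma\}\exp\{O(1)\}$. Finally, $\sqrt{q(a)q(b)}\asymp\sqrt{V(t_1)V(t_2)}$ because $\rho\to0$ when $s_0\ge s_\star$.

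\textbf{Verifying the hypotheses of Proposition~\ref{propoAiry: any q potential}.} We need to check three things.
\begin{enumerate}
\item \emph{The length condition.} We have $\Lambda_q\asymp\int_{t_1}^{t_2}S^{-2\alpha}S^{(6\alpha-2)/3}\,\td t=\int S^{-2/3}\,\td t$. So $t_2-t_1\ge L_\star S(t_1)^{2/3}$ gives $\Lambda_q\ge c L_\star^{?}$, which exceeds $\Lambda_\star$ once $L_\star$ is large. (Since $S$ grows, $\int_{t_1}^{t_1+LS(t_1)^{2/3}}S^{-2/3}\ge c\min(L,\dots)$ still tends to infinity with $L$.)
\item \emph{The bound on $\varepsilon_\infty(q)$.} Here $B_q=\frac{\td\log q}{\td s}=\frac{(\log q)'(t)}{q^{2/3}\sigma^2/2}$. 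We have $(\log V)'\sim S^{-1}$ and $q^{2/3}\sigma^2\sim S^{-2/3}$, so $B_q\sim S^{-1/3}$, which is small once $s_0\ge s_\star$. The contribution of $(\log(1+\rho))'$ requires $r''$ control, which is where $\beta_3$ enters.
\item \emph{The bound on $\mathcal{E}(q)$.} We have $\mathcal{E}(q)=\int|B_q|^2\,\td s\asymp\int S^{-2/3}S^{-2/3}\,\td t\asymp s_0^{-1/3}$, which is small.
\end{enumerate}
The upper bound then follows from part (1) of Proposition~\ref{propoAiry: any q potential}. The lower bound follows from part (2), since the ranges $x\le V(0)^{-1/3}$ and $y\le V(T)^{-1/3}$ lie inside $2q^{-1/3}$.

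\textbf{Main obstacle.} The main difficulty is the bookkeeping of the $r$-contributions. We must show that $\rho$, $B_q$ and the action error remain controlled using only the exponents in \eqref{eqAiry: beta 123 assumptions}, without the $1/T$ smallness that was available in \cite{MZ2016_slowdownBBM}. We would first write $\rho$ exactly and bound each term by a power of $S$, checking integrability against $S^{-1-\alpha}$.
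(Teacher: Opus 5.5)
Your proposal is correct and follows essentially the same route as the paper: the time change $\theta(t)=\frac12\int_0^t\sigma^2$, the decomposition $Q=V+R$ with $|R/V|\le\frac12$, the identity $\alpha_1\Lambda_V(t_1,t_2)=\int_{t_1}^{t_2}\eta/\sigma$, and verification of the length, $\varepsilon_\infty(q)$ and $\mathcal{E}(q)$ hypotheses of Proposition~\ref{propoAiry: any q potential} before applying it. One arithmetic slip: the prefactor $\sigma^2V^{-1/3}$ in the action error is of order $S^{1/3-3\alpha}$, not $S^{-1-\alpha}$, so the $r$-contributions are $S^{\alpha+1/3-\beta_2}$ and $S^{\alpha-2/3-\beta_1}$; these are still integrable under \eqref{eqAiry: beta 123 assumptions}, so your conclusion stands.
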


\begin{proof}
	We start with changing of variables to transform the PDE \eqref{eqAiry: PDE, u, sigma, r} into the Airy PDE \eqref{eqAiry: PDE, w, q} which is within the setting of Proposition \ref{propoAiry: any q potential}. Introduce
	\begin{equation}\label{eqAiry: theta(t)}
		\theta(t):=\int_0^t\frac{\sigma(s)^2}{2}\,\td s.
	\end{equation}
	Thus,
	\[
	\theta(t)=
	\begin{cases}
		\displaystyle
		\frac{S(t)^{1-2\alpha}-s_0^{1-2\alpha}}{2(1-2\alpha)},
		&0<\alpha<1/2,\\[1.1ex]
		\displaystyle
		\frac{1}{2}\log\frac{S(t)}{s_0},
		&\alpha=1/2.
	\end{cases}
	\]
	Set
	\[
	u(t,x)=
	\exp\Biggl\{\int_0^t\frac{\eta(s)+r(s)}{\sigma(s)}\,\td s\Biggr\}
	w(\theta(t),x).
	\]
	A direct computation shows that $w$ solves
	\begin{equation}\label{eqAiry: PDE, w, q}
		\partial_\theta w=\partial_{xx}w-q(\theta)xw,
		\qquad q(\theta(t))=:Q(t),
	\end{equation}
	where
	\[
	Q(t):=\frac{2}{\sigma(t)^2}
	\Bigl(\frac{k}{\sigma^2}\Bigr)'(t).
	\]
	The corresponding kernels satisfy
	\begin{equation}\label{eqAiry: G_u = sth K_q}
		G_u(x,y;t_1,t_2)
		=
		\exp\Biggl\{\int_{t_1}^{t_2}\frac{\eta(s)+r(s)}{\sigma(s)}\,\td s\Biggr\}
		K_q(x,y;\theta(t_1),\theta(t_2)).
	\end{equation}
	Since
	\[
	\frac{k}{\sigma^2}
	=\frac{c_A}{\sigma}-\frac{\eta+r}{c_A\sigma^2},
	\]
	we have
	\begin{equation}\label{eqAiry: Q decomp}
		Q(t)=V(t)+R(t),
		\qquad
		R(t):=-\frac{2}{c_A\sigma(t)^2}
		\Bigl(\frac{\eta(t)+r(t)}{\sigma(t)^2}\Bigr)',
	\end{equation}
	with $V$ defined in \eqref{eqAiry: V(t)}.
	
	We now define the variables concerned in the conditions of Proposition \ref{propoAiry: any q potential}. Set
	\begin{equation}
		s=s(\theta):=\int_{\theta(t_1)}^\theta q(u)^{2/3}\,\td u.
	\end{equation}
	By \eqref{eqAiry: theta(t)},
	\[
	\frac{\td s}{\td t}=Q(t)^{2/3}\frac{\sigma(t)^2}{2}.
	\]
	Recall that, in Proposition \ref{propoAiry: any q potential}, we define $B_q(s):=(\td/\td s)\log q(\theta(s))$. Equivalently,
	\[
	B_q(s(t))
	=\frac{(\log Q)'(t)}{Q(t)^{2/3}\sigma(t)^2/2}.
	\]
	Also, by the definition \eqref{eqAiry: Lambda q} of $\Lambda_q$,
	\begin{equation}\label{eqAiry: Lambda q}
		\Lambda_q(\theta(t_1),\theta(t_2))
		=\int_{\theta(t_1)}^{\theta(t_2)}q(\theta)^{2/3}\,\td\theta
		=\int_{t_1}^{t_2}Q(t)^{2/3}\frac{\sigma(t)^2}{2}\,\td t.
	\end{equation}
	We also define
	\[
	\Lambda_V(t_1,t_2):=
	\int_{t_1}^{t_2}V(t)^{2/3}\frac{\sigma(t)^2}{2}\,\td t.
	\]
	with $V$ defined in \eqref{eqAiry: V(t)}. Note that we have chosen $\eta$ in \eqref{eqAiry: eta} so that
	\begin{equation}\label{eqAiry: why eta}
		\alpha_1\Lambda_V(t_1,t_2)
		=\int_{t_1}^{t_2}\frac{\eta(t)}{\sigma(t)}\,\td t,
	\end{equation}
	which will finally be used to cancel the term $\eta(s)/\sigma(s)$ in \eqref{eqAiry: G_u = sth K_q}.
	
	In what follows, we will prove some conditions required in Proposition \ref{propoAiry: any q potential} under our assumptions \eqref{eqAiry: conditions on r} on $r$ and definition \eqref{eqAiry: eta} for $\eta$:
	\begin{equation}\label{eqAiry: Q symp V}
		\frac{1}{2}V(t)\leq Q(t)\leq\frac{3}{2}V(t),
		\qquad t\geq 0,
	\end{equation}
	\begin{equation}\label{eqAiry: check 1}
		\varepsilon(q):=\lVert B_q\rVert_{L^\infty}\leq Cs_0^{-1/3},
		\qquad
		\mathcal{E}(q):=\int_0^{\Lambda_q(t_1,t_2)}\lvert B_q(s)\rvert^2\,\td s
		\leq Cs_0^{-1/3},
	\end{equation}
	and
	\begin{equation}\label{eqAiry: check 2}
		\sup_{0\leq t_1<t_2<\infty}
		\Bigl\lvert
		\Lambda_q(\theta(t_1),\theta(t_2))-\Lambda_V(t_1,t_2)
		\Bigr\rvert
		\leq C.
	\end{equation}
	Also, we will check
	\begin{equation}\label{eqAiry: check 3}
		\Lambda_q(\theta(t_1),\theta(t_2))\geq CL_\star
	\end{equation}
	under the condition $t_2-t_1\geq L_\star S(t_1)^{2/3}$.
	
	Once \eqref{eqAiry: Q symp V}--\eqref{eqAiry: check 3} are proven, Proposition \ref{propoAiry: any q potential} gives
	\[
	\begin{aligned}
		K_q(x,y;\theta(t_1),\theta(t_2))
		&\leq Cxy\sqrt{q(\theta(t_1))q(\theta(t_2))}
		\exp\{-\alpha_1\Lambda_q(\theta(t_1),\theta(t_2))\}\\
		&=Cxy\sqrt{Q(t_1)Q(t_2)}
		\exp\{-\alpha_1\Lambda_q(\theta(t_1),\theta(t_2))\}\\
		&\leq Cxy\sqrt{V(t_1)V(t_2)}
		\exp\Biggl\{-\int_{t_1}^{t_2}\frac{\eta(s)}{\sigma(s)}\,\td s\Biggr\},
	\end{aligned}
	\]
	where the second inequality is due to \eqref{eqAiry: Q symp V} and \eqref{eqAiry: check 2}. Then the upper bound \eqref{eqAiry: upper bound} follows from \eqref{eqAiry: G_u = sth K_q}. Using the lower-bound estimate in Proposition \ref{propoAiry: any q potential} and a similar discussion proves \eqref{eqAiry: lower bound}.
	
	To prove \eqref{eqAiry: Q symp V}--\eqref{eqAiry: check 3}, we make some auxiliary estimates first. We start with the bounds for $R(t)$ appearing in the decomposition \eqref{eqAiry: Q decomp}. The definition of $\eta$ gives
	\[
	\eta(t)=c_\eta S(t)^{-\alpha-2/3},
	\qquad
	c_\eta:=
	\alpha_1 2^{-1/3}c_A^{2/3}\alpha^{2/3}c_0^{7/6}.
	\]
	Therefore,
	\begin{equation}\label{eqAiry: R explicit}
		R(t)
		=-\frac{2c_\eta}{c_Ac_0}
		\Bigl(\alpha-\frac{2}{3}\Bigr)S(t)^{3\alpha-5/3}
		-\frac{2}{c_Ac_0^2}S(t)^{4\alpha}r'(t)
		-\frac{4\alpha}{c_Ac_0}S(t)^{4\alpha-1}r(t).
	\end{equation}
	Since $V(t)\asymp S(t)^{3\alpha-1}$, we have
	\[
	\Bigl\lvert\frac{R(t)}{V(t)}\Bigr\rvert
	\leq CS(t)^{-2/3}
	+CS(t)^{\alpha+1-\beta_2}
	+CS(t)^{\alpha-\beta_1}
	\leq\frac{1}{2}
	\]
	when $s_\star>0$ is large and thus $S(t)=s_0+c_0t\geq s_\star$ is large, which is due to our choice \eqref{eqAiry: beta 123 assumptions}. Thus \eqref{eqAiry: Q symp V} follows from $Q(t)=V(t)+R(t)$.
	
	Next, differentiating \eqref{eqAiry: R explicit} and using the decay assumptions \eqref{eqAiry: conditions on r} on $r,r',r''$ gives
	\[
	\begin{aligned}
		\lvert R'(t)\rvert
		&\leq C\Bigl(
		S(t)^{3\alpha-8/3}
		+S(t)^{4\alpha-\beta_3}
		+S(t)^{4\alpha-1-\beta_2}
		+S(t)^{4\alpha-2-\beta_1}
		\Bigr)\\
		&\leq CS(t)^{3\alpha-2}.
	\end{aligned}
	\]
	Thus,
	\begin{equation}\label{eqAiry: |Q'|}
		\begin{aligned}
			\lvert Q'(t)\rvert
			&\leq\lvert V'(t)\rvert+\lvert R'(t)\rvert\\
			&\leq C\Bigl(
			S(t)^{3\alpha-2}
			+S(t)^{3\alpha-8/3}
			+S(t)^{4\alpha-8/3}
			+S(t)^{4\alpha-1-\beta_2}
			+S(t)^{4\alpha-2-\beta-1}
			\Bigr)\\
			&\leq CS(t)^{3\alpha-2},
		\end{aligned}
	\end{equation}
	by our assumptions \eqref{eqAiry: beta 123 assumptions}.
	
	With \eqref{eqAiry: Q symp V} and \eqref{eqAiry: |Q'|}, we have
	\[
	\lvert B_q(x)\rvert
	=\Biggl\lvert
	\frac{2(\log Q)'(t)}{Q(t)^{2/3}\sigma(t)^2}
	\Biggr\rvert
	\leq\frac{2\lvert Q'(t)\rvert}{Q(t)^{5/3}\sigma(t)^2}
	\leq CS(t)^{-1/3},
	\]
	and
	\[
	\mathcal{E}(q)
	=\frac{1}{2}\int_{t_1}^{t_2}
	B_q(s(t))^2Q(t)^{2/3}\sigma(t)^2\,\td t
	\leq CS(t_1)^{-1/3}
	\leq Cs_0^{-1/3},
	\]
	which prove \eqref{eqAiry: check 1}. Note that
	\[
	\begin{aligned}
		&\Bigl\lvert
		\Lambda_q(\theta(t_1),\theta(t_2))-\Lambda_V(t_1,t_2)
		\Bigr\rvert\\
		&\quad\leq C\int_{t_1}^{t_2}\sigma(t)^2V(t)^{-1/3}\lvert R(t)\rvert\,\td t\\
		&\quad\leq C\int_{t_1}^{t_2}
		\Bigl(
		S(t)^{-4/3}
		+S(t)^{\alpha+1/3-\beta_2}
		+S(t)^{\alpha-2/3-\beta_1}
		\Bigr)\,\td t
		\leq C,
	\end{aligned}
	\]
	where the last line is due to the definition \eqref{eqAiry: R explicit} of $R(t)$ and \eqref{eqAiry: beta 123 assumptions}, which proves \eqref{eqAiry: check 2}. To check \eqref{eqAiry: check 3}, it suffices to notice that
	\[
	\begin{aligned}
		\Lambda_q(\theta(t_1),\theta(t_2))
		&\geq c\int_{t_1}^{t_1+L_\star S(t_1)^{2/3}}S(t)^{-2/3}\,\td t\\
		&\geq cL_\star
		\bigl(1+c_0L_\star S(t_1)^{-1/3}\bigr)^{-2/3}.
	\end{aligned}
	\]
	After $L_\star$ is chosen, enlarge $s_\star$ so that $c_0L_\star s_\star^{-1/3}\leq 1$; then $\Lambda_q(\theta(t_1),\theta(t_2))\geq c'L_\star$.
\end{proof}

\begin{remark}\label{remark: G(x,y;t1,t2) killed BM estimate}
	In Proposition~\ref{propoAiry: Airy estimate}, the time interval is assumed to be sufficiently long so that the Airy spectral behavior dominates. We will also need a short-time upper bound on the fundamental solution $G_u(x,y;t_1,t_2)$ of \eqref{eqAiry: PDE, u, sigma, r}. This follows from comparison with Brownian motion killed at the origin.
	
	Indeed, after increasing $s_\star$ if necessary, \eqref{eqAiry: Q symp V} shows that the linear potential $-(k/\sigma^2)'(t)x$ in \eqref{eqAiry: PDE, u, sigma, r} is nonpositive. This holds on $\mathbb{R}_+\times\mathbb{R}_+$. Hence, by the comparison principle,
	\[
	G_u(x,y;t_1,t_2)\leq G_{\widetilde{u}}(x,y;t_1,t_2),
	\]
	where $G_{\widetilde{u}}$ denotes the fundamental solution of
	\[
	\partial_t\widetilde{u}
	=\frac{1}{2}\sigma(t)^2\partial_{xx}\widetilde{u}
	+\frac{\eta+r}{\sigma}(t)\widetilde{u}
	\]
	on $\mathbb{R}_+$, with Dirichlet boundary condition $\widetilde{u}(t,0)=0$. By the estimate of the Green kernel of killed Brownian motion, with $a(t_1,t_2):=\int_{t_1}^{t_2}\sigma(s)^2\,\td s$, there exists $C>0$ such that
	\[
	G_{\widetilde{u}}(x,y;t_1,t_2)
	\leq C\frac{xy}{a(t_1,t_2)^{3/2}}
	\exp\Biggl\{-\frac{(x-y)^2}{Ca(t_1,t_2)}\Biggr\}
	\exp\Biggl\{\int_{t_1}^{t_2}\frac{\eta+r}{\sigma}(s)\,\td s\Biggr\}.
	\]
	In particular,
	\[
	\begin{aligned}
		\left.\partial_y G_{\widetilde{u}}(x,y;t_1,t_2)\right|_{y=0}
		&\leq Cx\,a(t_1,t_2)^{-3/2}
		\exp\Biggl\{-\frac{x^2}{Ca(t_1,t_2)}\Biggr\}
		\exp\Biggl\{\int_{t_1}^{t_2}\frac{\eta+r}{\sigma}(s)\,\td s\Biggr\}\\
		&\leq Cx^{-2}
		\exp\Biggl\{\int_{t_1}^{t_2}\frac{\eta+r}{\sigma}(s)\,\td s\Biggr\},
	\end{aligned}
	\]
	where the last line is due to
	\[
	a^{-3/2}\exp\Bigl\{-\frac{x^2}{Ca}\Bigr\}\leq Cx^{-3}
	\]
	uniformly for all $a>0$ and $x>0$, which can be checked by taking derivatives with respect to $a$. The same bounds hold for $G_u$ by comparison; that is,
	\begin{equation}\label{eqA: killed BB ker estimate}
		G_u(x,y;t_1,t_2)
		\leq C\frac{xy}{a(t_1,t_2)^{3/2}}
		\exp\Biggl\{-\frac{(x-y)^2}{Ca(t_1,t_2)}\Biggr\}
		\exp\Biggl\{\int_{t_1}^{t_2}\frac{\eta+r}{\sigma}(s)\,\td s\Biggr\},
	\end{equation}
	and
	\begin{equation}\label{eqA: killed BB ker estimate, derivative}
		\left.\partial_y G_u(x,y;t_1,t_2)\right|_{y=0}
		\leq Cx^{-2}
		\exp\Biggl\{\int_{t_1}^{t_2}\frac{\eta+r}{\sigma}(s)\,\td s\Biggr\}.
	\end{equation}
\end{remark}

\section{Cooling BBM with fixed splitting times}
\label{Appendix: BBM with fixed time}

The proof in this section follows the same strategy as in Subsection~\ref{subsection: micro-BBM, upper tail}.

We first introduce the model. Fix a branching period $\Delta t>0$ and an integer offspring number $m\geq2$. Let $T=N\Delta t$ for some integer $N\geq1$. At each deterministic branching time $k\Delta t$, $k=1,\ldots,N$, every particle is replaced by exactly $m$ offspring. As usual, we write $\mathcal{N}_t$ for the set of particles alive at time $t\in[0,T]$.

Fix $\alpha\in(0,1/2]$. Assume the movement of each particle follows the law
\begin{equation}\label{eqaBBM: one particle movement}
	Y_t^\alpha
	:=
	\int_0^t \sigma_\alpha(s)\,\td B_s,
	\qquad t\in[0,T],
\end{equation}
where $B$ is a standard Brownian motion and
\begin{equation}\label{eqaBBM: sigma}
	\sigma_\alpha(t)
	:=
	\sqrt{c_0}\,(s_0+c_0t)^{-\alpha},
\end{equation}
for some constants $c_0>0$ and $s_0\geq1$.

Set $c_A:=\sqrt{2\log m/\Delta t}$. Define
\begin{equation}\label{eqaBBM: eta}
	\eta(t)
	:=
	\alpha_1 2^{-1/3}c_A^{2/3}
	\lvert\sigma_\alpha'(t)\rvert^{2/3}\lvert\sigma_\alpha(t)\rvert^{1/3}
	=
	c_\eta (s_0+c_0t)^{-\alpha-2/3},
\end{equation}
where $c_\eta:=\alpha_1 2^{-1/3}c_A^{2/3}\alpha^{2/3}c_0^{7/6}$.
Let
\begin{equation}\label{eqaBBM: r}
	r(t)
	:=
	-(s_0+c_0t)^{-1-\beta},
	\qquad
	\beta\in(0,\alpha).
\end{equation}

It is straightforward to check that $r$ satisfies the assumptions in
\eqref{eqAiry: conditions on r}. We then define
\begin{equation}\label{eqaBBM: Ar(t)}
	k_r(t)
	:=
	c_A\sigma_\alpha(t)-\frac{\eta(t)+r(t)}{c_A},
	\qquad
	A_r(t)
	:=
	\int_0^t k_r(s)\,\td s,
\end{equation}
and
\begin{equation}\label{eqaBBM: A(t)}
	A(t)
	:=
	\int_0^t
	\bigl(c_A\sigma_\alpha(s)-\frac{\eta(s)}{c_A}
	\bigr)\,\td s.
\end{equation}
Since $r$ is integrable on $[0,\infty)$, there exists a constant $C>0$ such
that
\begin{equation}\label{eqaBBM: A and Ar small difference}
	\lvert A(T)-A_r(T)\rvert \leq \int_0^T \frac{\lvert r(t)\rvert}{c_A} \td t \leq C,
	\qquad T\geq0.
\end{equation}
Moreover, for all sufficiently large $t$,
\begin{equation}\label{eqaBBM: the potential is positive}
	\frac{k_r(t)}{\sigma_\alpha(t)^2}
	\asymp
	\frac{c_A}{\sigma_\alpha(t)}
	\asymp
	(s_0+c_0t)^\alpha.
\end{equation}

For later use in Proposition~\ref{Appendix: Bridge like probability}, we also
record the following bound. Since
\[
\frac{\eta(s)+r(s)}{\sigma_\alpha(s)}
=
\frac{c_\eta}{\sqrt{c_0}}(s_0+c_0s)^{-2/3}
-
\frac{1}{\sqrt{c_0}}(s_0+c_0s)^{-(1+\beta-\alpha)},
\]
and since both exponents are strictly larger than $1/2$, we have
\begin{equation}\label{eqaBBM: check an integral}
	0\leq
	\int_0^T\frac{(\eta(s)+r(s))^2}{\sigma_\alpha(s)^2}\,\td s
	\leq C,
\end{equation}
for some $0<C<\infty$, uniformly for all sufficiently large $T$.

\begin{proposition}\label{proposition: aBBM exceeding probability}
	Fix $\alpha\in(0,1/2]$. Consider the BBM with deterministic branching period $\Delta t>0$, offspring number $m\geq2$, and spatial motion given by \eqref{eqaBBM: one particle movement}. Then there exist constants
	$c,C>0$ and $K_*,T_*>0$ such that, for all integers $N\geq1$ with
	$T=N\Delta t\geq T_*$ and all $K\geq K_*$,
	\begin{equation}\label{eqaBBM: exceeding probability to be proven}
		\mathbb{P}\Bigl(\exists\,t\in[0,T],\ \exists\,v\in\mathcal{N}_t\colon
		Y_t^{\alpha,v}\geq A(t)+K
		\Bigr)
		\leq
		Ce^{-cK}.
	\end{equation}
\end{proposition}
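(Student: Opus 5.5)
The plan is to mirror the proof of Proposition~\ref{proposition: BBM tight, upper bound}, with the exponential branching clock replaced by deterministic splitting. First, by \eqref{eqaBBM: A and Ar small difference} and since $r<0$ (so $A_r\ge A$ pointwise), it suffices to bound the probability that some particle crosses the raised barrier $t\mapsto A_r(t)+K$. Let $\mathscr L$ be the set of particles that hit this barrier for the first time before $T$; the event in \eqref{eqaBBM: exceeding probability to be proven} equals $\{\mathscr L\neq\varnothing\}$, so Markov's inequality reduces the problem to bounding $\mathbb E\lvert\mathscr L\rvert$. With deterministic branching, the many-to-one formula reads
\[
\mathbb E\lvert\mathscr L\rvert=\int_0^T m^{\lfloor t/\Delta t\rfloor}\,\mathbb P\bigl(\tau_0(A_r+K-Y)\in\td t\bigr)\le m\int_0^T e^{c_A^2t/2}\,\mathbb P\bigl(\tau_0(A_r+K-Y)\in\td t\bigr),
\]
because $m^{t/\Delta t}=e^{t\log m/\Delta t}=e^{c_A^2t/2}$ by the choice $c_A=\sqrt{2\log m/\Delta t}$. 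The discretization only costs the factor $m$.

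Next, apply Lemma~\ref{Appendix: Girsanov, exceeding probability} with this $c_A$. The factor $e^{-c_A^2\tau_0/2}$ cancels $e^{c_A^2t/2}$ exactly, which leaves
\[
e^{-k_r(0)K/\sigma_\alpha(0)^2}\,\mathbb E^{(K)}\Bigl[\exp\Bigl\{\int_0^{\tau_0}\Bigl(-\bigl(\tfrac{k_r}{\sigma_\alpha^2}\bigr)'Y_s+\tfrac{\eta+r}{\sigma_\alpha}\Bigr)\td s\Bigr\}\mathds 1_{\{\tau_0\le T\}}\Bigr].
\]
By \eqref{eqaBBM: the potential is positive}, $k_r(0)/\sigma_\alpha(0)^2\ge c>0$, possibly after first running the process for a fixed initial time and absorbing the first segment by a Gaussian union bound as in \eqref{eqBBM: upper bound, two segments argument}. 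That gives the prefactor $e^{-cK}$. By \eqref{eqH: PDE representation of hitting in some interval}, the expectation equals $\int_0^T\frac12\sigma_\alpha(t)^2\partial_yG_u(K,y;0,t)|_{y=0}\,\td t$, where $G_u$ is the kernel of \eqref{eqAiry: PDE, u, sigma, r}.

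The remaining work is to show this integral is at most $CK$, which is the main obstacle. The estimate splits in time:
\begin{itemize}
\item For $t\ge L_\star S(0)^{2/3}$, use \eqref{eqAiry: upper bound}. Since $r=-(s_0+c_0t)^{-1-\beta}$ satisfies \eqref{eqAiry: conditions on r} and $\beta<\alpha$, the exponent $\int_0^t r/\sigma_\alpha\,\td s\le -c t^{\alpha-\beta}$ gives stretched-exponential decay. Together with $\sigma_\alpha^2\sqrt{V(t)}\asymp t^{-(1+\alpha)/2}$, this makes the tail integrable.
\item For short times, use the killed-kernel bound \eqref{eqA: killed BB ker estimate, derivative}, which is bounded on a fixed interval.
\end{itemize}
If $s_0$ is not at least the threshold $s_\star$ of Proposition~\ref{propoAiry: Airy estimate}, first shift time by a fixed $T_0$ with $s_0+c_0T_0\ge s_\star$. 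The segment $[0,T_0]$ is then handled by $m^{T_0/\Delta t}$ times a Gaussian tail, giving $e^{-cK^2}$. Combining the pieces yields $CKe^{-cK}\le Ce^{-c'K}$.
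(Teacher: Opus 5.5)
Your proposal is correct and follows essentially the paper's proof. You reduce to the rectified barrier $A_r$ and weight crossings by $m^{t/\Delta t}=e^{c_A^2t/2}$; the paper does this as a union bound over $[j\Delta t,(j+1)\Delta t]$ with factor $m^{j+1}$. You cancel that weight via the Girsanov lemma and bound the Feynman--Kac integral by $CK$, using the Airy upper bound for long times and the killed kernel for short times. Small $s_0$ is handled by the same time shift. One small correction: the reduction to $A_r$ should rest on the uniform bound $\lvert A(t)-A_r(t)\rvert\le C$, replacing $K$ by $K-C$. It cannot rest on $A_r\ge A$, since crossing $A+K$ does not imply crossing the higher barrier $A_r+K$.
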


\begin{proof}
	By \eqref{eqaBBM: A and Ar small difference}, it suffices to prove \eqref{eqaBBM: exceeding probability to be proven} with $A(t)$ replaced by $A_r(t)$, after adjusting the constants.
	
	We first assume that $s_0$ is large enough for the Airy estimate in Proposition~\ref{propoAiry: Airy estimate} to apply. This restriction is only
	technical. For general $s_0\geq1$, we can take the same time decomposition strategy as in Subsection~\ref{subsection: micro-BBM, upper tail}.
	
	For any continuous process $X$, let $\tau_0(X):=\inf\{t\geq0\colon X_t=0\}$ denote its first hitting time of $0$. By a union bound over the branching
	intervals, we have
	\begin{equation}\label{eqaBBM: upper bound of exceeding at T}
		\begin{aligned}
			& \mathbb{P}\Bigl(\exists\,t\in[0,T],\ \exists\,v\in\mathcal{N}_t\colon
			Y_t^{\alpha,v}\geq A_r(t)+K
			\Bigr)  \\
			&\leq
			\sum_{j=0}^{N-1}
			m^{j+1}
			\mathbb{P}\Bigl(\tau_0\bigl(Y^\alpha_\cdot-A_r(\cdot)-K\bigr)
			\in[j\Delta t,(j+1)\Delta t]
			\Bigr).
		\end{aligned}
	\end{equation}
	Here $m^{j+1}$ is a crude upper bound on the number of particles that may
	contribute on the interval $[j\Delta t,(j+1)\Delta t]$.
	
	By the definition of $A_r$ and Proposition~\ref{Appendix: Girsanov, exceeding probability},
	\[
	\begin{aligned}
		&\mathbb{P}\Bigl(
		\tau_0\bigl(Y^\alpha_\cdot-A_r(\cdot)-K\bigr)
		\in[j\Delta t,(j+1)\Delta t]
		\Bigr)\\
		&\quad\leq
		\exp\Bigl\{-\frac12c_A^2j\Delta t
		-\frac{k_r(0)}{\sigma_\alpha(0)^2}K\Bigr\}\\
		&\qquad\times
		\mathbb{E}^{(K)}\Biggl[
		\begin{aligned}
			&\exp\Biggl\{
			\int_0^{\tau_0(Y)}
			\biggl(
			-\biggl(\frac{k_r}{\sigma_\alpha^2}\biggr)'(s)Y_s
			+\frac{\eta(s)+r(s)}{\sigma_\alpha(s)}
			\biggr)\,\td s
			\Biggr\}
			\mathds{1}_{\{\tau_0(Y)\in[j\Delta t,(j+1)\Delta t]\}}
		\end{aligned}
		\Biggr].
	\end{aligned}
	\]
	where $\mathbb{E}^{(K)}$ denotes expectation for the process $Y$ starting at $Y_0=K$.
	
	Since $c_A^2=2\log m/\Delta t$, we have $\exp\{-\tfrac12c_A^2j\Delta t\}=m^{-j}$.
	Therefore \eqref{eqaBBM: upper bound of exceeding at T} gives
\begin{equation}\label{eqaBBM: upper bound of exceeding at T, 2}
	\begin{aligned}
		&\mathbb{P}\Bigl(
		\exists\,t\in[0,T],\ \exists\,v\in\mathcal{N}_t\colon
		Y_t^{\alpha,v}\ge A_r(t)+K
		\Bigr)\\
		&\quad\le
		m\exp\Bigl\{-\frac{k_r(0)}{\sigma_\alpha(0)^2}K\Bigr\}
		\mathbb{E}^{(K)}\Biggl[
		\exp\Biggl\{
		\int_0^{\tau_0(Y)}
		\left[
		-\Bigl(\frac{k_r}{\sigma_\alpha^2}\Bigr)'(s)Y_s
		+\frac{\eta(s)+r(s)}{\sigma_\alpha(s)}
		\right]\td s
		\Biggr\}
		\mathds{1}_{\{\tau_0(Y)\le T\}}
		\Biggr].
	\end{aligned}
\end{equation}

	By the Feynman--Kac representation, see for instance the discussion after
	equation $(2.5)$ in \cite{MZ2016_slowdownBBM}, the expectation on the
	right-hand side can be written as
	\[
	\int_0^T
	\frac12\sigma_\alpha(t)^2
	\left.\partial_y G_r(K,y;0,t)\right|_{y=0}
	\,\td t,
	\]
	where $G_r$ is the fundamental solution of the Airy-type equation
	\begin{equation}\label{eqaBBM: Airy-like PDE}
		\partial_t u
		=
		\frac12\sigma_\alpha(t)^2\partial_{xx}u
		-
		\bigl(\frac{k_r}{\sigma_\alpha^2}\bigr)'(t)xu
		+
		\frac{\eta(t)+r(t)}{\sigma_\alpha(t)}u,
		\qquad (t,x)\in\mathbb{R}_+\times\mathbb{R}_+,
	\end{equation}
	with Dirichlet boundary condition $u(t,0)=0$.
	
	The same argument leading to
	\eqref{eqBBM: partial derivative of G, 1} and
	\eqref{eqBBM: partial derivative of G, 2}, \eqref{eqBBM: upper bound, int ...< C lambda}, gives
	\par
	\begin{equation}\label{eqaBBM: FK Airy integral upper bound}
		\int_0^T\frac12\sigma_\alpha(t)^2
		\left.\partial_yG_r(K,y;0,t)\right|_{y=0}\,\td t
		\leq CK,
	\end{equation}
	uniformly in $T$ and $K$ large.
	
	Since, under the present assumption that $s_0$ is sufficiently large, $\frac{k_r(0)}{\sigma_\alpha(0)^2}>0$,
	we obtain from \eqref{eqaBBM: upper bound of exceeding at T, 2} and
	\eqref{eqaBBM: FK Airy integral upper bound} that

	\begin{equation}\label{eqaBBM: exceeding prob}
		\begin{aligned}
			\mathbb{P}\Bigl(
			\exists\,t\in[0,T],\ \exists\,v\in\mathcal{N}_t\colon
			Y_t^{\alpha,v}\ge A_r(t)+K
			\Bigr)
			&\le Cm(1+K)
			\exp\Bigl\{-\frac{k_r(0)}{\sigma_\alpha(0)^2}K\Bigr\}\\
			&\le Ce^{-cK}.
		\end{aligned}
	\end{equation}
	
	This proves the proposition.
\end{proof}

The next estimate gives a bridge-type upper bound for the fixed-splitting BBM.

\begin{proposition}\label{proposition: BBM fixed splitting time, tail estimate}
	Fix $\alpha\in(0,1/2]$ and consider the same BBM as in Proposition~\ref{proposition: aBBM exceeding probability}. Recall that $\mathcal{N}_T$ is the index set of all particles at the terminal time $T$. There exist constants $C,c,c_1,c_2>0$ and thresholds $K_*,T_*>0$ such that, for all integers $N\geq1$ with $T=N\Delta t\geq T_*$, all $K_0\geq K_*$, all
	$n\in\mathbb{Z}_+$, and every deterministic subset $\Gamma\subset\mathcal{N}_T$,
	\begin{equation}\label{eqaBBM: BBM fixed splitting time, tail estimate}
		\begin{aligned}
			&\mathbb{P}\Bigl(\exists\,v\in\Gamma\colon
			Y_T^{\alpha,v}\geq A(T)+K_0-n\Bigr)\\
			&\quad\leq Ce^{-c(K_0+n)}
			+C \lvert\Gamma\rvert 
			\exp\bigl\{-\tfrac12c_A^2T-c_1K_0+c_2T^\alpha(n+1)\bigr\},
		\end{aligned}
	\end{equation}
	where $c_A:=\sqrt{2\log m/\Delta t}$.
\end{proposition}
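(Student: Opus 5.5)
Split according to whether the barrier $t\mapsto A_r(t)+K_0$ is crossed. Let $\mathcal{B}$ be the event that some particle crosses it on $[0,T]$. By Proposition~\ref{proposition: aBBM exceeding probability}, with $A$ replaced by $A_r$ as allowed by \eqref{eqaBBM: A and Ar small difference}, $\mathbb{P}(\mathcal{B})\le Ce^{-cK_0}$. This alone does not give the $e^{-c(K_0+n)}$ term, so we refine it. If $n\le K_0$, then $e^{-cK_0}\le e^{-c(K_0+n)/2}$ and this term suffices after halving $c$. If $n>K_0$, the event $\{Y_T^{\alpha,v}\ge A(T)+K_0-n\}$ is not controlled by crossings, and we rely on the second term. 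We may also raise the barrier to $A_r(t)+K_0+n$ at cost $Ce^{-c(K_0+n)}$, and we will choose whichever barrier height makes the bookkeeping close.

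On $\mathcal{B}^c$, use a union bound over $v\in\Gamma$ and the bridge representation Lemma~\ref{Appendix: Bridge like probability} with $t_1=0$, $t_2=T$, $x=K_0$. The target region is $y\in[0,n+C]$, measured below the barrier. This gives
\[
\mathbb{P}(\mathcal{B}^c,\ Y_T^{\alpha,v}\ge A(T)+K_0-n)\le \int_0^{n+C} e^{-\frac{k_r(0)}{\sigma_\alpha(0)^2}K_0+\frac{k_r(T)}{\sigma_\alpha(T)^2}y-\frac12 c_A^2T}\,G_r(K_0,y;0,T)\,\mathrm{d}y.
\]
Here each $v\in\Gamma$ is a single path, so no factor $m^N$ appears; the $|\Gamma|$ factor replaces it. By \eqref{eqaBBM: the potential is positive}, $k_r(T)/\sigma_\alpha(T)^2\asymp T^\alpha$, so $e^{\frac{k_r(T)}{\sigma_\alpha(T)^2}y}\le e^{c_2T^\alpha(n+1)}$. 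The prefactor $e^{-\frac{k_r(0)}{\sigma_\alpha(0)^2}K_0}$ gives $e^{-c_1K_0}$ because $s_0$ is large, as in the proof of Proposition~\ref{proposition: aBBM exceeding probability}. The remaining task is the bound $\int_0^{n+C}G_r(K_0,y;0,T)\,\mathrm{d}y\le C e^{cK_0}$, with the $e^{cK_0}$ growth absorbed by shrinking $c_1$.

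We bound $G_r$ in two ways. Since $T\ge T_*\ge L_\star S(0)^{2/3}$, \eqref{eqAiry: upper bound} gives $G_r\le CK_0y\sqrt{V(0)V(T)}\,e^{\int r/\sigma}$, and $\int_0^T r/\sigma\le 0$ because $r<0$. Alternatively, the killed-heat-kernel bound \eqref{eqA: killed BB ker estimate} gives $\int_0^\infty G_r(K_0,y;0,T)\,\mathrm{d}y\le \exp\{\int_0^T(\eta+r)/\sigma\}$. This exponent is only of order $T^{1/3}$, so the Airy bound must be used. It yields
\[
\int_0^{n+C}G_r\,\mathrm{d}y\le CK_0(n+C)^2\sqrt{V(0)V(T)}\le CK_0(n+1)^2,
\]
since $V(T)\le C$ for $\alpha\le 1/2$ (as $3\alpha-1\le 1/2$, it is bounded after fixing $s_0$). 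The polynomial factors $K_0(n+1)^2$ are absorbed into $e^{c_2T^\alpha(n+1)}$ and $e^{-c_1K_0}$ by adjusting the constants.

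The main obstacle is consistency between the two terms. The terminal window is measured from the barrier $A_r(T)+K_0$, while the event concerns $A(T)+K_0-n$. Their difference $A(T)-A_r(T)$ is bounded by $C$ via \eqref{eqaBBM: A and Ar small difference}, which only widens the $y$-window to $[0,n+C]$. The crossing term must remain valid for every $n$. I would take the barrier at height $K_0$ throughout, accept $\mathbb{P}(\mathcal{B})\le Ce^{-cK_0}$, and check whether the statement actually needs $e^{-c(K_0+n)}$ when $n$ is large. For $n\ge K_0$, using the barrier $K_0+n$ instead and rerunning the bridge estimate with $x=K_0+n$ gives an $e^{-c_1(K_0+n)}$ prefactor, which is even better. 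That case split closes the argument.
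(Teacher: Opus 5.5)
Your argument is correct and follows the paper's proof. The steps match: the crossing bound from Proposition~\ref{proposition: aBBM exceeding probability}, the union bound over $\Gamma$, the bridge representation of Lemma~\ref{Appendix: Bridge like probability}, the Airy upper bound \eqref{eqAiry: upper bound}, and $e^{k_r(T)z/\sigma_\alpha(T)^2}\le e^{CT^\alpha(n+1)}$ on the terminal window. The paper simply uses the barrier $A_r(t)+K_0+n$ for every $n$, with starting point $x=K_0+n$ and window $[0,2n+C_0]$, so your case split into $n\le K_0$ and $n>K_0$ is unnecessary.

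There is one slip. $V(T)\asymp(s_0+c_0T)^{3\alpha-1}$ is not bounded when $\alpha>1/3$, since it grows like $T^{3\alpha-1}$. The resulting factor $T^{(3\alpha-1)/2}$ is harmless: it is killed by $\exp\bigl\{\int_0^T r(s)/\sigma_\alpha(s)\,\td s\bigr\}\le Ce^{-cT^{\alpha-\beta}}$, as in the paper, or it can be absorbed into $e^{c_2T^\alpha(n+1)}$.
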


\begin{proof}
	As in the proof of Proposition~\ref{proposition: aBBM exceeding probability}, we may assume that $s_0$ is large enough for Proposition~\ref{propoAiry: Airy estimate} to apply.
	
	Replacing $K$ by $K_0+n$ in \eqref{eqaBBM: exceeding prob}, we obtain
	\[
	\mathbb{P}\Bigl(\exists\,t\in[0,T],\ \exists\,v\in\mathcal{N}_t\colon
	Y_t^{\alpha,v}\geq A_r(t)+K_0+n
	\Bigr)
	\leq
	Ce^{-c(K_0+n)}.
	\]
	Therefore,
	\begin{equation}\label{eqaBBM: probability of existing some v in Gamma good}
		\begin{aligned}
			& \mathbb{P}\Bigl(\exists\,v\in\Gamma\colon
			Y_T^{\alpha,v}\geq A(T)+K_0-n
			\Bigr) \\
			&\leq
			Ce^{-c(K_0+n)}
			+
			\lvert\Gamma\rvert\,
			\mathbb{P}\Bigl(
				Y_T^\alpha\geq A(T)+K_0-n,\ 
				Y_t^\alpha\leq A_r(t)+K_0+n
				\quad \forall t\in[0,T]
			\Bigr)
		\end{aligned}
	\end{equation}
	where $Y^\alpha_\cdot$ represents the movement of one particle and has the same law as $\int_0^\cdot \sigma_\alpha(s) \td B_s$.
	
	By \eqref{eqaBBM: A and Ar small difference}, there exists a constant
	$C_0>0$ such that $A(T)\geq A_r(T)-C_0$, for all $T\geq0$.
	Hence the last probability in
	\eqref{eqaBBM: probability of existing some v in Gamma good} is bounded
	above by
	\[
	\mathbb{P}\Bigl(
		Y_T^\alpha\geq A_r(T)+K_0-n-C_0,\\[-2pt]
		Y_t^\alpha\leq A_r(t)+K_0+n
		\quad \forall t\in[0,T]
	\Bigr).
	\]
	
	By Proposition~\ref{Appendix: Bridge like probability} and
	\eqref{eqaBBM: check an integral}, this probability is bounded by
	\[
	\begin{aligned}
		&\exp\biggl\{-\frac12c_A^2T
		-\frac{k_r(0)}{\sigma_\alpha(0)^2}(K_0+n)\biggr\}
		\int_0^{2n+C_0}
		\exp\biggl\{\frac{k_r(T)}{\sigma_\alpha(T)^2}z\biggr\}
		G_r(K_0+n,z;0,T)\,\td z,
	\end{aligned}
	\]
	where $G_r$ is the fundamental solution of \eqref{eqaBBM: Airy-like PDE}.
	
	By Proposition~\ref{propoAiry: Airy estimate},
	\[
	G_r(K_0+n,z;0,T)
	\leq C(K_0+n)z
	\sqrt{s_0^{3\alpha-1}(s_0+c_0T)^{3\alpha-1}}
	\exp\biggl\{\int_0^T\frac{r(s)}{\sigma_\alpha(s)}\,\td s\biggr\}.
	\]
	Since
	\[
	\int_0^T\frac{r(s)}{\sigma_\alpha(s)}\,\td s
	=-\frac{1}{\sqrt{c_0}}\int_0^T
	(s_0+c_0s)^{\alpha-1-\beta}\,\td s
	\leq C-cT^{\alpha-\beta},
	\]
	for some constants $C,c>0$, where $\beta<\alpha$, we have
	\[
	G_r(K_0+n,z;0,T)
	\leq
	C(K_0+n)z
	(s_0+c_0T)^{(3\alpha-1)/2}
	e^{-cT^{\alpha-\beta}}.
	\]
	Consequently,
	\begin{equation}\label{eqaBBM: Brownian bridge like estimate}
		\begin{aligned}
			&\mathbb{P}\Bigl(
			Y_T^\alpha\ge A(T)+K_0-n,\ 
			Y_t^\alpha\le A_r(t)+K_0+n\ \text{for all }t\in[0,T]
			\Bigr)\\
			&\quad\le
			C(K_0+n)(s_0+c_0T)^{(3\alpha-1)/2}
			\exp\Bigl\{-\frac{c_A^2}{2}T
			-\frac{k_r(0)}{\sigma_\alpha(0)^2}(K_0+n)
			-cT^{\alpha-\beta}\Bigr\}\\
			&\qquad\times
			\int_0^{2n+C_0}
			z\exp\Bigl\{\frac{k_r(T)}{\sigma_\alpha(T)^2}z\Bigr\}\,\td z.
		\end{aligned}
	\end{equation}
	
	Using \eqref{eqaBBM: the potential is positive}, we have $k_r(T)/\sigma_\alpha(T)^2\leq CT^\alpha$ for all sufficiently large $T$. Therefore,
	\[
	\int_0^{2n+C_0}
	z\,\exp\biggl\{\frac{k_r(T)}{\sigma_\alpha(T)^2}z\biggr\}
	\,\td z
	\leq
	C(n+1)^2\exp\{CT^\alpha(n+1)\}.
	\]
	Since $T\geq T_*$ and $K_0\geq K_*$ may be chosen large, after adjusting constants the last probability is at most
	$C\exp\bigl\{-\tfrac12c_A^2T-c_1K_0+c_2T^\alpha(n+1)\bigr\}$. Plugging this estimate into
	\eqref{eqaBBM: probability of existing some v in Gamma good} completes the proof.
\end{proof}

\section*{Acknowledgments}
The author thanks Nikos Zygouras for suggesting the problem that initiated this work, and for his guidance and valuable suggestions. The author also thanks Mo Dick Wong for pointing out the useful reference \cite{Acosta2014_tightness} and for an early discussion. Finally, the author thanks Clément Cosco and Shuta Nakajima, and Ofer Zeitouni for useful and inspiring discussions. The author is supported by the China Scholarship Council--University of Warwick Award.

\bibliographystyle{alpha}
\bibliography{max_loglog}

\end{document}